\newtheorem{thm}{Theorem}[section]
\newtheorem{lemma}[thm]{Lemma}
\newtheorem{prop}[thm]{Proposition}
\newtheorem{cor}[thm]{Corollary}
\newtheorem{thmx}{Theorem}
\theoremstyle{definition}
\newtheorem{rmk}[thm]{Remark}
\newtheorem{defn}[thm]{Definition}
\newcommand{\ep}{\epsilon}
\newcommand{\ka}{\kappa}
\newcommand{\vep}{\varepsilon}
\newcommand{\vph}{\varphi}
\newcommand{\vrh}{\varrho}
\newcommand{\vsi}{\varsigma}
\newcommand{\om}{\omega}
\newcommand{\pa}{\partial}
\newcommand{\mone}{\mathbf{1}}
\newcommand{\N}{\mathbb{N}}
\newcommand{\R}{\mathbb{R}}
\renewcommand{\S}{\mathbb{S}}
\newcommand{\mca}{\mathcal{A}}
\newcommand{\mcb}{\mathcal{B}}
\newcommand{\mcd}{\mathcal{D}}
\newcommand{\I}{\mathcal{I}}
\newcommand{\mcj}{\mathcal{J}}
\newcommand{\mcs}{\mathcal{S}}
\newcommand{\mcv}{\mathcal{V}}
\newcommand{\mcz}{\mathcal{Z}}
\newcommand{\mfa}{\mathfrak{a}}
\newcommand{\mfb}{\mathfrak{b}}
\newcommand{\mfc}{\mathfrak{c}}
\newcommand{\mfd}{\mathfrak{d}}
\newcommand{\mfe}{\mathfrak{e}}
\newcommand{\msr}{\mathscr{R}}
\newcommand{\tc}{\tilde{c}}
\newcommand{\tv}{\tilde{v}}
\newcommand{\tw}{\tilde{w}}
\newcommand{\tde}{\tilde{\delta}}
\newcommand{\txi}{\tilde{\xi}}
\newcommand{\trh}{\tilde{\rho}}
\newcommand{\tvrh}{\tilde{\varrho}}
\newcommand{\tchi}{\tilde{\chi}}
\newcommand{\wtv}{\widetilde{V}}
\newcommand{\wtw}{\widetilde{W}}
\newcommand{\wtmcd}{\widetilde{\mathcal{D}}}
\newcommand{\olw}{\overline{W}}
\newcommand{\supp}{\textup{supp}}
\newcommand{\tin}{\textup{in}}
\newcommand{\tout}{\textup{out}}
\newcommand{\ext}{\textup{Ext}}
\newcommand{\core}{\textup{Core}}
\newcommand{\neck}{\textup{Neck}}
\renewcommand{\(}{\left(}
\renewcommand{\)}{\right)}
\newcommand{\la}{\left\langle}
\newcommand{\ra}{\right\rangle}
\newcommand{\loc}{\textnormal{loc}}
\newcommand{\bs}[1]{\boldsymbol{#1}}
\numberwithin{equation}{section}
\begin{document}
\title[Sharp quantitative stability estimates for Brezis-Nirenberg problem]{Sharp quantitative stability estimates for the Brezis-Nirenberg problem}

\author{Haixia Chen}
\address[Haixia Chen]{Department of Mathematics and Research Institute for Natural Sciences, College of Natural Sciences, Hanyang University, 222 Wangsimni-ro Seongdong-gu, Seoul 04763, Republic of Korea}
\email{hxchen29@hanyang.ac.kr chenhaixia157@gmail.com}

\author{Seunghyeok Kim}
\address[Seunghyeok Kim]{Department of Mathematics and Research Institute for Natural Sciences, College of Natural Sciences, Hanyang University, 222 Wangsimni-ro Seongdong-gu, Seoul 04763, Republic of Korea}
\email{shkim0401@hanyang.ac.kr shkim0401@gmail.com}

\author{Juncheng Wei}
\address[Juncheng Wei] {Department of Mathematics, Chinese University of Hong Kong, Shatin, NT, Hong Kong}
\email{wei@math.cuhk.edu.hk}

\begin{abstract}
We study the quantitative stability for the classical Brezis-Nirenberg problem associated with the critical Sobolev embedding $H^1_0(\Omega) \hookrightarrow L^{\frac{2n}{n-2}}(\Omega)$ in a smooth bounded domain $\Omega \subset \mathbb{R}^n$ ($n \geq 3$).
To the best of our knowledge, this work presents the first quantitative stability result for the Sobolev inequality on bounded domains.
A key discovery is the emergence of unexpected stability exponents in our estimates, which arise from the intricate interaction among the nonnegative solution $u_0$ and the linear term $\lambda u$ of the Brezis--Nirenberg equation, bubble formation, and the boundary effect of the domain $\Omega$.
One of the main challenges is to capture the boundary effect quantitatively, a feature that fundamentally distinguishes our setting from the Euclidean case treated in \cite{CFM, FG, DSW} and the smooth closed manifold case studied in \cite{CK}.
In addressing a variety of difficulties, our proof refines and streamlines several arguments from the existing literature while also resolving new analytical challenges specific to our setting.
\end{abstract}

\date{\today}
\subjclass[2020]{Primary: 35A23, Secondary: 35B35, 35J08}
\keywords{Sobolev inequalities in bounded domain, Quantitative stability estimates, Brezis-Nirenberg problem, Struwe's decomposition, Projected bubble, Boundary effect}
\maketitle

\section{Introduction}
\subsection{Backgrounds}
The Brezis-Nirenberg problem is one of the most celebrated problems in nonlinear analysis. It is formulated as
\begin{equation}\label{BN_eq}
\begin{cases}
-\Delta u -\lambda u = u^{p} & \text{in } \Omega, \\
u \geq 0 & \text{in } \Omega, \\
u = 0 & \text{on } \pa\Omega,
\end{cases}
\end{equation}
where $\lambda \in \R$, $p:=2^*-1=\frac{n+2}{n-2}$, and $\Omega \subset \R^n$ ($n \geq 3$) is a smooth bounded domain.\footnote{The Brezis-Nirenberg problem may also refer to finding (sign-changing) solutions to $-\Delta u -\lambda u = |u|^{p-1}u$ in $\Omega$ and $u = 0$ on $\pa\Omega$. This paper is primarily concerned with its non-negative solutions, that is, solutions to \eqref{BN_eq}.}
Equation \eqref{BN_eq} was first introduced by Brezis and Nirenberg in their groundbreaking work \cite{BN}, which is closely linked to the critical Sobolev embedding via the Rayleigh quotient
\[Q_\lambda(u) := \frac{\int_{\Omega} (|\nabla u|^2 -\lambda u^2)dx}{\|u\|_{L^{p+1}(\Omega)}^2}, \quad u \in H_0^1(\Omega)\setminus\{0\},\]
with associated energy threshold
\[S_\lambda := \inf_{u \in H_0^1(\Omega)\setminus\{0\}} Q_\lambda(u).\]
When $\lambda = 0$, the constant $S_0$ coincides with the best constant of the Sobolev inequality in $\R^n$
\begin{equation}\label{Sobolev}
S_0 \(\int_{\R^n}|u|^{p+1} dx\)^{\frac{2}{p+1}} \leq \int_{\R^n}|\nabla u|^2 dx \quad \text{for all } u\in D^{1,2}(\R^n),
\end{equation}
where $D^{1,2}(\R^n)$ is the closure of the space $C^{\infty}_c(\R^n)$ with respect to the norm $\|\nabla u\|_{L^2(\R^n)}$.
It is well-known that $S_0$ is achieved if and only if $u$ is a constant multiple of the Aubin-Talenti bubbles \cite{AT, TAL} defined as
\begin{equation}\label{AT bubbles}
U_{\delta,\xi}(x) = a_n\(\frac{\delta}{\delta^2 + |x-\xi|^2}\)^{\frac{n-2}{2}}, \quad \xi \in \R^n,\, \delta>0,\ a_n=(n(n-2))^{\frac{n-2}{4}}.
\end{equation}
The selection of the dimensional constant $a_n$ guarantees that $U:=U_{1,0}$ solves the associated Euler-Lagrange equation
\begin{equation}\label{eleu}
-\Delta u = |u|^{p-1}u \quad \text{in } \R^n.
\end{equation}
In view of the Sobolev inequality, all solutions to \eqref{eleu} are critical points of the energy functional
\[J(u) = \frac{1}{2}\int_{\R^n}|\nabla u|^2 dx - \frac{1}{p+1} \int_{\R^n} |u|^{p+1} dx \quad \text{for } u \in D^{1,2}(\R^n),\]
and all Aubin-Talenti bubbles share the same energy level: $J(U_{\delta,\xi}) = \frac{1}{n}S_0^{\frac{n}{2}}.$

A key role is played by the critical parameter
\begin{equation}\label{last}
\lambda_* := \inf\{\lambda>0 : S_\lambda < S_0\}.
\end{equation}
In their seminal work \cite{BN}, Brezis and Nirenberg demonstrated that for $n \geq 4$, positive solutions exist for all $\lambda \in (0, \lambda_1)$,
where $\lambda_*=0$ and $\lambda_1$ is the first Dirichlet eigenvalue of $-\Delta$ on $\Omega$. In dimension $n=3$, they showed that $\lambda_*>0$, and established existence results for $\lambda \in (\lambda_*, \lambda_1)$.
On the unit ball $\Omega = B(0,1)$, explicit computation yields $\lambda_* = \lambda_1/4$.
Nonexistence results emerge from various mechanisms: Testing the equation against the first eigenfunction eliminates the possibility of positive solutions when $\lambda \ge \lambda_1$,
and Pohozaev's identity \cite{Po} prohibits nontrivial solutions for $\lambda \le 0$ in star-shaped domains. Conversely, Bahri and Coron \cite{BC} illustrated that certain topological features can allow for existence even at $\lambda = 0$.

\medskip
Apart from these existence results, the Brezis-Nirenberg problem \eqref{BN_eq} serves as a fundamental model for understanding bubbling phenomena in nonlinear PDEs.
As $\lambda \to \lambda_*$, solutions exhibit rich concentration behaviors. Early contributions by Han \cite{H1991} and Rey \cite{Re} characterized single-bubble blow-up profiles for $n \ge 4$, which was extended to the case $n=3$ by Druet \cite{D}.
The existence of single- or multi-bubble solutions concentrating at distinct isolated points was studied by Rey \cite{Re} and Musso and Pistoia \cite{MP} for $n \ge 5$, and by Musso and Salazar \cite{MS} for $n = 3$, and by Pistoia, Rago, and Vaira \cite{PRV} for $n = 4$.
Furthermore, Cao, Luo, and Peng \cite{CLP} studied the number of concentrated solutions for $n \ge 6$, Druet and Laurain \cite{DL} examined the Pohozaev obstruction for $n=3$, and K\"onig and Laurain \cite{KL1, KL2} conducted a fine multi-bubbles analysis for $n \ge 3$.
In addition, it is worth noting that, to our best knowledge, the existence of positive cluster or tower solutions for the Brezis-Nirenberg problem remains an open question.
This problem appears to be even more challenging than the sign-changing case, which has been extensively studied.
For the results concerning sign-changing solutions, we refer interested readers to the recent papers \cite{LVWW, SWY} and the references therein.

\medskip

In this paper, we aim to investigate the \textit{quantitative stability} of the Brezis-Nirenberg problem, a topic that has attracted considerable attention of researchers, with numerous generalizations and refinements in various directions.

\medskip
One prominent line of research concerns the stability of functional inequalities. The study of sharp functional inequalities naturally proceeds through three stages:
Identifying optimal constants, characterizing extremal functions, and understanding quantitative stability.
Once extremal functions are established, a fundamental question arises: How does the \emph{deficit}--the difference between the two sides of the inequality at the sharp constant--influence the distance to the set of extremals?
This stability question was initially posed by Brezis and Lieb \cite{BL} and subsequently resolved for the critical Sobolev inequality \eqref{Sobolev} by Bianchi and Egnell \cite{BE}, who provided a quantitative estimate regarding the distance to Aubin-Talenti bubbles in $D^{1,2}(\R^n)$.
Extending the Bianchi-Egnell stability result to general $L^p$-Sobolev inequalities has required the development of novel techniques, with major contributions from Cianchi, Figalli, Maggi, Neumayer, Pratell and Zhang \cite{CFMP, FMP, FN, FZ}.
Related advances have been developed for a variety of Sobolev-type inequalities \cite{DE, DEFFL, WW1, WW4}, and so on.
Furthermore, a recent progress has also been achieved in geometric contexts, including product spaces \cite{Fr} and general Riemannian manifolds \cite{ENS, NV, NP, AKRW, BCB}.
Notably, K\"onig’s recent breakthroughs \cite{Ko1, Ko2, Ko3} on the attainability of the sharp Bianchi-Egnell constant represent a significant milestone in the pursuit of optimal stability constants.

\medskip
Another significant direction focuses on stability through the viewpoint of the Euler--Lagrange equation induced by a sharp inequality. This perspective refines the classical concentration--compactness principle (refer to Theorem \ref{thm:Struwe}) by providing explicit convergence rates.
In a seminal work \cite{CFM}, Ciraolo, Figalli, and Maggi established the sharp stability result near a single-bubble for the Sobolev inequality in dimensions $n \geq 3$, with extensions to multiple-bubble configurations by Figalli and Glaudo \cite{FG} and Deng, Sun, and Wei \cite{DSW}.
Specifically, suppose that $\nu \in \N$ and $u$ is a nonnegative element in $D^{1,2}(\R^n)$ with $(\nu-\frac{1}{2})S_0^{n/2} \le \|u\|_{D^{1,2}(\R^n)}^2 \le (\nu+\frac{1}{2})S_0^{n/2}$ and sufficiently small $\Gamma(u):= \|\Delta u+u^{\frac{n+2}{n-2}}\|_{(D^{1,2}(\R^n))^{-1}}$.
Then there is a constant $C>0$ depending only on $n$ and $\nu$ such that
\begin{align}\label{strn}
\begin{medsize}
\displaystyle \left\|u-\sum_{i=1}^{\nu} U_i\right\|_{D^{1,2}(\R^n)} \le C\begin{cases}
\Gamma(u) &\text{if } n\ge 3,\; \nu=1 \text{ (by Ciraolo, Figalli and Maggi \cite{CFM})},\\
\Gamma(u) &\text{if } 3 \le n \le 5,\; \nu\ge 2 \text{ (by Figalli and Glaudo \cite{FG})},\\
\Gamma(u)|\log \Gamma(u)|^{\frac{1}{2}} &\text{if } n=6,\; \nu\ge 2 \text{ (by Deng, Sun, and Wei \cite{DSW})},\\
\Gamma(u)^{\frac{n+2}{2(n-2)}} &\text{if } n \ge 7,\; \nu\ge 2 \text{ (by Deng, Sun, and Wei \cite{DSW})}
\end{cases}
\end{medsize}
\end{align}
for some bubbles $U_1,\ldots,U_{\nu}$ and this estimate is optimal.
These results have been further generalized to a broad range of inequalities, including the fractional Sobolev inequality \cite{Ar, DK, CKW}, the Caffarelli-Kohn-Nirenberg inequality \cite{WW1, WW2},
the logarithmic Sobolev inequality \cite{WW3}, Sobolev inequalities involving $p$-Laplacian \cite{CG, LZ}, the subcritical case \cite{CFL}, as well as settings on the hyperbolic spaces \cite{BGKM1, BGKM2} and general Riemannian manifolds \cite{CK, CKW2}, and so forth.

\medskip
Beyond their intrinsic interest, quantitative stability estimates have powerful applications in nonlinear PDE dynamics, such as the asymptotic behavior of solutions to the Keller-Segel system \cite{CF} and the fast diffusion equation \cite{CFM, FG, DK, KY}.

\medskip
Our present work is interested in the latter direction, devoted to the quantitative stability of almost solutions to the Euler-Lagrange equation associated with the inequality $H^1_0(\Omega)\hookrightarrow L^{2^*}(\Omega)$ in bounded domains $\Omega$.
We begin with a well-known global compactness result associated with the functional corresponding to \eqref{BN_eq}, commonly referred to as Struwe's decomposition.
This result was established in \cite[Proposition 2.1]{S}, \cite[Theorem 2]{BrC} and \cite[Proposition 4]{BC}, which we restate below.
\begin{thmx}\label{thm:Struwe}
Let $\Omega$ be an open bounded domain in $\R^n$ with $n\ge 3$ and $\lambda_1$ be the first eigenvalue of $-\Delta$ with Dirichlet boundary condition in $\Omega$. For $\lambda\in (0, \lambda_1)$, we endow the Sobolev space $H^1_0(\Omega)$ with the norm
\[\|u\|_{H^1_0(\Omega)} := \left[\int_{\Omega} \(|\nabla u|^2-\lambda u^2\) dx\right]^{\frac{1}{2}},\]
and denote by $(H^1_0(\Omega))^*$ its dual space.

Let $\{u_m\}_{m \in \N}$ be a sequence of nonnegative functions in $H^1_0(\Omega)$ such that
\[\|u_m\|_{H^1_0(\Omega)} \le C_0 \quad \text{and} \quad \left\|\Delta u_m+\lambda u_m+u_m^{p}\right\|_{(H^1_0(\Omega))^*} \to 0 \text{ as } m \to \infty\]
for some constant $C_0>0$. Then, up to a subsequence, there exist a nonnegative function $u_0 \in C^{\infty}(\Omega)$, an integer $\nu \in \N \cup \{0\}$ satisfying $\nu \le C_0^2S_0^{-n/2}$,
and a sequence of parameters $\{(\delta_{1,m},\ldots,\delta_{\nu, m}, \xi_{1,m},\ldots,\xi_{\nu, m})\}_{m \in \N}$ $\subset (0,\infty)^{\nu} \times \Omega^{\nu}$ such that the followings hold:
\begin{itemize}
\item[-] $u_0$ is a smooth solution to \eqref{BN_eq}. By the strong maximum principle, we have either $u_0>0$ or $u_0 = 0$ in $\Omega$.
\item[-] For all $1 \le i \ne j \le \nu$, we have that $\delta_{i,m} \to 0$ and
\[\frac{d(\xi_{i,m},\pa\Omega)}{\delta_{i,m}}\to \infty, \quad \frac{\delta_{i,m}}{\delta_{j,m}} + \frac{\delta_{j,m}}{\delta_{i,m}} + \frac{|\xi_{i,m}-\xi_{j,m}|^2}{\delta_{i,m}\delta_{j,m}}\to \infty \quad \text{as } m \to \infty.\]
\item[-] It holds that
\[\bigg\|u_m-\bigg(u_0+\sum_{i=1}^{\nu}U_{\delta_{i,m},\xi_{i,m}}\bigg)\bigg\|_{H^1_0(\Omega)} \to 0 \quad \text{as } m \to \infty.\]
\end{itemize}	
\end{thmx}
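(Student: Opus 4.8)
The plan is to run Struwe's iterated blow-up scheme, adapted to the linear term $\lambda u$ and to the Dirichlet boundary $\pa\Omega$. Since $\lambda \in (0,\lambda_1)$, the norm $\|\cdot\|_{H^1_0(\Omega)}$ is equivalent to the standard Sobolev norm, so $\{u_m\}$ is bounded in $H^1_0(\Omega)$; passing to a subsequence, $u_m \rightharpoonup u_0 \ge 0$ weakly in $H^1_0(\Omega)$, $u_m \to u_0$ strongly in $L^2(\Omega)$ by Rellich's theorem, and $u_m \to u_0$ a.e. Testing the relation $\Delta u_m + \lambda u_m + u_m^p \to 0$ in $(H^1_0(\Omega))^*$ against $\vph \in C^\infty_c(\Omega)$ and letting $m \to \infty$ --- using a.e.\ convergence together with the $L^{p+1}(\Omega)$-bound to deduce $u_m^p \rightharpoonup u_0^p$ in $L^{(p+1)/p}(\Omega)$ --- shows that $u_0$ weakly solves $-\Delta u_0 - \lambda u_0 = u_0^p$. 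Elliptic regularity gives $u_0 \in C^\infty(\Omega)$, and the strong maximum principle forces either $u_0 > 0$ or $u_0 \equiv 0$ in $\Omega$.

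Next I would set $v^1_m := u_m - u_0$, extended by $0$ outside $\Omega$, so that $v^1_m \rightharpoonup 0$ in $D^{1,2}(\R^n)$. A Brezis--Lieb type expansion of the energy and of its differential, together with $u_m \to u_0$ in $L^2(\Omega)$ (which makes the $\lambda u$-contribution to $v^1_m$ negligible), yields $\|v^1_m\|_{D^{1,2}(\R^n)}^2 = \|u_m\|_{H^1_0(\Omega)}^2 - \|u_0\|_{H^1_0(\Omega)}^2 + o(1)$ and $\|\Delta v^1_m + |v^1_m|^{p-1}v^1_m\|_{(D^{1,2}(\R^n))^{-1}} \to 0$. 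If $v^1_m \to 0$ strongly, the proof is finished with $\nu = 0$. Otherwise, testing the approximate equation against $v^1_m$ shows $\|v^1_m\|_{L^{p+1}(\R^n)}^{p+1} = \|v^1_m\|_{D^{1,2}(\R^n)}^2 + o(1) \to \ell_1 > 0$, so $v^1_m$ does not vanish; since also $v^1_m \rightharpoonup 0$, the standard concentration-function argument produces $\xi_{1,m} \in \R^n$ and $\delta_{1,m} \to 0$ at which a fixed fraction of the $L^{p+1}$-mass sits at scale $\delta_{1,m}$. I then rescale $w^1_m(y) := \delta_{1,m}^{(n-2)/2} v^1_m(\delta_{1,m} y + \xi_{1,m})$, which is bounded in $D^{1,2}$ on the expanding domain $\Omega_m := \delta_{1,m}^{-1}(\Omega - \xi_{1,m})$ and solves an equation whose linear term carries the coefficient $\lambda\delta_{1,m}^2 \to 0$. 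Hence any weak limit $U^1 \ge 0$ solves $-\Delta U^1 = (U^1)^p$ on the limit domain, which is $\R^n$ or a half-space according to whether $d(\xi_{1,m},\pa\Omega)/\delta_{1,m} \to \infty$ or stays bounded along a subsequence. On a half-space the only nonnegative finite-energy solution vanishing on the boundary is $U^1 \equiv 0$ (Pohozaev's identity, or moving planes), contradicting $U^1 \ne 0$; therefore $d(\xi_{1,m},\pa\Omega)/\delta_{1,m} \to \infty$, the limit domain is $\R^n$, and by the Caffarelli--Gidas--Spruck classification $U^1$ is an Aubin--Talenti bubble $U_{\delta,\xi}$. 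Absorbing $(\delta,\xi)$ into a redefinition of $(\delta_{1,m},\xi_{1,m})$, I may take $U^1 = U_{1,0}$.

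The scheme is then iterated. Put $v^2_m := v^1_m - U_{\delta_{1,m},\xi_{1,m}}$ --- or a truncated bubble to stay in $H^1_0(\Omega)$, the truncation error being $o(1)$ in $D^{1,2}$ since $\delta_{1,m} \to 0$ and the bubble concentrates in the interior. The asymptotic orthogonality built into the choice of $(\delta_{1,m},\xi_{1,m})$ gives $\|v^2_m\|_{D^{1,2}}^2 = \|v^1_m\|_{D^{1,2}}^2 - S_0^{n/2} + o(1)$ and $\|\Delta v^2_m + |v^2_m|^{p-1}v^2_m\|_{(D^{1,2})^{-1}} \to 0$. Repeating the previous paragraph, each stage removes a quantum $S_0^{n/2}$ of $D^{1,2}$-energy; since $\limsup_m \|v^1_m\|_{D^{1,2}}^2 \le C_0^2$, the process terminates after $\nu \le C_0^2 S_0^{-n/2}$ stages with $v^{\nu+1}_m \to 0$ strongly, which is precisely the asserted $H^1_0(\Omega)$-decomposition. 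Finally, the separation $\delta_{i,m}/\delta_{j,m} + \delta_{j,m}/\delta_{i,m} + |\xi_{i,m}-\xi_{j,m}|^2/(\delta_{i,m}\delta_{j,m}) \to \infty$ for $i \ne j$ holds because, were it to fail along a subsequence, $U_{\delta_{i,m},\xi_{i,m}}$ and $U_{\delta_{j,m},\xi_{j,m}}$ would not be asymptotically $D^{1,2}$-orthogonal, so subtracting one of them could not have stripped a full $S_0^{n/2}$ of energy from the other, contradicting the bookkeeping; and $d(\xi_{i,m},\pa\Omega)/\delta_{i,m} \to \infty$ for each $i$ by the half-space argument applied at that stage.

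The main obstacle is the interplay between the boundary $\pa\Omega$ and the blow-up rescaling: excluding boundary-concentrating bubbles and establishing $d(\xi_{i,m},\pa\Omega)/\delta_{i,m} \to \infty$, which rests on the nonexistence of nontrivial nonnegative finite-energy solutions of $-\Delta U = U^p$ on a half-space with zero Dirichlet data --- here the nonnegativity of $u_m$ is essential. The Brezis--Lieb bookkeeping involving the lower-order term and the near-boundary truncation of the bubbles is routine but must be carried out with care.
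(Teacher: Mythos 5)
The paper states Theorem~\ref{thm:Struwe} without proof, citing \cite{S}, \cite{BrC}, and \cite{BC}. Your proposal is a faithful reconstruction of the classical iterated blow-up argument from those sources, with the two Brezis--Nirenberg-specific points correctly handled: the $\lambda u$ term is discarded at each stage via Rellich compactness, and boundary-concentrating bubbles are excluded via the Liouville theorem for nonnegative finite-energy Dirichlet solutions on a half-space, which is the one place where the assumption $u_m \ge 0$ is essential.
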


\subsection{Main results}
Our objective is to derive a quantitative version of above decomposition. To this end, we consider the following two auxiliary equations:
\begin{equation}\label{pu1}
\begin{cases}
-\Delta u=U_{\delta,\xi}^{p} &\text{in } \Omega,\\
u=0 &\text{on } \pa\Omega,
\end{cases}
\end{equation}
and
\begin{equation}\label{pu2}
\begin{cases}
-\Delta u-\lambda u=U_{\delta,\xi}^{p} &\text{in } \Omega,\\
u=0 &\text{on } \pa\Omega.
\end{cases}
\end{equation}
Before presenting our main results, we introduce the following assumption:

\medskip \noindent \textbf{Assumption B.}
Given any open bounded set $\Omega\subset\R^n$ with $n\ge 3$ and any $\lambda\in(0, \lambda_1)$. Suppose that a nonnegative function $u$ in $H^1_0(\Omega)$ satisfies
\begin{equation}\label{tui2}
\bigg\|u-\bigg(u_0+\sum_{i=1}^{\nu}U_{\tde_i,\txi_i}\bigg)\bigg\|_{H^1_0(\Omega)} \le \vep_0
\end{equation}
for some small $\vep_0>0$ and $\nu \in \N$. Here, $u_0$ is a solution of \eqref{BN_eq}
and $(\tde_i,\txi_i) \in (0,\infty) \times \Omega$ satisfies
\[\max_{i=1,\ldots,\nu} \tde_i + \max_{i=1,\ldots,\nu}\frac{\tilde{\delta}_i}{d(\tilde{\xi}_i,\pa\Omega)}\le \vep_0\]
and
\[\max\left\{\(\frac{\tde_i}{\tde_j} + \frac{\tde_j}{\tde_i} + \frac{|\txi_i-\txi_j|^2}{\tde_i\tde_j}\)^{-\frac{n-2}{2}}: i, j=1,\ldots,\nu,\ i \ne j\right\} \le \vep_0.\]
If $u_0>0$ in $\Omega$, we further assume that $u_0$ is \textbf{non-degenerate} in the sense that the only $H_0^1 (\Omega)$-solution to $ \Delta \phi+ \lambda \phi + p u_0^{p-1}\phi=0$ in $\Omega$ is identically zero in $\Omega$.
For later use, we define $\Gamma(u):=\|\Delta u+\lambda u+u^{p}\|_{(H^1_0(\Omega))^*}$.
\medskip

We note that the condition $\max_i\frac{\tilde{\delta}_i}{d(\tilde{\xi}_i,\pa\Omega)}\le \vep_0$ admits two possibilities: Either $\tilde{\xi}_i$ is away from $\pa\Omega$ or close to $\pa\Omega$. Accordingly, we divide our main results into two theorems.

\medskip
Our first theorem addresses the case where $\tilde{\xi}_i$ is away from the boundary of $\Omega$, covering both single and multi-bubble cases.
\begin{thm}\label{thm1}
Under the \textbf{Assumption B}, we further assume the followings:
\begin{itemize}
\item[-] Each $\tilde{\xi}_i$ lies on a compact set of $\Omega$ for $i=1,\ldots,\nu$.
\item[-] If $n=3$, $u_0=0$, $\nu \ge 2$, and a number $\lambda \in (\lambda_*, \lambda_1)$ is fixed, then $\vph^3_\lambda(\tilde{\xi}_i)<0$ for each $i=1,\ldots,\nu$.
Here, $\lambda_* \ge 0$ is given in \eqref{last}, and $\vph_\lambda^3(x)=H^3_\lambda(x,x)$ is the function defined by \eqref{h1xy}.\footnote{Druet \cite{D} proved that
the number $\lambda_*$ in \eqref{last} can be characterized as $\lambda_*=\sup\{\lambda>0: \min_{\Omega}\vph_{\lambda}^3>0\}$.}
\end{itemize}
Then, by possibly reducing $\vep_0>0$, one can find a large constant $C=C(n, \nu, \lambda, u_0, \Omega)>0$ and
functions $PU_1 := PU_{\delta_1,\xi_1}, \ldots, PU_{\nu} := PU_{\delta_{\nu},\xi_{\nu}}$ satisfying \eqref{pu1} if either \textup{[}$n=3,4$ and $u_0>0$\textup{]} or $n\ge 5$, and satisfying \eqref{pu2} if $n=3,4$ and $u_0=0$, such that
\begin{equation}\label{eq:sqe32}
\bigg\|u-\bigg(u_0+\sum_{i=1}^{\nu} PU_i\bigg)\bigg\|_{H^1_0(\Omega)} \le C\zeta(\Gamma(u)),
\end{equation}
where $\zeta \in C^0([0,\infty))$ satisfies
\begin{equation}\label{eq:zeta}
\zeta(t) = \begin{cases}
t &\text{if } [n=3,4,\; \nu\ge 1] \text{ or } [n=5,\; \nu\ge 1,\; u_0>0] \text{ or } [n\ge 7,\; \nu=1],\\
t^{\frac34} &\text{if } [n=5,\; \nu\ge 1,\; u_0=0],\\
t|\log t|^{\frac{1}{2}} &\text{if } [n=6,\; \nu\ge 1],\\
t^{\frac{n+2}{2(n-2)}} &\text{if } [n\ge 7,\; \nu\ge 2]
\end{cases}
\end{equation}
for $t>0$.

The estimate above is optimal in the sense that the function $\zeta$ cannot be improved.
\end{thm}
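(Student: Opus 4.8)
The plan is to combine a finite-dimensional reduction near the sum $u_0+\sum_i PU_i$ with a careful bookkeeping of the error terms, whose sizes are governed by the interaction between $u_0$ (or the linear term $\lambda u$ when $u_0=0$), the bubbles, and the boundary of $\Omega$. First I would invoke Struwe's decomposition (Theorem~\ref{thm:Struwe}) together with \textbf{Assumption B} to fix the qualitative picture: $u$ is $\vep_0$-close in $H^1_0(\Omega)$ to $u_0+\sum_{i=1}^\nu U_{\tde_i,\txi_i}$, with the bubbles shrinking, mutually far apart in the usual Bubble-tree sense, and their centers $\txi_i$ staying in a compact subset of $\Omega$. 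The first real step is to replace the Euclidean bubbles $U_{\delta_i,\xi_i}$ by their \emph{projected} versions $PU_i$ — solving \eqref{pu1} when the leading boundary correction is purely harmonic (the cases $n\ge 5$, or $n=3,4$ with $u_0>0$), and solving \eqref{pu2} when the linear term must be absorbed into the profile (the cases $n=3,4$, $u_0=0$) — so that $u_0+\sum_i PU_i$ satisfies the Dirichlet boundary condition exactly and the remaining error lives genuinely in the interior. I would then set $\rho := u - (u_0+\sum_i PU_i)$, choose the optimal parameters $(\delta_i,\xi_i)$ by requiring $\rho$ to be $H^1_0$-orthogonal to the approximate kernel $\mathrm{span}\{PZ_i^0, PZ_i^1,\dots,PZ_i^n\}$ spanned by the projected derivatives of the bubbles with respect to $\delta$ and $\xi$, and — if $u_0>0$ — also orthogonal to the (empty, by non-degeneracy) kernel at $u_0$; an inverse function theorem argument makes this choice possible for $\vep_0$ small.

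The heart of the matter is the linear-plus-quadratic estimate for $\rho$. Writing the equation $-\Delta u - \lambda u = u^p$ against the decomposition, I would derive
\[
\|\rho\|_{H^1_0(\Omega)} \;\le\; C\,\Gamma(u) \;+\; C\,\Big\|\,\Delta\Big(u_0+\textstyle\sum_i PU_i\Big)+\lambda\Big(u_0+\sum_i PU_i\Big)+\Big(u_0+\sum_i PU_i\Big)^p\,\Big\|_{(H^1_0(\Omega))^*} \;+\; \text{(quadratic in }\rho).
\]
The coercivity of the linearized operator on the orthogonal complement of the approximate kernel — here is where the non-degeneracy of $u_0$ enters decisively — controls the left side by the right side, and the quadratic remainder is absorbed for $\vep_0$ small. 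What remains, and what dictates the exponents in \eqref{eq:zeta}, is the size of the \emph{interaction error} $\|\mca\|_* := \|\Delta(u_0+\sum PU_i)+\lambda(u_0+\sum PU_i)+(u_0+\sum PU_i)^p\|_{(H^1_0)^*}$. Expanding $(u_0+\sum_i PU_i)^p$ and using $-\Delta PU_i = U_i^p$ (resp.\ $-\Delta PU_i-\lambda PU_i = U_i^p$), the error splits into: (i) bubble–bubble interaction terms, of order $\max_{i\ne j} q_{ij}^{-(n-2)/2}$ where $q_{ij} = \delta_i/\delta_j+\delta_j/\delta_i+|\xi_i-\xi_j|^2/(\delta_i\delta_j)$; (ii) bubble–$u_0$ interaction terms, of order $u_0(\xi_i)\delta_i^{(n-2)/2}$ when $u_0>0$; (iii) the \emph{boundary/projection} defect $U_i - PU_i$, of order $(\delta_i/d(\xi_i,\pa\Omega))^{(n-2)/2}$ (in the $u_0=0$, $n=3,4$ case, this is instead the Robin-function contribution of the Green's function for $-\Delta-\lambda$); and (iv) when $u_0=0$ and we use \eqref{pu1} rather than \eqref{pu2}, the residual linear term $\lambda PU_i$, whose $(H^1_0)^*$-norm is of order $\delta_i^2$ for $n\ge 5$, $\delta_i^2|\log\delta_i|^{1/2}$ for $n=6$, and which must be handled by projecting it onto the orthogonal complement for $n\ge 7$. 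Collecting these and comparing with the lower bound $\Gamma(u)\gtrsim$ the dominant interaction (which follows by testing the equation against the $PZ_i$'s — the Pohozaev-type / localized balancing identities), one matches $\|\rho\|_{H^1_0}$ to the stated $\zeta(\Gamma(u))$.

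The main obstacle is step (iii)–(iv): quantifying the boundary effect. In the Euclidean works \cite{CFM, FG, DSW} there is no boundary, and on a closed manifold \cite{CK} the analogue is a curvature term of a definite sign and known order; here the defect $U_i-PU_i$ is the restriction of a Green's-function-type correction whose size $(\delta_i/d(\xi_i,\pa\Omega))^{(n-2)/2}$ depends on the unknown optimal centers and must be balanced against the bubble–bubble and bubble–$u_0$ interactions simultaneously. In dimension $n=3$ with $u_0=0$, $\nu\ge 2$, this balancing is exactly why the sign condition $\vph^3_\lambda(\txi_i)<0$ is imposed: it guarantees that the leading terms in the Pohozaev identities do not cancel, so that $\Gamma(u)$ genuinely controls $\delta_i^{1/2}$ from below and one does not lose a power. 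I would carry out the delicate cancellation analysis dimension-by-dimension — separating $n=3,4$ (projection error comparable to $\Gamma$, hence $\zeta(t)=t$, except the $n=3$ multi-bubble subtlety), $n=5$ (where for $u_0=0$ the linear-term residual $\delta_i^2$ dominates and forces $\zeta(t)=t^{3/4}$), $n=6$ (logarithmic correction), and $n\ge 7$ (where for $\nu\ge2$ the bubble interaction $q_{ij}^{-(n-2)/2}$ is subcritical relative to $\Gamma$, yielding $\zeta(t)=t^{(n+2)/2(n-2)}$, while for $\nu=1$ there is no such interaction and $\zeta(t)=t$). Finally, optimality is shown by constructing, in each regime, an explicit family of approximate solutions realizing equality up to constants — this is where the lower bounds on $\Gamma(u)$ are sharpened into two-sided estimates.
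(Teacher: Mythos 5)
Your high-level strategy matches the paper's: project the bubbles via \eqref{pu1} or \eqref{pu2}, choose optimal parameters by requiring $\rho$ to be $H^1_0$-orthogonal to $\mathrm{span}\{PZ_i^k\}$, bound $\|\rho\|_{H^1_0}$ by $\Gamma(u)$ plus the approximation error, project the equation onto the $PZ_j^k$-directions to obtain a lower bound for $\Gamma(u)$, and close the argument by balancing the two. Your identification of the error terms (bubble--bubble, bubble--$u_0$, boundary/projection defect, residual $\lambda PU_i$) and of the role of $\vph^3_\lambda(\txi_i)<0$ for $n=3$ is accurate, and your sketch of optimality via explicit test families is the right idea. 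One methodological difference: you invoke coercivity of the linearized operator on the orthogonal complement, while the paper deliberately avoids proving a coercivity inequality and instead derives the bound $\|\varrho\|_{H^1_0}\lesssim\|h\|_{(H^1_0)^*}$ (Lemma~3.2) by a compactness/blow-up contradiction; both routes are legitimate, the paper's being advertised as avoiding the technical coercivity step.

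However, there is a genuine gap in dimension $n=6$. Your scheme estimates the interaction error via $L^{\frac{p+1}{p}}$-norms, i.e.\ $\|\rho\|_{H^1_0}\lesssim\Gamma(u)+\|\I_1\|_{L^{3/2}}+\|\I_2\|_{L^{3/2}}+\|\I_3\|_{L^{3/2}}$. But a direct computation (Lemma~\ref{lem2.2} and Lemma~\ref{a4}) shows that $\|U_i\|_{L^{3/2}(\Omega)}\simeq\delta_i^2|\log\delta_i|^{2/3}$ and $\|\I_1+\I_2+\I_3\|_{L^{3/2}}\lesssim Q|\log Q|^{2/3}$ when $\nu\ge 2$, so after matching with the projection lower bound $Q\lesssim\Gamma(u)$ you would only obtain $\zeta(t)=t|\log t|^{2/3}$, which is strictly weaker than the claimed (and optimal) $t|\log t|^{1/2}$. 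You state the exponent $|\log\delta|^{1/2}$ for the residual without justification, but it is not obtainable from $L^{3/2}$-norm bounds alone. Closing this gap requires the refined linear theory of Section~\ref{sec2.2}: one must split $\rho=\rho_0+\rho_1$, prove a pointwise weighted estimate $\|\rho_0\|_*\lesssim\|h\|_{**}$ for the main part $\rho_0$ via a representation-formula argument (Propositions~\ref{prop:31} and~\ref{prop:33}), and only then convert the pointwise bound to an $H^1_0$-bound with the sharp $|\log|^{1/2}$ power. Without this ingredient the $n=6$ case of \eqref{eq:zeta} does not follow from your argument.
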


Before we proceed further, we leave some remarks.
\begin{rmk}\label{rmk:thm1}
\

\medskip \noindent (1) Compared to the Euclidean case summarized in \eqref{strn}, the new exponents appear when $[n=5,\; u_0=0,\; \nu \ge 1]$ or $[n=6,\; \nu = 1]$.

\medskip \noindent (2) Solutions to certain specific perturbation of the equation $\Delta u +\lambda u + u^p=0$ in $\Omega$ cannot exhibit boundary blow-up, thereby fulfilling the first additional assumption in Theorem \ref{thm1}.
Moreover, in some cases, only one of the conditions $u_0=0$ or $\nu = 0$ is permitted; refer to e.g. \cite{KL1, Dr}.

\medskip \noindent (3) When $u_0>0$, the non-degeneracy assumption on $u_0$ is generic; see \cite[Lemma 4.9]{JX}.
In the case $u_0=0$ and $n=3$ or $4$, defining $PU_i$ via solutions to \eqref{pu2} rather than \eqref{pu1} turns out to be more natural; see Subsection \ref{subc1.3}(2).
Similar observation was made in constructing positive solutions to the Brezis-Nirenberg-type problem in low dimensions; see e.g. \cite{dDM}.

\medskip \noindent (4) For $n=3$, $u_0=0$, and $\nu \ge 2$, we use the condition $\vph^3_\lambda(\tilde{\xi}_i)<0$ so that no sign competition occurs between the terms $\int_{\Omega} \I_2 PZ_j^0$ in Lemma \ref{lem2.33} and $\int_{\Omega} \I_3 PZ_j^0$ in Lemma \ref{lem2.32}.

\medskip \noindent (5) If $n=5$, $u_0=0$, and $\nu \ge 1$, the linear term $\lambda u$ is the dominant factor determining $\zeta(t) = t^{3/4}$ in \eqref{eq:zeta}.
In this case, one may instead choose the projected bubble $PU_{\delta,\xi}$ as in \eqref{pu2} rather than \eqref{pu1}.
Since \eqref{pu2} already incorporates the effect of the linear term, it leads to the stability function $\zeta(t) = t$, as opposed to $t^{3/4}$, and this improved rate can again be shown to be sharp.
Such a sensitive dependence on the choice of the test function is a distinctive characteristic of the Brezis-Nirenberg problem in $\Omega$, and does not appear in the Euclidean setting or in the Yamabe problem.
\end{rmk}

Our second main result concerns the boundary effect when $\tilde{\xi}_i$ may approach $\pa\Omega$. We fully characterize the single-bubble case in this setting.
\begin{thm}\label{thm2}
Under the \textbf{Assumption B}, we further assume that $\nu=1$ and $\tilde{\xi}_1\in\Omega$.
Then, by possibly reducing $\vep_0>0$, one can find a large constant $C=C(n, \lambda, u_0, \Omega)>0$ and
a function $PU_1 := PU_{\delta_1,\xi_1}$ satisfying \eqref{pu1} if either \textup{[}$n=4,5$ and $u_0>0$\textup{]} or $n\ge 6$, and satisfying \eqref{pu2} if either $n=3$ and \textup{[}$n=4,5$ and $u_0=0$\textup{]}, such that
\begin{equation}\label{eq:sqe322}
\left\|u-\(u_0+PU_1\)\right\|_{H^1_0(\Omega)} \le C\zeta(\Gamma(u)),
\end{equation}
where $\zeta \in C^0([0,\infty))$ satisfies
\begin{equation}\label{eq:zeta122}
\zeta(t) = \begin{cases}
t &\text{if } n=3\text{ or } [n=4,\; u_0=0],\\
t^{\frac{n-2}{n-1}} &\text{if } [n=4,\; u_0>0] \text{ or } n=5,\\
t|\log t|^{\frac{1}{2}} &\text{if } n=6,\\
t^{\frac{n+2}{2(n-1)}} &\text{if } n\ge 7
\end{cases}
\end{equation}
for $t>0$. The above estimate is also optimal.
\end{thm}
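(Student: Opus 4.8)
My plan is to adapt the standard Euler--Lagrange scheme for quantitative stability (optimal decomposition, coercivity of the linearized operator, a crude linear bound, Pohozaev-type balancing identities) carried out with the Aubin--Talenti bubble replaced by its projection and with careful control of the boundary contributions through the regular part of the Green's function. Starting from \textbf{Assumption B}, I would first select, by a minimization/implicit-function argument near $(\tde_1,\txi_1)$, parameters $(\delta_1,\xi_1)\in(0,\infty)\times\Omega$ minimizing $\|u-u_0-PU_{\delta,\xi}\|_{H^1_0(\Omega)}$, where $PU_{\delta,\xi}$ solves \eqref{pu1} or \eqref{pu2} as prescribed in the statement. With $\rho:=u-u_0-PU_1$, minimality yields the orthogonality $\langle\rho,PZ^a_1\rangle_{H^1_0(\Omega)}=0$, $a=0,\dots,n$, where $PZ^0_1,\dots,PZ^n_1$ are the $H^1_0(\Omega)$-projections of $\delta\pa_\delta U_{\delta,\xi}$ and $\delta\pa_{\xi_1}U_{\delta,\xi},\dots,\delta\pa_{\xi_n}U_{\delta,\xi}$ at $(\delta,\xi)=(\delta_1,\xi_1)$; one also records $\delta_1\to0$, $\delta_1/d(\xi_1,\pa\Omega)\to0$ and $\|\rho\|_{H^1_0(\Omega)}\to0$ as $\vep_0\to0$. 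Then I would prove that $\phi\mapsto\int_\Omega(|\nabla\phi|^2-\lambda\phi^2-p(u_0+PU_1)^{p-1}\phi^2)$ is bounded below by $c\|\phi\|_{H^1_0(\Omega)}^2$ on the orthogonal complement of $PU_1$, the $PZ^a_1$, and the finitely many negative eigenfunctions of $-\Delta-\lambda-pu_0^{p-1}$ (finite by non-degeneracy of $u_0$), once $\delta_1/d(\xi_1,\pa\Omega)$ is small; this rests on the classical spectral picture of the bubble, the non-degeneracy hypothesis, and the separation of the scales on which $PU_1$ and $u_0$ live. The components of $\rho$ along $PU_1$ and along those eigenfunctions are controlled, using $\Gamma(u_0)=0$ and the concentration of $(u_0+PU_1)^{p-1}-u_0^{p-1}$, by testing the equation against them, with bounds $O(\Gamma(u)+\|\rho\|^{1+\alpha})$.

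Writing $-\Delta u-\lambda u-u^p=f$ with $\|f\|_{(H^1_0(\Omega))^*}=\Gamma(u)$ and inserting $u=u_0+PU_1+\rho$ gives $-\Delta\rho-\lambda\rho-p(u_0+PU_1)^{p-1}\rho=f+\mathcal E+\mathcal Q(\rho)$, where $\mathcal Q(\rho)$ is the superlinear remainder (of size $\|\rho\|^2$ for $n\le6$ and $\|\rho\|^p$ for $n\ge7$ in $(H^1_0(\Omega))^*$) and $\mathcal E:=-\Delta(u_0+PU_1)-\lambda(u_0+PU_1)-(u_0+PU_1)^p$. Using the equations for $u_0$ and $PU_1$, $\mathcal E$ decomposes into the projection error $U_{\delta_1,\xi_1}^p-PU_1^p$, the $u_0$--bubble interaction $u_0^p+PU_1^p-(u_0+PU_1)^p$ (present only if $u_0>0$), and -- only when $PU_1$ solves \eqref{pu1} -- the linear error $-\lambda PU_1$. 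Coercivity together with the control of the $PU_1$- and $u_0$-components yields the crude bound $\|\rho\|_{H^1_0(\Omega)}\lesssim\Gamma(u)+\|\mathcal E\|_{(H^1_0(\Omega))^*}$ after absorbing $\mathcal Q(\rho)$ -- legitimate since $\|\rho\|$ is a priori small, although for $n\ge7$ one must check that the $\|\rho\|^p$ term does not resurface at leading order. The boundary now enters through $\varphi_{\delta,\xi}:=U_{\delta,\xi}-PU_{\delta,\xi}$, which satisfies $\varphi_{\delta,\xi}\approx a_n\delta^{(n-2)/2}H(\cdot,\xi)$ with $H$ the regular part of the Green's function of $-\Delta$ (resp. $-\Delta-\lambda$), so $\varphi_{\delta,\xi}(\xi)\sim\delta^{(n-2)/2}d(\xi,\pa\Omega)^{-(n-2)}$; propagating this through $L^{2n/(n+2)}$-estimates expresses $\|\mathcal E\|_{(H^1_0(\Omega))^*}$ through $\delta_1$ and $d(\xi_1,\pa\Omega)$ as a competition among $|u_0(\xi_1)|\,\delta_1^{(n-2)/2}$, a power of $\delta_1/d(\xi_1,\pa\Omega)$ (with a logarithm at $n=6$), and -- in the \eqref{pu1} cases -- $\lambda$ times $\delta_1^2$ ($n\ge5$), $\delta_1^2|\log\delta_1|$ ($n=4$), or $\delta_1$ ($n=3$).

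To control $\delta_1$ and $d(\xi_1,\pa\Omega)$ by $\Gamma(u)$ I would test the equation for $u$ against $PZ^0_1$ and the $PZ^j_1$. Using $\rho\perp PZ^a_1$, each identity reads $(\text{explicit leading term in }\delta_1,d(\xi_1,\pa\Omega))=\langle f,PZ^a_1\rangle+p\int_\Omega(u_0+PU_1)^{p-1}\rho\,PZ^a_1+\text{h.o.t.}$, and the key point is that the $\rho$-term is not merely $O(\|\rho\|)$ but $O\big((\delta_1/d(\xi_1,\pa\Omega))^{(n-2)/2}\|\rho\|+\delta_1\|\rho\|\big)$, because $pU^{p-1}Z^a=-\Delta Z^a$ and $Z^a-PZ^a$ is as small as the projection error, so it can be absorbed into the (larger) left-hand side. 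One is then left with a $PZ^0_1$-relation linking the $u_0$-term, the boundary term and the linear term to $\Gamma(u)$, and a $PZ^j_1$-relation of the form $(\nabla u_0\text{-term})+(\text{normal boundary term, of order }(\delta_1/d(\xi_1,\pa\Omega))^{n-1})\lesssim\Gamma(u)$, the latter encoding $\pa_\xi H(\xi,\xi)\sim d(\xi,\pa\Omega)^{-(n-1)}$. Solving this system for $\delta_1$ and $\delta_1/d(\xi_1,\pa\Omega)$, substituting back into $\|\mathcal E\|_{(H^1_0(\Omega))^*}$, and comparing with $\Gamma(u)$ produces \eqref{eq:zeta122}: in each regime $\zeta$ is dictated by whichever of the three error sources dominates once the parameters are optimized, and the $n-1$ in the denominators of $t^{(n-2)/(n-1)}$ and $t^{(n+2)/(2(n-1))}$ is precisely the cost of the normal-derivative balancing. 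Optimality would then be established by constructing, in each regime, explicit almost-solutions $u=u_0+PU_{\delta,\xi}+(\text{a correction orthogonal to the linearized kernel})$ with $\delta$ and $d(\xi,\pa\Omega)$ tuned to saturate these relations, and computing $\Gamma(u)$ and $\|u-(u_0+PU_1)\|_{H^1_0(\Omega)}$ directly.

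The main obstacle, I expect, lies in obtaining the sharp two-sided asymptotics of $\mathcal E$ and of the balancing pairings \emph{uniformly as $\xi_1\to\pa\Omega$}: the expansions of $\varphi_{\delta,\xi}$, $\nabla_\xi\varphi_{\delta,\xi}$, and of $\int_\Omega u_0^p\,PZ^a_1$ near the boundary, where $H$ and its derivatives blow up like $d(\xi,\pa\Omega)^{-(n-2)}$ and $d(\xi,\pa\Omega)^{-(n-1)}$, and then disentangling the three-way competition among the $u_0$-interaction, the linear term and the boundary effect to read off the correct exponent in each of the several dimensional cases. A secondary difficulty is keeping the coercivity/linearization constants uniform when $u_0>0$ has positive Morse index, since one must simultaneously quotient out the bubble's negative direction and the negative eigenspace of $-\Delta-\lambda-pu_0^{p-1}$.
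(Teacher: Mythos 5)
Your overall strategy matches the paper's: minimize over $(\delta,\xi)$ to get orthogonality, linearize, derive a crude $\|\rho\|_{H^1_0}$-bound in terms of $\Gamma(u)$ plus the error $\mathcal E$, and then close the estimate by testing against $PZ_1^0$ \emph{and} $PZ_1^k$, with the spatial-derivative balancing producing the exponent $n-1$ via $|\nabla_\xi H(\xi,\xi)|\sim d(\xi,\pa\Omega)^{-(n-1)}$. This is indeed the correct mechanism and the genuinely novel ingredient relative to the Euclidean and manifold settings. A minor technical divergence: you propose to obtain the $\|\rho\|$-bound via a coercivity inequality on the orthogonal complement of the approximate kernel, whereas the paper deliberately replaces coercivity with a blow-up/compactness argument for the linearized operator (and remarks that this simplification is one of the points of the paper). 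Either route is viable for this step.

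There is, however, one genuine gap in your plan, at $n=6$. Your crude estimate bounds $\|\rho\|_{H^1_0}$ by $\Gamma(u)+\|\mathcal E\|_{(H^1_0(\Omega))^*}$, and the $L^{\frac{2n}{n+2}}=L^{3/2}$ bound on $\I_1+\I_2+\I_3$ in dimension $6$ carries a factor $|\log\delta_1|^{2/3}$ (resp.\ $|\log\ka_1|^{2/3}$), as in \eqref{i12e}--\eqref{i3e}; fed through your scheme this gives $\zeta(t)\simeq t|\log t|^{2/3}$, which is strictly worse than the sharp $t|\log t|^{1/2}$ claimed in \eqref{eq:zeta122}. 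The paper closes this by a dedicated linear theory for $n=6$ (Section~\ref{sec2.2}): a pointwise weighted estimate $\|\rho_0\|_*\lesssim\|h\|_{**}$ (Proposition~\ref{prop:31}), proved by a representation-formula/blow-up argument, which upgrades the $H^1_0$-bound for the main part $\rho_0$ to $\delta_1^2|\log\delta_1|^{1/2}+\ka_1^4|\log\ka_1|^{1/2}$ (Proposition~\ref{prop:33}, \eqref{eq:331}). Without a comparable pointwise input your proposal cannot reach the $1/2$-power.

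A smaller inaccuracy: in the $PZ_1^k$-balancing you describe a ``$\nabla u_0$-term'' at leading order, but the contribution of $\I_1$ to $\int_\Omega\I_1 PZ_1^k$ in fact vanishes at leading order by the odd symmetry of $(x-\xi_1)^k U_1^{p-1}Z_1^k$ over the ball $B(\xi_1,d(\xi_1,\pa\Omega))$; see \eqref{eq:I1PZk}. The dominant term in that identity is precisely the normal boundary term $\delta_1^{n-1}|\nabla\vph(\xi_1)|\simeq\ka_1^{n-1}$, with $u_0$ appearing only at higher order. This does not break the argument (it simplifies the $PZ^k$-identity), but you should not count on a $\nabla u_0$ contribution when reading off exponents.
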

\begin{rmk}\label{rmk:thm2}
\

\medskip \noindent (1) Even in single-bubble case, the surprising new exponents in \eqref{eq:zeta122} emerge due to the possibility $d(\tilde{\xi}_1,\pa\Omega)$ $\to 0$.
This phenomenon occurs exclusively in domains with nonempty boundary. The multi-bubble case remains an open problem due to a serious technical issue. See Subsection \ref{subc1.3}(7).

\medskip \noindent (2) Unlike in Theorem \ref{thm1}, we choose $PU_1$ to satisfy \eqref{pu2} for the cases $[n=3,\; u_0>0]$ or $[n=5,\; u_0=0]$ to avoid difficulties arising from the boundary effects. We believe that this choice is nearly unavoidable.

\medskip \noindent (3) Similar to Remark \ref{rmk:thm1}(5), when $[n=4,\; u_0>0]$ or $[n=5,\; u_0>0]$, choosing $PU_{\delta,\xi}$ as in \eqref{pu2} again yields the optimal stability function $\zeta(t)=t$.
In both cases, the sharp stability function depends explicitly on the choice of the projected bubble $PU_{\delta,\xi}$ within the framework of this theorem.
\end{rmk}
As an application of Theorem \ref{thm2} and Struwe’s profile decomposition Theorem \ref{thm:Struwe}, we obtain the following corollary.
\begin{cor}
Let $S_0>0$ be the sharp Sobolev constant in \eqref{Sobolev}. We assume that every positive solution to \eqref{BN_eq} is non-degenerate.

\medskip \noindent If $u$ is a nonnegative function in $H^1_0(\Omega)$ with
\begin{equation}\label{eneu}
\|u\|_{H^1_0(\Omega)}^2 \le \frac{3}{2}S_0^{\frac{n}{2}},
\end{equation}
then there exists a constant $C>0$ depending only on $n, \lambda, \Omega$ such that
\[\inf\left\{\bigg\|u-\bigg(u_0+\sum_{i=1}^{\nu} PU_{\delta_i,\xi_i}\bigg)\bigg\|_{H^1_0(\Omega)}: u_0 \text{ solves } \eqref{BN_eq},\, PU_{\delta_i,\xi_i} \in \mcb,\, \nu = 0,1\right\} \le C\zeta(\Gamma(u)),\]
where $\zeta(t)$ satisfies \eqref{eq:zeta122} for $t \in [0,\infty)$ and
\begin{multline*}
\mcb := \big\{PU_{\delta,\xi}: PU_{\delta,\xi} \text{ satisfies \eqref{pu1} for $n\ge 6$ or $[n=4,5,\; u_0>0]$}\\
\text{and satisfies \eqref{pu2} for $n=3$ or $[n=4,5,\; u_0=0]$},\, (\delta,\xi) \in (0,\infty) \times \Omega\big\}.
\end{multline*}
Here $\sum_{i=1}^0 PU_{\delta_i,\xi_i} = 0$.
\end{cor}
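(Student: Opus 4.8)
The plan is to deduce the corollary from Theorem~\ref{thm2} and Struwe's decomposition (Theorem~\ref{thm:Struwe}) by a contradiction argument. Write $\mcd(u)$ for the infimum on the left-hand side of the claimed estimate. Since $\nu=0$, $u_0=0$ is an admissible competitor, $\mcd(u)\le\|u\|_{H^1_0(\Omega)}\le(3/2)^{1/2}S_0^{n/4}$, and the Sobolev embedding bounds $\Gamma(u)$ on the set $\{\|u\|_{H^1_0(\Omega)}^2\le\frac{3}{2}S_0^{n/2}\}$; using the continuity and positivity of $\zeta$ on $(0,\infty)$, the estimate is immediate (with a large $C$) whenever $\Gamma(u)$ is bounded below, so I would first reduce to the regime $\Gamma(u)\to 0$. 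Concretely, if the corollary fails there is a sequence $\{u_m\}_{m\in\N}$ of nonnegative functions in $H^1_0(\Omega)$ with $\|u_m\|_{H^1_0(\Omega)}^2\le\frac{3}{2}S_0^{n/2}$, $\Gamma(u_m)>0$, $\Gamma(u_m)\to0$, and $\mcd(u_m)>m\,\zeta(\Gamma(u_m))$ for every $m$. Applying Theorem~\ref{thm:Struwe} with $C_0=(3/2)^{1/2}S_0^{n/4}$ then yields, along a subsequence, a solution $u_0$ of \eqref{BN_eq} and an integer $\nu\le C_0^2S_0^{-n/2}=\frac{3}{2}$, hence $\nu\in\{0,1\}$; if $\nu=1$ there are also parameters $(\delta_m,\xi_m)\in(0,\infty)\times\Omega$ with $\delta_m\to0$, $\delta_m/d(\xi_m,\pa\Omega)\to0$ and $\|u_m-(u_0+U_{\delta_m,\xi_m})\|_{H^1_0(\Omega)}\to0$, while if $\nu=0$ then $u_m\to u_0$ in $H^1_0(\Omega)$.

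Next I would split into the two cases. For $\nu=0$ I would bound $\mcd(u_m)\le\|u_m-u_0\|_{H^1_0(\Omega)}$ by hand: when $u_0=0$, testing the identity $\Delta u_m+\lambda u_m+u_m^{p}=g_m$ (with $\|g_m\|_{(H^1_0(\Omega))^*}=\Gamma(u_m)$) against $u_m$ and using $\|u_m\|_{H^1_0(\Omega)}\to0$ together with $p+1>2$ gives $\|u_m\|_{H^1_0(\Omega)}\le 2\Gamma(u_m)$ for large $m$; when $u_0>0$, the assumed non-degeneracy makes $\Delta+\lambda+pu_0^{p-1}$ invertible on $H^1_0(\Omega)$, so writing $u_m=u_0+w_m$ with $w_m\to0$ and controlling the quadratic remainder of $u_m^{p}$ by $o(\|w_m\|_{H^1_0(\Omega)})$ gives $\|w_m\|_{H^1_0(\Omega)}\le C\,\Gamma(u_m)$ for large $m$. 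For $\nu=1$, the above shows that for $m$ large $u_m$ satisfies \textbf{Assumption B} with $\nu=1$, $\tde_1=\delta_m$, $\txi_1=\xi_m\in\Omega$ (the multi-bubble separation condition being vacuous) and with $u_0$ non-degenerate when $u_0>0$; Theorem~\ref{thm2} then furnishes $PU_1=PU_{\delta_1,\xi_1}$, solving \eqref{pu1} or \eqref{pu2} exactly according to the case distinction defining $\mcb$ — hence $PU_1\in\mcb$ — with $\|u_m-(u_0+PU_1)\|_{H^1_0(\Omega)}\le C\,\zeta(\Gamma(u_m))$ for a constant $C=C(n,\lambda,u_0,\Omega)$. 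Since $\zeta(t)\ge t$ for all sufficiently small $t>0$ (the exponents in \eqref{eq:zeta122} are all $\le1$ and $|\log t|\ge1$ for $t\le e^{-1}$), in every case $\mcd(u_m)\le C\,\zeta(\Gamma(u_m))$ for all large $m$ with $C$ fixed along the subsequence; taking $m>C$ contradicts $\mcd(u_m)>m\,\zeta(\Gamma(u_m))$ (note $\zeta(\Gamma(u_m))>0$ for large $m$), and the corollary follows.

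I do not expect a deep obstacle here: the proof is essentially the correct bookkeeping of the two quoted results. The mildly delicate points are: (i) that the energy ceiling $\frac{3}{2}S_0^{n/2}$ forces $\nu\le1$, so that only the single-bubble Theorem~\ref{thm2} plus the trivial $\nu=0$ case are needed; (ii) the degenerate sub-case $\nu=0$, $u_0=0$, which cannot be routed through Theorem~\ref{thm2} — whose Assumption~B requires $\nu\ge1$ — and must be handled by the elementary energy estimate above, where the non-degeneracy hypothesis plays no role; and (iii) translating the purely asymptotic conclusions of Struwe's decomposition into the finite-$\vep_0$ hypotheses of Assumption~B, which is automatic once $m$ is large. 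The contradiction argument also has the virtue of sidestepping any question of uniformity of the constant in $u_0$: even though Theorem~\ref{thm2} produces a constant depending on $u_0$, deriving a contradiction on a single subsequence suffices, so a uniform $C=C(n,\lambda,\Omega)$ is obtained in the end. The preliminary reduction to small $\Gamma(u)$ and the inequality $\zeta(t)\ge t$ near $0$ are both straightforward.
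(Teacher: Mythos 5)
Your proposal is correct and follows the same route the paper indicates (a contradiction argument combining Struwe's decomposition with Theorem~\ref{thm2}); the paper explicitly states that it "proceeds by contradiction, following an argument similar to that in [CK, Section 6]" and omits the details. Your bookkeeping is sound: the energy ceiling forces $\nu\in\{0,1\}$, the $\nu=0$ sub-cases are dispatched by the elementary energy estimate (for $u_0=0$) and by invertibility of the non-degenerate linearization plus absorption of the superlinear remainder (for $u_0>0$), the $\nu=1$ case is handed to Theorem~\ref{thm2} once Assumption B is verified for large $m$, and passing to a subsequence fixes $u_0$ so that the constant from Theorem~\ref{thm2} becomes uniform, yielding the contradiction.
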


\begin{rmk}
In this corollary, we modify the class of admissible functions $u$ in \eqref{tui2} to those with uniformly bounded energy as in \eqref{eneu}.
This necessitates assuming the non-degeneracy for all positive solutions to \eqref{BN_eq}, since $u_0$ cannot be determined a priori.
The proof proceeds by contradiction, following an argument similar to that in \cite[Section 6]{CK}, and is therefore omitted.
\end{rmk}

\subsection{Comments on the proof}\label{subc1.3}\
Our proof is primarily inspired by the approaches developed in \cite{CFM, FG, DSW, CK, CKW}. To clarify the new technical challenges involved in our setting, we begin by outlining a general strategy for proving quantitative stability of sharp inequalities in the critical point setting:
\begin{itemize}
\item[(i)] The starting point is that the infimum $\inf\|u-(u_0+\sum_{i=1}^{\nu}\mcv_{\tilde{\delta}_i,\tilde{\xi}_i})\|_{H^1}$ can be achieved by $\mcv_{\delta_i,\xi_i}$ where $\mcv_{\delta,\xi}$ is an appropriate bubble-like function,
    then $\rho:=u-(u_0+\sum_{i=1}^{\nu}\mcv_{\delta_i,\xi_i})$ satisfies an auxiliary equation (e.g. \eqref{eqrho2}) along with certain orthogonality conditions, at least in a Hilbert space framework.

\item[(ii)] By testing the equation of $\rho$ with $\rho$ itself, one can derive a rough estimate $\|\rho\|_{H^1} \lesssim \|f\|_{H^{-1}}+\|\I\|_{H^{-1}}$ where $f := -\Delta u-\lambda u-u^p$, and $\I$ is an error term (in our setting, $\I:=\I_1+\I_2+\I_3$ given by \eqref{eq:I2}).
    While this estimate may not be sufficient to achieve a sharp stability function $\zeta$, it can be often improved through a linear theory.
    In fact, the linear theory provides a pointwise estimate of the main part $\rho_0$ of $\rho$, leading to a refined estimate of the form $\|\rho\|_{H^1} \lesssim \|f\|_{H^{-1}} + \mathcal{J}_1$ where $\mcj_1$ is a small quantity.

\item[(iii)] By choosing suitable test functions originated from bubbles (see Subsection \ref{subc1.3}(7) below), one can find another small quantity $\mcj_2$ such that $\mcj_2\lesssim \|f\|_{H^{-1}}$.
    If one can determine a function $\tilde{\zeta}$ such that $\mcj_1\lesssim \tilde{\zeta}(\mcj_2)$, the final stability function will be determined as $\zeta(t):=\max\{t, \tilde{\zeta}(t)\}$ for small $t>0$.

\item[(iv)] Once one finds special parameters $(\delta_i,\xi_i)$ and functions $\rho$ and $f$ satisfying $\|\rho\|_{H^1} \gtrsim \zeta(\|f\|_{H^{-1}})$,
    then the nonnegative function $u_*=(u_0+\sum_{i=1}^{\nu}\mcv_{\delta_i,\xi_i}+\rho)_+$ usually provides an optimal example.
\end{itemize}

\medskip
Although our proof could follow the procedures outlined above, several non-trivial and novel challenges arise in our specific setting. We now present the new strategies devised to overcome or mitigate these difficulties.

\medskip
\noindent{(1)} Due to the presence of $u_0$ and the linear term $\lambda u$, more precise computations are needed for the interactions among bubbles with various powers, as well as those between a bubble and $u_0$, for all dimensions $n\ge 3$.

\medskip
\noindent{(2)} The selection of bubble-like functions is subtle.
For our problem, depending on the dimension $n$ and the solution $u_0$ of \eqref{BN_eq}, we make appropriate use of two different projected bubbles given by \eqref{pu1} and \eqref{pu2}.

Let us explain why we must define $PU_i$ via solutions of \eqref{pu2} in deducing Theorem \ref{thm1} for $n=3$ or $4$ and $u_0=0$:

If $n=3$ and $u_0=0$, then the function $PU_i$ defined via \eqref{pu1} fails to produce any quantitative estimates even in the single-bubble case due to the excessive size of $\|U_{\delta,\xi}\|_{L^{6/5}(\Omega)}$.

If $n=4$, $u_0=0$, and $\nu=1$, then such a definition yields a valid but a non-sharp estimate.

If $n=4$, $u_0=0$, and $\nu \ge 2$, then the use of the above-defined $PU_i$ fails completely, because the interaction terms $\int U_iU_j$ are not negligible compared to the presumably dominant term $\max_i \int U_i^2$.

In Lemmas \ref{lem2.1} and \ref{lema6}, we rigorously analyze the behavior of the function $PU_{\delta, \xi}$ defined via \eqref{pu2}, which may be independent of interests.

As previously noted, there seems be no results on positive cluster or tower solutions for the low-dimensional Brezis-Nirenberg problem. Our calculations take into account all possible bubbles and may be helpful for constructing such solutions, should they exist.

\medskip
\noindent{(3)} In \cite{DSW, CKW}, the authors employed a pointwise estimate across all bubble configurations for the main part of $\rho$ in all dimensions $n \ge 6$.
Our proof of stability estimates \eqref{eq:sqe32} shows that such an estimate is required only when $n = 6$ in our setting and \cite{DSW, CKW}.
For the optimality proof, a pointwise estimate for the main part of $\rho$ is still needed in many other dimensions, but only for specific configurations.
This insight simplifies the technical aspects of the argument (cf. \cite[Lemma 4.2]{DSW}). See Section \ref{sec2.2}, Subsection \ref{subs3.2}.

\medskip
\noindent (4) To develop the linear theory for $n=6$ in Section \ref{sec2.2}, we make extensive use of the representation formula, which offers a unified and simplified proof compared to approaches based on the maximum principle (cf. \cite[Lemma 5.1]{DSW}).
This idea was initially developed in our previous work \cite{CKW}, where we studied the quantitative stability of the fractional Sobolev inequality $\dot{H}^s(\R^n) \hookrightarrow L^{\frac{2n}{n-2s}}(\R^n)$ of all orders $s \in (0,\frac{n}{2})$.

\medskip
\noindent{(5)} In Step (ii), many seminal works in the critical regime (see \cite{CFM, FG, DSW} and their generalizations, e.g., \cite{BGKM2, CK, CKW}) devote substantial effort to deriving appropriate coercivity inequalities.
In \cite[Section 6]{DSW}, such inequalities play a crucial role in deducing a Sobolev norm estimate for the term $\rho_1 := \rho-\rho_0$.
In contrast, our approach provides a direct derivation of the Sobolev norm estimate for $\rho_1$ based solely on blow-up analysis (refer to Subsection \ref{subs3.1}).
As a result, the proof avoids coercivity inequalities entirely, greatly simplifying the argument again.

\medskip
\noindent{(6)} Regarding the sharpness of our results, we conduct a comprehensive analysis of all admissible forms of the function $\zeta$, dealing with the linear ($\zeta(t) = t$) and sublinear ($\zeta(t) \gg t$) regimes through two distinct strategies.
In the linear case, sharpness is verified by constructing a smooth perturbation of $u_0 + \sum_{i=1}^{\nu} PU_i$.
For the sublinear case, a more delicate analysis is required for the multiple bubble scenario whose idea differs from that in $\R^n$,
and it is important to identify which of the dominant factors--interactions among $u_0$, the boundary effect, the bubbles, and the linear term $\lambda u$--govern the exponent of $\zeta$.

\medskip
\noindent{(7)} In the proof of Theorem \ref{thm2}, the scenario in which $d(\xi,\pa \Omega)$ is small introduces a crucial challenge:
The projection of $\I_1 + \I_2 + \I_3$ in the direction of the dilation derivative $\delta_i \pa_{\delta_i} \mcv_i$ of the bubble-like function $\mcv_i$ has a negative leading term of the form $-\delta^{n-2}/d(\xi, \pa \Omega)^{n-2}$; see \eqref{iz0}.
In the single-bubble case, we address this projection by carefully analyzing all possible scenarios, as detailed in Section \ref{sec4}.
The reason that one primarily uses $\delta_i \pa_{\delta_i} \mcv_i$ as a test function in both Euclidean and manifold settings--instead of using a spatial derivative $\delta_i \pa_{\xi_i^k} \mcv_i$--is that the latter generally lead to weaker estimates.
However, in our setting, it is sometimes necessary to consider the projections of $\I_1 + \I_2 + \I_3$ in the direction $\delta_i \pa_{\xi_i^k} \mcv_i$, since the dilation projection may suffer from sign cancellations among its leading-order terms, weakening their overall effect.
As such, precise term-by-term estimates like \eqref{iz0} and \eqref{izk} are indispensable.

In the multi-bubble case $\nu \ge 2$, these challenges become significantly more difficult.
We currently lack a clear strategy to effectively handle the competition between the negative term involving $d(\xi_i,\pa\Omega)$ and the interaction between different bubbles.
Additionally, integrals such as $\int_{\Omega} [(PU_i)^p - U_i^p] U_j$ for $i \ne j$ and $\int_{\Omega} [(\sum_{i=1}^{\nu}PU_i)^p-\sum_{i=1}^{\nu}(PU_i)^p]PZ_j^0$ (when $n \ge 3$),
and cross terms like $U_i w_{3j}^{\tin}$ and $U_i w_{3j}^{\tout}$ (when $n=6$, cf. Definition \ref{norm}) in the linear theory, pose formidable analytical obstacles.

\medskip
Our structure of this paper is described as follows:
In Section \ref{sec2}, we present some necessary estimates for proving our main theorems. In Section \ref{sec2.2}, we improve the estimate for the main part of $\rho$ in dimension 6 based on a linear theory.
In Sections \ref{sec3} and \ref{sec4}, we provide the detailed proofs of Theorem \ref{thm1} and Theorem \ref{thm2}, respectively.
In Appendix \ref{a}, we include several elementary estimates that are frequently used throughout the main text.
In Appendix \ref{b}, we give a proof for an important lemma used in Section \ref{sec4}.

\medskip

\noindent\textbf{Notations.} Here, we list some notations that will be used throughout the paper.

\medskip \noindent - $\N$ denotes the set of positive integers.

\medskip \noindent - Let (A) be a condition. We set $\mone_{\text{(A)}} = 1$ if (A) holds and $0$ otherwise.

\medskip \noindent - For $x \in \Omega$ and $r>0$, we write $B(x,r) = \{\om \in \Omega: |\om-x| < r\}$ and $B(x,r)^c = \{\om \in \Omega: |\om-x| \ge r\}$.

\medskip \noindent - We use the Japanese bracket notation $\la x \ra = \sqrt{1+|x|^2}$ for $x \in \R^n$.

\medskip \noindent - Unless otherwise stated, $C>0$ is a universal constant that may vary from line to line and even in the same line.
We write $a_1 \lesssim a_2$ if $a_1 \le Ca_2$, $a_1 \gtrsim a_2$ if $a_1 \ge Ca_2$, and $a_1 \simeq a_2$ if $a_1 \lesssim a_2$ and $a_1 \gtrsim a_2$.

\section{Setting and analysis of bubbles}\label{sec2}
\subsection{Problem setting}
By \eqref{tui2}, there exist parameters $(\delta_1,\ldots,\delta_{\nu}, \xi_1,\ldots,\xi_{\nu}) \subset (0,\infty)^{\nu} \times \Omega^{\nu}$ and $\vep_1>0$ small such that $\vep_1 \to 0$ as $\vep_0 \to 0$,
\[\begin{medsize}
\displaystyle \bigg\|u-\bigg(u_0+\sum_{i=1}^{\nu} PU_i\bigg)\bigg\|_{H^1_0(\Omega)} = \inf\left\{\bigg\|u-\bigg(u_0+\sum_{i=1}^{\nu} PU_{\tde_i,\txi_i}\bigg)\bigg\|_{H^1_0(\Omega)}: \(\tde_i,\txi_i\) \in (0,\infty) \times \Omega,\ i=1,\ldots,\nu\right\} \le \vep_1,
\end{medsize}\]
where $PU_i = PU_{\delta_i,\xi_i}$, and
\[\max_i\delta_i + \max_i\frac{\delta_i}{d(\xi_i,\pa\Omega)} \le \vep_1 \]
as well as
\[\max\left\{\(\frac{\delta_i}{\delta_j} + \frac{\delta_j}{\delta_i} + \frac{|\xi_i-\xi_j|^2}{\delta_i\delta_j}\)^{-\frac{n-2}{2}}: i,j = 1,\ldots,\nu\right\} \le \vep_1.\]
Setting $\sigma:=\sum_{i=1}^{\nu} PU_i$, $\rho:=u-(u_0+\sigma)$, and $f:=-\Delta u-\lambda u-u^{p}$, we have
\begin{equation}\label{eqrho2}
\begin{cases}
\displaystyle -\Delta \rho-\lambda \rho - p(u_0+\sigma)^{p-1}\rho = f+\I_0[\rho]+\I_1+\I_2+\I_3 \quad \text{in } \Omega,\\
\displaystyle \rho=0 \quad \text{on } \pa\Omega,\\
\displaystyle \big\langle \rho, PZ_i^k \big\rangle_{H^1_0(\Omega)} := \int_{\Omega}\nabla \rho \cdot \nabla PZ_i^k-\lambda\rho PZ_i^k= 0 \quad \text{for } i=1,\ldots,\nu \text{ and } k=0,\ldots,n,
\end{cases}
\end{equation}
where
\begin{equation}\label{eq:I2}
\begin{aligned}
&PZ_i^0:=\delta_i\frac{\pa PU_i}{\pa \delta_i}, \quad PZ_i^k:=\delta_i\frac{\pa PU_i}{\pa\xi_i^k} \quad \text{for } k=1,\ldots,n,\\
&\I_0[\rho] := |u_0+\sigma+\rho|^{p-1}(u_0+\sigma+\rho) - (u_0 + \sigma)^{p} - p (u_0 + \sigma)^{p-1}\rho,\\
&\I_1 := (u_0 + \sigma)^{p}-u_0^{p}-\sigma^{p},\\
&\I_2 := \sigma^{p} -\sum_{i=1}^{\nu} (PU_i)^p, \quad \text{and} \quad \I_3 := \sum_{i=1}^{\nu} \left[\Delta PU_i+\lambda PU_i+ (PU_i)^p\right].
\end{aligned}
\end{equation}

We recall a well-known non-degeneracy result: Given any $\delta>0$ and $\xi = (\xi^1,\ldots,\xi^n) \in \R^n$, the solution space of the linear problem
\[-\Delta v=p U_{\delta,\xi}^{p-1} v \quad \text { in } \R^n, \quad v \in D^{1,2}(\R^n)\]
is spanned by the functions
\[Z_{\delta,\xi}^0 := \delta \frac{\pa U_{\delta,\xi}}{\pa \delta} \quad \text{and} \quad Z_{\delta,\xi}^k := \delta \frac{\pa U_{\delta,\xi}}{\pa \xi^k} \quad \text{for } k=1,\ldots,n.\]
We rewrite $U_i:=U_{\delta_i,\xi_i}$, $Z^k: = Z_{1,0}^k$, and $Z_i^k:=Z_{\delta_i,\xi_i}^k$ for $i=1,\ldots,\nu$ and $k=0,\ldots,n$.

Let us define four quantities
\begin{equation}\label{rq}
\begin{cases}
\displaystyle q_{ij} := \left[{\frac{\delta_i}{\delta_j}} + {\frac{\delta_j}{\delta_i}} + \frac{|\xi_i-\xi_j|^2}{{\delta_i\delta_j}}\right]^{-\frac{n-2}{2}}, \quad Q := \max\{q_{ij}: i,j = 1,\ldots,\nu\} \le \vep_1, \\
\displaystyle \msr_{ij} := \max\left\{\sqrt{\frac{\delta_i}{\delta_j}}, \sqrt{\frac{\delta_j}{\delta_i}}, \frac{|\xi_i-\xi_j|}{\sqrt{\delta_i\delta_j}}\right\} \simeq q_{ij}^{-\frac{1}{n-2}},\quad \msr := \frac{1}{2} \min \msr_{ij}.
\end{cases}
\end{equation}

\subsection{Expansions of $PU_{\delta,\xi}$}
Given the projected bubbles $PU_{\delta,\xi}$ via either \eqref{pu1} or \eqref{pu2}, we expand them.
\begin{lemma}\label{lem2.1}
Suppose that $x,\, \xi\in\Omega$ and $\delta>0$ is small. Then, $0<PU_{\delta,\xi}\le U_{\delta,\xi}$ in $\Omega$, and for any $\tau\in(0,1)$, the following holds:
\[PU_{\delta,\xi}(x) = U_{\delta,\xi}(x) - a_n\delta^{\frac{n-2}{2}}H(x,\xi) + O\(\delta^{\frac{n+2}{2}}d(\xi,\pa\Omega)^{-n}\)\]
provided $n \ge 3$ and $PU_{\delta,\xi}$ is given by equation \eqref{pu1}, and
\begin{align*}
\begin{medsize}
\displaystyle PU_{\delta,\xi}(x)
\end{medsize}
&\begin{medsize}
\displaystyle = U_{\delta,\xi}(x) + \frac{\lambda}{2}a_n \delta^{\frac{n-2}{2}}
\left.\begin{cases}
-|x-\xi| &\text{if } n=3\\
-\log|x-\xi| &\text{if } n=4 \\
\frac{1}{|x-\xi|}-4\lambda|x-\xi| &\text{if } n=5
\end{cases}\right\}
-\delta^{\frac{n-2}{2}} a_nH^n_\lambda(x,\xi)+\delta^{2-\frac{n-2}{2}}\mcd_n\(\frac{x-\xi}{\delta}\)
\end{medsize} \\
&\begin{medsize}
\displaystyle \ + \left.\begin{cases}
O(\delta^{\frac52-\tau}) &\text{if } n=3, 5\\
O(\delta^{3-\tau}) &\text{if } n=4
\end{cases}\right\}
+O\(\delta^{\frac{n+2}{2}} \left[d(\xi,\pa\Omega)^{-(n-2)}\bigg|\log\frac{d(\xi,\pa\Omega)}{\delta}\bigg| + d(\xi,\pa\Omega)^{-n}\right]\)
\end{medsize}
\end{align*}
provided $n=3,4,5$ and $PU_{\delta,\xi}$ is given by equation \eqref{pu2}. Here, $a_n=(n(n-2))^{\frac{n-2}{4}}$ (see \eqref{AT bubbles}), the function $H(x,y)$ satisfies
\[\begin{cases}
-\Delta_x H(x,y)=0 &\text{in } \Omega,\\
H(x,y)=\frac{1}{|x-y|^{n-2}} &\text{on } \pa\Omega,
\end{cases}\]
the function $H^3_\lambda(x,y)$ satisfies
\begin{equation}\label{h1xy}
\begin{cases}
\Delta_x H^3_\lambda(x,y)+\lambda H^3_\lambda(x,y)=-\frac{\lambda^2}{2}|x-y| &\text{in } \Omega,\\ H^3_\lambda(x,y)=\frac{1}{|x-y|}-\frac{\lambda}{2}|x-y| &\text{on }\pa \Omega,
\end{cases}
\end{equation}
the function $H^4_{\lambda}(x,y)$ satisfies
\begin{equation}\label{h2xy}
\begin{cases}
\Delta_x H^4_\lambda(x,y)+\lambda H^4_\lambda(x,y)=\lambda \log|x-y| &\text{in } \Omega,\\ H^4_\lambda(x,y)=\frac{1}{|x-y|^2}-\frac{\lambda}{2}\log|x-y| &\text{on } \pa \Omega,
\end{cases}
\end{equation}
and the function $H^5_{\lambda}(x,y)$ satisfies
\begin{equation}\label{h3xy}
\begin{cases}
\Delta_x H^5_\lambda(x,y)+\lambda H^5_\lambda(x,y)=-\frac{\lambda^2}{2}|x-y| &\text{in } \Omega,\\ H^5_\lambda(x,y)=\frac{1}{|x-y|^3}+\frac{\lambda}{2}\frac{1}{|x-y|}-\frac{\lambda^2}{2}|x-y| &\text{on } \pa \Omega,
\end{cases}
\end{equation}
for each fixed $y \in \Omega$. Besides, the function $\mcd_n=\mcd_n(z)$ satisfies
\[\begin{cases}
-\Delta \mcd_n=\lambda a_n\left[\frac{1}{(1+|z|^2)^{\frac{n-2}{2}}}-\frac{1}{|z|^{n-2}}\right] &\text{in } \R^n,\\
\mcd_n\approx |z|^{-(n-2)}|\log|z|| &\text{as } |z|\to \infty.
\end{cases}\]
\end{lemma}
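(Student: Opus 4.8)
The plan is to derive both expansions from the Green's representation formula, treating $PU_{\delta,\xi}$ as a perturbation of the Aubin--Talenti bubble $U_{\delta,\xi}$ and carefully peeling off the next-order corrections. First I would write $\varphi_{\delta,\xi} := U_{\delta,\xi} - PU_{\delta,\xi}$. For the case of \eqref{pu1}, $\varphi_{\delta,\xi}$ solves $-\Delta \varphi_{\delta,\xi} = 0$ in $\Omega$ with boundary value $U_{\delta,\xi}|_{\pa\Omega}$; since $U_{\delta,\xi}(x) = a_n \delta^{\frac{n-2}{2}} |x-\xi|^{-(n-2)}(1 + O(\delta^2/|x-\xi|^2))$ on $\pa\Omega$ and $|x-\xi| \gtrsim d(\xi,\pa\Omega)$ there, the harmonic function with boundary value $a_n\delta^{\frac{n-2}{2}}|x-\xi|^{-(n-2)}$ is exactly $a_n\delta^{\frac{n-2}{2}} H(x,\xi)$, while the remainder is harmonic with boundary data of size $O(\delta^{\frac{n+2}{2}} d(\xi,\pa\Omega)^{-n})$, hence controlled by the maximum principle. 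The positivity $0 < PU_{\delta,\xi} \le U_{\delta,\xi}$ also follows from the maximum principle since $\varphi_{\delta,\xi}$ is harmonic and nonnegative on $\pa\Omega$, and $PU_{\delta,\xi} > 0$ because $-\Delta PU_{\delta,\xi} = U_{\delta,\xi}^p > 0$ with zero boundary data.

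For the case of \eqref{pu2}, the strategy is the same but with an extra layer: $\psi_{\delta,\xi} := PU_{\delta,\xi} - U_{\delta,\xi}$ now satisfies $-\Delta \psi_{\delta,\xi} - \lambda \psi_{\delta,\xi} = \lambda U_{\delta,\xi}$ in $\Omega$ with $\psi_{\delta,\xi} = -U_{\delta,\xi}$ on $\pa\Omega$. The key is to split the right-hand side $\lambda U_{\delta,\xi}$ into a part that lives near $\xi$ (producing the inner correction $\delta^{2-\frac{n-2}{2}}\mcd_n((x-\xi)/\delta)$, which solves the rescaled equation $-\Delta\mcd_n = \lambda a_n[(1+|z|^2)^{-\frac{n-2}{2}} - |z|^{-(n-2)}]$ exactly) and a far-field part behaving like $\lambda a_n \delta^{\frac{n-2}{2}}|x-\xi|^{-(n-2)}$, whose particular solution against $-\Delta - \lambda$ produces the explicit terms $\frac{\lambda}{2}a_n\delta^{\frac{n-2}{2}}(-|x-\xi|)$, $(-\log|x-\xi|)$, or $(\frac{1}{|x-\xi|} - 4\lambda|x-\xi|)$ in dimensions $n = 3, 4, 5$ respectively — these are obtained by solving the radial ODE $(-\Delta - \lambda)g = \lambda a_n|x|^{-(n-2)}$ and expanding $g$ in powers of $|x|$ up to the order that survives the error budget. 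The boundary contribution, after subtracting off the explicit terms, is absorbed into $\delta^{\frac{n-2}{2}} a_n H^n_\lambda(x,\xi)$, where $H^n_\lambda$ is defined precisely to solve $(\Delta_x + \lambda) H^n_\lambda = (\text{the in-domain remainder})$ with the stated boundary values matching $\frac{1}{|x-y|^{n-2}}$ plus the explicit correction terms on $\pa\Omega$. One then tracks all remaining errors: the mismatch $O(\delta^2/|x-\xi|^2)$ in the far-field expansion of $U_{\delta,\xi}$, the truncation of the ODE expansion, and the boundary data of the residual harmonic-type pieces, which collectively give the $O(\delta^{5/2-\tau})$, $O(\delta^{3-\tau})$ and $O(\delta^{\frac{n+2}{2}}[d(\xi,\pa\Omega)^{-(n-2)}|\log(d(\xi,\pa\Omega)/\delta)| + d(\xi,\pa\Omega)^{-n}])$ terms. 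The regularity of the Green's function and elliptic estimates on the fixed domain $\Omega$ (for $\lambda \in (0,\lambda_1)$, so $-\Delta - \lambda$ is invertible on $H^1_0(\Omega)$) convert $L^\infty$ or weighted bounds on the right-hand sides into pointwise bounds on the solutions.

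I expect the main obstacle to be bookkeeping the interplay between the two competing small scales $\delta$ and $d(\xi,\pa\Omega)$ in the $n = 3, 4, 5$ case — in particular, isolating which terms in the Taylor expansion of the far-field ODE solution are genuinely of lower order than the boundary corrections, and making sure that the definition of $H^n_\lambda$ (including the somewhat delicate boundary data in \eqref{h3xy}, which already contains two correction terms) exactly captures the residual so that the leftover is of the claimed order uniformly in $x \in \Omega$. A secondary technical point is the appearance of the logarithmic factor $|\log(d(\xi,\pa\Omega)/\delta)|$ in the error: this comes from convolving the Green's function of $-\Delta-\lambda$ against a source of size $\delta^{\frac{n+2}{2}}|x-\xi|^{-n}$ concentrated on the region $|x - \xi| \gtrsim d(\xi,\pa\Omega)$, which integrates to produce the logarithm when $n$ equals the borderline decay rate; one has to check that this is the worst such contribution. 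Once the right-hand sides are correctly decomposed, each piece is handled by a standard potential-theoretic estimate, so the difficulty is almost entirely in the organization of the expansion rather than in any single estimate.
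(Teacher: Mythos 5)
Your proposal is essentially the same as the paper's. The paper starts from the Green's function $G_\lambda = \frac{1}{(n-2)|\S^{n-1}|}[|x-y|^{-(n-2)} - H_\lambda(x,y)]$, decomposes $H_\lambda$ into an explicit radial singular part (your ``radial ODE solution'': $\frac{\lambda}{2}|x-y|$ for $n=3$, $\frac{\lambda}{2}\log|x-y|$ for $n=4$, $-\frac{\lambda}{2}|x-y|^{-1}+2\lambda^2|x-y|$ for $n=5$) plus a $C^{1,\alpha}$ remainder $H^n_\lambda$, and introduces $\wtmcd_n(x)=\delta^{2-(n-2)/2}\mcd_n((x-\xi)/\delta)$ to absorb the near-$\xi$ mismatch $\lambda U_{\delta,\xi}-\lambda a_n\delta^{(n-2)/2}|x-\xi|^{-(n-2)}$; the residual $\mcs_{\delta,\xi}=PU_{\delta,\xi}-U_{\delta,\xi}+a_n\delta^{(n-2)/2}H_\lambda-\wtmcd_n$ is then bounded in $L^\infty$ via elliptic $L^t$-estimates. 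Your description in terms of the PDE for $\psi=PU-U$ with source $\lambda U_{\delta,\xi}$, split into near-field ($\mcd_n$) and far-field (radial ODE) contributions, with the boundary mismatch captured in $H^n_\lambda$, is exactly the same decomposition in slightly different bookkeeping.

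Two small points where your heuristic for the error terms diverges from the actual mechanism in the paper, though neither is a genuine gap. First, the $O(\delta^{5/2-\tau})$ and $O(\delta^{3-\tau})$ errors do not come from ``truncation of the ODE expansion''; they come from the elliptic $L^t\to L^\infty$ estimate applied to $\mcs_{\delta,\xi}$, whose right-hand side is $\pm\lambda\wtmcd_n$, combined with the bounds $\|\wtmcd_n\|_{L^t}\lesssim\delta^{a+n/t}$ (the $\tau$ loss reflects the restriction on admissible $t$). Second, the $|\log(d(\xi,\pa\Omega)/\delta)|$ factor is not a convolution artifact; it appears directly because $\mcd_n(z)\approx|z|^{-(n-2)}|\log|z||$ for $|z|\to\infty$, so the boundary value of $\wtmcd_n$ on $\pa\Omega$ (where $|z|=|x-\xi|/\delta\gtrsim d(\xi,\pa\Omega)/\delta$) carries the logarithm into the boundary data of $\mcs_{\delta,\xi}$. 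Finally, your maximum-principle argument for $0<PU_{\delta,\xi}\le U_{\delta,\xi}$ only addresses the case \eqref{pu1}; for \eqref{pu2} the difference $U_{\delta,\xi}-PU_{\delta,\xi}$ is not harmonic, and a separate comparison argument for $-\Delta-\lambda$ is needed (the paper is equally terse on this point).
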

\begin{proof}
The inequality $0<PU_{\delta,\xi}\le U_{\delta,\xi}$ in $\Omega$ holds by the maximum principle.

The proof for the case where $PU_{\delta,\xi}$ satisfies \eqref{pu1}, or it satisfies \eqref{pu2} with $n=3$, can be found in \cite[Proposition 1]{Re} or \cite[Lemma 2.2]{dDM}, respectively.
Here, we provide a proof for $PU_{\delta,\xi}$ satisfying \eqref{pu2} that applies to $n=3, 4, 5$ simultaneously.

Let $G_{\lambda}(x,y)$ be the Green's function of $-\Delta-\lambda$ in $\Omega \subset \R^n$ with Dirichlet boundary condition, which solves
\begin{align}\label{gla}\begin{cases}
-\Delta_x G_{\lambda}(x,y)-\lambda G_{\lambda}(x,y)=\delta_{y} &\text{in } \Omega,\\
G_{\lambda}(x,y)=0 &\text{on } \pa\Omega
\end{cases}\end{align}
in the sense of distributions. The function $G_{\lambda}(x,y)$ is symmetric with respect to the two variables $x$ and $y$.
Also, one can write
\[G_\lambda(x,y) = \frac{1}{(n-2)|\S^{n-1}|}\left[\frac{1}{|x-y|^{n-2}} - H_\lambda(x,y)\right],\]
where $|\S^{n-1}|$ is the surface area of the unit sphere $\S^{n-1}$ in $\R^n$ and $H_\lambda$ solves
\[\begin{cases}
\Delta_x H_\lambda(x,y)+\lambda H_\lambda(x,y)=\lambda \frac{1}{|x-y|^{n-2}} &\text{in } \Omega,\\ H_\lambda(x,y)=\frac{1}{|x-y|^{n-2}} &\text{on } \pa \Omega.
\end{cases}\]
We decompose $H_{\lambda}(x,y)$ as
\[H_\lambda(x,y)= \left.\begin{cases}
\frac{\lambda}{2}|x-y| &\text{if } n=3\\
\frac{\lambda}{2} \log|x-y| &\text{if } n=4\\
-\frac{\lambda}{2}\frac{1}{|x-y|}+2\lambda^2|x-y| &\text{if } n=5
\end{cases}\right\} + H^n_\lambda(x,y)\]
and apply elliptic regularity theory to ensure that $H^n_\lambda(x,y)\in C^{1,\alpha}(\Omega\times\Omega)$ for any $\alpha\in(0,1)$.

Next, we define
\[\mcs_{\delta,\xi}(x)=PU_{\delta,\xi}-U_{\delta,\xi}+a_n\delta^{\frac{n-2}{2}} H_\lambda(x,\xi)-\wtmcd_n(x).\]
Here, $\wtmcd_n(x) := \delta^{2-\frac{n-2}{2}}\mcd_n(\frac{x-\xi}{\delta})$ so that
\[\begin{cases}
-\Delta \wtmcd_n=\lambda a_n \left[\(\frac{\delta}{\delta^2+|x-\xi|^2}\)^{\frac{n-2}{2}} - \frac{\delta^{\frac{n-2}{2}}}{|x-\xi|^{n-2}}\right] &\text{in } \Omega,\\
\wtmcd_n\approx \frac{\delta^{2+\frac{n-2}{2}}}{|x-\xi|^{n-2}}\left|\log\frac{|x-\xi|}{\delta}\right| &\text{on } \pa\Omega.
\end{cases}\]
By observing that
\[\mcs_{\delta,\xi}(x) = -a_n\left[\(\frac{\delta}{\delta^2+|x-\xi|^2}\)^{\frac{n-2}{2}}-\frac{\delta^{\frac{n-2}{2}}}{|x-\xi|^{n-2}}\right] -\wtmcd_n(x) \quad \text{for } x \in \pa\Omega,\]
we obtain
\[\begin{cases}
\Delta \mcs_{\delta,\xi}+\lambda\mcs_{\delta,\xi}=\lambda\wtmcd_n &\text{in } \Omega,\\
\mcs_{\delta,\xi} = O\(\delta^{2+\frac{n-2}{2}}\left[d(\xi,\pa\Omega)^{-(n-2)}\bigg|\log\frac{d(\xi,\pa\Omega)}{\delta}\bigg| + d(\xi,\pa\Omega)^{-n}\right]\) &\text{on } \pa\Omega.
\end{cases}\]
We notice that
\[|\mcd_n(z)|\simeq \begin{cases}
|z| &\text{if } n=3,\\
|\log|z|| &\text{if } n=4,\\
|z|^{-1} &\text{if } n=5
\end{cases} \quad \text{as } |z|\to 0.\]
Thus, elliptic estimates yield that $\|\wtmcd_3\|_{L^t} \lesssim \delta^{\frac32+\frac{3}{t}}$ for any $t>3$, $\|\wtmcd_4\|_{L^t} \lesssim \delta^{1 + \frac{4}{t}}$ for any $t > 2$, and $\|\wtmcd_5\|_{L^t} \lesssim \delta^{\frac12+\frac{5}{t}}$ for any $t\in(\frac52,5)$. Thus, we conclude for any $\tau\in (0,1)$,
\[\begin{medsize}
\displaystyle \|\mcs_{\delta,\xi}\|_{L^{\infty}(\Omega)}=O\(\left.\begin{cases}
\delta^{\frac52-\tau} &\text{if } n=3, 5\\
\delta^{3-\tau} &\text{if } n=4
\end{cases}\right\}
+\delta^{2+\frac{n-2}{2}} \left[d(\xi,\pa\Omega)^{-(n-2)}\bigg|\log\frac{d(\xi,\pa\Omega)}{\delta}\bigg| + d(\xi,\pa\Omega)^{-n}\right]\),
\end{medsize}\]
which completes the proof.
\end{proof}
\begin{rmk}
\

\medskip \noindent (1) To construct solutions to the Brezis-Nirenberg problem via a perturbative approach, additional information about $H^n_\lambda(x,y)$ might be necessary.
However, since the coefficient $\lambda$ is fixed in this paper, the $C^{1,\alpha}$ regularity suffices for our purpose.

\medskip \noindent (2) Define $\vph^n_\lambda(x) := H^n_\lambda(x,x)$ for $n=3,4,5$ and $\vph(x) := H(x,x)$ for $n\ge 3$.
Indeed, it is not difficult to realize that $\vph_\lambda^n\in C^{\infty}(\Omega)$ for $n=3,4,5$ and $\vph\in C^{\infty}(\Omega)$ for $n\ge 3$. When $d(x, \pa \Omega)$ is small enough, the following estimates hold:
\begin{equation}\label{vaor}
\begin{aligned}
&\left.\begin{cases}
\vph_\lambda^n(x) &\text{if } n=3,4,5\\
\vph(x) &\text{if } n\ge 3
\end{cases}\right\}
=\frac{1}{(2d(x,\pa\Omega))^{n-2}}[1+O(d(x,\pa\Omega))],\\
&\left.\begin{cases}
|\nabla\vph_\lambda^n (x)| &\text{if } n=3,4,5\\
|\nabla\vph(x)| &\text{if } n\ge 3
\end{cases}\right\}
= \frac{2(n-2)}{(2d(x,\pa\Omega))^{n-1}}[1+O(d(x,\pa\Omega))].
\end{aligned}
\end{equation}
We postpone their proofs to Appendix \ref{b}.
\end{rmk}

\begin{cor}\label{cor:PZ^0exp}
Suppose that $x,\, \xi\in\Omega$ and $\delta>0$ is small. For any $\tau \in (0,1)$, it holds that
\[PZ^0_{\delta,\xi}(x) = Z^0_{\delta,\xi}(x) - \frac{n-2}{2}a_n\delta^{\frac{n-2}{2}}H(x,\xi) + O\(\delta^{\frac{n+2}{2}}d(\xi,\pa\Omega)^{-n}\)\]
provided $n \ge 3$ and $PU_{\delta,\xi}$ is given by equation \eqref{pu1}, and
\begin{align*}
\begin{medsize}
\displaystyle PZ^0_{\delta,\xi}(x)
\end{medsize}
&\begin{medsize}
\displaystyle = Z^0_{\delta,\xi}(x) + \frac{n-2}{4}\lambda a_n \delta^{\frac{n-2}{2}} \left.\begin{cases}
- |x-\xi| &\text{if } n=3\\
- \log|x-\xi| &\text{if } n=4 \\
\frac{1}{|x-\xi|}-4\lambda|x-\xi| &\text{if } n=5
\end{cases}\right\}
-\frac{n-2}{2}a_n\delta^{\frac{n-2}{2}} H^n_\lambda(x,\xi)
\end{medsize} \\
&\begin{medsize}
\displaystyle \ +\delta\pa_{\delta}\left[\delta^{2-\frac{n-2}{2}}\mcd_n(\frac{x-\xi}{\delta})\right] + \left.\begin{cases}
O(\delta^{\frac52-\tau}) &\text{if } n=3, 5\\
O(\delta^{3-\tau}) &\text{if } n=4
\end{cases}\right\}
\end{medsize} \\
&\begin{medsize}
\displaystyle \ + O\(\delta^{\frac{n+2}{2}} \left[d(\xi,\pa\Omega)^{-(n-2)}\bigg|\log\frac{d(\xi,\pa\Omega)}{\delta}\bigg| + d(\xi,\pa\Omega)^{-n}\right]\)
\end{medsize}
\end{align*}
provided $n=3,4,5$ and $PU_{\delta,\xi}$ is given by equation \eqref{pu2}.
\end{cor}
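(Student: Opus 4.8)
\textbf{Proof proposal for Corollary \ref{cor:PZ^0exp}.}

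The plan is to differentiate the expansions of $PU_{\delta,\xi}$ obtained in Lemma \ref{lem2.1} with respect to the scaling parameter $\delta$, after multiplying by $\delta$. Recall that $PZ^0_{\delta,\xi} = \delta\,\partial_\delta PU_{\delta,\xi}$ and $Z^0_{\delta,\xi} = \delta\,\partial_\delta U_{\delta,\xi}$, so the desired formula should follow by applying the operator $\delta\partial_\delta$ term by term to the identities in Lemma \ref{lem2.1}. The only subtlety is that the error terms in Lemma \ref{lem2.1} are $L^\infty$ estimates, not pointwise identities with controlled derivatives, so one cannot naively differentiate them; instead I would re-run the proof of Lemma \ref{lem2.1} at the level of the $\delta\partial_\delta$-derivative of the relevant functions, keeping track of how the $\delta$-differentiation affects each power of $\delta$.

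Concretely, first I would observe that the function $\mcs_{\delta,\xi}$ introduced in the proof of Lemma \ref{lem2.1} satisfies $-\Delta(\delta\partial_\delta \mcs_{\delta,\xi}) - \lambda(\delta\partial_\delta\mcs_{\delta,\xi}) = \lambda\,\delta\partial_\delta \wtmcd_n$ in $\Omega$ with boundary data obtained by applying $\delta\partial_\delta$ to the boundary expression; since $\delta\partial_\delta$ applied to a quantity of size $\delta^a$ (times $\delta$-independent or logarithmically-$\delta$-dependent factors) produces a quantity of the same order up to harmless logarithmic factors that are absorbed by the arbitrarily small loss $\tau\in(0,1)$, the elliptic estimates used in Lemma \ref{lem2.1} carry over verbatim. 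This gives the bound on the remainder term in the stated formula. For the explicit main terms, I would simply compute: $\delta\partial_\delta U_{\delta,\xi} = Z^0_{\delta,\xi}$ by definition; $\delta\partial_\delta(a_n\delta^{\frac{n-2}{2}}H^n_\lambda(x,\xi)) = \frac{n-2}{2}a_n\delta^{\frac{n-2}{2}}H^n_\lambda(x,\xi)$ and likewise for the $H(x,\xi)$ term and the elementary terms $-|x-\xi|$, $-\log|x-\xi|$, $\frac{1}{|x-\xi|}-4\lambda|x-\xi|$, each of which is $\delta$-independent so picks up exactly the factor $\frac{n-2}{2}$ (yielding the coefficient $\frac{n-2}{4}\lambda a_n$ after combining with the $\frac{\lambda}{2}a_n$ prefactor); and the term $\delta^{2-\frac{n-2}{2}}\mcd_n(\frac{x-\xi}{\delta})$ is written as is, with its $\delta\partial_\delta$-derivative left in the form $\delta\partial_\delta[\delta^{2-\frac{n-2}{2}}\mcd_n(\frac{x-\xi}{\delta})]$ since this object is already of admissible size and need not be expanded further for later applications.

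The main obstacle, modest as it is, will be verifying that differentiating the $O(\cdot)$ remainders does not degrade their order. This requires going back into the construction of $\mcs_{\delta,\xi}$ and $\wtmcd_n$ rather than treating Lemma \ref{lem2.1} as a black box: one must check that $\delta\partial_\delta\wtmcd_n$ still obeys the $L^t$ bounds used there (it does, because $\delta\partial_\delta$ acting on $\delta^{2-\frac{n-2}{2}}\mcd_n(\frac{x-\xi}{\delta})$ produces terms of the form $\delta^{2-\frac{n-2}{2}}\mcd_n(\frac{x-\xi}{\delta})$ and $\delta^{2-\frac{n-2}{2}}\frac{x-\xi}{\delta}\cdot\nabla\mcd_n(\frac{x-\xi}{\delta})$, both of the same homogeneity and with the same growth/decay profiles, since $|z\cdot\nabla\mcd_n(z)|$ has the same behavior as $|\mcd_n(z)|$ both as $|z|\to 0$ and $|z|\to\infty$ up to logarithmic factors), and that the corresponding boundary term is differentiated similarly. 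Once this is in place, the $L^\infty$ elliptic estimate for $\delta\partial_\delta\mcs_{\delta,\xi}$ is identical to the one in Lemma \ref{lem2.1}, and the corollary follows by collecting terms.
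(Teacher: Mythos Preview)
Your proposal is correct and matches the paper's approach exactly: the paper's proof consists of the single sentence ``We can argue as in the proof of Lemma~\ref{lem2.1}. We omit the details.'' Your write-up is in fact more detailed than the paper's, and your identification of the key technical point---that one must re-run the elliptic estimate for $\delta\partial_\delta\mcs_{\delta,\xi}$ rather than differentiate the $O(\cdot)$ remainder as a black box---is precisely the content of ``argue as in the proof.''
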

\begin{proof}
We can argue as in the proof of Lemma \ref{lem2.1}. We omit the details.
\end{proof}

\begin{cor}\label{cor:PZ^kexp}
Suppose that $x,\, \xi\in\Omega$, $\delta>0$ is small, and $k=1,\ldots,n$. For any $\tau \in (0,1)$, it holds that
\[PZ^k_{\delta,\xi}(x) = Z^k_{\delta,\xi}(x) - a_n\delta^{\frac{n}{2}}\pa_{\xi^k}H(x,\xi) + O\(\delta^{\frac{n+2}{2}}d(\xi,\pa\Omega)^{-n}\)\]
provided $n \ge 3$ and $PU_{\delta,\xi}$ is given by equation \eqref{pu1}, and
\begin{align*}
\begin{medsize}
\displaystyle PZ^k_{\delta,\xi}(x)
\end{medsize}
&\begin{medsize}
\displaystyle = Z^k_{\delta,\xi}(x)+a_n\delta^{\frac{n}{2}}
\left.\begin{cases}
\displaystyle \frac{\lambda}{2} \frac{(x-\xi)^k}{|x-\xi|} &\text{if } n=3\\
\displaystyle \frac{\lambda}{2} \frac{(x-\xi)^k}{|x-\xi|^2} &\text{if } n=4
\end{cases}\right\}
-\delta^{\frac{n}{2}} a_n\pa_{\xi^k}H^n_\lambda(x,\xi) + \delta\pa_{\xi^k}\left[\delta^{2-\frac{n-2}{2}}\mcd_n\(\frac{x-\xi}{\delta}\)\right]
\end{medsize} \\
&\begin{medsize}
\ + \left.\begin{cases}
\displaystyle O(\delta^{\frac52-\tau}) + O\(\delta^{\frac{n+2}{2}} \left[d(\xi,\pa\Omega)^{-(n-2)}\bigg|\log\frac{d(\xi,\pa\Omega)}{\delta}\bigg| + d(\xi,\pa\Omega)^{-n}\right]\) &\text{if } n=3\\
\displaystyle O(\delta^{3-\tau}) + O\(\delta^{\frac{n+2}{2}} \left[d(\xi,\pa\Omega)^{-(n-2)}\bigg|\log\frac{d(\xi,\pa\Omega)}{\delta}\bigg| + d(\xi,\pa\Omega)^{-n}\right]\) &\text{if } n=4
\end{cases}\right\}
\end{medsize}
\end{align*}
provided $n=3,4$ and $PU_{\delta,\xi}$ is given by equation \eqref{pu2}. Furthermore, if $n=5$ and $PU_{\delta,\xi}$ is given by equation \eqref{pu2}, then
\begin{align*}
PZ^k_{\delta,\xi}(x) &= Z^k_{\delta,\xi}(x) + a_n\delta^{\frac{n}{2}} \left[\frac{\lambda}{2} \frac{(x-\xi)^k}{|x-\xi|^3} +2\lambda^2 \frac{(x-\xi)^k}{|x-\xi|}\right]
-\delta^{\frac{n}{2}} a_n\pa_{\xi^k}H^n_\lambda(x,\xi) \\
&\ + \delta\pa_{\xi^k}\left[\delta^{2-\frac{n-2}{2}}\mcd_n\(\frac{x-\xi}{\delta}\)\right] + \delta\pa_{\xi^k}\mcs_{\delta,\xi}(x),
\end{align*}
where the function $\mcs_{\delta,\xi}$ satisfies
\[\|\delta\pa_{\xi^k}\mcs_{\delta,\xi}\|_{L^t(\Omega)} \lesssim \delta^{\frac12+\frac{5}{t}} + O\(\delta^{\frac{n+2}{2}}\left[d(\xi,\pa\Omega)^{-(n-2)}+\delta d(\xi,\pa\Omega)^{-(n+1)}\right]\)\]
for any $t\in(\frac53, \frac52)$.\footnote{We have not deduced a pointwise estimate of $|\delta \pa_{\xi^k}\mcs_{\delta,\xi}|$ for this case. Its $L^t$-estimate is sufficient for later use.}
\end{cor}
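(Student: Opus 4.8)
\textbf{Proof proposal for Corollary \ref{cor:PZ^kexp}.}

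The plan is to mimic the proof of Lemma \ref{lem2.1}, replacing the dilation derivative $\delta\pa_\delta$ by the translation derivative $\delta\pa_{\xi^k}$, and carefully tracking how the differentiation in $\xi^k$ acts on each of the building blocks appearing in the expansion of $PU_{\delta,\xi}$. Recall from the proof of Lemma \ref{lem2.1} that, for $PU_{\delta,\xi}$ given by \eqref{pu2} with $n=3,4,5$, one has the exact identity
\[
PU_{\delta,\xi}(x) = U_{\delta,\xi}(x) - a_n\delta^{\frac{n-2}{2}}H_\lambda(x,\xi) + \wtmcd_n(x) + \mcs_{\delta,\xi}(x),
\]
where $\wtmcd_n(x)=\delta^{2-\frac{n-2}{2}}\mcd_n(\frac{x-\xi}{\delta})$ and $\mcs_{\delta,\xi}$ solves the boundary value problem derived there. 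Applying $\delta\pa_{\xi^k}$ term by term: $\delta\pa_{\xi^k}U_{\delta,\xi}=Z^k_{\delta,\xi}$ by definition; $\delta\pa_{\xi^k}[\delta^{\frac{n-2}{2}}H_\lambda(x,\xi)] = \delta^{\frac n2}\pa_{\xi^k}H_\lambda(x,\xi)$, which, after inserting the decomposition of $H_\lambda$ into its explicit singular part plus $H^n_\lambda$, produces precisely the explicit $|x-\xi|$-dependent terms and the $\delta^{n/2}a_n\pa_{\xi^k}H^n_\lambda(x,\xi)$ term recorded in the statement (using the $C^{1,\alpha}$ regularity of $H^n_\lambda$); and $\delta\pa_{\xi^k}\wtmcd_n(x) = \delta\pa_{\xi^k}[\delta^{2-\frac{n-2}{2}}\mcd_n(\frac{x-\xi}{\delta})]$, which is kept as is.

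The remaining — and only genuinely new — point is to estimate $\delta\pa_{\xi^k}\mcs_{\delta,\xi}$. Differentiating the equation $\Delta\mcs_{\delta,\xi}+\lambda\mcs_{\delta,\xi}=\lambda\wtmcd_n$ in $\xi^k$ gives $\Delta(\delta\pa_{\xi^k}\mcs_{\delta,\xi})+\lambda(\delta\pa_{\xi^k}\mcs_{\delta,\xi}) = \lambda\,\delta\pa_{\xi^k}\wtmcd_n$ in $\Omega$, with boundary value $\delta\pa_{\xi^k}\mcs_{\delta,\xi} = -a_n\delta\pa_{\xi^k}[(\frac{\delta}{\delta^2+|x-\xi|^2})^{\frac{n-2}{2}}-\frac{\delta^{\frac{n-2}{2}}}{|x-\xi|^{n-2}}] - \delta\pa_{\xi^k}\wtmcd_n$ on $\pa\Omega$. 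For $n=3,4$, one controls $\|\delta\pa_{\xi^k}\wtmcd_n\|_{L^t}$ by the same scaling argument as for $\wtmcd_n$ itself (the extra $\delta\pa_{\xi^k}$ is scale-invariant and only shifts exponents by the stated amount), bounds the boundary term using $d(\xi,\pa\Omega)^{-1}|x-\xi|^{-1}\lesssim d(\xi,\pa\Omega)^{-2}$ on $\pa\Omega$, and concludes by $L^t\to L^\infty$ elliptic estimates exactly as before, absorbing everything into the $O(\delta^{5/2-\tau})$ (resp. $O(\delta^{3-\tau})$) and boundary-error terms. For $n=5$, the function $\mcd_5$ has a $|z|^{-1}$ singularity at the origin, so $\delta\pa_{\xi^k}\wtmcd_5$ only lies in $L^t$ for $t\in(\frac53,\frac52)$ and one does \emph{not} upgrade to $L^\infty$; instead one keeps the $L^t$-bound $\|\delta\pa_{\xi^k}\wtmcd_5\|_{L^t}\lesssim \delta^{\frac12+\frac5t}$, runs the $L^t$ elliptic estimate for the equation above, and adds the boundary contribution, yielding the stated $\|\delta\pa_{\xi^k}\mcs_{\delta,\xi}\|_{L^t(\Omega)}\lesssim \delta^{\frac12+\frac5t} + O(\delta^{\frac{n+2}{2}}[d(\xi,\pa\Omega)^{-(n-2)}+\delta\, d(\xi,\pa\Omega)^{-(n+1)}])$, the extra $\delta\,d(\xi,\pa\Omega)^{-(n+1)}$ reflecting that differentiating the boundary data in $\xi^k$ costs one factor of $|x-\xi|^{-1}\lesssim d(\xi,\pa\Omega)^{-1}$.

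The main obstacle, such as it is, is bookkeeping: one must verify that differentiating the explicit singular kernels in $\xi^k$ does not worsen the interior or boundary error beyond what is claimed — in particular that near $x=\xi$ the singularities $\pa_{\xi^k}|x-\xi|^{-j}\sim |x-\xi|^{-(j+1)}$ remain integrable against the relevant $L^t$ spaces for the stated ranges of $t$, and that the $d(\xi,\pa\Omega)$-dependence on the boundary is sharp. Since all the hard analytic content (the construction of $H^n_\lambda$, $\mcd_n$, and the scaling estimates for $\wtmcd_n$) is already in place from Lemma \ref{lem2.1}, these are routine modifications, which is why the proof in the text simply says ``We can argue as in the proof of Lemma \ref{lem2.1}.''
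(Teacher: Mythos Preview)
Your proposal is correct and follows essentially the same approach as the paper: differentiate the decomposition from Lemma~\ref{lem2.1} in $\xi^k$, control $\|\delta\pa_{\xi^k}\wtmcd_n\|_{L^t}$ via the asymptotics of $|\nabla\mcd_n(z)|$ near $0$ and at infinity (the paper records these explicitly, yielding the same $L^t$-ranges you use), and then apply elliptic estimates to the differentiated boundary value problem for $\mcs_{\delta,\xi}$. The paper's proof is in fact terser than yours---it just states the $|\nabla\mcd_n|$ asymptotics and the resulting $L^t$-bounds, then invokes ``the same strategy as in the proof of Lemma~\ref{lem2.1}''---so your write-up, including the explanation of why $n=5$ only yields an $L^t$-estimate rather than a pointwise one, is if anything a fuller account of the same argument.
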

\begin{proof}
We notice that
\[|\nabla\mcd_n(z)| \simeq \begin{cases}
|\log|z|| &\text{if } n=3, \\
|z|^{-1} &\text{if } n=4, \\
|z|^{-2} &\text{if } n=5
\end{cases} \quad \text{as } |z|\to 0,
\quad \text{and} \quad
|\nabla\mcd_n(z)|\simeq |z|^{-(n-2)} \quad \text{as } |z|\to \infty.\]
Thus, elliptic estimates yield that $\|\delta\pa_{\xi^k}\wtmcd_3\|_{L^t} \lesssim \delta^{\frac32+\frac{3}{t}}$ for any $t>3$,
$\|\delta\pa_{\xi^k}\wtmcd_4\|_{L^t} \lesssim \delta^{1 + \frac{4}{t}}$ for any $t\in(2,4)$, and $\|\delta\pa_{\xi^k}\wtmcd_5\|_{L^t} \lesssim \delta^{\frac12+\frac{5}{t}}$ for any $t\in(\frac53, \frac52)$.
Using these results, we employ the same strategy in the proof of Lemma \ref{lem2.1}.
\end{proof}

\subsection{$L^{\frac{2n}{n+2}}(\Omega)$-norm estimates for the terms $\I_1$, $\I_2$, and $\I_3$}
We recall the quantities $\I_1$, $\I_2$, and $\I_3$ from \eqref{eq:I2}. We estimate their $L^{\frac{2n}{n+2}}(\Omega)$-norms.
\begin{lemma}\label{lem2.2}
For each $i \in \{1,\ldots,\nu\}$, we assume that $PU_i = PU_{\delta_i,\xi_i}$ satisfies \eqref{pu1} if $n \ge 5$ or \textup{[}$n = 3, 4$ and $u_0>0$\textup{]}, and satisfies \eqref{pu2} if $n=3,4$ and $u_0=0$. Then it holds that
\begin{multline*}
\|\I_1\|_{L^{\frac{p+1}{p}}(\Omega)}+\|\I_2\|_{L^{\frac{p+1}{p}}(\Omega)}+\|\I_3\|_{L^{\frac{p+1}{p}}(\Omega)} \lesssim \begin{medsize}
\left.\begin{cases}
0 &\text{if } n=3,u_0=0\\
\displaystyle \max_i\delta_i^2|\log\delta_i| &\text{if } n=4,u_0=0\\
\displaystyle \max_i\delta_i^{\frac{n-2}{2}} &\text{if } [n=3,4 \text{ and } u_0>0] \text{ or } n=5\\
\displaystyle \max_i\delta_i^2|\log\delta_i|^{\frac23} &\text{if } n=6\\
\displaystyle \max_i\delta_i^2, &\text{if } n\ge 7
\end{cases}\right\}
\end{medsize}\\
\begin{medsize}
+ \left.\begin{cases}
\displaystyle \max_i \ka_i^{n-2} & \text{if } n=3,4,5\\
\displaystyle \max_i \ka_i^4|\log\ka_i|^{\frac23} &\text{if } n=6\\
\displaystyle \max_i \ka_i^{\frac{n+2}{2}} &\text{if } n\ge 7
\end{cases}\right\}
+ \left.\begin{cases}
Q &\text{if } n=3,4,5\\
Q|\log Q|^{\frac23} &\text{if } n=6\\
Q^{\frac{n+2}{2(n-2)}} &\text{if } n \ge 7
\end{cases}\right\}\mone_{\{\nu\ge 2\}}
\end{medsize}
\end{multline*}
provided $\ep_1>0$ is small. Here, $\ka_i := \frac{\delta_i}{d(\xi_i,\pa\Omega)}$.
\end{lemma}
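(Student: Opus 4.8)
The quantity $\I_1 + \I_2 + \I_3$ decomposes into three structurally different pieces, so the plan is to estimate $\|\I_1\|_{L^{(p+1)/p}}$, $\|\I_2\|_{L^{(p+1)/p}}$ and $\|\I_3\|_{L^{(p+1)/p}}$ separately and then collect the bounds. Throughout, $\frac{p+1}{p} = \frac{2n}{n+2}$, and I will repeatedly use H\"older's inequality together with the elementary pointwise inequalities for sums of powers recorded in Appendix \ref{a} (the ones controlling $|(a+b)^p - a^p - b^p|$ by $a^{p-1}b + ab^{p-1}$ when $p > 1$, and by $a^{p-1}b$ when $p \le 2$, i.e.\ $n \ge 6$). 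The first step is to replace each $PU_i$ by $U_i$ up to a controlled error using Lemma \ref{lem2.1}: the difference $PU_i - U_i$ is, to leading order, $-a_n\delta_i^{(n-2)/2}H(x,\xi_i)$ (or the corresponding regular part in the \eqref{pu2} case), whose $L^{2n/(n+2)}(\Omega)$-norm is $O(\delta_i^{(n-2)/2})$ when $u_0 > 0$ and $n = 3,4$ or $n \ge 5$, and $O(\delta_i^2|\log\delta_i|)$ when $n = 4, u_0 = 0$, etc.; this is exactly the first brace in the claimed bound, coming from the "bubble vs.\ boundary / linear term" interaction. The $d(\xi_i,\pa\Omega)$-dependence is captured by the $O(\delta^{(n+2)/2}d(\xi,\pa\Omega)^{-n})$ remainders, which after taking $L^{2n/(n+2)}$-norms over $\Omega$ produce the $\ka_i^{n-2}$, $\ka_i^4|\log\ka_i|^{2/3}$, $\ka_i^{(n+2)/2}$ terms (the middle brace); here one integrates the tail $\delta^{n/2+1}|x-\xi|^{-n}$ against volume and balances at scale $|x-\xi| \simeq d(\xi,\pa\Omega)$.

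For $\I_3 = \sum_i[\Delta PU_i + \lambda PU_i + (PU_i)^p]$, one uses that $PU_i$ solves \eqref{pu1} or \eqref{pu2} exactly, so $\Delta PU_i = -U_i^p$ (resp.\ $\Delta PU_i + \lambda PU_i = -U_i^p$), whence each summand collapses to $(PU_i)^p - U_i^p + \lambda PU_i$ (resp.\ $(PU_i)^p - U_i^p$). The term $(PU_i)^p - U_i^p$ is handled by the mean value theorem: $|(PU_i)^p - U_i^p| \lesssim U_i^{p-1}|PU_i - U_i| \lesssim U_i^{p-1}(\delta_i^{(n-2)/2}\vph(x) + \text{remainders})$, and integrating gives the $\delta_i$-powers in the first brace together with $\ka_i$-contributions; the leftover $\lambda PU_i$ term (present only in the \eqref{pu2} case, i.e.\ low dimensions with $u_0 = 0$) is bounded by $\lambda U_i$ whose $L^{2n/(n+2)}$-norm is $O(\delta_i^{(n-2)/2})$ for $n = 3,4$, matching the stated bound. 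This is where the dimensional split between \eqref{pu1} and \eqref{pu2} enters, and the case $n = 3, u_0 = 0$ gives a clean $0$ on the first line because there the leading $\delta^{1/2}$ pieces of $PU - U$ are subsumed into $H_\lambda^3$ and the $\mcd_3$ correction, which are then absorbed into the $\ka$ and $Q$ terms.

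For $\I_2 = \sigma^p - \sum_i (PU_i)^p$ with $\sigma = \sum_i PU_i$, the point is that this is identically zero when $\nu = 1$ — hence the factor $\mone_{\{\nu \ge 2\}}$ — and for $\nu \ge 2$ it is controlled by the interaction integrals $\int_\Omega U_i^{p-1}U_j$ (for $p \ge 2$, also $\int U_iU_j^{p-1}$), which by the standard interaction estimates in Appendix \ref{a} are $\simeq q_{ij}$ for $n \ge 5$, $\simeq q_{ij}|\log q_{ij}|^{2/3}$ for $n = 6$ after taking the $L^{2n/(n+2)}$-norm, and $\simeq q_{ij}^{(n+2)/(2(n-2))}$ for $n \ge 7$ — precisely the last brace with $Q = \max_{ij}q_{ij}$. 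One uses $PU_i \le U_i$ from Lemma \ref{lem2.1} to pass from $PU_i$ to $U_i$ in these estimates. The main obstacle is the bookkeeping: one must verify that in each of the (roughly eight) dimensional/sign regimes the three sources — the $\delta_i$-power from $PU_i - U_i$, the boundary power $\ka_i$, and the interaction power $Q$ — combine to give exactly the stated piecewise bound without any one of them dominating unexpectedly, and in particular that the logarithmic factors in $n = 6$ appear with the correct exponent $2/3$ (which traces back to $\|U_i^{p-1}U_j\|_{L^{2n/(n+2)}}$ with $p - 1 = 1$ in dimension $6$, where the borderline integral $\int (1+|y|^2)^{-2}$ against a slowly varying factor produces $|\log|^{2/3}$ after raising to the power $\frac{n+2}{2n} = \frac{2}{3}$). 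The estimates themselves are routine once the pointwise inequalities and the expansions of Lemma \ref{lem2.1} are in hand; no coercivity or linear theory is needed here.
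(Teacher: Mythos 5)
There is a genuine gap in your handling of $\I_3$ that would cause the proof to give wrong bounds. You write that "the leftover $\lambda PU_i$ term (present only in the \eqref{pu2} case, i.e.\ low dimensions with $u_0 = 0$) is bounded by $\lambda U_i$." This is backwards. If $PU_i$ solves \eqref{pu1}, then $\Delta PU_i = -U_i^p$, and
\[
\I_3 = \sum_i\bigl[\Delta PU_i + \lambda PU_i + (PU_i)^p\bigr] = \sum_i\bigl[(PU_i)^p - U_i^p\bigr] + \lambda\sum_i PU_i,
\]
so the $\lambda PU_i$ term \emph{survives} precisely in the \eqref{pu1} case (i.e.\ $n\ge 5$ or $[n=3,4,\,u_0>0]$). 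If $PU_i$ solves \eqref{pu2}, then $\Delta PU_i + \lambda PU_i = -U_i^p$, so $\I_3 = \sum_i[(PU_i)^p - U_i^p]$ and there is \emph{no} leftover linear term. Carrying out your plan as stated would insert a spurious $\lambda\|U_i\|_{L^{(p+1)/p}} \simeq \delta_i^{1/2}$ contribution for $n=3,\,u_0=0$, which contradicts the claimed first-brace value of $0$ in that regime; and conversely, for $n\ge 5$ with $u_0 = 0$ it would omit the $\lambda\|U_i\|_{L^{(p+1)/p}}$ term that is the actual source of the first brace when $\I_1 = 0$. The paper's bound \eqref{i3e} carries the indicator $\mone_{\{\text{each } PU_i \text{ satisfies \eqref{pu1}}\}}$ on the $\lambda\max_i\|U_i\|$ term for exactly this reason.

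Relatedly, your opening paragraph misattributes the first brace to $\|PU_i - U_i\|_{L^{2n/(n+2)}(\Omega)}$. This is already wrong at the level of arithmetic for $n\ge 6$: with $\xi_i$ in a compact subset of $\Omega$, $PU_i - U_i \simeq -a_n\delta_i^{(n-2)/2}H(\cdot,\xi_i)$ with $H(\cdot,\xi_i)$ bounded, so $\|PU_i - U_i\|_{L^{2n/(n+2)}} \simeq \delta_i^{(n-2)/2}$, which for $n=6$ equals $\delta_i^2$, not the stated $\delta_i^2|\log\delta_i|^{2/3}$, and for $n\ge 7$ equals $\delta_i^{(n-2)/2}$, not $\delta_i^2$. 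The first brace in the lemma does not come from $PU_i-U_i$; it comes from $\|U_i\|_{L^{2n/(n+2)}}$-type quantities that arise in two distinct places: from $\I_1$ (the $u_0$--bubble cross terms, controlled by $\min\{u_0^{p-1}U_i,\,u_0 U_i^{p-1}\}$, present only when $u_0>0$), and from the leftover $\lambda PU_i$ in $\I_3$ (present only in the \eqref{pu1} regime). Your proposal never explicitly estimates $\I_1 = (u_0+\sigma)^p - u_0^p - \sigma^p$ and its $u_0$-dependent terms, which is where the $\mone_{\{u_0>0\}}$ dependence of the first brace originates in the paper's proof (see the splitting \eqref{i2ex} and the pointwise estimate \eqref{i1po}). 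Without that and with the \eqref{pu1}/\eqref{pu2} roles swapped, the bookkeeping across the dimensional/sign regimes cannot close.

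Two smaller points: the paper treats $\I_1$ and $\I_2$ jointly via the pointwise bound \eqref{i1po} (valid for $n\ge 6$), which also controls the bubble--bubble interactions by $\min\{U_i^{p-1}U_j,\,U_iU_j^{p-1}\}$; your parenthetical "(for $p\ge 2$, also $\int U_iU_j^{p-1}$)" has the split backwards -- the minimum is needed precisely when $p < 2$ (i.e.\ $n\ge 7$), while for $p\ge 2$ (i.e.\ $n\le 6$) the cruder sum $U_i^{p-1}U_j + U_iU_j^{p-1}$ suffices. Finally, the logarithm for $n = 4,\,u_0 = 0$ does not come from replacing $PU$ by $U$ in $\I_1$ or $\I_2$ (both of which have no $\delta_i^2|\log\delta_i|$ contribution there); it enters through the $\log|x-\xi|$ structure in the $n=4$ expansion of $PU_{\delta,\xi} - U_{\delta,\xi}$ from Lemma \ref{lem2.1}, which feeds into the $(PU_i - U_i)U_i^{p-1}$ part of $\I_3$ (see the $J_1$ quantity in \eqref{pup1}).
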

\begin{proof}
We begin by introducing an elementary inequality: For fixed $m \in \N$, $s > 1$, and any $a_1,\ldots,\,a_m \ge 0$, it holds that
\[0 \le \(\sum_{i=1}^{m}a_i\)^{s}-\sum_{i=1}^ma_i^s \lesssim \sum_{i\ne j}\left[(a_i+a_j)^s-a_i^s-a_j^s\right] \lesssim \begin{cases}
\displaystyle \sum_{i\ne j} a_i^{s-1}a_j &\text{if } s>2,\\
\displaystyle \sum_{i\ne j} \min\{a_i^{s-1}a_j,a_ia_j^{s-1}\} &\text{if } s\le 2.
\end{cases}\]
From this, we derive that
\begin{equation}\label{ilow}
0 \le \I_1+\I_2\lesssim\sum_{i=1}^{\nu}(U_i^{p-1}+U_j)+\sum_{i\ne j}U_i^{p-1}U_j
\quad \text{for } n=3,4,5.
\end{equation}

We next consider the cases $n\ge 6$. Fixing any $i\in\{1,\ldots,\nu\}$, we decompose $\I_1$ into three parts:
\[\I_1= \I_{11} + \I_{12} + \I_{13},\]
where
\begin{align*}
I_{11} &:= (u_0+ PU_i)^{p}-u_0^{p}- (PU_i)^p, \\
I_{12} &:= (u_0+\sigma)^{p}-(u_0+ PU_i)^{p}-\bigg(\sum_{j\neq i} PU_j\bigg)^{p}, \\
I_{13} &:= (PU_i)^p+\bigg(\sum_{j\neq i} PU_j\bigg)^{p}-\sigma^{p}.
\end{align*}
Considering the relationship between $u_0$ and $U_i$ in different regions, i.e., $u_0 \lesssim U_i$ when $|x-\xi_i| \le \sqrt{\delta_i}$ and $u_0 \gtrsim U_i$ when $|x-\xi_i| \ge \sqrt{\delta_i}$, we obtain that
\[|\I_{11}| \lesssim \min\{u_0(PU_i)^{p-1}, u_0^{p-1}PU_i\} \lesssim U_i^{p-1}\mone_{|x-\xi_i| \le \sqrt{\delta_i}}+U_i\mone_{|x-\xi_i| \ge \sqrt{\delta_i}}.\]
Similarly, we have
\begin{align*}
|\I_{12}| &\lesssim \sum_{j\neq i}\min\left\{(u_0+ PU_i)^{p-1} PU_j, (u_0+ PU_i) (PU_j)^{p-1}\right\}\\
&\lesssim \sum_{j\neq i} \left[\min\{U_i^{p-1} U_j, U_j^{p-1} U_i\}\mone_{|x-\xi_i| \le \sqrt{\delta_i}}+\min\{U_j, U_j^{p-1}\}\mone_{|x-\xi_i| \ge \sqrt{\delta_i}}\right].
\end{align*}
In addition,
\[|\I_{13}|+\I_2\lesssim \sum_{j\neq i}\min\{U_i^{p-1} U_j, U_j^{p-1} U_i\}.\]
By introducing the rescaled variable $x_i:=\delta_i^{-1}(x-\xi_i)$, we further obtain a pointwise estimate for $\I_1+\I_2$ with the aid of \cite[Proposition 3.4]{DSW}:
\begin{equation}\label{i1po}
\begin{aligned}
&\ \I_1+\I_2 \\
&\lesssim \sum_{i=1}^{\nu}\min\{U_i, U_i^{p-1}\}\mone_{\{u_0>0\}}+\sum_{j\neq i}\min\{ U_i^{p-1} U_j, U_j^{p-1} U_i\}\mone_{\{\nu\ge 2\}} \\
&\lesssim \sum_{i=1}^{\nu}\left[\frac{\delta_i^{-2}}{\la x_i\ra^4} \mone_{\{|x_i|\le {\delta_i}^{-\frac12}\}}+\frac{\delta_i^{-\frac{n-2}{2}}}{\la x_i\ra^{n-2}} \mone_{\{|x_i|\ge {\delta_i}^{-\frac12}\}}\right] \mone_{\{u_0>0\}} \\
&\ + \left.\begin{cases}
\displaystyle \sum_{i=1}^{\nu} \left[\delta_i^{-4} \frac{\msr^{-4}}{\la x_i \ra^4} \mone_{\{|x_i| < \msr^2\}}(x)+ \delta_i^{-4} \frac{\msr^{-2}}{|x_i|^{5}} \mone_{\{|x_i| \ge \msr^2\}}(x)\right] &\text{if } n=6\\
\displaystyle \sum_{i=1}^{\nu} \left[\delta_i^{-\frac{n+2}{2}} \frac{\msr^{2-n}}{\la x_i \ra^4} \mone_{\{|x_i| < \msr\}}(x) + \delta_i^{-\frac{n+2}{2}} \frac{\msr^{-4}}{|x_i|^{n-2}} \mone_{\{|x_i| \ge \msr\}}(x)\right] &\text{if } n\ge7
\end{cases}\right\}\mone_{\{\nu\ge 2\}}.
\end{aligned}
\end{equation}

Employing \eqref{ilow} and \eqref{i1po}, we perform direct computations to find
\begin{multline}\label{i12e}
\|\I_1\|_{L^{\frac{p+1}{p}}(\Omega)}+\|\I_2\|_{L^{\frac{p+1}{p}}(\Omega)} \\
\lesssim \left.\begin{cases}
\displaystyle \max_i\delta_i^{\frac{n-2}{2}} &\text{if } n=3,4,5\\
\displaystyle \max_i\delta_i^2|\log\delta_i|^{\frac23} &\text{if } n=6\\
\displaystyle \max_i\delta_i^{\frac{n+2}{4}} &\text{if } n\ge 7
\end{cases}\right\}\mone_{\{u_0>0\}}
+ \left.\begin{cases}
Q &\text{if } n=3,4,5\\
Q|\log Q|^{\frac23} &\text{if } n=6\\
Q^{\frac{n+2}{2(n-2)}} &\text{if } n\ge 7
\end{cases}\right\}\mone_{\{\nu\ge2\}}.
\end{multline}

Now, we analyze the term $\I_3$. By applying Lemma \ref{lem2.1}, we observe that
\begin{align}
&\ \|(PU_i-U_i)U_i^{p-1}\|_{L^{\frac{p+1}{p}}(\Omega)} \nonumber\\
&\lesssim \|(PU_i-U_i)U_i^{p-1}\|_{L^{\frac{p+1}{p}}\(B(\xi_i, \frac{d(\xi_i,\pa\Omega)}{\delta_i})\)}+\|U_i^p\|_{L^{\frac{p+1}{p}}\(B(\xi_i, \frac{d(\xi_i,\pa\Omega)}{\delta_i})^c\)} \label{pup1}\\
&\lesssim \begin{cases}
\displaystyle \max_i \ka_i^{n-2} &\text{if } n=3 \text{ or } [n=4, \text{ each } PU_i \text{ satisfies \eqref{pu1}}] \text{ or } n= 5,\\
\displaystyle \max_i \(\delta_i^2|\log\delta_i| + \ka_i^2\) &\text{if } n=4 \text{ and each } PU_i\text{ satisfies \eqref{pu2}},\\
\displaystyle \max_i \ka_i^4|\log\ka_i|^{\frac23} &\text{if } n=6,\\
\displaystyle \max_i \ka_i^{\frac{n+2}{2}} &\text{if } n\ge 7
\end{cases} \nonumber\\
&=: J_1 \nonumber
\end{align}
where $\ka_i = \frac{\delta_i}{d(\xi_i,\pa\Omega)}$. Using estimate \eqref{iqu} and Lemma~\ref{a4}, we obtain
\begin{equation}\label{i3e}
\begin{aligned}
\|\I_3\|_{L^{\frac{p+1}{p}}(\Omega)}
&\lesssim \max_i\|(PU_i-U_i)U_i^{p-1}\|_{L^{\frac{p+1}{p}}(\Omega)} + \max_i\|(PU_i-U_i)^{p-1}U_i\|_{L^{\frac{p+1}{p}}(\Omega)} \\
&\ +\lambda\max_i\|U_i\|_{L^{\frac{p+1}{p}}(\Omega)}\mone_{\{\text{each } PU_i \text{ satisfies \eqref{pu1}}\}} \\
&\lesssim J_1
+\left.\begin{cases}
\displaystyle \max_i\delta_i^{\frac{n-2}{2}} &\text{if } n=3,4,5\\
\displaystyle \max_i\delta_i^2|\log\delta_i|^{\frac23} &\text{if } n=6\\
\displaystyle \max_i\delta_i^2 &\text{if } n\ge 7
\end{cases}\right\} \mone_{\{\text{each } PU_i \text{ satisfies \eqref{pu1}}\}}.
\end{aligned}
\end{equation}

By collecting estimates \eqref{i12e} and \eqref{i3e}, we conclude the proof.
\end{proof}

\subsection{Projections of $\I_1$, $\I_2$, and $\I_3$ onto the $PZ_j^0$-direction}
Given $j = 1,\ldots, \nu$, we evaluate the integrals $\int_{\Omega} \I_1 PZ_j^0$, $\int_{\Omega} \I_2 PZ_j^0$, and $\int_{\Omega} \I_3 PZ_j^0$,
which correspond to the projections of $\I_1$, $\I_2$, and $\I_3$ onto the directions of $PZ_j^0$, respectively.
\begin{lemma}\label{lem2.31}
Assume that $u_0>0$. Moreover, when $n=3$, each $PU_i$ satisfies \eqref{pu1} or \eqref{pu2}, and when $n\ge 4$, each $PU_i$ satisfies \eqref{pu1}.
For any $j \in \{1,\ldots,\nu\}$, it holds that
\begin{equation}\label{eq:I2Z}
\begin{aligned}
\int_{\Omega} \I_1 PZ_j^0 &= \mfa_nu_0(\xi_j)\delta_j^{\frac{n-2}{2}}+o(Q)+ \left.\begin{cases}
\displaystyle O(\max_i\delta_i) &\text{if } n=3 \\
\displaystyle O(\max_i\delta_i^2|\log\delta_i|) &\text{if } n=4 \\
\displaystyle O(\max_i\delta_i^2) &\text{if } n=5
\end{cases}
\right\} \mone_{\{p>2\}}\\
&\ + O\(\max_i\delta_i^{\frac{n}{2}} + \max_i\ka_i^n\),
\end{aligned}
\end{equation}
where $\ka_i = \frac{\delta_i}{d(\xi_i,\pa\Omega)}$ and $\mfa_n := p\int_{\R^n}U^{p-1}Z^0 > 0$.
\end{lemma}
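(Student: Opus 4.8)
The quantity $\I_1 = (u_0+\sigma)^p - u_0^p - \sigma^p$ encodes the interaction between $u_0$ and the bubbles, and since it is integrated against $PZ_j^0$, which concentrates near $\xi_j$, the plan is to localize at $\xi_j$. First I would fix a radius $r_j$ with $\delta_j \ll r_j \ll \min\{1,\ d(\xi_j,\pa\Omega),\ \min_{i\ne j}|\xi_i-\xi_j|\}$, adapted when necessary to the separation quantity $\msr$ of \eqref{rq} and to $d(\xi_j,\pa\Omega)$, and split $\int_\Omega\I_1 PZ_j^0 = \int_{B(\xi_j,r_j)} + \int_{\Omega\setminus B(\xi_j,r_j)}$. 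On $B(\xi_j,r_j)$ one has $\sigma = PU_j\,(1+o(1))$, the remaining bubbles being negligible there by the separation assumption, and $u_0(x) = u_0(\xi_j) + O(|x-\xi_j|)$ by smoothness of $u_0$ on compacta.

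For the leading term I would use the elementary pointwise expansion $(a+b)^p - a^p - b^p = p\,b^{p-1}a + O\big(b^{p-2}a^2\,\mone_{\{p>2\}} + \min\{b\,a^{p-1},\,b^{p-1}a\}\,\mone_{\{p\le 2\}}\big)$ with $a = u_0$, $b=\sigma$, reduce the main part to $p\,u_0(\xi_j)\,(PU_j)^{p-1}$ (against $PZ_j^0$), replace $PU_j$ by $U_j$ and $PZ_j^0$ by $Z_j^0$ via Lemma~\ref{lem2.1} and Corollary~\ref{cor:PZ^0exp} (invoking the expansion appropriate to \eqref{pu1} or \eqref{pu2}), and extend the integral from $B(\xi_j,r_j)$ to $\R^n$. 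The model integral is then computed from the scaling identity $\int_{\R^n}U_j^{p-1}Z_j^0 = \delta_j^{\frac{n-2}{2}}\int_{\R^n}U^{p-1}Z^0$ together with $\int_{\R^n}U^{p-1}Z^0 = \tfrac1p\frac{d}{d\delta}\big|_{\delta=1}\int_{\R^n}U_\delta^p = \tfrac{n-2}{2p}\int_{\R^n}U^p$, so the leading term is exactly $\mfa_n u_0(\xi_j)\delta_j^{\frac{n-2}{2}}$ with $\mfa_n = \tfrac{n-2}{2}\int_{\R^n}U^p > 0$.

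The error terms are collected one by one. (i) The remainder $p(PU_j)^{p-1}[u_0(x)-u_0(\xi_j)]$ contributes $O(\delta_j^{n/2})$ since $\int_{\R^n}U^{p-1}|y|\,|Z^0|\,dy<\infty$. (ii) When $p>2$, i.e. $n=3,4,5$, the next term $\binom p2\sigma^{p-2}u_0^2$ must be kept; integrating it against $PZ_j^0$ and rescaling by $y=\delta_j^{-1}(x-\xi_j)$, the integral localizes at scale $d(\xi_j,\pa\Omega)/\delta_j$, and the growth rates $\int_{|y|<R}U^{p-2}|Z^0|\simeq 1,\ \log R,\ R$ for $n=3,4,5$ give precisely the $\mone_{\{p>2\}}$ contributions $O(\delta_j)$, $O(\delta_j^2|\log\delta_j|)$, $O(\delta_j^2)$. (iii) The boundary corrections $PU_j-U_j$ and $PZ_j^0-Z_j^0$ from Lemma~\ref{lem2.1}/Corollary~\ref{cor:PZ^0exp}, whose leading part is $O(\delta_j^{\frac{n-2}{2}}H(\cdot,\xi_j))$ with $H(x,\xi_j)\simeq d(\xi_j,\pa\Omega)^{-(n-2)}$ for $x$ near $\xi_j$, are estimated against $(PU_j)^{p-1}$ (and, for the $u_0^2$-piece, $(PU_j)^{p-2}$); a short case distinction according to whether $d(\xi_j,\pa\Omega)\le\delta_j^{1/2}$ or not shows these are $O(\delta_j^{n/2}+\ka_j^n)$. (iv) On the part of $\Omega\setminus B(\xi_j,r_j)$ away from the remaining bubbles one has $\I_1 = p\,u_0^{p-1}\sigma + (\text{lower order})$, and using $PU_i\simeq\big(\int_{\R^n}U^p\big)\delta_i^{\frac{n-2}{2}}G(\cdot,\xi_i)$ away from $\xi_i$ (with $G$ the Green function of $-\Delta$ on $\Omega$), this piece is $O\big(\delta_j^{\frac{n-2}{2}}\max_i\delta_i^{\frac{n-2}{2}}\big)$, absorbed into the buckets above.

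The remaining and, I expect, hardest contribution is that of the other bubbles: the parts of $\I_1$ produced by $\sigma^{p-1}=(PU_j+\sum_{i\ne j}PU_i)^{p-1}$ that involve $PU_i$ with $i\ne j$, and, dually, the concentrated part of $\I_1$ near each $\xi_i$ ($i\ne j$) integrated against the slowly varying $PZ_j^0$. These are treated with the interaction estimates of Appendix~\ref{a}; the point is that each such term carries an extra power of $\delta$ compared with the pure interaction quantity $q_{ij}$ — schematically $\int_{\R^n}U_j^{p-2}U_i$, or $U_i(\xi_j)\,\delta_j^{\frac{n-2}{2}}$, in place of $\int U_j^{p-1}U_i\simeq q_{ij}$ — which upgrades the naive $O(Q)$ bound to $o(Q)$. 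Carrying out this bookkeeping uniformly is the main obstacle: one must run through all relative configurations of $(\delta_i,\delta_j,|\xi_i-\xi_j|)$ allowed by \eqref{rq}, including the nested case $\delta_i\gg\delta_j$ with $|\xi_i-\xi_j|\lesssim\delta_i$ (where the support of $PZ_j^0$ lies inside the core of $PU_i$), and check that in every regime the gain over $q_{ij}$ is genuine, so that all the cross contributions collapse into the single term $o(Q)$. A secondary technical point is the two-regime case analysis in (iii), needed to absorb the boundary corrections into $O(\delta_j^{n/2}+\ka_j^n)$ with no a priori control on $d(\xi_j,\pa\Omega)$.
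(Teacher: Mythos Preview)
Your plan is essentially the paper's proof: expand $\I_1$ via \eqref{ab6} into $p\,u_0\sigma^{p-1}+O(u_0^2\sigma^{p-2})\mone_{\{p>2\}}+\dots$ on the bubble cores and $p\,u_0^{p-1}\sigma+\dots$ outside, isolate the $(PU_j)^{p-1}$ piece to produce $\mfa_n u_0(\xi_j)\delta_j^{(n-2)/2}$, and then show the cross terms are $o(Q)+O(\max_i\delta_i^{n/2})$ by a three-case analysis on $\msr_{ij}$ exactly as in \eqref{22}. One small point: your radius $r_j\ll\min_{i\ne j}|\xi_i-\xi_j|$ need not exist in tower configurations ($\delta_i\ll\delta_j$, $|\xi_i-\xi_j|\lesssim\delta_j$); the paper sidesteps this by taking the inner region to be $\cup_i B(\xi_i,\eta\sqrt{\delta_i})$ rather than a single ball around $\xi_j$, which makes the bookkeeping in your last paragraph cleaner.
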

\begin{proof}
By \eqref{ab6}, there exists a constant $\eta>0$ such that
\begin{equation}\label{i2ex}
\begin{aligned}
\I_1 &= \left[pu_0\sigma^{p-1}+O(u_0^2\sigma^{p-2})\mone_{\{p>2\}}+ O\(u_0^{p}\)\right] \mone_{\cup_{i=1}^{\nu} B(\xi_i,\eta\sqrt{\delta_i})} \\
&\ +\left[pu_0^{p-1}\sigma +O(u_0^{p-2}\sigma^2)\mone_{\{p>2\}} + O(\sigma^{p})\right] \mone_{\cap_{i=1}^{\nu} B(\xi_i,\eta\sqrt{\delta_i})^c}.
\end{aligned}
\end{equation}
The remainder of the proof is split into two steps.

\medskip \noindent \doublebox{\textsc{Step 1.}}
It follows from $|PZ_j^0| \lesssim U_j$, Lemma \ref{lem2.1} and Corollary \ref{cor:PZ^0exp}, and Young's inequality that 
\begin{align*}
&\ \bigg|\int_{B(\xi_j, d(\xi_j,\pa\Omega))} \left[(PU_j)^{p-1} PZ_j^0 -U_j^{p-1}Z_j^0\right]\bigg| \\
&\lesssim \int_{B(\xi_j, d(\xi_j,\pa\Omega))} (|PU_j-U_j|+|PZ_j^0-Z_j^0|) U_j^{p-1} +\int_{B(\xi_j, d(\xi_j,\pa\Omega))} |PU_j-U_j|^{p-1}U_j \\
&\lesssim \delta_j^{\frac{n-2}{2}}\ka_j^2 \lesssim \delta_j^{\frac{n}{2}}+\ka_j^n.
\end{align*}
Therefore,
\begin{equation}\label{21}
\begin{aligned}
&\ p \int_{\cup_{i=1}^{\nu} B(\xi_i,\eta\sqrt{\delta_i})} u_0 (PU_j)^{p-1} PZ_j^0 \\
&= p\int_{B(\xi_j, d(\xi_j,\pa\Omega))} u_0 (PU_j)^{p-1} PZ_j^0 + O\bigg(\int_{B(\xi_j,d(\xi_j,\pa\Omega))^c} U_j^p\bigg) \\
&= \displaystyle p \delta_j^{\frac{n-2}{2}} \left[u_0(\xi_j) \int_{\R^n} U^{p-1}Z^0 + O\bigg(\int_{B(0,\ka_j)} |\delta_j y|^2 U^{p}(y)dy\bigg)\right] + O\(\delta_j^{\frac{n}{2}}+\ka_j^n\) \\
&= \mfa_n \delta_j^{\frac{n-2}{2}} u_0(\xi_j) +O\(\delta_j^{\frac{n}{2}}+\ka_j^n\).
\end{aligned}
\end{equation}

We claim that
\begin{equation}\label{22}
\left|\int_{\Omega} u_0 \left[\sigma^{p-1}- (PU_j)^{p-1}\right] PZ_j^0\right|
\lesssim \sum_{i \ne l} \int_{\Omega} U_i^{p-1} U_l = o(Q) + O\(\max_i\delta_i^{\frac{n}{2}}\).
\end{equation}
The inequality immediately follows from \eqref{ab1}. To analyze the equality, we set $z_{ij}:=\delta_i^{-1}(\xi_j-\xi_i)$ and $d_{ij}:=|\xi_i-\xi_j|$.
We distinguish three cases based on the value of $\msr_{ij}$.

\medskip \noindent \fbox{Case 1:} Suppose that $\msr_{ij} = \frac{d_{ij}}{\sqrt{\delta_i \delta_j}}$.
Then, it holds that $d_{ij} \ge \delta_i$ and $(\sqrt{\delta_i\delta_j}/d_{ij})^{n-2} \simeq q_{ij} \le Q$. In view of Lemma \ref{a1}, we confirm that
\[\int_{\Omega} U_i^{p-1} U_j \lesssim
\left.\begin{cases}
\displaystyle \delta_i \delta_j^{\frac{1}{2}} d_{ij}^{-1} &\text{if } n=3 \\[0.2em]
\displaystyle \delta_i^2 \delta_j d_{ij}^{-2} \log\(2+d_{ij}\delta_i^{-1}\) &\text{if } n=4 \\[0.2em]
\displaystyle \delta_i^2 \delta_j^{\frac{n-2}{2}} d_{ij}^{-2} &\text{if } n\ge 5
\end{cases}\right\}
=O\(\max_i\delta_i^{\frac{n}{2}}\)+ o(Q).\]
\noindent \fbox{Case 2:} Suppose that $\msr_{ij} = \sqrt{\frac{\delta_i}{\delta_j}}$. Then, it holds that $d_{ij}\le \delta_i$, i.e., $|z_{ij}| \le 1$, and $({\frac{\delta_j}{\delta_i}})^{\frac{n-2}{2}} \simeq q_{ij} \le Q$. Therefore,
\begin{align*}
\int_{\Omega} U_i^{p-1} U_j &\lesssim \int_{\Omega} \(\frac{\delta_i}{\delta_i^2+|x-\xi_i|^2}\)^2 \(\frac{\delta_j}{\delta_j^2+|x-\xi_j|^2}\)^{\frac{n-2}{2}} dx \\
&\lesssim \delta_j^{\frac{n-2}{2}} \int_{B(0,C\delta_i^{-1})} \frac{1}{(1+|y|^2)^2} \frac{dy}{[(\frac{\delta_j}{\delta_i})^2+|y-z_{ij}|^2]^{\frac{n-2}{2}}} \\
&\lesssim \delta_j^{\frac{n-2}{2}} \(1 +\int_2^{C\delta_i^{-1}} t^{-3}dt\) \simeq \delta_j^{\frac{n-2}{2}} = o(Q).
\end{align*}
\noindent \fbox{Case 3:} Suppose that $\msr_{ij}=\sqrt{\frac{\delta_j}{\delta_i}}$. Then, it holds that $d_{ij}\le \delta_j$ and $({\frac{\delta_i}{\delta_j}})^{\frac{n-2}{2}} \simeq q_{ij} \le Q$. We divide the domain $\Omega$ into $B(\xi_i,\sqrt{\delta_i})$ and $(B(\xi_i,\sqrt{\delta_i}))^c$, and compute
\begin{align*}
\int_{B(\xi_i, \sqrt{\delta_i})} U_i^{p-1} U_j &\lesssim \frac{\delta_i^{n-2}}{\delta_j^{\frac{n-2}{2}}} \int_{B(0,\delta_i^{-\frac12})} \frac{1}{(1+|y|^2)^2} \frac{dy}{[1+(\frac{\delta_i}{\delta_j}|y-z_{ij}|)^2]^{\frac{n-2}{2}}} \\
&\lesssim \frac{\delta_i^{n-2}}{\delta_j^{\frac{n-2}{2}}} \(1+\int_1^{\delta_i^{-\frac12}}t^{n-5}dt\) = o(Q)
\end{align*}
and
\[\int_{B(\xi_i,\sqrt{\delta_i})^c} U_i^{p-1} U_j \lesssim \delta_i^2\delta_j^{\frac{n-2}{2}} \int_{B(0,\sqrt{\delta_i})^c}\frac{1}{|y|^4} \frac{1}{|y-(\xi_j-\xi_i)|^{n-2}} dy \lesssim \delta_i\delta_j^{\frac{n-2}{2}} = O\(\max_i\delta_i^{\frac{n}{2}}\).\]
These estimates justify \eqref{22}.

\medskip \noindent \doublebox{\textsc{Step 2.}} Applying $|PZ_j^0|\lesssim \sum_{i=1}^{\nu}U_i$, we observe
\begin{equation}\label{24}
\int_{\Omega} u_0^2 \sigma^{p-2} \left|PZ_j^0\right| \mone_{\{p>2\}} \lesssim \sum_{i=1}^{\nu} \int_{\Omega} U_i^{p-1} \mone_{\{p>2\}} \lesssim\begin{cases}
\displaystyle \max_i\delta_i &\text{if } n=3,\\
\displaystyle \max_i\delta_i^2|\log\delta_i| &\text{if } n=4,\\
\displaystyle \max_i\delta_i^2 &\text{if } n=5.
\end{cases}
\end{equation}
Furthermore, since $u_0(x)\lesssim U_i(x)$ for $x\in B(\xi_i,\eta\sqrt{\delta_i})$, we infer from \eqref{22} that
\begin{equation}\label{25}
\begin{aligned}
\int_{\cup_{i=1}^{\nu} B(\xi_i,\eta\sqrt{\delta_i})} u_0^{p} |PZ_j^0|
&\lesssim \int_{B(\xi_j,\eta\sqrt{\delta_j})} U_j +\sum_{i\neq j}\int_{B(\xi_i,\eta\sqrt{\delta_i})} U_i^{p-1} U_j \\
&=O\(\max_i\delta_i^{\frac{n}{2}}\)+ o(Q).
\end{aligned}
\end{equation}
We also estimate the integrals over the exterior region:
\begin{equation}\label{28}
\begin{aligned}
\int_{\cap_{i=1}^{\nu}B(\xi_i,\eta\sqrt{\delta_i})^c} u_0^{p-2} \sigma^2 |PZ_j^0| \mone_{\{p>2\}} &\lesssim \sum_{i=1}^{\nu}\int_{B(\xi_i,\eta\sqrt{\delta_i})^c} U_i^3 \mone_{\{p>2\}} \\
&\lesssim \begin{cases}
\displaystyle \max_i\delta_i^{\frac32}|\log\delta_i| &\text{if } n=3,\\
\displaystyle \max_i\delta_i^{\frac{n}{2}} &\text{if } n=4,5
\end{cases}
\end{aligned}
\end{equation}
and
\begin{equation}\label{29}
\int_{\cap_{i=1}^{\nu}B(\xi_i,\eta\sqrt{\delta_i})^c} (u_0^{p-1} \sigma +\sigma^{p}) |PZ_j^0|
\lesssim \sum_{i=1}^{\nu} \int_{B(\xi_i,\eta\sqrt{\delta_i})^c} (U_i^2+U_i^p) \lesssim \max_i\delta_i^{\frac{n}{2}}.
\end{equation}

\medskip
Combining estimates \eqref{21}--\eqref{29}, we conclude the proof of \eqref{eq:I2Z}.
\end{proof}

\begin{lemma}\label{lem2.32}
For any $j \in \{1,\ldots,\nu\}$, it holds that
\begin{align}
\int_{\Omega} \I_3 PZ_j^0 &= \left.\begin{cases}
\displaystyle -\delta_j\mfc_n \vph(\xi_j) + O(\ka_j^3) + O(\delta_j) &\text{if } n=3 \\
\displaystyle -\delta_j^2\mfc_n \vph(\xi_j) + O(\ka_j^4) + O(\delta_j^2|\log\delta_j|) &\text{if } n=4 \\
\displaystyle \lambda\mfb_n \delta_j^2 - \delta_j^{n-2}\mfc_n \vph(\xi_j) + O\(\delta_j^2 \ka_j^{n-4}\) + O(\ka_j^n) &\text{if } n\ge 5
\end{cases}\right\} \label{i3pz1} \\
&\ + \left[\left.\begin{cases}
\displaystyle O(\max_i\delta_i) &\text{if } n=3 \\
\displaystyle O(\max_i \delta_i^2|\log\delta_i|) &\text{if } n=4 \\
\displaystyle o(\max_i\delta_i^2) &\text{if } n\ge 5
\end{cases}\right\}
+o(Q)\right] \mone_{\{\nu\ge 2, \textup{ each } \xi_i \textup{ is in a compact set of } \Omega\}} \nonumber
\end{align}
provided $n \ge 3$ and each $PU_i$ satisfies \eqref{pu1}, and
\begin{align}
\int_{\Omega} \I_3 PZ_j^0 &= \left.\begin{cases}
\displaystyle - \mfc_3\vph^3_{\lambda}(\xi_j)\delta_j + O(\delta_j^2) + O(\ka_j^3) &\text{if } n=3 \\
\displaystyle \mfb_4\lambda\delta_j^2|\log\delta_j| - \mfc_4\delta_j^2\vph^4_{\lambda}(\xi_j) - 96|\S^3|\lambda\delta_j^2 + O(\delta_j^3) + O(\ka_j^4) &\text{if } n=4
\end{cases}\right\} \label{i3pz2} \\
&\ + \left[\left.\begin{cases}
\displaystyle O(\max_i\delta_i^2) &\text{if } n=3\\
\displaystyle O(\max_i\delta_i^3|\log\delta_i|) &\text{if } n=4
\end{cases}\right\}
+o(Q)\right] \mone_{\{\nu\ge 2, \textup{ each } \xi_i \textup{ is in a compact set of } \Omega\}} \nonumber
\end{align}
provided $n=3,4$ and each $PU_i$ satisfies \eqref{pu2}.
Here, $\ka_j = \frac{\delta_j}{d(\xi_j,\pa\Omega)}$, $\mfb_4:=3\sqrt{2}\int_{\R^4}U^{p-1}Z^0 > 0$, $\mfb_n:=\int_{\R^n}U Z^0 > 0$ for $n\ge 5$, and $\mfc_n:=a_np\int_{\R^n}U^{p-1}Z^0 > 0$.
\end{lemma}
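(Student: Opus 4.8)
\textbf{Proof proposal for Lemma \ref{lem2.32}.} The plan is to compute $\int_\Omega \I_3 PZ_j^0$ by expanding $\I_3 = \sum_{i=1}^\nu [\Delta PU_i + \lambda PU_i + (PU_i)^p]$ term by term, separating the diagonal contribution ($i=j$) from the off-diagonal interactions ($i \ne j$), the latter contributing only to the indicator-weighted remainder when $\nu \ge 2$. For the diagonal term, I would first use $-\Delta PU_i = U_i^p$ (in the \eqref{pu1} case) or $-\Delta PU_i - \lambda PU_i = U_i^p$ (in the \eqref{pu2} case), so that $\Delta PU_i + \lambda PU_i + (PU_i)^p = [(PU_i)^p - U_i^p] + \lambda PU_i \cdot \mone_{\{\eqref{pu1}\}}$. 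Then I would pair this against $PZ_j^0 = Z_j^0 - \tfrac{n-2}{2}a_n\delta_j^{\frac{n-2}{2}}H(\cdot,\xi_j) + (\text{error})$ from Corollary \ref{cor:PZ^0exp}, using the pointwise expansions of $PU_i - U_i$ from Lemma \ref{lem2.1}.

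The key computations are: (i) $\int_\Omega [(PU_i)^p - U_i^p] Z_i^0 \approx p\int_\Omega U_i^{p-1}(PU_i - U_i) Z_i^0$, which after rescaling $x = \xi_i + \delta_i y$ and inserting $PU_i - U_i \approx -a_n\delta_i^{\frac{n-2}{2}}H(x,\xi_i) \approx -a_n\delta_i^{\frac{n-2}{2}}\vph(\xi_i)$ (using $H(x,\xi_i) = \vph(\xi_i) + O(|x-\xi_i|)$ near $\xi_i$) produces the main term $-\delta_i^{n-2}\mfc_n\vph(\xi_i)$ with $\mfc_n = a_n p \int_{\R^n} U^{p-1}Z^0$; in the \eqref{pu2} case one must instead carry the extra singular terms $-\tfrac{\lambda}{2}a_n\delta_i^{\frac{n-2}{2}}|x-\xi_i|$, $-\tfrac{\lambda}{2}a_n\delta_i^{\frac{n-2}{2}}\log|x-\xi_i|$, etc., which after rescaling give the logarithmic term $\mfb_4\lambda\delta_j^2|\log\delta_j|$ and the constant $-96|\S^3|\lambda\delta_j^2$ for $n=4$, and the pure $O(\delta_j^2)$ correction for $n=3$; (ii) the term $\lambda\int_\Omega PU_i Z_i^0$ (present only in the \eqref{pu1} case), which for $n \ge 5$ gives $\lambda\mfb_n\delta_i^2$ with $\mfb_n = \int_{\R^n} U Z^0$ after rescaling (the integral $\int U Z^0$ converges for $n \ge 5$), while for $n=3,4$ it contributes only to the error terms $O(\delta_j)$, $O(\delta_j^2|\log\delta_j|)$ shown; (iii) the pairing of $(PU_i)^p - U_i^p$ (or the linear term) against the $-\tfrac{n-2}{2}a_n\delta_j^{\frac{n-2}{2}}H$-correction piece of $PZ_j^0$, which I would show is absorbed into the stated error. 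The off-diagonal terms $\int_\Omega [\Delta PU_i + \lambda PU_i + (PU_i)^p] PZ_j^0$ for $i \ne j$ are controlled by bubble-interaction estimates of the type $\int U_i^{p-1}U_j \lesssim o(Q) + O(\max_i \delta_i^{n/2})$ (as in the three-case analysis in Lemma \ref{lem2.31}), yielding the $o(Q)$ remainder; the restriction to compact subsets of $\Omega$ ensures $\ka_i$ is not dominant there and $H$ stays bounded.

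The bookkeeping of error terms against the $\ka_j$-powers is the delicate part. Every place where I replace $H(x,\xi_i)$ by its value $\vph(\xi_i)$, or truncate the exterior region $B(\xi_i, d(\xi_i,\pa\Omega))^c$, or discard the $O(\delta^{(n+2)/2}d(\xi,\pa\Omega)^{-n})$ remainder from Lemma \ref{lem2.1}, I must verify the resulting error is $O(\ka_j^3)$ (for $n=3$), $O(\ka_j^4)$ (for $n=4$), $O(\delta_j^2\ka_j^{n-4}) + O(\ka_j^n)$ (for $n \ge 5$) — and crucially not larger. This forces a careful split of the diagonal integral into $B(\xi_j, d(\xi_j,\pa\Omega))$ and its complement, the Taylor expansion $H(x,\xi_j) = \vph(\xi_j) + \nabla_x H(\xi_j,\xi_j)\cdot(x-\xi_j) + \ldots$ (where $|\nabla_x H(\xi_j,\xi_j)| \simeq d(\xi_j,\pa\Omega)^{-(n-1)}$ by \eqref{vaor}, which is why the linear-in-$(x-\xi_j)$ term integrates against the even function structure to leave a $\ka_j$-type error rather than a main term), and use of the decay rates of $U$, $Z^0$, and $\mcd_n$ at infinity. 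I expect this term-by-term error control — especially isolating the precise power of $|\log\delta_j|$ and the exact constant $96|\S^3|$ in the $n=4$, \eqref{pu2} case — to be the main obstacle, as it requires the sharpest form of Lemma \ref{lem2.1} together with the asymptotics in \eqref{vaor}, and any over-crude estimate would destroy the optimality claimed in Theorems \ref{thm1} and \ref{thm2}.
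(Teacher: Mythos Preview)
Your proposal is correct and follows essentially the same route as the paper: both decompose $\I_3$ into $\sum_i[(PU_i)^p-U_i^p]+\lambda PU_i\mone_{\{\eqref{pu1}\}}$, treat the diagonal term $i=j$ by splitting over $B(\xi_j,d(\xi_j,\pa\Omega))$ versus its complement and inserting the expansions of Lemma~\ref{lem2.1} and Corollary~\ref{cor:PZ^0exp}, and handle the off-diagonal terms $i\ne j$ via the same three-case bubble-interaction analysis (Cases 1--3 on $\msr_{ij}$) as in Lemma~\ref{lem2.31}. The paper organizes the computation into two steps---first the linear piece $\lambda\int PU_iPZ_j^0$ (yielding $\lambda\mfb_n\delta_j^2$ for $n\ge5$), then $\int[(PU_i)^p-U_i^p]PZ_j^0$---but the substance, including the explicit integrals producing the $n=4$ constants $\mfb_4\lambda\delta_j^2|\log\delta_j|$ and $-96|\S^3|\lambda\delta_j^2$, is exactly what you outline.
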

\begin{proof}
We present the proof by dividing it into two steps.

\medskip \noindent \doublebox{\textsc{Step 1.}} Assuming that each $PU_i$ satisfies \eqref{pu2}, we assert that
\begin{equation}\label{lmpz}
\int_{\Omega} \sum_{i=1}^{\nu}\lambda PU_i PZ_j^0
= \begin{cases}
\displaystyle O(\max_i\delta_i)+o(Q)\mone_{\{\nu\ge 2\}} &\text{if } n=3, \\
\displaystyle O(\max_i \delta_i^2|\log\delta_i|)+o(Q)\mone_{\{\nu\ge 2\}} &\text{if } n=4, \\
\displaystyle \lambda\mfb_n \delta_j^2 + O\(\delta_j^2\ka_j^{n-4}\) + o(Q+\max_i\delta_i^2)\mone_{\{\nu\ge 2\}} &\text{if } n\ge 5.
\end{cases}
\end{equation}

To verify \eqref{lmpz}, we first estimate
\[\int_{\Omega}PU_jPZ_j^0 =\begin{cases}
\displaystyle O(\delta_j) &\text{if } n=3,\\
\displaystyle O(\delta_j^2|\log\delta_j|) &\text{if } n=4,\\
\displaystyle \mfb_n\delta_j^2 + O\(\delta_j^2\ka_j^{n-4}\) &\text{if } n \ge 5.
\end{cases}\]
Indeed, for the case $n \ge 5$, we have
\[\bigg|\int_{B(\xi_j,d(\xi_j,\pa\Omega))^c} PU_jPZ_j^0\bigg|\lesssim \delta_j^2\ka_j^{n-4}\]
and
\begin{equation}\label{puzl}
\begin{aligned}
\int_{B(\xi_j,d(\xi_j,\pa\Omega))}PU_jPZ_j^0
&= \int_{B(\xi_j,d(\xi_j,\pa\Omega))}U_jZ_j^0 + O\(\frac{\delta_j^{\frac{n-2}{2}}}{d(\xi_j,\pa\Omega)^{n-2}}\cdot \delta_j^{\frac{n+2}{2}} \int_{B(0,\ka_j)} U\) \\
&=\mfb_n\delta_j^2+O\(\delta_j^2\ka_j^{n-4}\).
\end{aligned}
\end{equation}

It remains to estimate the interaction terms $\int_{\Omega}U_iU_j$ for $1 \le i \ne j \le \nu$ provided $\nu\ge 2$. As in \eqref{22}, we separate the analysis into three cases.

\medskip \noindent \fbox{Case 1:} Suppose that $\msr_{ij} = \frac{d_{ij}}{\sqrt{\delta_i \delta_j}}$. We verify that
\begin{equation}\label{lauij}
\begin{aligned}
\int_{\Omega} U_i U_j &\lesssim \delta_i^{\frac{n-2}{2}} \delta_j^{\frac{n-2}{2}}
\times \begin{cases}
1 &\text{if } n=3,\\
1+|\log d_{ij}| &\text{if } n=4,\\
\displaystyle d_{ij}^{-(n-4)} &\text{if } n\ge 5
\end{cases} \\
&\simeq \begin{cases}
\displaystyle O(\max_i\delta_i) &\text{if } n=3,\\
\displaystyle O(\max_i\delta_i^2|\log\delta_i|) &\text{if } n=4,\\
\displaystyle \max_i\delta_i^2Q^{\frac{n-4}{n-2}} = o(\max_i\delta_i^2) &\text{if } n \ge 5.
\end{cases}
\end{aligned}
\end{equation}
\noindent \fbox{Case 2:} Suppose that $\msr_{ij}=\sqrt{\frac{\delta_i}{\delta_j}}$. We estimate
\begin{align*}
\int_{\Omega} U_i U_j &\lesssim \int_{\Omega} \(\frac{\delta_i}{\delta_i^2+|x-\xi_i|^2}\)^{\frac{n-2}{2}} \(\frac{\delta_j}{\delta_j^2+|x-\xi_j|^2}\)^{\frac{n-2}{2}} dx \\
&\lesssim \delta_i^{\frac{n+2}{2}-(n-2)}\delta_j^{\frac{n-2}{2}} \int_{B(0,C\delta_i^{-1})} \frac{1}{(1+|y|^2)^{\frac{n-2}{2}}} \frac{dy}{[(\frac{\delta_j}{\delta_i})^2+|y-z_{ij}|^2]^{\frac{n-2}{2}}} \\
&\lesssim \delta_i^2\frac{\delta_j^{\frac{n-2}{2}}}{\delta_i^{\frac{n-2}{2}}} \(1 +\int_2^{C\delta_i^{-1}} t^{-(n-5)}dt\) = o(Q).
\end{align*}
\noindent \fbox{Case 3:} Suppose that $\msr_{ij}=\sqrt{\frac{\delta_j}{\delta_i}}$. We evaluate
\begin{align*}
\int_{B(\xi_i, \sqrt{\delta_i})} U_i U_j &\lesssim \frac{\delta_i^{\frac{n+2}{2}}}{\delta_j^{\frac{n-2}{2}}} \int_{B(0,\frac{1}{\sqrt{\delta_i}})}\frac{1}{(1+|y|^2)^{\frac{n-2}{2}}}
\frac{dy}{[1+(\frac{\delta_i}{\delta_j}|y-z_{ij}|)^2]^{\frac{n-2}{2}}} \\
&\lesssim \frac{\delta_i^{\frac{n+2}{2}}}{\delta_j^{\frac{n-2}{2}}} \(1+\int_1^{\delta_i^{-\frac12}}tdt\) = o(Q)
\end{align*}
and
\begin{equation}\label{lauij4}
\begin{aligned}
\int_{B(\xi_i,\sqrt{\delta_i})^c} U_iU_j &\lesssim \delta_i^{\frac{n-2}{2}}\delta_j^{\frac{n-2}{2}} \int_{B(0,\sqrt{\delta_i})^c} \frac{1}{|y|^{n-2}}\frac{1}{|y-(\xi_j-\xi_i)|^{n-2}} dy \\
&=\begin{cases}
\displaystyle O(\max_i\delta_i) &\text{if } n=3 \text{ and } u_0>0, \\
\displaystyle O(\max_i\delta_i^2|\log\delta_i|) &\text{if } n=4 \text{ and } u_0>0, \\
\displaystyle O\(\max_i\delta_i^{\frac{n}{2}}\) &\text{if } n\ge 5.
\end{cases}
\end{aligned}
\end{equation}
This concludes the proof of \eqref{lmpz}.

\medskip \noindent \doublebox{\textsc{Step 2.}} We claim that
\begin{align*}
\int_{\Omega} \sum_{i=1}^{\nu}[(PU_i)^{p}-U_i^{p}] PZ_j^0 &= -\delta_j^{n-2}\mfc_n \vph(\xi_j) + O(\ka_j^n) \\
&\ + \left[O(\max_i\delta_i^{n-1})+o(Q)\right] \mone_{\{\nu\ge 2, \text{ each } \xi_i \text{ is in a compact set of } \Omega\}}
\end{align*}
provided $n \ge 3$ and $PU_i$ satisfies \eqref{pu1} for each $i = 1,\ldots,\nu$, and
\begin{align*}
&\ \int_{\Omega} \sum_{i=1}^{\nu}[(PU_i)^{p}-U_i^{p}] PZ_j^0 \\
&= \left.\begin{cases}
\displaystyle -\mfc_3\vph^3_{\lambda}(\xi_j)\delta_j + O(\delta_j^2) + O(\ka_j^3) &\text{if } n=3 \\
\displaystyle \mfb_4\lambda\delta_j^2|\log\delta_j| - \mfc_4\delta_j^2\vph^4_{\lambda}(\xi_j) - 96|\S^3|\lambda\delta_j^2
+ O(\delta_j^3) + O(\ka_j^4) &\text{if } n=4
\end{cases}\right\} \\
&\ +\left[\left.\begin{cases}
\displaystyle O(\max_i\delta_i^2) &\text{if } n=3 \\
\displaystyle O(\max_i\delta_i^3|\log\delta_i|) &\text{if } n=4
\end{cases}\right\}
+o(Q)\right] \mone_{\{\nu\ge 2, \text{ each } \xi_i \text{ is in a compact set of } \Omega\}}
\end{align*}
provided $n=3,4$ and $PU_i$ satisfies \eqref{pu2} for each $i = 1,\ldots,\nu$.

\medskip
To prove this, we decompose the domain by $\Omega=B(\xi_j, d(\xi_j,\pa\Omega))\cup [\Omega\setminus B(\xi_j, d(\xi_j,\pa\Omega))]$.

First, we observe that
\[\left|\int_{\Omega\setminus B(\xi_j, d(\xi_j,\pa\Omega))}[(PU_j)^{p}-U_j^{p}] PZ_j^0\right| \lesssim \int_{B(0,\ka_j)^c}U^{p+1} \lesssim \ka_j^n.\]

Suppose that $PU_i$ satisfies \eqref{pu1}. By Lemma \ref{lem2.1} and \eqref{ab6}, we obtain
\begin{equation}\label{puz5}
\begin{aligned}
&\ \int_{B(\xi_j, d(\xi_j,\pa\Omega))}[(PU_j)^{p}-U_j^{p}] PZ_j^0 \\
&= p\int_{B(\xi_j, d(\xi_j,\pa\Omega))} (PU_j-U_j)U_j^{p-1}PZ_j^0 + O\(\int_{B(\xi_j, d(\xi_j,\pa\Omega))} (PU_j-U_j)^2U_j^{p-2}|PZ_j^0|\) \mone_{\{p>2\}}\nonumber\\
&\ +O\(\int_{B(\xi_j, d(\xi_j,\pa\Omega))}|PU_j-U_j|^p|PZ_j^0|\) \\
&= -\delta_j^{n-2}\mfc_n \vph(\xi_j) + O(\ka_j^n).
\end{aligned}
\end{equation}

Suppose next that $n=3$ and $PU_i$ satisfies \eqref{pu2}. Noticing that
\begin{equation} \label{tdn}
\begin{aligned}
p\int_{B(\xi_j, d(\xi_j,\pa\Omega))}\wtmcd_3U_j^{p-1} Z_j^0 &= \delta_j^2\int_{B(0,\ka_j)} (-\Delta\mcd_3) Z^0 \\
&\ +\delta_j^2 O\(\int_{\pa B(0,\ka_j)} \frac{\pa\mcd_3}{\pa\nu}|Z^0| dS + \int_{\pa B(0,\ka_j)}\bigg|\frac{\pa Z^0}{\pa\nu} \mcd_3\bigg| dS\) \\
&= \delta_j^2 \int_{B(0,\ka_j)} (-\Delta\mcd_3)Z^0 + O\(\delta_j^3+\ka_j^3\),
\end{aligned}
\end{equation}
where $\frac{\pa}{\pa\nu}$ denotes the outward normal derivative and $dS$ is the surface measure, we deduce
\begin{align*}
&\ \int_{B(\xi_j, d(\xi_j,\pa\Omega))}[(PU_j)^{p}-U_j^{p}] PZ_j^0 \\
&= -\delta_j^{\frac{1}{2}}a_3pH^3_{\lambda}(\xi_j,\xi_j) \int_{B(\xi_j, d(\xi_j,\pa\Omega))} U_j^{p-1}Z_j^0-\frac{\lambda}{2}a_n\delta_j^{\frac12} \int_{B(\xi_j, d(\xi_j,\pa\Omega))} |x-\xi_j|(U_j^{p-1}Z_j^0)(x) dx \\
&\ +\lambda a_3^2p\delta_j^2\int_{B(0, \ka_j)} \left[\frac{1}{\sqrt{1+|z|^2}}-\frac{1}{|z|}\right]\frac{|z|^2-1}{(1+|z|^2)^{\frac32}} dz + O(\delta_j^{3-\tau})+O(\ka_j^3) \\
&=-\mfc_3\delta_j\vph^3_{\lambda}(\xi_j) + O(\delta_j^2) + O(\ka_j^3). \nonumber
\end{align*}

\medskip
Suppose that $n=4$ and $PU_i$ satisfies \eqref{pu2}. Then
\begin{align}
&\ \int_{B(\xi_j, d(\xi_j,\pa\Omega))}[(PU_j)^{p}-U_j^{p}] PZ_j^0 \nonumber\\
&=\frac{\lambda}{2} a_4\delta_j|\log\delta_j| p\int_{B(\xi_j,d(\xi_j,\pa\Omega))} U_j^{p-1}Z_j^0 - \frac{\lambda}{2} a_4p\delta^2_j\int_{B(0,\ka_j)} \log|x|(U^2Z^0)(x) dx \label{puz4} \\
&\ +\lambda a_4^2p\delta_j^2\int_{B(0,\ka_j)} \left[\frac{1}{1+|z|^2}-\frac{1}{|z|^2}\right]\frac{|z|^2-1}{(1+|z|^2)^2} dz
-\delta_j a_4p H^4_{\lambda}(\xi_j,\xi_j) \int_{B(\xi_j, d(\xi_j,\pa\Omega))}U_j^{p-1}Z_j^0 \nonumber \\
&\ + O(\delta_j^3) + O(\ka_j^4) \nonumber \\
&=\mfb_4\lambda\delta_j^2|\log\delta_j| - \mfc_4\delta_j^2\vph^4_{\lambda}(\xi_j) - 96|\S^3|\lambda\delta_j^2+O(\delta_j^3) + O(\ka_j^4). \nonumber
\end{align}
Here, we used
\[\int_{\R^4}\log|z|\frac{|z|^2-1}{(1+|z|^2)^4} dz = \frac{|\S^3|}8
\quad \text{and} \quad
\int_{\R^4}\left[\frac{1}{1+|z|^2}-\frac{1}{|z|^2}\right]\frac{|z|^2-1}{(1+|z|^2)^2} dz=0.\]

\medskip
Finally, we assume that $\nu\ge 2$ and each $\xi_1,\ldots,\xi_{\nu}$ is in a compact set of $\Omega$. Given $1 \le i \ne j \le \nu$, we infer from \eqref{22} that
\begin{equation}\label{pupz1}
\bigg|\int_{\Omega} [(PU_i)^{p}-U_i^{p}] PZ_j^0\bigg| \lesssim \delta_i^{\frac{n-2}{2}} \int_{B(\xi_i,d(\xi_i,\pa\Omega))}U_i^{p-1}U_j = O(\max_i\delta_i^{n-1})+o(Q)
\end{equation}
provided $n \ge 3$ and each $PU_i$ satisfies \eqref{pu1}, and
\begin{equation}\label{pupz2}
\begin{aligned}
\bigg|\int_{\Omega} [(PU_i)^{p}-U_i^{p}] PZ_j^0\bigg| &\lesssim  \left.\begin{cases}
\displaystyle O(\delta_i^{\frac{n-2}{2}}) &\text{if } n=3 \\
\displaystyle O(\delta_i|\log\delta_i|) &\text{if } n=4
\end{cases}\right\} \int_{B(\xi_i,d(\xi_i,\pa\Omega))}U_i^{p-1}U_j \\
&= \left.\begin{cases}
\displaystyle O(\max_i\delta_i^{n-1}) &\text{if } n=3 \\
\displaystyle O(\max_i\delta_i^3|\log\delta_i|) &\text{if } n=4
\end{cases}\right\}+o(Q)
\end{aligned}
\end{equation}
provided $n=3,4$ and each $PU_i$ satisfies \eqref{pu2}. Here we used 
\[\int_{\Omega}\left|\log \left|\frac{x-\xi_i}{\delta_i}\right|\right|(U_i^{p-1}U_j)(x) dx=o(Q+\max_i \delta_i^2)\] for $n=4$, which can be argued as \eqref{22}.

This completes the proof of the claim.
\end{proof}

\begin{lemma}\label{lem2.33}
Assume that $\nu \ge 2$ and each of the $\xi_1,\ldots,\xi_{\nu}$ lies on a compact set of $\Omega$.
For any $j \in \{1,\ldots,\nu\}$, it holds that
\begin{align*}
&\int_{\Omega} \I_2 PZ_j^0 = \sum_{i\ne j} \mfd_n \(q_{ij}^{-\frac{2}{n-2}}-2\frac{\delta_j}{\delta_i}\)q_{ij}^{\frac{n}{n-2}} +o(Q) \\ 
&+\begin{cases}
\displaystyle O(\max_i\delta_i^{n-2}) &\text{if } n\ge3, \text{ each } PU_i\text{ satisfies \eqref{pu1}}, \\
\displaystyle  \sum_{i\ne j} [-\mfb_3\lambda|\xi_j-\xi_i|-\mfc_3H_{\lambda}^3(\xi_i, \xi_j)]\delta_i^{\frac12}\delta_j^{\frac12} + o(\max_i\delta_i) &\text{if } n=3, \text{ each } PU_i\text{ satisfies \eqref{pu2}}, \\
\displaystyle \sum_{i\ne j} [-\mfb_4\lambda\log |\xi_j-\xi_i| - \mfc_4H_{\lambda}^4(\xi_i, \xi_j)]\delta_i\delta_j  + o(\max_i \delta_i^2|\log\delta_i|) &\text{if } n=4, \text{ each } PU_i\text{ satisfies \eqref{pu2}},
\end{cases}
\end{align*}
where $\mfd_n > 0, \mfb_3=\frac{1}{2}a_3p\int_{\R^3}U^{p-1}Z^0$, provided $q_{ij}$ in \eqref{rq} is small.
\end{lemma}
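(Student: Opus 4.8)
The plan is to isolate from $\I_2=\sigma^p-\sum_{i=1}^{\nu}(PU_i)^p$ the leading bubble--interaction terms and to show that everything else is a controlled remainder. \textbf{Step 1 (reduction to the main interaction term).} Near $\xi_j$ the bubble $PU_j$ dominates every other $PU_i$ because $q_{ij}\le Q\ll 1$ (see \eqref{rq}); a Taylor expansion then gives, on $B(\xi_j,\sqrt{\delta_j})$, $\I_2=p(PU_j)^{p-1}\sum_{i\ne j}PU_i+O\!\big((PU_j)^{p-2}(\sum_{i\ne j}PU_i)^2\big)+O\!\big(\sum_{i\ne j}(PU_i)^p\big)$, whereas on $B(\xi_i,\sqrt{\delta_i})$ for $i\ne j$ and on the exterior region $|\I_2\,PZ_j^0|$ is dominated by higher-order interaction integrals of exactly the type already handled, via the three-case split according to the value of $\msr_{ij}$ (Cases~1--3 in the proof of Lemma~\ref{lem2.31}) together with the elementary estimates of Appendix~\ref{a} (Lemma~\ref{a1}); all of these are $o(Q)$ and are absorbed into the stated errors. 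Since $|PZ_j^0|\lesssim U_j$, the two error terms displayed above also integrate to $o(Q)$ (for instance $\int_{\R^n}U_j^{p-1}U_i^2=o(q_{ij})$ in each regime). This reduces the computation to
\[
\int_{\Omega}\I_2\,PZ_j^0=p\sum_{i\ne j}\int_{\Omega}(PU_j)^{p-1}PU_i\,PZ_j^0+(\text{admissible error}).
\]

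\textbf{Step 2 (expansion of the projected objects and the main identity).} Using Lemma~\ref{lem2.1} and Corollary~\ref{cor:PZ^0exp}, write $PU_j=U_j+R_j$, $PU_i=U_i+R_i$ and $PZ_j^0=Z_j^0+\widetilde R_j$, where $R_j,\widetilde R_j=O(\delta_j^{(n-2)/2})$, while $R_i$ collects the $\lambda$-linear piece (equal to $-\tfrac{\lambda}{2}a_n\delta_i^{(n-2)/2}|x-\xi_i|$ if $n=3$, to $-\tfrac{\lambda}{2}a_4\delta_i\log|x-\xi_i|$ if $n=4$, and \emph{absent} in the \eqref{pu1} case), the regular part $-a_n\delta_i^{(n-2)/2}H^n_\lambda(x,\xi_i)$ (resp.\ $-a_n\delta_i^{(n-2)/2}H(x,\xi_i)$), and the lower-order pieces $\wtmcd_n$ and the $O(\delta^{5/2-\tau})$-type remainders. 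The dominant contribution $p\sum_{i\ne j}\int_{\R^n}U_j^{p-1}Z_j^0\,U_i$ I would evaluate from the identity
\[
p\int_{\R^n}U_j^{p-1}Z_j^0\,U_i\,dx=\delta_j\,\pa_{\delta_j}\!\left[\int_{\R^n}U_j^{p}\,U_i\,dx\right],
\]
valid because $pU_j^{p-1}\pa_{\delta_j}U_j=\pa_{\delta_j}(U_j^{p})$ and $U_i$ is independent of $\delta_j$. Combining this with a quantitative expansion $\int_{\R^n}U_j^{p}U_i\,dx=\mathfrak{g}_n\,q_{ij}+(\text{l.o.t.})$ for some $\mathfrak{g}_n>0$ (with remainder differentiable in $\log\delta_j$) and the elementary computation $\delta_j\pa_{\delta_j}q_{ij}=\tfrac{n-2}{2}\big(q_{ij}^{-2/(n-2)}-2\delta_j/\delta_i\big)q_{ij}^{n/(n-2)}$ yields precisely $\mfd_n\sum_{i\ne j}\big(q_{ij}^{-2/(n-2)}-2\delta_j/\delta_i\big)q_{ij}^{n/(n-2)}$ with $\mfd_n=\tfrac{n-2}{2}\mathfrak{g}_n>0$.

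\textbf{Step 3 (the $R_i$-cross terms and the residual errors).} On $B(\xi_j,\sqrt{\delta_j})$ the factor $R_i$ is, up to lower order, the constant $R_i(\xi_j)$, so $p\int(PU_j)^{p-1}Z_j^0\,R_i\approx R_i(\xi_j)\,p\int_{\R^n}U_j^{p-1}Z_j^0=R_i(\xi_j)\cdot\tfrac{n-2}{2}\delta_j^{(n-2)/2}\int_{\R^n}U^{p}$. Substituting $R_i(\xi_j)$ and using $\mfc_n=a_np\int_{\R^n}U^{p-1}Z^0$, $\mfb_3=\tfrac12 a_3 p\int_{\R^n}U^{p-1}Z^0$ and $\mfb_4=\tfrac12 a_4 p\int_{\R^n}U^{p-1}Z^0$ produces exactly $\sum_{i\ne j}[-\mfb_3\lambda|\xi_j-\xi_i|-\mfc_3H^3_\lambda(\xi_i,\xi_j)]\delta_i^{1/2}\delta_j^{1/2}$ when $n=3$ and $\sum_{i\ne j}[-\mfb_4\lambda\log|\xi_j-\xi_i|-\mfc_4H^4_\lambda(\xi_i,\xi_j)]\delta_i\delta_j$ when $n=4$ (using the symmetry of $H^n_\lambda$ inherited from that of $G_\lambda$), and nothing beyond $O(\max_i\delta_i^{n-2})$ in the \eqref{pu1} case, which is exactly the magnitude of the $H$-cross term $R_i(\xi_j)\cdot\tfrac{n-2}{2}\delta_j^{(n-2)/2}\int U^{p}$ there for $\xi_i$ in a compact set. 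The remaining cross terms — those carrying $R_j$, $\widetilde R_j$, $\wtmcd_n$, or the $O(\delta^{5/2-\tau})$ and $O(\ka_j^n)$ remainders — I would bound by Hölder's inequality together with the integral estimates of Appendix~\ref{a}, checking in each of the three $\msr_{ij}$-regimes that they are $o(Q)$ (resp.\ $o(\max_i\delta_i)$ for $n=3$, $o(\max_i\delta_i^2|\log\delta_i|)$ for $n=4$) and hence absorbed.

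\textbf{Main obstacle.} The delicate point is Step~2: extracting the precise constant $\mfd_n$ and, more importantly, the sign-sensitive combination $q_{ij}^{-2/(n-2)}-2\delta_j/\delta_i$ requires much more than the crude bound $\int_{\R^n}U_j^{p}U_i\simeq q_{ij}$, since this quantity must be \emph{differentiated} in $\delta_j$ — one needs an expansion whose error stays controlled after differentiation, or equivalently a direct evaluation of $p\int_{\R^n}U_j^{p-1}Z_j^0\,U_i$ in which the sign change of $Z_j^0$ is tracked through each of the configurations $\msr_{ij}\in\{\sqrt{\delta_i/\delta_j},\,\sqrt{\delta_j/\delta_i},\,d_{ij}/\sqrt{\delta_i\delta_j}\}$, so that the cancellation producing the factor $q_{ij}^{-2/(n-2)}-2\delta_j/\delta_i$ is made explicit. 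A secondary but persistent nuisance is bookkeeping the $\ka_j$-dependent boundary remainders supplied by Lemma~\ref{lem2.1}, making sure each of them lands inside the stated error bounds uniformly over the fixed-compact locations $\xi_i$.
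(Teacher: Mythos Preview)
Your overall reduction (Step~1) and the extraction of the $R_i$-cross terms (Step~3) match the paper's argument closely and are correct. The difference is precisely at your self-identified ``main obstacle'' in Step~2. Rather than differentiating an expansion of $\int_{\R^n}U_j^{p}U_i$ in $\delta_j$ (which, as you note, requires the remainder to survive differentiation), the paper avoids this entirely via an integration-by-parts swap: since $-\Delta Z_j^0=pU_j^{p-1}Z_j^0$ and $-\Delta U_i=U_i^{p}$ on $\R^n$, one has
\[
p\int_{\R^n}U_j^{p-1}Z_j^0\,U_i=\int_{\R^n}(-\Delta Z_j^0)\,U_i=\int_{\R^n}Z_j^0\,(-\Delta U_i)=\int_{\R^n}U_i^{p}\,\delta_j\frac{\partial U_j}{\partial\delta_j}.
\]
The point is that $U_i^{p}$ is \emph{integrable} and concentrated near $\xi_i$, so the right-hand side can be computed directly (the paper cites \cite[Lemma~A.2]{DSW}) to produce $\mfd_n\big(q_{ij}^{-2/(n-2)}-2\delta_j/\delta_i\big)q_{ij}^{n/(n-2)}+o(q_{ij})$ in one shot, uniformly over all three $\msr_{ij}$-regimes, with no differentiation of error terms required. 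Your route is in principle workable but is exactly the harder path; the swap of $U_j^{p-1}U_i$ for $U_i^{p}$ is the missing idea that dissolves the obstacle.
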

\begin{proof}
Adapting the proof of \cite[Lemma 2.1]{DSW}, and employing Lemma \ref{lem2.1}, \eqref{pupz1}--\eqref{pupz2}, and \cite[Lemma A.2]{DSW}, we discover
\begin{align}
&\ \int_{\Omega} \I_2 PZ_j^0 \mone_{\{\nu\ge 2\}} \nonumber \\
&= \sum_{i\ne j}p\int_{\Omega}(PU_j)^{p-1}PU_iPZ_j^0 \mone_{\{\nu\ge 2\}}+o(Q) \nonumber \\
&=\begin{medsize}
\displaystyle \sum_{i\ne j} \int_{\R^n}U_i^p \delta_j\frac{\pa {U_j}}{\pa \delta_j} + p\sum_{i\ne j}\int_{\Omega}(PU_j)^{p-1}(PU_i-U_i)PZ_j^0 + O\(\sum_{i\ne j}\int_{\Omega}(PU_j)^{p-1}PU_i|PZ_j^0-Z_j^0|\)+ o(Q)
\end{medsize} \nonumber\\
&=\sum_{i\ne j} \mfd_n \(q_{ij}^{-\frac{2}{n-2}}-2\frac{\delta_j}{\delta_i}\) q_{ij}^{\frac{n}{n-2}} + o(Q) \label{i3pz0} \\
&\ +\begin{cases}
\displaystyle O(\max_i\delta_i^{n-2}) &\text{if } n\ge3, \text{ each } PU_i \text{ satisfies \eqref{pu1}}, \\
\displaystyle  \sum_{i\ne j} [-\mfb_3\lambda|\xi_j-\xi_i|-\mfc_3H_{\lambda}^3(\xi_i, \xi_j)]\delta_i^{\frac12}\delta_j^{\frac12} + o(\max_i\delta_i) &\text{if } n=3, \text{ each } PU_i \text{ satisfies \eqref{pu2}},\\
\displaystyle \sum_{i\ne j} [-\mfb_4\lambda\log |\xi_j-\xi_i| - \mfc_4H_{\lambda}^4(\xi_i, \xi_j)]\delta_i\delta_j   + o(\max_i \delta_i^2|\log\delta_i|) &\text{if } n=4, \text{ each } PU_i \text{ satisfies \eqref{pu2}}.
\end{cases}\nonumber
\end{align}
Here we used 
\begin{align*}
&\delta_i^{\frac12}\int_{\Omega}(|x-\xi_i|-|\xi_j-\xi_i|)U_j^{p}(x) dx=o(Q+\max_i \delta_i)\text{~ for~} n=3,\\
&\delta_i\int_{\Omega}|\log |x-\xi_i|-\log|\xi_j-\xi_i||U_j^{p}(x) dx=o(Q+\max_i \delta_i^2|\log\delta_i|)\text{~ for~} n=4.
\end{align*} This finishes the proof.
\end{proof}

\section{Linear theory and an improved estimate for $n=6$}\label{sec2.2}
In Section \ref{sec3}, we will derive an $H^1_0(\Omega)$-norm estimate for $\rho$ of the form
\[\|\rho\|_{H^1_0(\Omega)} \lesssim \|f\|_{(H^1_0(\Omega))^*}+ \|\I_1\|_{L^{\frac{p+1}{p}}(\Omega)}+\|\I_2\|_{L^{\frac{p+1}{p}}(\Omega)}+\|\I_3\|_{L^{\frac{p+1}{p}}(\Omega)}.\]
When $n=6$, this estimate is coarse and requires refinement. In the remainder of this section, we develop a suitable linear theory for $n=6$, which enables the derivation of a pointwise estimate and an improved $H^1_0(\Omega)$-norm bound for the main part of $\rho$. In what follows, we assume that each of the $\xi_1,\ldots,\xi_{\nu}$ lies on a compact set of $\Omega$ if $\nu\ge 2$.
\begin{defn}\label{norm}
For each $i \in \{1, \dots, \nu\}$, recall the rescaled variable $x_i := \delta_i^{-1}(x-\xi_i) \in \delta_i^{-1}(\Omega - \xi_i)$. We introduce the weighted norms
\[
\|h\|_{**} := \sup_{x \in \Omega} \frac{|h(x)|}{V(x)}, \qquad \|\rho\|_* := \sup_{x \in \Omega} \frac{|\rho(x)|}{W(x)},
\]
where the weights $V(x)$ and $W(x)$ are defined by
\begin{align*}
V(x) &:= \sum_{i=1}^\nu \(v_{1i}(x) + [v_{2i}^{\tin}(x) + v_{2i}^{\tout}(x)] \mone_{\{\nu \ge 2\}} + [v_{3i}^{\tin}(x) + v_{3i}^{\tout}(x)] \mone_{\{\nu = 1\}} \), \\
W(x) &:= \sum_{i=1}^\nu \(w_{1i}(x) + [w_{2i}^{\tin}(x) + w_{2i}^{\tout}(x)] \mone_{\{\nu \ge 2\}} + [w_{3i}^{\tin}(x) + w_{3i}^{\tout}(x)] \mone_{\{\nu = 1\}} \).
\end{align*}
The component functions are given explicitly as follows:
\begin{align*}
v_{1i}(x) &:= \frac{\delta_i^{-2}}{\la x_i \ra^4}, & w_{1i}(x) &:= \frac{1}{\la x_i \ra^2}, \\
v_{2i}^{\tin}(x) &:= \delta_i^{-4} \frac{\msr^{-4}}{\la x_i \ra^4} \mone_{\{|x_i| < \msr^2\}}, &
w_{2i}^{\tin}(x) &:= \delta_i^{-2} \frac{\msr^{-4}}{\la x_i \ra^2} \mone_{\{|x_i| < \msr^2\}}, \\
v_{2i}^{\tout}(x) &:= \delta_i^{-4} \frac{\msr^{-2}}{|x_i|^5} \mone_{\{|x_i| \ge \msr^2\}}, &
w_{2i}^{\tout}(x) &:= \delta_i^{-2} \frac{\msr^{-2}}{|x_i|^3} \mone_{\{|x_i| \ge \msr^2\}}, \\
v_{3i}^{\tin}(x) &:= \frac{1}{[d(\xi_i,\pa\Omega)\la x_i \ra]^4} \mone_{\left\{|x_i| \le \frac{d(\xi_i,\pa\Omega)}{\delta_i}\right\}}, &
w_{3i}^{\tin}(x) &:= \frac{\delta_i^2}{d(\xi_i,\pa\Omega)^4 \la x_i \ra^2} \mone_{\left\{|x_i| \le \frac{d(\xi_i,\pa\Omega)}{\delta_i}\right\}}, \\
v_{3i}^{\tout}(x) &:= \frac{\delta_i^{-1}}{d(\xi_i,\pa\Omega)^3 |x_i|^5} \mone_{\left\{|x_i| \ge \frac{d(\xi_i,\pa\Omega)}{\delta_i}\right\}}, &
w_{3i}^{\tout}(x) &:= \frac{\delta_i}{d(\xi_i,\pa\Omega)^3 |x_i|^3} \mone_{\left\{|x_i| \ge \frac{d(\xi_i,\pa\Omega)}{\delta_i}\right\}}.
\end{align*}
\end{defn}

Consider the equation
\begin{equation}\label{eq:31}
\begin{cases}
\displaystyle (-\Delta-\lambda)\rho_0 - 2(u_0+\sigma)\rho_0 = \I_1+\I_2+\I_{31}+\I_0[\rho_0]+ \sum_{i=1}^{\nu} \sum_{k=0}^6 c_i^k(-\Delta-\lambda) PZ_i^k \quad \text{in } \Omega \subset \R^6,\\
\displaystyle\rho_0=0 \quad \text{on } \pa\Omega, \quad c_1^0, \ldots, c_{\nu}^6 \in \R,\\
\displaystyle \big\langle \rho_0, PZ_i^k \big\rangle_{H^1_0(\Omega)} = 0 \quad \text{for } i=1,\ldots,\nu \text{ and } k=0,\ldots,6\end{cases}
\end{equation}
where
\[\I_{31} := \sum_{i=1}^{\nu}\bigg[\lambda PU_i+\bigg(a_6\delta_i^2\vph(\xi_i)U_i\mone_{\left\{|x_i| \le \frac{d(\xi_i, \pa\Omega)}{\delta_i}\right\}} + (PU_i^p-U_i^p)\mone_{\left\{|x_i| \ge \frac{d(\xi_i, \pa\Omega)}{\delta_i}\right\}}\bigg)\mone_{\{\nu=1\}}\bigg].\]
In Propositions \ref{prop:31} and \ref{prop:33}, we will prove the existence of $\rho_0$, its pointwise estimate and the $H^1_0(\Omega)$-norm estimate.

\medskip

We start with a linear theory.
\begin{prop}\label{prop:31}
Given any $h\in (H^1_0(\Omega))^*$ with $\|h\|_{**}\le C$.
There exist a constant $C=C(\nu,\lambda, u_0, \Omega)>0$, $\rho_0 \in H^1_0(\Omega)$ and numbers $\{c_i^k\}_{\{i=1,\ldots,\nu,\, k=0,\ldots,6\}}$ such that
\begin{equation}\label{eq:lin}
\begin{cases}
\displaystyle(-\Delta-\lambda)\rho_0 - 2(u_0+\sigma)\rho_0= h + \sum_{i=1}^{\nu} \sum_{k=0}^6 c_i^k (-\Delta-\lambda) PZ_i^k \quad \text{in } \Omega \subset \R^6, \\
\displaystyle \rho_0=0 \quad\text{on } \pa\Omega,\quad c_1^0, \ldots, c_{\nu}^6 \in \R, \\
\displaystyle \big\langle \rho_0, PZ_i^k \big\rangle_{H^1_0(\Omega)} = 0 \quad \text{for } i=1,\ldots,\nu \text{ and } k=0,\ldots,6
\end{cases}
\end{equation}
satisfying
\begin{equation}\label{eq:311}
\|\rho_0\|_* \le C\|h\|_{**}
\end{equation}
provided $\ep_1>0$ is small.
\end{prop}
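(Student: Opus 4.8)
The plan is to establish \eqref{eq:lin} together with \eqref{eq:311} by a standard a priori estimate plus Fredholm/degree argument, carried out in the weighted spaces $\|\cdot\|_*$ and $\|\cdot\|_{**}$. First I would prove the key a priori estimate: there exists $C>0$, independent of the small parameters, such that any solution $(\rho_0, \{c_i^k\})$ of the system \eqref{eq:lin} satisfies $\|\rho_0\|_* \le C\|h\|_{**}$. The proof of this estimate is by contradiction: assume there are sequences $\delta_{i,m}\to 0$, $\xi_{i,m}$, $h_m$ with $\|h_m\|_{**}\to 0$, and solutions $\rho_{0,m}$ with $\|\rho_{0,m}\|_*=1$. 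The first step is to control the multipliers $c_i^k$. Testing \eqref{eq:lin} against $PZ_j^\ell$, using the near-orthogonality of the family $\{PZ_i^k\}$ in $H^1_0(\Omega)$ (which follows from Corollaries \ref{cor:PZ^0exp} and \ref{cor:PZ^kexp} together with the standard $\R^n$ computations of $\int Z_i^k Z_j^\ell$) and the bounds $\|(-\Delta-\lambda)PZ_i^k\|_{**}\lesssim 1$ and $\|(u_0+\sigma)\rho_0\|_{**}\lesssim \|\rho_0\|_*$, one solves the resulting almost-diagonal linear system to get $|c_i^k| \lesssim \|h_m\|_{**} + \|\rho_{0,m}\|_* \cdot o(1)$. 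Inserting this back, the right-hand side of the equation for $\rho_{0,m}$ is $o(1)$ in $\|\cdot\|_{**}$ plus lower-order terms.

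The second and central step is the blow-up analysis. Around each bubble $i$, rescale $\tilde\rho_{i,m}(y) := \delta_{i,m}^{2}\,\rho_{0,m}(\xi_{i,m}+\delta_{i,m} y)$ (the exponent matching the weight $w_{1i}$, since in dimension $n=6$ one has $\frac{n-2}{2}=2$); by elliptic estimates and the weight bound $|\rho_{0,m}|\le W(x)$, $\tilde\rho_{i,m}$ is locally bounded and converges in $C^1_{\loc}(\R^6)$ to a solution $\tilde\rho_i \in D^{1,2}(\R^6)$ of $-\Delta \tilde\rho_i = 2 U_{1,0}\tilde\rho_i = 3 U_{1,0}^{p-1}\tilde\rho_i$ (recall $p-1 = \frac{4}{n-2}=1$ when $n=6$), with $\langle \tilde\rho_i, Z_{1,0}^k\rangle=0$ for all $k=0,\dots,6$ inherited from the orthogonality conditions. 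By the non-degeneracy of the bubble, $\tilde\rho_i \equiv 0$. One must also rule out nontrivial limiting behavior near $\pa\Omega$ and in the intermediate/exterior regions: away from all bubbles and the boundary, $\rho_{0,m}$ satisfies $(-\Delta-\lambda)\rho_{0,m} = o(1)$ in weighted norm, and since $\lambda < \lambda_1$ the operator $-\Delta-\lambda$ is coercive, which forces $\rho_{0,m}\to 0$ there as well; near $\pa\Omega$ one uses the Dirichlet condition and the explicit decay built into the weights $w_{3i}^{\tin}, w_{3i}^{\tout}$. Combining all regions, $\|\rho_{0,m}\|_*\to 0$, contradicting $\|\rho_{0,m}\|_*=1$; this proves the a priori estimate.

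With the a priori estimate in hand, the final step is existence. I would set up \eqref{eq:lin} as a linear equation in the closed subspace $K^\perp := \{\rho \in H^1_0(\Omega) : \langle \rho, PZ_i^k\rangle = 0\}$: writing $\rho_0 = (-\Delta-\lambda)^{-1}\big[2(u_0+\sigma)\rho_0\big] + (-\Delta-\lambda)^{-1} h$ projected onto $K^\perp$, the operator $T\rho_0 := \Pi_{K^\perp}(-\Delta-\lambda)^{-1}[2(u_0+\sigma)\rho_0]$ is compact on $K^\perp$ (it gains derivatives and $(u_0+\sigma)$ is a bounded multiplier from the weighted space into $L^{6/5}$-type spaces). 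By the Fredholm alternative, $\mathrm{Id}-T$ is invertible provided its kernel is trivial; but any kernel element would be a solution of \eqref{eq:lin} with $h=0$, hence zero by the a priori estimate. This yields a unique $\rho_0 \in H^1_0(\Omega)$ solving \eqref{eq:lin}, and the estimate \eqref{eq:311} is exactly the a priori estimate applied to this solution. I expect the main obstacle to be the blow-up analysis near the boundary: in contrast to the Euclidean and closed-manifold cases, one must show that the weights $v_{3i}^{\tin}, v_{3i}^{\tout}, w_{3i}^{\tin}, w_{3i}^{\tout}$ correctly capture the boundary behavior of $\rho_0$ and that no energy escapes to $\pa\Omega$ — this requires careful use of the representation formula with the Green's function $G_\lambda$ and the expansions of $PU_{\delta,\xi}$ near $\pa\Omega$ from Lemma \ref{lem2.1}, rather than a naive maximum principle argument.
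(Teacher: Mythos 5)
Your broad strategy matches the paper's: a priori estimate by contradiction, control of the Lagrange multipliers by testing against $PZ_j^\ell$, a blow-up/rescaling argument around each bubble using non-degeneracy, and the Fredholm alternative for existence once the a priori bound is in hand. However, there are two genuine gaps in the proposed proof of the a priori estimate.

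First, the claim that away from the bubbles ``$(-\Delta-\lambda)\rho_{0,m}=o(1)$ in weighted norm, and since $\lambda<\lambda_1$ the operator $-\Delta-\lambda$ is coercive, which forces $\rho_{0,m}\to 0$'' is not a valid step. Coercivity of $-\Delta-\lambda$ on $H^1_0(\Omega)$ is an energy-type statement; it does \emph{not} yield pointwise control of $|\rho_{0,m}|$ measured against the specific weight $W(x)$, which is exactly what $\|\rho_{0,m}\|_*\to 0$ requires. What actually makes the exterior and neck regions work in the paper is the pointwise representation formula $|\rho_{0,m}(x)|\le C_0\int_\Omega |x-\om|^{-4}(\sigma_m|\rho_{0,m}|)(\om)\,d\om + o_m(1)W_m(x)$ via the Green's function of the full operator $-\Delta-\lambda-2u_0$ (not merely $-\Delta-\lambda$; the non-degeneracy of $u_0$ is used here so that this Green's function is comparable to $|x-y|^{-4}$), combined with the key convolution estimate of the form $\int_\Omega |x-\om|^{-4}(\sigma W)(\om)\,d\om\lesssim \olw(x)$ where $\olw$ is a small perturbation of $W$ on the exterior region. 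You gesture at the representation formula with $G_\lambda$ at the very end, but you do not articulate this convolution estimate, and without it you have no mechanism to close the pointwise bound in $\Omega_{\ext}$.

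Second, for $\nu\ge 2$ the neck regions between interacting bubbles require a separate and delicate analysis (the paper decomposes $\Omega=\Omega_{\ext}\cup\Omega_{\core}\cup\Omega_{\neck}$ following the bubble-tree structure of \cite{DSW}). In $\Omega_{\neck}$ neither the rescaled-limit argument nor a crude exterior estimate applies directly; one needs the cross-weight estimates \eqref{ujw1i}--\eqref{eq:upb6} together with a careful choice of the parameters $\vep, L, L'$. Your rescaled limit $\tilde\rho_i$ around each bubble also needs a removable-singularity argument to discard singular points $z_{i_0 j,\infty}$ before invoking non-degeneracy of $U$; this is absent. These are not cosmetic omissions --- they are precisely where the dimension-$6$ linear theory is hard and where the weight structure was designed to help.
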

Subsequently, we utilize Proposition \ref{prop:31} along with the Banach Fixed-point theorem to derive the following existence result.
\begin{prop}\label{prop:33}
Assume that $\ep_1>0$ is small enough. Equation \eqref{eq:31} admits a unique solution $\rho_0 \in H^1_0(\Omega)$ such that
\begin{equation}\label{eq:33}
\|\rho_0\|_* \le C,
\end{equation}
where $C>0$ depends only on $\nu$, $\lambda$, $u_0$ and $\Omega$. Moreover,
\begin{equation}\label{eq:331}
\|\rho_0\|_{H^1_0(\Omega)} \le C
\left[\max_i\delta_i^2|\log \delta_i|^{\frac12} + \max_i\(\frac{\delta_i}{d(\xi_i,\pa\Omega)}\)^4 \left|\log\frac{d(\xi_i,\pa\Omega)}{\delta_i}\right|^{\frac12} \mone_{\{\nu=1\}} + Q|\log Q|^{\frac12}\right].
\end{equation}
\end{prop}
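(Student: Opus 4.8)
The plan is to obtain $\rho_0$ as the fixed point of a contraction built on the linear theory of Proposition~\ref{prop:31}, to read the pointwise bound \eqref{eq:33} off that contraction, and then to prove the $H^1_0(\Omega)$-estimate \eqref{eq:331} by pairing \eqref{eq:31} with $\rho_0$ itself and controlling every resulting term pointwise by means of \eqref{eq:33}. What makes $n=6$ manageable is that $p=2$, so that $\I_0[\cdot]$ in \eqref{eq:I2} is \emph{quadratic}: directly from its definition one checks $|\I_0[w]|\lesssim w^2$ and $|\I_0[w_1]-\I_0[w_2]|\lesssim(|w_1|+|w_2|)\,|w_1-w_2|$ pointwise on $\Omega$.

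For existence I would set up the map $T$ on $X:=\{w\in H^1_0(\Omega):\|w\|_*<\infty\}$ (the norm from Definition~\ref{norm}) by letting $\rho_0=T(w)$ solve \eqref{eq:lin} with datum $h=\I_1+\I_2+\I_{31}+\I_0[w]$. To apply Proposition~\ref{prop:31} I need $\|h\|_{**}\le C$, which I would obtain from three ingredients: (a) $\|\I_1+\I_2\|_{**}\lesssim1$, which is precisely the pointwise bound \eqref{i1po} read against the weight $V$; (b) $\|\I_{31}\|_{**}\lesssim1$ --- using $PU_i\le U_i$ one has $\lambda PU_i\lesssim v_{1i}$, using Lemma~\ref{lem2.1}, Corollary~\ref{cor:PZ^0exp} and the boundary asymptotics \eqref{vaor} of $\vph$ one has $a_6\delta_i^2\vph(\xi_i)U_i\,\mone_{\{|x_i|\le d(\xi_i,\pa\Omega)/\delta_i\}}\simeq v_{3i}^{\tin}$, and using $|PU_i^p-U_i^p|\le U_i^p$ one has $|PU_i^p-U_i^p|\,\mone_{\{|x_i|\ge d(\xi_i,\pa\Omega)/\delta_i\}}\lesssim v_{3i}^{\tout}$ on that set, so each piece of $\I_{31}$ is dominated by $V$ uniformly up to $\pa\Omega$; (c) $\|\I_0[w]\|_{**}\lesssim\|w\|_*^2\,\|W^2/V\|_{L^\infty(\Omega)}$, where a routine comparison of the weight products (of the type done in Appendix~\ref{a}) gives $\|W^2/V\|_{L^\infty(\Omega)}=o(1)$ as $\vep_1\to0$. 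Combining (a)--(c) with \eqref{eq:311}, and using the Lipschitz bound above, shows that $T$ maps a ball $\{\|w\|_*\le M\}$, $M=M(\nu,\lambda,u_0,\Omega)$, into itself and is a contraction there once $\vep_1$ is small; the Banach fixed point theorem then yields $\rho_0\in H^1_0(\Omega)$ solving \eqref{eq:31} with $\|\rho_0\|_*\le M$, i.e.\ \eqref{eq:33}, and any $H^1_0(\Omega)$-solution of \eqref{eq:31} lies in that ball by elliptic regularity, which gives uniqueness.

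For \eqref{eq:331}, I would pair \eqref{eq:31} with $\rho_0$ and use the orthogonality conditions $\langle\rho_0,PZ_i^k\rangle_{H^1_0(\Omega)}=0$, which annihilate the whole term $\sum_{i,k}c_i^k(-\Delta-\lambda)PZ_i^k$, obtaining
\[
\|\rho_0\|_{H^1_0(\Omega)}^2 = \int_\Omega 2(u_0+\sigma)\rho_0^2 + \int_\Omega(\I_1+\I_2+\I_{31})\rho_0 + \int_\Omega \I_0[\rho_0]\rho_0.
\]
This is precisely the step where the \emph{pointwise} estimate \eqref{eq:33} lets one bypass any coercivity inequality for the form $\rho\mapsto\|\rho\|_{H^1_0(\Omega)}^2-\int_\Omega 2(u_0+\sigma)\rho^2$: inserting $|\rho_0|\le MW$, $|\I_1+\I_2|\lesssim V$, $|\I_{31}|\lesssim V$ and $|\I_0[\rho_0]|\lesssim\rho_0^2\le M^2W^2$ reduces everything to the weighted integrals $\int_\Omega(u_0+\sigma)W^2$, $\int_\Omega VW$ and $\int_\Omega W^3$. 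Using $\|u_0\|_{L^\infty}\lesssim1$, $\sigma\lesssim\sum_iU_i$ and the rescaling $x=\xi_i+\delta_ix_i$, the first and third are bounded by $\max_i\delta_i^4+Q^2+\max_i\ka_i^8|\log(d(\xi_i,\pa\Omega)/\delta_i)|\,\mone_{\{\nu=1\}}$, hence by the square of the right-hand side of \eqref{eq:331}; the logarithms in \eqref{eq:331} come solely from $\int_\Omega VW$, because in $\R^6$ exactly the products $v_{1i}w_{1i}$, $v_{2i}^{\tin}w_{2i}^{\tin}$ and $v_{3i}^{\tin}w_{3i}^{\tin}$ scale like $\la x_i\ra^{-6}$, which is critically integrable, so that $\int_\Omega v_{1i}w_{1i}\simeq\delta_i^4|\log\delta_i|$, $\int_\Omega v_{2i}^{\tin}w_{2i}^{\tin}\mone_{\{\nu\ge2\}}\lesssim Q^2|\log Q|$ and $\int_\Omega v_{3i}^{\tin}w_{3i}^{\tin}\mone_{\{\nu=1\}}\lesssim\ka_i^8|\log(d(\xi_i,\pa\Omega)/\delta_i)|$, with the remaining (mixed and ``out'') products of strictly lower order. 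Summing over $i$ and taking a square root yields \eqref{eq:331}.

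The main obstacle is establishing $\|\I_{31}\|_{**}\lesssim1$ --- and, hand in hand, controlling $\int_\Omega VW$ by the right-hand side of \eqref{eq:331} --- \emph{uniformly as $d(\xi_1,\pa\Omega)\to0$} in the single-bubble case. There the crude bound $PU_i\le U_i$ is far from sufficient near $\pa\Omega$, and one must use the full expansion of $PU_{\delta,\xi}$ from Lemma~\ref{lem2.1}, the boundary asymptotics \eqref{vaor}, and the exact matching between the cut-offs defining $\I_{31}$ and the supports of $v_{3i}^{\tin},v_{3i}^{\tout}$ --- it is precisely to make this matching succeed that $\I_{31}$, rather than $\I_3$, appears in \eqref{eq:31}. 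The companion computation of $\int_\Omega VW$ then requires, for instance, a case split according to whether $d(\xi_1,\pa\Omega)$ is small relative to $\delta_1^{1/2}$, so that the $\simeq\delta_1^2\ka_1^4|\log\ka_1|$ contribution of the $\lambda PU_1$ term is absorbed into either $\max_i\ka_i^8|\log\ka_i|$ or $\max_i\delta_i^4|\log\delta_i|$ --- elementary, but needing careful bookkeeping.
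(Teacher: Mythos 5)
Your proposal matches the paper's proof in structure and content: the contraction on the $\|\cdot\|_*$-ball via Proposition~\ref{prop:31} with $h=\I_1+\I_2+\I_{31}+\I_0[w]$, the checks that $\|\I_1+\I_2+\I_{31}\|_{**}\lesssim1$ (using \eqref{i1po}, Lemma~\ref{lem2.1}, and \eqref{vaor}) and that $\I_0$ is a small quadratic perturbation in this norm, and then the $H^1_0$-bound obtained by pairing \eqref{eq:31} with $\rho_0$ so that orthogonality kills the Lagrange-multiplier terms. The only cosmetic difference is that you compute the resulting weighted integrals $\int(u_0+\sigma)W^2$, $\int VW$, $\int W^3$ directly to isolate where the critical logarithm originates, whereas the paper uses the cruder H\"older bounds $\|W\|_{L^3}^2$ and $\|V\|_{L^{3/2}}\|W\|_{L^3}$ — both give the same final $|\log|^{1/2}$.
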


To establish Proposition \ref{prop:31}, we need two preliminary lemmas.
\begin{lemma}
For each $j\in\{1,\ldots,\nu\}$ and $k\in\{0,1,\ldots,n\}$, there exists a constant $C>0$ depending only on $\nu,\lambda, u_0$ and $\Omega$ such that
\begin{equation}\label{eq:cjb}
|c_j^k| \le C [o(\|\rho_0\|_{*})+\|h\|_{**}] \cdot \left[Q\mone_{\{\nu\ge 2\}} + \delta_j^2 + \ka_j^4\mone_{\{\nu=1\}}\right]
\end{equation}
provided $\ep_1>0$ is small, where $\ka_j = \frac{\delta_j}{d(\xi_j,\pa\Omega)}$.
\end{lemma}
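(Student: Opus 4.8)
The plan is to test the first equation of \eqref{eq:lin} against each $PZ_j^k$ in $L^2(\Omega)$ and read off the coefficients from the resulting finite linear system. Multiplying by $PZ_j^k$ and integrating over $\Omega$, the orthogonality $\big\langle \rho_0, PZ_j^k\big\rangle_{H^1_0(\Omega)} = 0$ makes the term $\int_\Omega [(-\Delta-\lambda)\rho_0]\,PZ_j^k = \big\langle \rho_0, PZ_j^k\big\rangle_{H^1_0(\Omega)}$ vanish, so that one is left with
\[
\sum_{i=1}^\nu \sum_{k'=0}^6 c_i^{k'} M_{ik',jk} = -2\int_\Omega (u_0+\sigma)\rho_0\, PZ_j^k - \int_\Omega h\, PZ_j^k, \qquad M_{ik',jk} := \big\langle PZ_i^{k'}, PZ_j^k\big\rangle_{H^1_0(\Omega)}.
\]
Thus the proof splits into: (a) showing that the matrix $M = (M_{ik',jk})$ is invertible with $\|M^{-1}\|$ bounded independently of $\ep_1$; and (b) estimating the two terms on the right, then inverting.

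For (a), the key point is that $n=6$ forces $p-1=1$, hence $-\Delta PZ_i^{k'} = pU_i^{p-1}Z_i^{k'} = 2U_iZ_i^{k'}$ in $\Omega$ with $PZ_i^{k'}=0$ on $\pa\Omega$, so $M_{ik',jk} = \int_\Omega (2U_iZ_i^{k'} - \lambda PZ_i^{k'})PZ_j^k$. Using $|PZ_j^k|\lesssim U_j$ together with Corollaries \ref{cor:PZ^0exp}--\ref{cor:PZ^kexp}, the exact orthogonality $\int_{\R^6}\nabla Z^{k'}\cdot\nabla Z^k = 0$ for $k\ne k'$, and the interaction estimates already developed in Section \ref{sec2}, one checks that $M_{jk,jk} = \int_{\R^6}|\nabla Z^k|^2 + o(1) > 0$ while $M_{ik',jk} = o(1)$ for every $(i,k')\ne(j,k)$, with the sharper off-diagonal bounds $|M_{ik',jk}|\lesssim \int_\Omega U_iU_j$ when $i\ne j$ and $|M_{jk',jk}| \lesssim \delta_j^{2}$-type quantities when $k'\ne k$. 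Hence $M$ is a small $\ell^\infty$-perturbation of an invertible diagonal matrix once $\ep_1$ is small, so $\|M^{-1}\| = O(1)$, and the off-diagonal bounds will be used below to recover the $j$-dependence.

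For (b), the source term is handled by $|\int_\Omega h\, PZ_j^k| \le \|h\|_{**}\int_\Omega V\,|PZ_j^k|$, and one evaluates this weighted integral region by region from the explicit form of $V$ in Definition \ref{norm} for $n=6$: the self-interaction $v_{1j}$ contributes the scale $\delta_j^2$, the cross pieces ($v_{1i}$ with $i\ne j$ and $v_{2i}^{\tin}, v_{2i}^{\tout}$) contribute $O(Q)$, and for $\nu=1$ the pieces $v_{3j}^{\tin}, v_{3j}^{\tout}$ contribute $O(\ka_j^4)$, giving $\int_\Omega V\,|PZ_j^k| \lesssim Q\mone_{\{\nu\ge2\}} + \delta_j^2 + \ka_j^4\mone_{\{\nu=1\}}$. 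The term $\int_\Omega(u_0+\sigma)\rho_0\, PZ_j^k$ is subtler: the naive bound $\|\rho_0\|_*\int_\Omega(u_0+\sigma)W\,|PZ_j^k|$ is not sufficient, because near $\xi_j$ one has $2(u_0+\sigma)\approx 2U_j$ and $\int_\Omega PU_j\,w_{1j}\,U_j \simeq \delta_j^2$ with an $O(1)$, not $o(1)$, constant. Instead one isolates the dangerous piece and rewrites it using $-\Delta PZ_j^k = 2U_jZ_j^k$:
\[
2\int_\Omega U_j\rho_0 PZ_j^k = \int_\Omega \rho_0(-\Delta PZ_j^k) + 2\int_\Omega U_j(PZ_j^k - Z_j^k)\rho_0 = \big\langle\rho_0,PZ_j^k\big\rangle_{H^1_0(\Omega)} + \lambda\int_\Omega\rho_0 PZ_j^k + 2\int_\Omega U_j(PZ_j^k-Z_j^k)\rho_0,
\]
where the bracket vanishes by orthogonality again, and the remaining pieces — namely $\lambda\int\rho_0 PZ_j^k$, $\int u_0\rho_0 PZ_j^k$, $\int(\sigma - PU_j)\rho_0 PZ_j^k$, $\int(PU_j - U_j)\rho_0 PZ_j^k$, and $\int U_j(PZ_j^k-Z_j^k)\rho_0$ — are each bounded by $\|\rho_0\|_*$ times weighted integrals of the already-met type but now carrying a genuine $o(1)$ gain (using $d(\xi_j,\pa\Omega)\simeq 1$ or $d(\xi_j,\pa\Omega)\to 0$ in the $\nu=1$ case, and $\delta_j^2\lesssim\ep_1^2 Q = o(Q)$ in the scale-separated regime). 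This yields $|\int_\Omega(u_0+\sigma)\rho_0 PZ_j^k| \le o(\|\rho_0\|_*)\big(Q\mone_{\{\nu\ge2\}} + \delta_j^2 + \ka_j^4\mone_{\{\nu=1\}}\big)$. Combining (a) and (b): inverting $M$ first gives the uniform estimate $\max_{j,k}|c_j^k| \lesssim (o(\|\rho_0\|_*)+\|h\|_{**})\max_i(Q\mone_{\{\nu\ge2\}}+\delta_i^2+\ka_i^4\mone_{\{\nu=1\}})$; feeding this back into $M_{jk,jk}c_j^k = (\textup{RHS})_{jk} - \sum_{(i,k')\ne(j,k)}M_{ik',jk}c_i^{k'}$ and using the sharper off-diagonal bounds from (a) upgrades it to the claimed $j$-specific estimate \eqref{eq:cjb}.

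The main obstacle is step (b): the many region-by-region weighted-integral computations for $n=6$, extracting the precise scales $Q\mone_{\{\nu\ge2\}}$, $\delta_j^2$ and (when $\nu=1$) the new boundary scale $\ka_j^4$ with the required $o(1)$ improvements, and — above all — recognizing that the contribution $\int_\Omega(u_0+\sigma)\rho_0 PZ_j^k$ cannot be treated by a direct pointwise bound but must be routed through the kernel identity $-\Delta PZ_j^k = 2U_jZ_j^k$ so that the leading-order part collapses onto $\big\langle\rho_0,PZ_j^k\big\rangle_{H^1_0(\Omega)}=0$; the remaining bookkeeping is then routine.
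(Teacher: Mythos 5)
Your proposal is correct and follows essentially the same path as the paper's proof: test the linearized equation against $PZ_j^k$, kill the leading term via orthogonality, and bound the remaining source and interaction terms region by region with the weights from Definition \ref{norm}. In particular, you correctly identify the crux of the argument — that $\int_\Omega 2(u_0+\sigma)\rho_0 PZ_j^k$ cannot be estimated pointwise because the self-interaction piece $\int PU_j w_{1j}U_j \simeq \delta_j^2$ is only $O(1)$ and not $o(1)$, and that the fix is to route this contribution through the kernel identity $-\Delta PZ_j^k = 2U_jZ_j^k$ (valid precisely because $p=2$ when $n=6$ and $PU_j$ solves \eqref{pu1}) so that the offending leading part becomes $\int\nabla\rho_0\cdot\nabla PZ_j^k = \lambda\int\rho_0 PZ_j^k$ by orthogonality. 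The paper packages the same cancellation slightly differently, by directly estimating $\int_\Omega(\lambda + 2(u_0+\sigma)-2PU_j)\rho_0\,PZ_j^k$ in \eqref{lc0} and $\int_\Omega(PU_jPZ_j^k - U_jZ_j^k)\rho_0$ in \eqref{pupzj}; one checks via $-\Delta PZ_j^k=2U_jZ_j^k$ and $\langle\rho_0,PZ_j^k\rangle_{H^1_0(\Omega)}=0$ that the sum of these two quantities equals $-2\int(u_0+\sigma)\rho_0 PZ_j^k$ exactly, so your decomposition and the paper's are algebraically equivalent. Your step (a) on the Gram matrix and the two-pass "invert then feed back" argument is more explicit than the paper (which simply asserts the conclusion after the three estimates), but it is a routine and standard way to supply the omitted linear-algebra step, and the needed off-diagonal smallness is indeed available from the interaction bounds already established in Section \ref{sec2}.
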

\begin{proof}
For each $j\in \{1,\dots,\nu\}$, we assert that
\begin{equation}\label{lc0}
\left|\int_{\Omega}(\lambda+2(u_0+\sigma)-2PU_j)\rho_0 PZ_j^k\right| \lesssim \int_{\Omega}|\rho_0|U_j + \int_{\Omega} \sum_{i\ne j}U_iU_j |\rho_0| \mone_{\{\nu\ge 2\}} = o(Q+\delta_j^2).
\end{equation}

Note that
\[\int_{\Omega}|\rho_0|U_j \lesssim \|\rho_0\|_* \bigg[\bigg\|\sum_{i=1}^{\nu}(w_{2i}^{\tin} + w_{2i}^{\tout})\bigg\|_{L^3(\Omega)} \|U_j\|_{L^{\frac32}(\Omega)}
+ \int_{\Omega}(w_{3j}^{\tin}+w_{3j}^{\tout})U_j \mone_{\{\nu=1\}}+\sum_{i=1}^{\nu}\int_{\Omega}w_{1i}U_j\bigg].\]
By Young's inequality,
\begin{align}
\bigg\|\sum_{i=1}^{\nu}(w_{2i}^{\tin}+w_{2i}^{\tout})\bigg\|_{L^3(\Omega)}\|U_j\|_{L^{\frac32}(\Omega)} &\lesssim Q|\log Q|^{\frac13}\cdot \delta_j^2|\log\delta_j|^{\frac23} \nonumber\\
&\lesssim Q^2|\log Q|^{\frac23}+\delta_j^4|\log \delta_j|^{\frac43}=o(Q+\delta_j^2)\label{w2ju}
\end{align}
and
\begin{equation}\label{w3jr}
\int_{\Omega}(w_{3j}^{\tin}+w_{3j}^{\tout})U_j \mone_{\{\nu=1\}} \lesssim \delta_j^2\ka_j^4 |\log\ka_j| + \delta_j^2\ka_j^4 = o(\delta_j^2).
\end{equation}
Let us prove that
\begin{equation}\label{wuj}
\int_{\Omega} \sum_{i=1}^{\nu}w_{1i} U_j \mone_{\{u_0>0\}}=o(Q+\delta_j^2).
\end{equation}
If $i = j$, it holds that
\begin{equation}\label{lc1}
\int_{\Omega}w_{1j} U_j \lesssim \delta_j^4|\log\delta_j|.
\end{equation}
If $i \ne j$, we have
\begin{align}
\int_{\Omega}w_{1i} U_j &\lesssim \int_{\Omega} \frac{\delta_i^2}{\delta_i^2+|x-\xi_i|^2} \(\frac{\delta_j}{\delta_j^2+|x-\xi_j|^2}\)^2 \label{w1vj} \\
&\lesssim \begin{medsize}
\begin{cases}
\displaystyle \delta_i^2 \delta_j^2 (1+|\log |\xi_i-\xi_j||) &\text{if } \msr_{ij} = \frac{|\xi_i-\xi_j|}{\sqrt{\delta_i\delta_j}} \\
\displaystyle \delta_i^2\delta_j^2 \int_{B(0,\delta_i^{-1})} \frac{1}{1+|y|^2} \frac{dy}{[(\frac{\delta_j}{\delta_i})^2+|y-z_{ij}|^2]^2} &\text{if } \msr_{ij}=\sqrt{\frac{\delta_i}{\delta_j}} \\
\displaystyle \frac{\delta_i^6}{\delta_j^2} \int_{B(0,\delta_i^{-\frac12})}\frac{1}{1+|y|^2}
\frac{dy}{[1+(\frac{\delta_i}{\delta_j}|y-z_{ij}|)^2]^2}+\delta_i^2\delta_j^2\int_{ \sqrt{\delta_i}\le |x|\le C}\frac{1}{|x-\xi_i|^2}\frac{dx}{|x-\xi_j|^4} &\text{if } \msr_{ij}=\sqrt{\frac{\delta_j}{\delta_i}}
\end{cases}
\end{medsize} \nonumber \\
&\lesssim \begin{medsize}
\left.\begin{cases}
\displaystyle \delta_j^2 \delta_i^2|\log\delta_i| &\text{if } \msr_{ij} = \frac{|\xi_i-\xi_j|}{\sqrt{\delta_i\delta_j}} \\
\displaystyle \delta_i^2\delta_j^2 \(1 +\int_2^{\delta_i^{-1}} t^{-1}dt\) \lesssim\delta_j^2 \delta_i^2|\log\delta_i| &\text{if } \msr_{ij}=\sqrt{\frac{\delta_i}{\delta_j}} \\
\displaystyle \frac{\delta_i^6}{\delta_j^2} \(1+\int_1^{\delta_i^{-\frac12}}t^3dt\) + \delta_i^2\delta_j^2|\log\delta_i| \lesssim \delta_i^2Q + \delta_j^2\delta_i^2|\log\delta_i| &\text{if } \msr_{ij}=\sqrt{\frac{\delta_j}{\delta_i}}
\end{cases}\right\}
\end{medsize}
=o(Q+\delta_j^2). \nonumber
\end{align}
As a result, \eqref{wuj} is valid.

Next, let us verify that
\begin{align*}
\int_{\Omega} \sum_{i\ne j}U_iU_j|\rho_0| \mone_{\{\nu\ge 2\}} &\lesssim \bigg\|\sum_{i=1}^{\nu}(v_{2i}^{\tin}+v_{2i}^{\tout})\bigg\|_{L^{\frac32}(\Omega)} \bigg\|\sum_{i=1}^{\nu}(w_{2i}^{\tin}+w_{2i}^{\tout})\bigg\|_{L^3(\Omega)} \\
&\ + \sum_{i\ne j}\int_{\Omega}U_iU_jw_{1i} + \sum_{i\ne j}\int_{\Omega}U_iU_jw_{1j} + \sum_{\substack{i \ne j,\, j \ne l,\\ i \ne l}}U_iU_jw_{1l} \\
&\simeq Q^2|\log Q|+o(Q)=o(Q).
\end{align*}
Here, we used that
\begin{align*}
\int_{\Omega} \sum_{\substack{i \ne j,\, j \ne l,\\ i \ne l}} U_iU_jw_{1l} &\lesssim \int_{\Omega} \sum_{\substack{i \ne j,\, j \ne l,\\ i \ne l}} U_iU_jU_l \\
&\lesssim \sum_{\substack{i \ne j,\, j \ne l,\\ i \ne l}} \|U_iU_j\|_{L^{\frac32}(\Omega)}^{\frac12} \|U_iU_l\|_{L^{\frac32}(\Omega)}^{\frac12} \|U_jU_l\|_{L^{\frac32}(\Omega)}^{\frac12} \lesssim Q^{\frac32}|\log Q|.
\end{align*}
Arguing as in \eqref{w1vj}, one can verify that for $i\ne j$,
\begin{align*}
\int_{\Omega} U_iU_jw_{1i} &\lesssim \int_{\Omega} \frac{\delta_i^4}{(\delta_i^2+|x-\xi_i|^2)^3} \(\frac{\delta_j}{\delta_j^2+|x-\xi_j|^2}\)^2 dx \\
&\lesssim \begin{medsize}
\left.\begin{cases}
\displaystyle \frac{\delta_i^4\delta_j^2}{|\xi_i-\xi_j|^4} \log\(2+\frac{|\xi_i-\xi_j|}{\delta_i}\) &\text{if } \msr_{ij} = \frac{|\xi_i-\xi_j|}{\sqrt{\delta_i\delta_j}} \\
\displaystyle \delta_j^2 \int_{B(0,\delta_i^{-1})} \frac{1}{(1+|y|^2)^3} \frac{dy}{[(\frac{\delta_j}{\delta_i})^2+|y-z_{ij}|^2]^2} &\text{if } \msr_{ij}=\sqrt{\frac{\delta_i}{\delta_j}} \\
\displaystyle \frac{\delta_i^4}{\delta_j^2} \int_{B(0,\delta_i^{-1})}\frac{1}{(1+|y|^2)^3}
\frac{dy}{[1+(\frac{\delta_i}{\delta_j}|y-z_{ij}|)^2]^2} &\text{if } \msr_{ij}=\sqrt{\frac{\delta_j}{\delta_i}}
\end{cases}\right\}
\end{medsize} \\
&\lesssim \begin{medsize}
\left.\begin{cases}
\delta_i^2|\log\delta_i|\msr_{ij}^{-4} &\text{if } \msr_{ij} = \frac{|\xi_i-\xi_j|}{\sqrt{\delta_i\delta_j}} \\
\delta_i^2\msr_{ij}^{-4} &\text{if } \msr_{ij}=\sqrt{\frac{\delta_i}{\delta_j}} \\
\delta_i^2|\log\delta_i|\msr_{ij}^{-4} &\text{if } \msr_{ij}=\sqrt{\frac{\delta_j}{\delta_i}}
\end{cases}\right\}
\end{medsize}
=o(Q),
\end{align*}
and similarly,
\begin{align*}
\int_{\Omega}U_iU_jw_{1j} \lesssim \left.\begin{cases}
\delta_j^2|\log\delta_j|\msr_{ij}^{-4} &\text{if } \msr_{ij} = \frac{|\xi_i-\xi_j|}{\sqrt{\delta_i\delta_j}} \nonumber\\
\delta_j^2\msr_{ij}^{-4} &\text{if } \msr_{ij}=\sqrt{\frac{\delta_j}{\delta_i}}\\
\delta_j^2|\log\delta_j|\msr_{ij}^{-4} &\text{if } \msr_{ij}=\sqrt{\frac{\delta_i}{\delta_j}}
\end{cases}\right\}=o(Q).
\end{align*}
All the above estimates imply that \eqref{lc0} holds true.

\medskip
Furthermore, by \eqref{w2ju}-\eqref{lc1}, we have
\begin{align}
\bigg|\int_{\Omega}(PU_jPZ_j^k-U_jZ_j^k)\rho_0\bigg| &\lesssim \|\rho_0\|_*\bigg[\frac{\delta_j^2}{d(\xi_j,\pa\Omega)^4} \int_{\Omega}(w_{1j}+w_{3j}^{\tin}+w_{3j}^{\tout})U_j \mone_{\{\nu=1\}}\label{pupzj}\\
&+\delta_j^2\int_{\Omega}U_j\sum_{i=1}^{\nu}(w_{1i}+w_{2i}^{\tin}+w_{2i}^{\tout})\mone_{\{\nu\ge 2, \text{~each~}\xi_i \text{~is in a compact set of~}\partial\Omega\}}\bigg]\nonumber\\
&= o(\|\rho_0\|_*(Q+\delta_j^2+\ka_j^4\mone_{\{\nu=1\}})).\nonumber
\end{align}

\medskip
Finally, we claim that
\begin{equation}\label{hzj}
\bigg|\int_{\Omega}h PZ_j^k\bigg| \lesssim \int_{\Omega}V U_j \lesssim \|h\|_{**} \(Q+\delta_j^2+\ka_j^4\mone_{\{\nu=1\}}\).
\end{equation}
A direct computation gives
\[\int_{\Omega}v_{1j}U_j\simeq \delta_j^2, \quad \int_{\Omega}(v_{2j}^{\tin}+v_{2j}^{\tout})U_j \simeq Q, \quad \int_{\Omega}(v_{3j}^{\tin}+v_{3j}^{\tout})U_j \mone_{\{\nu=1\}}\simeq\ka_j^4.\]
Assume that $i\ne j$. From \eqref{lauij} and \eqref{lauij4}, we see that
\[\int_{\Omega}v_{1i}U_j =\int_{\Omega}U_iU_j \lesssim
\left.\begin{cases}
\delta_j^2 \frac{\delta_i^2}{|\xi_i-\xi_j|^2} &\text{if } \msr_{ij}=\frac{|\xi_i-\xi_j|}{\sqrt{\delta_i\delta_j}} \\
o(Q) &\text{if } \msr_{ij}=\sqrt{\frac{\delta_i}{\delta_j}} \\
o(Q)+\delta_i\delta_j^2 &\text{if } \msr_{ij}=\sqrt{\frac{\delta_j}{\delta_i}}
\end{cases}\right\}
\lesssim \delta_j^2+o(Q).\]
Similarly to \eqref{w1vj}, we obtain
\begin{align*}
\int_{\Omega}v^{\tin}_{2i} U_j &\lesssim \int_{\Omega} \delta_i^{-4} \frac{\msr^{-4}}{\la x_i \ra^4} \mone_{\{|x_i| < \msr^2\}}(x)\(\frac{\delta_j}{\delta_j^2+|x-\xi_j|^2}\)^2 dx \\
&\lesssim \left.\begin{cases}
\frac{\delta_j^2\msr^{-4}}{|\xi_i-\xi_j|^2} &\text{if } \msr_{ij}=\frac{|\xi_i-\xi_j|}{\sqrt{\delta_i\delta_j}} \\
\frac{\delta_j^2}{\delta_i^2}\msr^{-4}\(1+\int_{2}^{\msr^2}t^{-3} dt\) &\text{if } \msr_{ij}=\sqrt{\frac{\delta_i}{\delta_j}} \\
\frac{\delta_i^2}{\delta_j^2}\msr^{-4}\(1+\int_{2}^{\msr^2}t dt\) &\text{if } \msr_{ij}=\sqrt{\frac{\delta_j}{\delta_i}}
\end{cases}\right\}
\lesssim \msr^{-4}\simeq Q,
\end{align*}
and
\begin{align*}
\int_{\Omega}v^{\tout}_{2i} U_j &\lesssim \int_{\Omega} \delta_i^{-4} \frac{\msr^{-2}}{|x_i|^5} \mone_{\{|x_i| \ge \msr^2\}}(x)\(\frac{\delta_j}{\delta_j^2+|x-\xi_j|^2}\)^2 dx \\
&\lesssim \left.\begin{cases}
\frac{\delta_j^2\msr^{-4}}{|\xi_i-\xi_j|^2} &\text{if } \msr_{ij}=\frac{|\xi_i-\xi_j|}{\sqrt{\delta_i\delta_j}} \\
\delta_i^{-2}\delta_j^2\msr^{-2} \int_{\{t\ge \msr^2\}}t^{-4} dt &\text{if } \msr_{ij}=\sqrt{\frac{\delta_i}{\delta_j}} \\
\frac{\delta_i^2}{\delta_j^2}\msr^{-2} \int_{\{t\ge \msr^2\}}1 dt &\text{if } \msr_{ij}=\sqrt{\frac{\delta_j}{\delta_i}}
\end{cases}\right\} \lesssim \msr^{-4}\simeq Q.
\end{align*}
Thus, the claim \eqref{hzj} holds as desired.

\medskip
Consequently, by testing the linearized equation \eqref{eq:lin} against the functions $PZ_j^k$ and using \eqref{lc0}, \eqref{pupzj}, and \eqref{hzj}, we obtain \eqref{eq:cjb}.
\end{proof}

\begin{lemma}
For any $x \in \Omega$ and sufficiently large $M > 1$, the following inequality holds:
\begin{align}
&\ \int_{\Omega} \frac{1}{|x-\om|^4} \(\sigma W\)(\om) d\om \nonumber \\
&\lesssim \sum_{i=1}^{\nu} \(w_{2i}^{\tin}+ w_{2i}^{\tout}\) \left[M^2\frac{\log(2+|x_i|)}{\la x_i \ra} \mone_{\{|x_i| < \msr^2\}} + M^2\frac{\log |x_i|}{|x_i|^2}\mone_{\{|x_i| \ge \msr^2\}} + M^4\msr^{-2} + M^{-1}\right] \nonumber \\
&\ + \sum_{i=1}^{\nu} \left[\msr^{-2}+M^{-2}+\max_i\delta_iM^4+\frac{\log(2+|x_i|)}{\langle x_i\rangle^2}\right]w_{1i} \label{eq:323} \\
&\ + \sum_{i=1}^{\nu} \left[w_{3i}^{\tin}\frac{\log(2+|x_i|)}{\langle x_i\rangle^2} + w_{3i}^{\tout}\frac{\log|x_i|}{|x_i|^2}\right] \mone_{\{\nu=1\}}
=: \olw(x). \nonumber
\end{align}
\end{lemma}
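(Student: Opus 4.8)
The plan is to reduce \eqref{eq:323} to a finite list of Newton‑potential estimates for bubble‑type functions, and then to treat those region by region, separating the diagonal contributions from the bubble--bubble interactions. To start, by Lemma~\ref{lem2.1} one has $\sigma=\sum_{i=1}^{\nu}PU_i\lesssim\sum_{i=1}^{\nu}U_i$ on $\Omega$, and every weight introduced in Definition~\ref{norm} is given by an explicit formula that extends to all of $\R^6$; since $|x-\om|^{-4}$ is locally integrable on $\R^6$, it is enough to bound, for each $i,j\in\{1,\dots,\nu\}$ and each building block $w_j$ of $W$ (namely $w_{1j}$, $w_{2j}^{\tin}\mone_{\{\nu\ge2\}}$, $w_{2j}^{\tout}\mone_{\{\nu\ge2\}}$, $w_{3j}^{\tin}\mone_{\{\nu=1\}}$, $w_{3j}^{\tout}\mone_{\{\nu=1\}}$), the quantity
\[
\mathcal K_{ij}^{(w)}(x):=\int_{\R^6}\frac{U_i(\om)\,w_j(\om)}{|x-\om|^4}\,d\om
\]
by the corresponding piece of $\olw(x)$, distinguishing the diagonal case $i=j$ from the interaction case $i\ne j$.

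The elementary input is, first, the family of Newton‑potential bounds in $\R^6$: for $a>2$,
\[
\int_{\R^6}\frac{d\om}{|x-\om|^4\,\la\om\ra^{a}}\ \lesssim\
\begin{cases}
\la x\ra^{-4} & \text{if } a>6,\\
\la x\ra^{-4}\log(2+|x|) & \text{if } a=6,\\
\la x\ra^{-(a-2)} & \text{if } 2<a<6,
\end{cases}
\]
together with their truncations over $\{|\om|<R\}$ and $\{|\om|\ge R\}$, which supply the extra logarithmic factors and the negative powers of $R$; second, the two‑ and three‑bubble interaction estimates of the type used in the proofs of Lemmas~\ref{lem2.31}--\ref{lem2.33}, recalling $\msr_{ij}\simeq q_{ij}^{-1/(n-2)}$ and $\msr=\tfrac12\min_{i\ne j}\msr_{ij}$. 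All of these are elementary, of the kind collected in Appendix~\ref{a}.

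For the diagonal terms I would rescale $\om=\xi_i+\delta_i y$: the product $U_iw_i$ turns into an explicit radial profile in $y$ (possibly truncated to $\{|y|<\msr^2\}$ or $\{|y|\le d(\xi_i,\pa\Omega)/\delta_i\}$), the kernel becomes $\delta_i^{-4}|x_i-y|^{-4}$, and $\mathcal K_{ii}^{(w)}$ reduces, after restoring powers of $\delta_i$, to one of the model integrals above evaluated at $x_i$; matching the result with the $w_{1i}$‑, $(w_{2i}^{\tin}+w_{2i}^{\tout})$‑ or $(w_{3i}^{\tin}+w_{3i}^{\tout})$‑pieces of $\olw$ is then bookkeeping. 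For the interaction terms $i\ne j$ I would partition $\R^6$ into the region near $\xi_j$ (where $w_j$ is concentrated), the region near $\xi_i$ (where $U_i$ is large), the region near $x$, and the complement, and then run the three‑case analysis according to whether $\msr_{ij}$ equals $\sqrt{\delta_i/\delta_j}$, $\sqrt{\delta_j/\delta_i}$, or $|\xi_i-\xi_j|/\sqrt{\delta_i\delta_j}$, exactly as in the proof of Lemma~\ref{lem2.31}; the bounds that emerge are absorbed into $\msr^{-2}w_{1i}$ for the $w_{1j}$‑block and into $M^4\msr^{-2}(w_{2i}^{\tin}+w_{2i}^{\tout})$ for the $w_{2j}$‑blocks. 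The parameter $M$ enters precisely as the device separating, in each convolution, a near‑core region (at the concentration scale magnified by $M$, either around $\xi_i$ or around $x$) from the complementary region: integrating the singular kernel over the near‑core region yields the positive powers $M^2$, $M^4$ and the factor $\max_i\delta_i$, while on the complementary region the smallness of $|x-\om|^{-4}$ or of the tails of $U_i$ and $w_j$ yields the gains $M^{-1}$, $M^{-2}$ and the decaying factors $\log(2+|x_i|)\la x_i\ra^{-2}$ and $\log|x_i|\,|x_i|^{-2}$. Summing the diagonal and interaction contributions over $i,j$ and over the building blocks of $W$ then yields \eqref{eq:323}.

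The hard part will be the interaction estimates that involve the truncated weights $w_{2j}^{\tin}$ and $w_{2j}^{\tout}$: there one must bound $\mathcal K_{ij}^{(w)}$ while simultaneously keeping track of the support cut $\{|x_j|<\msr^2\}$ versus $\{|x_j|\ge\msr^2\}$, the singularity of $|x-\om|^{-4}$ at $\om=x$, and the relative position of $\xi_i$ and $\xi_j$ encoded by $\msr_{ij}$, and still close the estimate using only the powers of $\msr$ and of $M$ permitted by $\olw$. This is precisely the linear‑theory incarnation of the ``cross term'' obstruction flagged in Subsection~\ref{subc1.3}(7); by contrast, the diagonal estimates and the underlying Newton‑potential bounds are routine.
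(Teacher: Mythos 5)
Your outline has the right shape (reduce to bubble--weight convolutions, split diagonal from interaction terms, run a case analysis on $\msr_{ij}$), but the paper organizes the argument differently and more efficiently: it first proves \emph{pointwise} bounds on the products $U_i w_j$ (for example $U_j w_{1i}\lesssim(\max_i\delta_iM^4+\msr^{-2}+M^{-2})\sum_i(v_{1i}+v_{2i}^{\tin}+v_{2i}^{\tout})$, and the cross products with $w_{2j}^{\tin},w_{2j}^{\tout}$ are controlled by adapting the pointwise inequalities from \cite[Lemma 4.2]{DSW}), and only then convolves the resulting single-index expressions against $|x-\om|^{-4}$ using two model estimates. This decoupling is what makes the case analysis manageable; your proposal to estimate each $\mathcal K_{ij}^{(w)}$ directly by partitioning the integration domain would have to track the kernel singularity, the weight truncations, and the bubble separation simultaneously, which is considerably harder to close.

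Two specific misreadings. First, $M$ is not a radial cut in the convolution: it enters once and for all as a dichotomy on the \emph{center separation} $|\xi_i-\xi_j|\lessgtr M\delta_i$, at the pointwise stage, and the powers $M^{\pm2},M^4$ record which side of that dichotomy you are on. Second, the ``cross-term obstruction'' of Subsection~\ref{subc1.3}(7) concerns $U_iw_{3j}^{\tin},U_iw_{3j}^{\tout}$ for $i\ne j$; it is sidestepped here, not confronted, because $w_{3i}$ carries the factor $\mone_{\{\nu=1\}}$ (and Section~\ref{sec2.2} assumes the $\xi_i$ lie in a compact set when $\nu\ge2$), so in this lemma the $w_3$ block has no cross terms at all. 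The $w_2$ cross terms you flag as hard are indeed delicate, but they are already handled by the DSW estimates \eqref{eq:upb1}--\eqref{eq:upb6}.
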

\begin{proof}
Without loss of generality, we can assume that $\delta_i\geq \delta_j$ for $1 \le i\neq j \le \nu$. We recall the notations $x_i=\delta_i^{-1}(x-\xi_i)$ and $z_{ij}=\delta_i^{-1}(\xi_j-\xi_i)$.

We first consider the cross terms involving $w_{1i}$, namely, $U_iw_{1j}$ and $U_jw_{1i}$. We will show that
\begin{equation}\label{ujw1i}
U_jw_{1i}=\sum_{i=1}^{\nu}(v_{1i}+v_{2i}^{\tin}+v_{2i}^{\tout})(\max_i\delta_i M^4+\msr^{-2}+M^{-2})
\end{equation}
by dividing two cases.

\medskip
\noindent\fbox{Case 1:} Suppose that $|\xi_i-\xi_j|\leq M \delta_i$. Then $\sqrt{\frac{\delta_i}{\delta_j}}\le \msr \le M \sqrt{\frac{\delta_i}{\delta_j}}$ and $w_{1i} \lesssim 1$.

If $\frac{|x-\xi_j|}{\delta_j}\le \msr^2$, then $|x-\xi_j|\le M^2\delta_i$, leading to
\[U_{j}w_{1i} \lesssim v_{2j}^{\tin} \delta_j^2\msr^4 \lesssim v_{2j}^{\tin}\delta_i^2M^4.\]

If $\frac{|x-\xi_j|}{\delta_j}\ge \msr^2$, then $|x-\xi_j|\ge \delta_i$, resulting in
\[U_{j}w_{1i} \lesssim v_{2j}^{\tout} \delta_j\msr^2 \lesssim v_{2j}^{\tout}\delta_iM^2.\]

\noindent\fbox{Case 2:} Suppose that $|\xi_i-\xi_j|\geq M \delta_i$. Then, $\msr=\frac{|\xi_i-\xi_j|}{\sqrt{\delta_i\delta_j}}$.

When $|x-\xi_i|\ge \frac{|\xi_i-\xi_j|}{2}$, then
\[U_{j}w_{1i} \lesssim v_{1j} \frac{\delta_i^2}{|\xi_i-\xi_j|^2} \lesssim v_{1j}M^{-2}.\]

When $|x-\xi_i|\le \frac{|\xi_i-\xi_j|}{2}$, then $|x-\xi_j|\gtrsim \frac{|\xi_i-\xi_j|}{2}$. Using $\delta_j\le \delta_i$, we deduce
\[U_{j}w_{1i} \lesssim v_{1i}(\delta_i^2+|x-\xi_i|^2) \frac{\delta_j^2}{|\xi_i-\xi_j|^4} \lesssim v_{1i}\msr^{-2}.\]

\medskip
We turn to handling $U_iw_{1j}$. Applying Young's inequality and using $\delta_j\le \delta_i$ once again, we obtain
\begin{equation}\label{uiw1j}
U_{i}w_{1j} \lesssim \frac{\delta_j}{\delta_i} \frac{\delta_i^4}{(\delta_i^2+|x-\xi_i|^2)^3} + \frac{\delta_j^4}{(\delta_j^2+|x-\xi_j|^2)^3}
\lesssim \frac{\delta_i^{-2}}{\la x_i\ra^4}w_{1i}+\frac{\delta_j^{-2}}{\la x_j\ra^4}w_{1j}.
\end{equation}
By adapting the arguments from \cite[Lemma 4.2]{DSW}, we establish the following estimates:
\begin{align}
U_jw_{2i}^{\tin} &\lesssim \msr^{-2}\(v_{2j}^{\tin} + v_{2j}^{\tout} + v_{2i}^{\tin}\),\label{eq:upb1}\\
U_jw_{2i}^{\tout} &\lesssim \msr^{-2}\(v_{2j}^{\tin}+v_{2j}^{\tout} + v_{2i}^{\tout}\),\label{eq:upb3}\\
U_iw_{2j}^{\tin} &\lesssim \la z_{ij} \ra^{-2}\(v_{2i}^{\tin}+v_{2j}^{\tin}\)+\msr^{-2}\la z_{ij} \ra^{-1} v_{2i}^{\tout},\label{eq:upb4}\\
U_iw_{2j}^{\tout} &\lesssim \la z_{ij} \ra^{-1} v_{2i}^{\tin} + \msr^{-2}v_{2i}^{\tout} + \la z_{ij} \ra^{-2}v_{2j}^{\tout}, \label{eq:upb5}
\end{align}
and
\begin{align}
U_i\(w_{2j}^{\tin}+w_{2j}^{\tout}\) \lesssim \left[\(\tfrac{\delta_{j}}{\delta_{i}}\)^2+\theta^2\right] v_{2j}^{\tout} &\quad \text{if } |x_i-z_{ij}| \le \theta,\label{eq:upb2}\\
w_{2j}^{\tin}+w_{2j}^{\tout} \lesssim \la z_{ij} \ra^{5} \theta^3 \(w_{2i}^{\tin}+w_{2i}^{\tout}\) &\quad \text{if } |x_i-z_{ij}| \ge \theta \label{eq:upb6}
\end{align}
for any $\theta \in (0,1)$.

\medskip
Lastly, letting $\om_i=\frac{\om-\xi_i}{\delta_i}$, we check that
\[\int_{\Omega} \frac{1}{|x-\om|^4}[v_{1i}+v_{2i}^{\tin}+v_{2i}^{\tout}+v_{3i}^{\tin}+v_{3i}^{\tout}](\om) d\om \le C(w_{1i}+w_{2i}^{\tin}+w_{2i}^{\tout}+w_{3i}^{\tin}+w_{3i}^{\tout})\]
and
\begin{align*}
&\ \int_{\Omega} \frac{1}{|x-\om|^4}\frac{\delta_i^{-2}}{\la \om_i\ra^4}[w_{1i}+w_{2i}^{\tin}+w_{2i}^{\tout}+w_{3i}^{\tin}+w_{3i}^{\tout}](\om) d\om \\
&\le C\left[\frac{\log(2+|x_i|)}{\la x_i\ra^2}(w_{1i}+w_{3i}^{\tin})+ \frac{\log(2+|x_i|)}{\la x_i\ra}w_{2i}^{\tin}+\frac{\log|x_i|}{|x_i|^2}(w_{2i}^{\tout}+w_{3i}^{\tout})\right].
\end{align*}

\medskip
Now, taking the above estimates yield \eqref{eq:323}.
\end{proof}

We are ready to complete the proof of Propositions \ref{prop:31} and \ref{prop:33}.
\begin{proof}[Proof of Proposition \ref{prop:31}]
Once \eqref{eq:311} is established, the Fredholm alternative principle will give us the existence and uniqueness of solution $\rho_0$ to \eqref{eq:lin} for a given $h$ with $\|h\|_{**}<\infty$. As a consequence, it is sufficient to prove \eqref{eq:311}.

\medskip
We argue by contradiction. Suppose that there exist parameters $\{(\delta_{i,m}, \xi_{i,m})\}_{m\in \N}$, functions $\{\rho_{0,m}\}_{m\in \N}$ and $\{h_m\}_{m\in \N}$, and numbers $\{c_{i,m}^k\}_{m\in \N}$
such that $\xi_{i,m} \in \Omega$, $d(\xi_{i,m},\pa\Omega) \gtrsim 1$ if $\nu\ge 2$,
\[\delta_{i,m} + \frac{\delta_{i,m}}{d(\xi_{i,m}, \pa\Omega)} + \|h_m\|_{**} \to 0 \quad \text{as } m\to \infty,
\quad \text{and} \quad \|\rho_{0,m}\|_*=1 \text{ for all } m\in \N.\]
We also assume that these sequences satisfy
\begin{align}\label{suct}
\begin{cases}
\displaystyle(-\Delta-\lambda)\rho_{0,m} - 2(u_0+\sigma_m)\rho_{0,m}= h_m + \sum_{i=1}^{\nu} \sum_{k=0}^6 c_{i,m}^k (-\Delta-\lambda) PZ_{i,m}^k \quad \text{in } \Omega\subset \R^6,\\
\displaystyle \rho_{0,m}=0 \quad \text{on } \pa\Omega,\ c_{1,m}^0, \ldots, c_{\nu,m}^6 \in \R,\\
\displaystyle \big\langle \rho_{0,m}, PZ_{i,m}^k \big\rangle_{H^1_0(\Omega)} = 0 \quad \text{for } i=1,\ldots,\nu \text{ and } k=0,\ldots,6,
\end{cases}
\end{align}
where $PU_{i,m}=PU_{\delta_{i,m},\xi_{i,m}}$. Moreover, let $V_m$, $W_m$, $\olw_m$, $Q_m$, and $\msr_m$ denote the functions and quantities corresponding to $V$, $W$, $\olw$, $Q$, and $\msr$, respectively,
where $(\delta_i, \delta_j, \xi_i,\xi_j)$ are replaced by $(\delta_{i,m},\delta_{j,m}, \xi_{i,m}, \xi_{j,m})$; see Definition \ref{norm}, \eqref{eq:323}, and \eqref{rq}.

By virtue of \eqref{eq:cjb} and Definition \ref{norm}, we observe
\begin{equation}\label{cimk}
\begin{aligned}
&\ \int_{\Omega} \frac{1}{|x-\om|^4}\bigg| \sum_{k=0}^6\sum_{i=1}^{\nu} c_{i,m}^k(-\Delta-\lambda) PZ_{i,m}^k\bigg|(\om)d\om \\
&\lesssim \int_{\Omega} \frac{1}{|x-\om|^4}\sum_{k=0}^6\sum_{i=1}^{\nu} c_{i,m}^k \(U_{i,m}^2+U_{i,m}\)(\om)d\om \lesssim \sum_{k=0}^6\sum_{i=1}^{\nu}|c_{i,m}^k|U_{i,m} \\
&\lesssim \begin{medsize}
\displaystyle \sum_{k=0}^6\sum_{i=1}^{\nu} \bigg[\delta_{i,m}^2\delta_{i,m}^{-2}w_{1i,m}+Q_m\msr_m^4(w_{2i,m}^{\tin}+w_{2i,m}^{\tout})
+\frac{\delta_{i,m}^4}{d(\xi_{i,m},\pa\Omega)^4}\delta_{i,m}^{-2}(w_{3i,m}^{\tin}+w_{3i,m}^{\tout})\bigg] (o(\|\rho_{0,m}\|_{*})+\|h_m\|_{**})
\end{medsize} \\
&= o_m(1)W_m(x).
\end{aligned}
\end{equation}
Here, $o_m(1) \to 0$ as $m \to \infty$, and we exploit the precise estimate of $c_i^k$ presented in \eqref{eq:cjb} to deduce the second inequality.

Given the nondegeneracy and boundedness of $u_0$, we know the Green's function of the operator $-\Delta-\lambda-2u_0$ with Dirichlet boundary condition is bounded by $C_0\frac{1}{|x-y|^4}$ for some constant $C_0>0$. Combining this fact with $\|h_m\|_{**}=o_m(1)$ and \eqref{cimk}, one has
\begin{equation}\label{poin}
|\rho_{0,m}(x)|\le C_0 \int_{\Omega} \frac{1}{|x-\om|^4} \(\sigma_m |\rho_{0,m}|\)(\om) d\om+o_m(1)W_m(x).
\end{equation}
To complete the proof, we will prove that for any given $\tau \in (0,1)$, there exists a number $m_{\tau} \in \N$ depending on $\tau$ such that
\begin{equation}\label{eq:claim2}
m \ge m_{\tau} \ \Rightarrow \ C_0\int_{\Omega} \frac{1}{|x-\om|^4} \(\sigma_m|\rho_{0,m}|\)(\om) d\om \le \tau W_m(x) \quad \text{for all } x \in \Omega.
\end{equation}

\medskip
Without loss of generality, we may assume that
\[\begin{cases}
\delta_{1,m} \ge \delta_{2,m} \ge \cdots \ge \delta_{\nu,m} \text{ for all } m \in \N,\\
\displaystyle \text{either } \lim_{m \to \infty} z_{ij,m} = z_{ij,\infty} \in \R^6 \text{ or } \lim_{m \to \infty} |z_{ij,m}| \to \infty,
\end{cases}\]
where $z_{ij,m} := \delta^{-1}_{i,m}(\xi_{j,m}-\xi_{i,m}) \in \R^6$.
We define
\[\mcd(i):=\left\{j \in \{1,\ldots,\nu\}: i < j \text{ and } \lim_{m \to \infty} |z_{ij,m}| \in \R\right\}\]
and $x_{i,m} := \delta^{-1}_{i,m}(x-\xi_{i,m}) \in \delta_{i,m}^{-1}(\Omega-\xi_{i,m})$. For large $L > 1$ and small $\vep \in (0,1)$, we introduce
\[\Omega_{i,m}:=\left\{x \in \Omega: |x_{i,m}| \le L,\, |x_{i,m}-z_{ij,m}| \ge \vep \text{ for all } j \in \mcd(i)\right\}\]
and
\[\mca_{i,m}:=\bigcup_{j \in \mcd(i)} \bigg[\left\{x \in \Omega: |x_{i,m}-z_{ij,m}| < \vep\right\} \setminus \bigcup_{\ell \in \mcd(i)} \left\{x \in \Omega: |x_{\ell,m}| \le L\right\}\bigg].\]
Using these definitions, we decompose $\Omega$ into three disjoint subsets:
\[\Omega = \Omega_{\ext} \cup \Omega_{\core} \cup \Omega_{\neck},\ \footnotemark
\]where\footnotetext{If $\nu=1$, then $\Omega_{\neck}=\emptyset$ and $\Omega_{\core}=\{x\in \Omega:|x_{1,m}|\le L\}$. For $\nu\ge 2$, we essentially use the bubble-tree structure introduced by \cite[Subsection 4.2]{DSW}.}
\[\Omega_{\ext} := \bigcap_{i=1}^{\nu} \{x \in \Omega: |x_{i,m}| > L\},\quad \Omega_{\core} := \bigcup_{i=1}^{\nu} \Omega_{i,m},\quad \Omega_{\neck} := \bigcup_{i=1}^{\nu} \mca_{i,m}.\]
Subsequently, we express
\begin{align*}
C_0\int_{\Omega} \frac{1}{|x-\om|^4} \(\sigma_m|\rho_{0,m}|\)(\om) d\om &= C_0\(\int_{\Omega_\ext} +\int_{\Omega_{\core}}+ \int_{\Omega_{\neck}}\) \frac{1}{|x-\om|^4} \(\sigma_m|\rho_{0,m}|\)(\om) d\om \\
&=: \I_{\ext}(x) + \I_{\core}(x) + \I_{\neck}(x) \quad \text{for all } x \in \Omega.
\end{align*}

\medskip
Owing to \eqref{poin} and \eqref{eq:323}, we have
\[|\rho_{0,m}(x)|\le CC_0\olw_m(x)+o_m(1)W_m(x) \quad \text{for } x\in \Omega.\]
Thus, there exists suitable constants $L, M>0$ such that
\[M^2L^{-1}\log L+M^4\msr_m^{-2} + M^{-1}+\max_i\delta_{i,m} M^4\lesssim c_{0,m},\]
where $c_{0,m}>0$ is sufficiently small, which leads to
\begin{equation}\label{pinex}
|\rho_{0,m}(x)|\le (CC_0c_{0,m}+o_m(1))W_m(x) \quad \text{for } x\in \Omega_{\ext}.
\end{equation}
Then, using \eqref{eq:323} again, we arrive at
\begin{equation}\label{ext}
\I_{\ext}(x) \le (CC_0c_{0,m}+o_m(1))CC_0\olw_m(x) \le \frac{\tau}{3} W_m(x)
\end{equation}
for $m \in \N$ large enough.

\medskip
If we establish
\begin{equation}\label{core0}
|\rho_{0,m}(x)|=o_m(1)W_m(x) \quad \text{for } x\in \Omega_{\core},
\end{equation}
it will follow from \eqref{eq:323} and $\olw_m \lesssim W_m$ again that
\begin{equation}\label{core}
\I_{\core}(x) \le o_m(1)C_0\int_{\Omega_{\core}} \frac{1}{|x-\om|^4} \(\sigma_mW_m\)(\om) d\om \le \frac{\tau}{3} W_m(x)
\end{equation}
for $m \in \N$ large enough.

In the following, we derive \eqref{core0}. Because of $\|\rho_{0,m}\|_*=1$ and \eqref{pinex}, there exist $i_0\in \{1,..,\nu\}$ and $\bar{x}_m\in B(\xi_{i_0,m}, \delta_{i_0,m}L)$ such that
\begin{equation}\label{rhbx}
|\rho_{0,m}(\bar{x}_m)|\ge \frac12 W_m(\bar{x}_m)
\end{equation}
for $m \in \N$ large enough. Denoting
$\bar{\rho}_{0,m}(y) := W_m(\bar{x}_m)^{-1} \rho_{0,m}(\delta_{i_0,m}y+\xi_{i_0,m})$, we observe that
\[\frac{\sum_{i=1}^{\nu}w_{1i,m}(\delta_{i_0,m}y+\xi_{i_0,m})}{W_m(\bar{x}_m)} \lesssim \la\frac{\bar{x}_m-\xi_{i_0,m}}{\delta_{i_0,m}}\ra^2 \lesssim L^2\]
and
\[\frac{w_{3i_0,m}(\delta_{i_0,m}y+\xi_{i_0,m})}{W_m(\bar{x}_m)} \mone_{\{\nu=1\}} \lesssim 1.\]
Arguing as Case 2 in \cite[Lemma 5.1]{DSW}, we also have that given $\ka > 2\max\{L,\vep^{-1}\}$,
\begin{multline*}
|\bar{\rho}_{0,m}(y)|\lesssim L^2+\sum_{j\in\mcd(i_0)}\frac{1}{|y-z_{i_0j,m}|^3}\mone_{\{\nu\ge 2\}} \\
\text{for } y \in \mathcal{K}_\ka : =\left\{y\in \R^6: |y|\le \ka \text{ and } |y-z_{i_0j,\infty}|\ge \ka^{-1} \text{ for } j\in\mcd(i_0)\right\},
\end{multline*}
and so there exists $\bar{\rho}_{0,\infty} \in D^{1,2}(\R^6)$ such that, up to subsequence,
\[\bar{\rho}_{0,m}\to \bar{\rho}_{0,\infty} \quad \text{in } C^0_{\loc}(\R^6\setminus \bar{\mcz}_\infty) \quad \text{as } m\to\infty,\]
where $\bar{\mcz}_\infty:=\{z_{i_0j,\infty}, j\in \mcd(i_0)\}$.\footnote{If $\nu=1$, then $\bar{\mcz}_\infty=\emptyset$.}
From \eqref{suct}, we obtain that
\begin{align}
&-\Delta \bar{\rho}_{0,\infty} = pU^{p-1}\bar{\rho}_{0,\infty} \quad \text{in } \R^6\setminus\bar{\mcz}_\infty, \label{brinl}\\
&|\bar{\rho}_{0,\infty}(y)|\lesssim L^2+\sum_{j\in\mcd(i_0)}\frac{1}{|y-z_{i_0j,\infty}|^3}\mone_{\{\nu\ge 2\}} \quad \text{for }\R^6\setminus\bar{\mcz}_\infty, \label{sin}\\
&\int_{\R^6} \bar{\rho}_{0,\infty} U^{p-1}Z^k =0 \quad \text{for } k=0,1,\ldots,6. \label{orbr}
\end{align}
We claim each singularity $z_{i_0j,\infty}$ of $\bar{\rho}_{0,\infty}$ is removable if $\nu\ge 2$. Inequality \eqref{sin} implies that $\bar{\rho}_{0,\infty} \lesssim 1$ if $|y-z_{i_0j,\infty}|\ge 1>0$ for each $j\in\mcd(i_0)$. So it suffices to prove that
\begin{equation}\label{eq:remove}
|\bar{\rho}_{0,\infty}(y)|\lesssim 1 \quad \text{if } y\in B(z_{i_0j,\infty}, 1) \quad \text{for any } j.
\end{equation}
We choose a small number $c>0$ such that $c\le \min \{\frac12 |z_{i_0j_1,\infty}-z_{i_0j_2,\infty}|: j_1\ne j_2,\, j_1, j_2\in \mcd(i_0)\}.$ Then
\begin{equation}\label{rem}
\begin{aligned}
|\bar{\rho}_{0,\infty}(y)| &\lesssim 1+\sum_{j\in\mcd(i_0)}\int_{B(z_{i_0j,\infty}, c)} \frac{1}{|y-\om|^4}\frac{1}{(1+|\om|^2)^2}\frac{1}{|\om-z_{i_0j,\infty}|^3} d\om\mone_{\{\nu\ge 2\}} \\
&\lesssim 1+\sum_{j\in \mcd(i_0)}\frac{1}{|y-z_{i_0j,\infty}|}\mone_{\{\nu\ge 2\}}.
\end{aligned}
\end{equation}
Applying \eqref{rem} again, we deduce \eqref{eq:remove}. Thus, $\bar{\rho}_{0, \infty}$ can be extended to a function in $L^{\infty}(\R^n)$ satisfying equation \eqref{brinl} in $\R^n$.
By the orthogonality conditions \eqref{orbr}, we conclude that $\bar{\rho}_{0,\infty}=0$, contradicting \eqref{rhbx}.
As a result, \eqref{core0} and so \eqref{core} are established.

\medskip
The only remaining task is to estimate $\I_{\neck}$ for $\nu\ge 2$. We claim that
\begin{equation}\label{siw1}
C_0\sum_{i=1}^{\nu}\int_{\mca_{i,m}} \frac{1}{|x-\om|^4}\bigg(\sigma_m\sum_{j=1}^{\nu}w_{1j,m}\bigg)(\om)d\om\le \frac{\tau}{6}W_m(x)
\end{equation}
for $m \in \N$ large enough. We write $x_{ji,m}:=\delta_{i,m}^{-1}(x-\xi_{j,m})$, $\om_{i,m}:=\delta_{i,m}^{-1}(\om-\xi_{i,m})$, and $\om_{ji,m}:=\delta_{i,m}^{-1}(\om-\xi_{j,m})$.
Also, we set $C_*:=1+\max\{|z_{ij}|: i,j=1,\ldots,\nu,\ j\in \mcd(i)\}$. A straightforward computation with the choice $L' \ge 2C_*$ yields that
\begin{equation}\label{ne2}
\begin{aligned}
\int_{\mca_{i,m}} \frac{1}{|x-\om|^4}\frac{\delta_{i,m}^{-2}}{\la \om_{i,m}\ra^6} d\om
&\lesssim \sum_{j\in\mcd(i)} \int_{B(\xi_{j,m},\delta_{i,m}\vep) \setminus \bigcup_{\ell \in \mcd(i)} B(\xi_{\ell,m},\delta_{\ell,m} L)} \frac{1}{|x-\om|^4}\frac{\delta_{i,m}^{-2}}{\la \om_{i,m}\ra^6} d\om \\
& \lesssim \begin{cases}
\displaystyle \vep^6 \sum_{j\in\mcd(i)} \frac{1}{|x_{ji,m}|^4} &\text{if } |x-\xi_{j,m}|\ge \delta_{i,m} L', \\
\displaystyle \vep^2 &\text{if } 2\ep\delta_{i,m}\le |x-\xi_{j,m}|\le \delta_{i,m} L',\\
\displaystyle \vep^2 &\text{if } |x-\xi_{j,m}|\le 2\ep\delta_{i,m}
\end{cases} \\
&\lesssim \begin{cases}
\displaystyle \vep^6(L')^{-2}\frac{1}{|x_{i,m}|^2} &\text{if } |x-\xi_{j,m}|\ge \delta_{i,m} L' \\
\displaystyle \vep^2[1+(L'+C_*)^2]\frac{1}{\la x_{i,m}\ra^2} &\text{if } 2\ep\delta_{i,m}\le |x-\xi_{j,m}|\le \delta_{i,m} L'\\
\displaystyle \vep^2[1+(2\ep+C_*)^2]\frac{1}{\la x_{i,m}\ra^2} &\text{if } |x-\xi_{j,m}|\le 2\ep\delta_{i,m}
\end{cases} \\
&\lesssim \vep^2L'^2W_m(x),
\end{aligned}
\end{equation}
where we used
\[\frac{|x_{ji,m}|}2 \lesssim |x_{ji,m}|-|z_{ij,m}|\le |x_{i,m}|\le |x_{ji,m}|+|z_{ij,m}|\lesssim \frac{3|x_{ji,m}|}{2} \quad \text{for } |x_{ji,m}|\ge L'\gtrsim 2|z_{ij,m}|\]
to get the third inequality.

Additionally, we conduct computations
\[\int_{\mca_{i,m}} \sum_{l\in \mcd(i)} \frac{1}{|x-\om|^4}\frac{\delta_{l,m}^{-2}}{\la \om_{l,m}\ra^6} d\om
\lesssim L^{-2} \sum_{l\in \mcd(i)} \int_{\Omega} \frac{1}{|x-\om|^4}\frac{\delta_{l,m}^{-2}}{\la \om_{l,m}\ra^4} d\om \lesssim L^{-2}W_m(x)\]
and
\begin{align*}
&\ \int_{\mca_{i,m}} \sum_{l\in \{\delta_{l,m}^{-1}\ll \delta_{i,m}^{-1},\ \lim\limits_{m \to \infty} |z_{il,m}|\in\R\}} \frac{1}{|x-\om|^4}\frac{\delta_{l,m}^{-2}}{\la \om_{l,m}\ra^6} d\om\\
&\lesssim \sum_{l\in \{\delta_{l,m}^{-1}\ll \delta_{i,m}^{-1},\ \lim\limits_{m\to \infty} |z_{il,m}|\in\R\}} \(\frac{\delta_{i,m}}{\delta_{l,m}}\)^2 \sum_{j\in\mcd(i)}\int_{B(0,\ep)} \frac{1}{|x_{ji,m}-\om_{ji,m}|^4} d\om_{ji,m} \lesssim o_m(1)W_m(x),
\end{align*}
where we adapted the strategy in \eqref{ne2} to obtain the last inequality.

In addition, we analyze
\begin{align*}
&\ \int_{\mca_{i,m}} \sum_{l\in \{\lim\limits_{m\to \infty}|z_{il,m}| = \infty\}} \frac{1}{|x-\om|^4}\frac{\delta_{l,m}^{-2}}{\la \om_{l,m}\ra^6} d\om \\
&\lesssim \sum_{l\in \{\lim\limits_{m\to \infty} |z_{il,m}| = \infty\}} \int_{B(\xi_{i,m},\delta_{i,m}L)}\delta_{i,m}^{-2} |z_{il,m}|^{-2}\frac{1}{|x-\om|^4}\frac{1}{\la \om_{l,m}\ra^4} d\om \lesssim o_m(1)W_m(x).
\end{align*}
By recalling \eqref{ujw1i} and \eqref{uiw1j}, and taking proper $\vep$, $L'$, $L$ and $m$, we obtain \eqref{siw1} for $m \in \N$ large enough.

On the other hand, using \eqref{eq:upb1}--\eqref{eq:upb6} and applying an analogous argument as above, we demonstrate that
\begin{equation}\label{siw2}
C_0\int_{\Omega_{\neck}} \frac{1}{|x-\om|^4} \bigg[\sigma_m\sum_{j=1}^{\nu}\(w_{2j,m}^{\tin}+w_{2j,m}^{\tout}\)\bigg](\om) d\om \le \frac{\tau}{6}W_m(x)
\end{equation}
for $m \in \N$ large enough.

It follows from \eqref{siw1} and \eqref{siw2} that
\begin{equation}\label{neck}
\I_{\neck}(x) \le \frac{\tau}{3} W_m(x)
\end{equation}
for $m \in \N$ large enough.

\medskip
Now, estimate \eqref{eq:claim2} is a consequence of \eqref{ext}, \eqref{core}, and \eqref{neck}. We complete the proof.
\end{proof}

\begin{proof}[Proof of Proposition \ref{prop:33}]
The proof follows the spirit of the argument used in \cite[Proposition 5.4]{DSW}. We initiate by checking the uniform bound
\[\left\| \I_1+\I_2+\I_{31}\right\|_{**} \le C,\]
which is a direct consequence of the estimates established in \eqref{i1po}, \eqref{vaor}, and Lemma \ref{lem2.1}. Denoting
\[h:=\I_1+\I_2+\I_{31}+\I_0[\rho_0]\]
and realizing the estimates
\[\frac{w_{1i}^2}{v_{1i}} \lesssim \delta_i^4, \quad \frac{(w_{3i}^{\tin})^2}{v_{3i}^{\tin}} \lesssim \(\frac{\delta_i}{d(\xi_i,\pa\Omega)}\)^4,\]
\[\frac{(w_{3i}^{\tout})^2}{v_{3i}^{\tout}} \lesssim \(\frac{\delta_i}{d(\xi_i,\pa\Omega)}\)^3 \frac{1}{|x_i|}\mone_{\left\{|x_i|\ge \frac{d(\xi_i,\pa\Omega)}{\delta_i}\right\}} \lesssim \(\frac{\delta_i}{d(\xi_i,\pa\Omega)}\)^4,\]
one can invoke Proposition \ref{prop:31} and the Banach fixed-point theorem to achieve the existence of a solution $\rho_0$ to \eqref{eq:31} satisfying \eqref{eq:33}.
Next, we test equation \eqref{eq:31} against $\rho_0$. From
\begin{align*}
\|W\|_{L^3(\Omega)} \lesssim
\max_i\delta_i^2|\log \delta_i|^{\frac13} +\max_{i}\(\frac{\delta_i}{d(\xi_i,\pa\Omega)}\)^4\left|\log \frac{d(\xi_i,\pa\Omega)}{\delta_i}\right|^{\frac13}\mone_{\{\nu=1\}} +
Q|\log Q|^{\frac13}, \\ 
\|V\|_{L^{\frac32}(\Omega)} \lesssim
\max_i\delta_i^2|\log \delta_i|^{\frac23} +\max_{i}\(\frac{\delta_i}{d(\xi_i,\pa\Omega)}\)^4\left|\log \frac{d(\xi_i,\pa\Omega)}{\delta_i}\right|^{\frac23}\mone_{\{\nu=1\}}+
 Q|\log Q|^{\frac23}, 
\end{align*}
we have
\begin{align*}
\|\rho_0\|^2_{H^1_0(\Omega)} &\le \int_{\Omega}2(u_0+\sigma)\rho_0^2 + \(|\I_1|+|\I_2|+|\I_{31}|+|\I_0[\rho_0]|\)|\rho_0| \\
&\lesssim \|\rho_0\|^2_{*}\int_{\Omega}2(u_0+\sigma)W^2(x) + \|\rho_0\|_{*}\int_{\Omega}V(x)W(x) +\|\rho_0\|_{*}^3\int_{\Omega}W^3 \\
&\lesssim \|W\|_{L^3(\Omega)}^2+\|V\|_{L^{\frac32}(\Omega)}\|W\|_{L^3(\Omega)}\\
&\lesssim \max_i\delta_i^4|\log \delta_i| +\max_{i}\(\frac{\delta_i}{d(\xi_i,\pa\Omega)}\)^{8}\left|\log \frac{d(\xi_i,\pa\Omega)}{\delta_i}\right|\mone_{\{\nu=1\}} + Q^2|\log Q|,
\end{align*}
yielding \eqref{eq:331}. This completes the proof.
\end{proof}

\section{Proof of Theorem \ref{thm1}}\label{sec3}
The proof of Theorem \ref{thm1} is divided into two parts: In Subsection \ref{subs3.1}, we prove that \eqref{eq:sqe32} holds.
In Subsection \ref{subs3.2}, we show that this estimate is optimal.

\subsection{Proof of estimate \eqref{eq:sqe32}}\label{subs3.1}
If $n=6$, we set $\rho_0$ by \eqref{eq:31} when $n=6$. If $n=3,4,5$ or $n \ge 7$, we set $\rho_0 = 0$.
Define also $\rho_1 := \rho - \rho_0$. In light of \eqref{eqrho2} and \eqref{eq:31}, the function $\rho_1$ satisfies the following boundary value problem
\begin{align}\label{eq:34}
\begin{cases}
\displaystyle(-\Delta-\lambda)\rho_1 - [(u_0+\sigma+\rho_0+\rho_1)^{p}- |u_0+\sigma+\rho_0|^{p-1}(u_0+\sigma+\rho_0)] \\
\hspace{55pt} = \begin{cases}
\displaystyle f+\I_1+\I_2+\I_3 &\text{if } n \ne 6,\\
\displaystyle f+(\I_3-\I_{31})-\sum_{i=1}^{\nu} \sum_{k=0}^n c_i^k (-\Delta-\lambda) PZ_i^k &\text{if } n = 6
\end{cases} \quad \text{in } \Omega,\\
\displaystyle \rho_1=0 \quad \text{on }\pa\Omega, \\
\displaystyle \big\langle \rho_1, PZ_i^k \big\rangle_{H^1_0(\Omega)} = 0 \quad \text{for all } i = 1,\ldots,\nu \text{ and } k = 0,\ldots,n.
\end{cases}
\end{align}
Next, we establish the $H^1_0(\Omega)$-norm estimate of $\rho_1$.
\begin{prop}\label{prop:34}
Assume that $\ep_1>0$ is small enough. There exists a constant $C>0$ depending only on $n,\, \nu,\, \lambda,\, u_0$, and $\Omega$ that
\begin{equation}\label{eq:341}
\begin{aligned}
\|\rho_1\|_{H^1_0(\Omega)} &\le C\bigg[\|f\|_{(H^1_0(\Omega))^*}+ \bigg(\|\I_1\|_{L^{\frac{p+1}{p}}(\Omega)} + \|\I_2\|_{L^{\frac{p+1}{p}}(\Omega)} + \|\I_3\|_{L^{\frac{p+1}{p}}(\Omega)}\bigg)\mone_{\{n\ne 6\}} \\
&\qquad + \left\|\I_3-\I_{31}\right\|_{L^{\frac{p+1}{p}}(\Omega)} \mone_{\{n=6\}} + \sum_{i=1}^{\nu} \sum_{k=0}^n |c_i^k| \mone_{\{n=6\}}\bigg].
\end{aligned}
\end{equation}
\end{prop}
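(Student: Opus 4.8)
The plan is to test equation \eqref{eq:34} against $\rho_1$ itself and estimate each resulting term, exploiting the orthogonality conditions $\langle\rho_1,PZ_i^k\rangle_{H^1_0(\Omega)}=0$ and a blow-up/compactness argument to absorb the "bad" quadratic term coming from the linearized operator. Writing $N:=u_0+\sigma+\rho_0$ and $F$ for the right-hand side of \eqref{eq:34} (i.e. $F = f + \I_1+\I_2+\I_3$ when $n\ne 6$, and $F = f + (\I_3-\I_{31}) - \sum_{i,k}c_i^k(-\Delta-\lambda)PZ_i^k$ when $n=6$), multiplying by $\rho_1$ and integrating by parts gives
\[
\|\rho_1\|_{H^1_0(\Omega)}^2 = \int_\Omega \big[(N+\rho_1)^p - |N|^{p-1}N\big]\rho_1 \, dx + \int_\Omega F\rho_1\, dx.
\]
For the last term, $\big|\int_\Omega f\rho_1\big|\le \|f\|_{(H^1_0(\Omega))^*}\|\rho_1\|_{H^1_0(\Omega)}$ directly; the $\I_j$ (or $\I_3-\I_{31}$) contributions are bounded by $\|\I_j\|_{L^{(p+1)/p}(\Omega)}\|\rho_1\|_{L^{p+1}(\Omega)}\lesssim \|\I_j\|_{L^{(p+1)/p}(\Omega)}\|\rho_1\|_{H^1_0(\Omega)}$ via Hölder and Sobolev; and the $c_i^k$ term, after integrating by parts, is $\sum_{i,k}c_i^k\langle PZ_i^k,\rho_1\rangle_{H^1_0(\Omega)} = 0$ by orthogonality — though one should double check whether it contributes via the $\lambda$-discrepancy, in which case it is $\lesssim \sum_{i,k}|c_i^k|\,\|\rho_1\|_{H^1_0(\Omega)}$, matching the last term of \eqref{eq:341}. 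Absorbing the $\|\rho_1\|_{H^1_0(\Omega)}$ factor then yields the estimate provided the nonlinear term is controlled.

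The crux is therefore the first integral, the contribution of the nonlinearity linearized around $N$. Split $(N+\rho_1)^p - |N|^{p-1}N = p|N|^{p-1}\rho_1 + R(\rho_1)$, where $R(\rho_1) = O(|N|^{p-2}\rho_1^2 \mathbf{1}_{\{p>2\}} + |\rho_1|^p)$ is the superquadratic remainder. The remainder contributes $\int_\Omega |N|^{p-2}\rho_1^3 + \int_\Omega |\rho_1|^{p+1}$, which is $o(1)(\|\rho_1\|_{H^1_0(\Omega)}^2 + \|\rho_1\|_{H^1_0(\Omega)}^{p+1})$ since $\|N\|_{L^{p+1}}$ is bounded and $\|\rho_1\|_{H^1_0(\Omega)}\to 0$ as $\vep_1\to0$; these are harmless. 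The genuinely dangerous term is $p\int_\Omega |N|^{p-1}\rho_1^2\, dx = p\int_\Omega (u_0+\sigma)^{p-1}\rho_1^2\,dx + (\text{lower order involving }\rho_0)$. One must show
\[
p\int_\Omega (u_0+\sigma)^{p-1}\rho_1^2\, dx \le (1-c_0)\|\rho_1\|_{H^1_0(\Omega)}^2
\]
for some fixed $c_0>0$, i.e. a coercivity inequality for the operator $-\Delta-\lambda - p(u_0+\sigma)^{p-1}$ restricted to the orthogonal complement of $\mathrm{span}\{PZ_i^k\}$. As emphasized in Subsection \ref{subc1.3}(5), rather than proving a quantitative coercivity inequality by hand, I would argue by contradiction via blow-up: if no such $c_0$ exists, one extracts sequences $\delta_{i,m}\to 0$, parameters $\xi_{i,m}$, and functions $\rho_{1,m}$ with $\|\rho_{1,m}\|_{H^1_0(\Omega)}=1$, orthogonal to the $PZ_{i,m}^k$, with $p\int_\Omega(u_0+\sigma_m)^{p-1}\rho_{1,m}^2 \to 1$. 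Rescaling around each bubble $U_{i,m}$ and passing to the limit (using that $PU_{i,m}\to U$ after rescaling, by Lemma \ref{lem2.1}, and that $u_0$ disappears in the rescaled limit while $\sigma_m^{p-1}$ concentrates), the weak limits $\rho_{1,\infty}^{(i)}\in D^{1,2}(\R^n)$ solve $-\Delta v = pU^{p-1}v$ with $\int U^{p-1}Z^k v=0$ for all $k$, hence vanish by non-degeneracy of the bubble; the non-concentrated part converges to an $H^1_0(\Omega)$-solution of $\Delta\phi+\lambda\phi+pu_0^{p-1}\phi=0$, which is zero by the non-degeneracy assumption on $u_0$ in Assumption B. A standard argument (splitting $\Omega$ into neighborhoods of the bubbles and the complement, controlling the cross/neck regions exactly as in the proof of Proposition \ref{prop:31}, using the interaction estimates from Lemmas \ref{lem2.2}--\ref{lem2.33}) then forces $1 = \lim p\int(u_0+\sigma_m)^{p-1}\rho_{1,m}^2 = 0$, a contradiction.

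I expect the main obstacle to be the bubble-tree bookkeeping in this blow-up argument when $\nu\ge 2$: ensuring the rescaled mass of $\rho_{1,m}$ does not escape to the neck regions between bubbles of different scales, and that the limiting functions genuinely decouple into the $\nu$ bubble-limits plus the $u_0$-limit with no leftover mass. This is precisely where one would reuse the $\Omega = \Omega_{\ext}\cup\Omega_{\core}\cup\Omega_{\neck}$ decomposition and the pointwise/weighted-norm technology of Section \ref{sec2.2}; the novelty claimed in Subsection \ref{subc1.3}(5) is that this can be done directly at the level of blow-up analysis without invoking a separate coercivity inequality, so I would present it as a self-contained contradiction argument. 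A secondary technical point is verifying that the $\rho_0$-dependent corrections to $p\int|N|^{p-1}\rho_1^2$ (present only for $n=6$) are $o(\|\rho_1\|_{H^1_0(\Omega)}^2)$, which follows from $\|\rho_0\|_{H^1_0(\Omega)}\to 0$ by Proposition \ref{prop:33} together with Hölder's inequality.
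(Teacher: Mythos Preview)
Your approach is correct and very close in spirit to the paper's, but the packaging differs in a way the paper explicitly highlights. You test \eqref{eq:34} against $\rho_1$ and then prove the coercivity inequality $p\int_\Omega(u_0+\sigma)^{p-1}\rho_1^2\le(1-c_0)\|\rho_1\|_{H^1_0(\Omega)}^2$ by blow-up contradiction. The paper instead first proves a separate lemma giving the linear estimate $\|\vrh\|_{H^1_0(\Omega)}\lesssim\|h\|_{(H^1_0(\Omega))^*}$ for the problem $\vrh-\Pi^\perp[(-\Delta-\lambda)^{-1}(p(u_0+\sigma)^{p-1}\vrh)]=\Pi^\perp[(-\Delta-\lambda)^{-1}h]$ (again by blow-up), and then applies it to $\rho_1$ with $h=h_1$ collecting all nonlinear and forcing terms; this is precisely the ``avoid coercivity inequalities entirely'' point of Subsection~\ref{subc1.3}(5). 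The underlying contradiction argument is identical: the weak $H^1_0(\Omega)$-limit vanishes by non-degeneracy of $u_0$, each rescaled limit near a bubble vanishes by non-degeneracy of $U$, and one concludes $\int_\Omega(u_0+\sigma_m)^{p-1}\vrh_m^2\to0$.

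Your anticipated ``main obstacle'' is overstated, and following it would lead you astray: no $\Omega_{\ext}\cup\Omega_{\core}\cup\Omega_{\neck}$ decomposition or weighted-norm machinery from Section~\ref{sec2.2} is needed here. The pointwise bound $(u_0+\sigma_m)^{p-1}\lesssim u_0^{p-1}+\sum_i(PU_{i,m})^{p-1}$ splits the dangerous integral into a piece controlled by strong $L^2$-convergence of $\vrh_m\to0$ and pieces that, after rescaling around each $\xi_{i,m}$, become $\int_{\R^n}U^{p-1}\tilde\vrh_{i,m}^2$ with $\tilde\vrh_{i,m}^2\rightharpoonup0$ weakly in $L^{n/(n-2)}(\R^n)$; no neck analysis or bubble-tree bookkeeping enters. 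Your observation that the $c_i^k$ term vanishes when testing with $\rho_1$ is correct (the inner product $\langle\cdot,\cdot\rangle_{H^1_0(\Omega)}$ already contains the $-\lambda$ term), and gives a slightly sharper inequality than stated; the paper retains $\sum|c_i^k|$ only because its linear-estimate formulation bounds by $\|h_1\|_{(H^1_0(\Omega))^*}$ rather than by the projected quantity.
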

\noindent To obtain analogous estimates to \eqref{eq:341} in \cite{DSW, CK, CKW}, the authors decomposed $\rho_1$ into smaller pieces by introducing auxiliary parameters, and analyzed each part relying on a coercivity inequality. See Subsection \ref{subc1.3}(5) for a prior discussion.
Our argument in this paper is direct. We first derive an $H^1_0(\Omega)$-norm estimate for the solution to the associated linear problem, whose proof is based on a blow-up argument.
\begin{lemma}
Let $\Pi^{\perp}: H^1_0(\Omega) \to \textup{span}\{PZ_i^k: i=1,\ldots,\nu \text{ and } k=0,\ldots,n\}^{\perp} \subset H^1_0(\Omega)$ be the projection operator.
For any functions $\vrh\in H^1_0(\Omega)$ and $h\in (H^1_0(\Omega))^*$ satisfying
\[\begin{cases}
\vrh-\Pi^{\perp}[(-\Delta -\lambda)^{-1}(p(u_0+\sigma)^{p-1})]=\Pi^{\perp}[(-\Delta -\lambda)^{-1}(h)] &\text{in }\Omega,\\
\vrh=0 &\text{on } \pa \Omega,\\
\la \vrh, PZ_i^k\ra=0 \quad \text{for } i=1,\ldots,\nu \text{ and } k=0,\ldots,n,\\
\end{cases}\]
it holds that
\begin{equation}\label{vrh1}
\|\vrh\|_{H^1_0(\Omega)} \lesssim \|h\|_{(H^1_0(\Omega))^*}.
\end{equation}
\end{lemma}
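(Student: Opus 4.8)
The plan is to argue by contradiction through a blow-up and concentration analysis, in the spirit of the invertibility arguments for linearized operators around bubble sums in \cite{CFM, FG, DSW}; the feature forced by the term $u_0$ is that one must branch on whether $u_0>0$ or $u_0\equiv0$, invoking in the first case the non-degeneracy of $u_0$ postulated in \textbf{Assumption B} and in the second the strict inequality $\lambda<\lambda_1$.

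First I would negate the conclusion, producing sequences of parameters $(\delta_{i,m},\xi_{i,m})$ obeying the smallness and separation constraints of Section~\ref{sec2}, functions $\vrh_m\in H^1_0(\Omega)$ with $\|\vrh_m\|_{H^1_0(\Omega)}=1$, and $h_m$ with $\|h_m\|_{(H^1_0(\Omega))^*}\to0$. Writing $\sigma_m:=\sum_i PU_{\delta_{i,m},\xi_{i,m}}$, $U_{i,m}:=U_{\delta_{i,m},\xi_{i,m}}$, and using that $g-\Pi^{\perp}g\in\textup{span}\{PZ_{i,m}^k\}$ for every $g$, the relation solved by $\vrh_m$ is equivalent to
\[
(-\Delta-\lambda)\vrh_m-p(u_0+\sigma_m)^{p-1}\vrh_m=h_m+\sum_{i=1}^{\nu}\sum_{k=0}^{n}c_{i,m}^k(-\Delta-\lambda)PZ_{i,m}^k\quad\text{in }\Omega
\]
for suitable $c_{i,m}^k\in\R$. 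Testing this against $\vrh_m$ and using the orthogonality conditions $\langle\vrh_m,PZ_{i,m}^k\rangle_{H^1_0(\Omega)}=0$ to annihilate the multiplier terms gives the basic identity
\[
1=\|\vrh_m\|_{H^1_0(\Omega)}^2=p\int_{\Omega}(u_0+\sigma_m)^{p-1}\vrh_m^2+O\big(\|h_m\|_{(H^1_0(\Omega))^*}\big)=p\int_{\Omega}(u_0+\sigma_m)^{p-1}\vrh_m^2+o(1),
\]
so everything reduces to showing that the right-hand side tends to $0$.

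The core step, and the main obstacle, is to prove $\int_{\Omega}\sigma_m^{p-1}\vrh_m^2\to0$. Since $\sigma_m^{p-1}\lesssim\sum_i U_{i,m}^{p-1}$ (finitely many bubbles), it suffices to handle each $\int_{\Omega}U_{i,m}^{p-1}\vrh_m^2$. I would rescale $\vrh_m^{(i)}(y):=\delta_{i,m}^{(n-2)/2}\vrh_m(\delta_{i,m}y+\xi_{i,m})$, which is bounded in $D^{1,2}(\R^n)$ because $\lambda<\lambda_1$ makes $\|\cdot\|_{H^1_0(\Omega)}$ equivalent to $\|\nabla\cdot\|_{L^2(\Omega)}$; hence $\vrh_m^{(i)}\rightharpoonup\vrh_\infty^{(i)}$ along a subsequence. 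Using the identity $-\Delta PZ_{i,m}^k=pU_{i,m}^{p-1}Z_{i,m}^k$ (respectively $=\lambda PZ_{i,m}^k+pU_{i,m}^{p-1}Z_{i,m}^k$ for the projection \eqref{pu2}) together with the smallness of the lower-order term $\lambda\int_{\Omega}\vrh_m PZ_{i,m}^k$ (which follows from $\|PZ_{i,m}^k\|_{L^2(\Omega)}\lesssim\|U_{i,m}\|_{L^2(\Omega)}\to0$), the orthogonality conditions become $\int_{\R^n}U^{p-1}Z^k\vrh_m^{(i)}\to0$. Passing to the limit — after organizing the configuration into the bubble-tree of the proof of Proposition~\ref{prop:31} and removing the singularities generated by nested bubbles exactly as there — shows that $\vrh_\infty^{(i)}$ solves $-\Delta\vrh_\infty^{(i)}=pU^{p-1}\vrh_\infty^{(i)}$ in $\R^n$ and satisfies $\int_{\R^n}U^{p-1}Z^k\vrh_\infty^{(i)}=0$ for $k=0,\dots,n$, so by the non-degeneracy of the standard bubble $\vrh_\infty^{(i)}\equiv0$. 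A cut-off/tail splitting then finishes this step: $\int_{\{|y|\le R\}}U^{p-1}(\vrh_m^{(i)})^2\to0$ for fixed $R$ by local strong $L^2$-convergence, while $\int_{\{|y|>R\}}U^{p-1}(\vrh_m^{(i)})^2\le\|U^{p-1}\|_{L^{n/2}(\{|y|>R\})}\|\vrh_m^{(i)}\|_{L^{2^*}(\R^n)}^2$ is uniformly small as $R\to\infty$; summing over $i$ yields $\int_{\Omega}\sigma_m^{p-1}\vrh_m^2\to0$.

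With this in hand I would close the contradiction. If $u_0\equiv0$, then $(u_0+\sigma_m)^{p-1}=\sigma_m^{p-1}$ and the basic identity immediately reads $1=o(1)$. If $u_0>0$, let $\vrh_\infty$ be a weak $H^1_0(\Omega)$-limit of $\vrh_m$ (strong in $L^2(\Omega)$ by Rellich). Testing the equation for $\vrh_m$ against an arbitrary $\varphi\in C^\infty_c(\Omega)$ and sending $m\to\infty$ — the $\sigma_m$-contribution drops since $\|\sigma_m\|_{L^q(\Omega)}\to0$ for every $q<2^*$, the multiplier terms drop because $\langle(-\Delta-\lambda)PZ_{i,m}^k,\varphi\rangle=\langle PZ_{i,m}^k,\varphi\rangle_{H^1_0(\Omega)}$ stays bounded while the $c_{i,m}^k$ stay bounded (obtained by testing the equation against $PZ_{j,m}^l$ and inverting the uniformly invertible, nearly block-diagonal Gram matrix $[\langle PZ_{i,m}^k,PZ_{j,m}^l\rangle_{H^1_0(\Omega)}]$), and $\|h_m\|_{(H^1_0(\Omega))^*}\to0$ — shows that $\vrh_\infty\in H^1_0(\Omega)$ satisfies $-\Delta\vrh_\infty-\lambda\vrh_\infty-pu_0^{p-1}\vrh_\infty=0$ in $\Omega$, hence $\vrh_\infty\equiv0$ by the non-degeneracy assumption on $u_0$ in \textbf{Assumption B}. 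Combining $\int_{\Omega}\sigma_m^{p-1}\vrh_m^2\to0$ with $\int_{\Omega}u_0^{p-1}\vrh_m^2\to\int_{\Omega}u_0^{p-1}\vrh_\infty^2=0$ (dominated convergence, $u_0\in L^\infty$), the basic identity again forces $1=o(1)$, the desired contradiction. The delicate points throughout are the bubble-tree bookkeeping and removable-singularity argument when $\nu\ge2$, and the verification that the rescaled potentials $\delta_{i,m}^2(u_0+\sigma_m)^{p-1}(\delta_{i,m}\,\cdot+\xi_{i,m})$ converge to $pU^{p-1}$ away from the nested-bubble points so that $\vrh_\infty^{(i)}$ is a genuine global solution of the linearized bubble equation.
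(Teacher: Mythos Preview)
Your strategy is the paper's: argue by contradiction, rescale around each bubble and use the non-degeneracy of $U$ to kill the concentrating part, use the non-degeneracy of $u_0$ (or $\lambda<\lambda_1$ when $u_0\equiv0$) to kill the weak $H^1_0(\Omega)$-limit, and finish by testing against $\vrh_m$ to force $1=o(1)$. The paper executes exactly these steps, in this order; see \eqref{2.12}--\eqref{vrhm}.

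There is one genuine slip. Your claim that ``the multiplier terms drop because $\langle PZ_{i,m}^k,\varphi\rangle_{H^1_0(\Omega)}$ stays bounded while the $c_{i,m}^k$ stay bounded'' is a non sequitur: bounded times bounded is not $o(1)$. Worse, in the rescaled limit the pairing $\langle PZ_{j,m}^k,\tchi_{j,m}\rangle_{H^1_0(\Omega)}$ is of order one, so with only boundedness of $c_{j,m}^k$ the limit equation for $\vrh_\infty^{(j)}$ would carry an extra $\sum_k c_{j,\infty}^k\,pU^{p-1}Z^k$ term. The paper closes this by proving $c_{i,m}^k=o(1)$ outright: testing against $PZ_{j,m}^q$, the orthogonality kills $\langle\vrh_m,PZ_{j,m}^q\rangle_{H^1_0}$, and the remaining term $p\int_\Omega(u_0+\sigma_m)^{p-1}\vrh_m\,PZ_{j,m}^q$ is shown to be $o(1)$ by comparing $(-\Delta-\lambda)PZ_{j,m}^q$ with $p(PU_{j,m})^{p-1}PZ_{j,m}^q$ and estimating every discrepancy (this is \eqref{2.121}--\eqref{liar}). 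Once $c_{i,m}^k=o(1)$, both limit passages are clean, and no bubble-tree or removable-singularity machinery from Proposition~\ref{prop:31} is needed: the cross terms $\int_\Omega(PU_{i,m})^{p-1}\vrh_m\,\tchi_{j,m}$ for $i\neq j$ are handled by a direct scale/position comparison (see the computations following \eqref{u0sm}).
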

\begin{proof}
We proceed by contradiction. Suppose that there exist sequences of parameters $\{(\delta_{i,m},\xi_{i,m})\}_{m \in \N}$, and functions $\{\vrh_m\}_{m \in \N}$ and $\{h_m\}_{m \in \N}$ such that
\begin{equation}\label{vrh}
\begin{cases}
\displaystyle \max_i\delta_{i,m} + \max_i\frac{\delta_{i,m}}{d(\xi_{i,m},\pa\Omega)} + \|h_m\|_{(H^1_0(\Omega))^*} \to 0 &\text{as } m \to \infty,\\
\displaystyle \|\vrh_m\|_{H^1_0(\Omega)}=1 &\text{for all } m \in \N,
\end{cases}
\end{equation}
and
\begin{align}\label{2.12}
\begin{cases}
\displaystyle \vrh_m -(-\Delta-\lambda)^{-1}[p(u_0+\sigma_m)^{p-1}\vrh_m] = \Pi^{\perp}[(-\Delta-\lambda)^{-1}h_m] + \sum_{i=1}^{\nu}\sum_{k=0}^n \mu_{i,m}^k PZ^k_{i,m} &\text{in } \Omega,\\
\displaystyle \vrh_m=0 &\text{on } \pa\Omega,\\
\displaystyle \big\langle \vrh_m, PZ^k_{i,m} \big\rangle_{H^1_0(\Omega)} =0 \quad \text{for } i=1,\ldots,\nu \text{ and } k=0,1,\ldots,n.
\end{cases}
\end{align}
Here, $PU_{i,m} := PU_{\delta_{i,m},\xi_{i,m}}$, $PZ^0_{i,m} := \delta_{i,m} \frac{\pa PU_{i,m}}{\pa\delta_{i,m}}$, and $PZ^k_{i,m} := \delta_{i,m} \frac{\pa PU_{i,m}}{\pa\xi_{i,m}^k}$. Besides, $\mu_{i,m}^k \in \R$ denote Lagrange multipliers.

\medskip
First, we observe that
\begin{equation}\label{pih}
\begin{aligned}
\|\Pi^{\perp}[(-\Delta-\lambda)^{-1}h_m]\|_{H^1_0(\Omega)} &\lesssim \left\|(-\Delta-\lambda)^{-1}h_m + \sum_{i=1}^{\nu}\sum_{k=0}^n \frac{\int_{\Omega}h_mPZ_{i,m}^k }{\|PZ_{i,m}^k\|_{H^1_0(\Omega)}}\cdot PZ_{i,m}^k\right\|_{H^1_0(\Omega)} \\
&\lesssim \|h_m\|_{(H^1_0(\Omega))^*} + \sum_{i=1}^{\nu}\sum_{k=0}^n \left|\int_{\Omega}h_mPZ_{i,m}^k\right| \\
&\lesssim \|h_m\|_{(H^1_0(\Omega))^*}.
\end{aligned}
\end{equation}

\medskip
Second, we verify that
\begin{equation}\label{2.15}
\sum_{i=1}^{\nu}\sum_{k=0}^n |\mu_{i,m}^k| = o_m(1)
\end{equation}
where $o_m(1) \to 0$ as $m \to \infty$.

For this aim, we test \eqref{2.12} with $PZ^q_{j,m}$ for each $j \in \{1,\ldots,\nu\}$ and $q \in \{0,1,\ldots,n\}$. We only need to focus on
\begin{equation}\label{2.121}
\begin{aligned}
&\ \left|\int_{\Omega} \left[(-\Delta-\lambda)\vrh_m - p(u_0+\sigma_m)^{p-1}\vrh_m\right] PZ_{j,m}^q\right| \\
&\lesssim \left|\int_{\Omega} \left[(-\Delta-\lambda) PZ_{j,m}^q-p (PU_{j,m})^{p-1} PZ_{j,m}^q\right]\vrh_m\right| \\
&\ +\int_{\Omega} \left[\sigma_m^{p-1}- (PU_{j,m})^{p-1}\right]|\vrh_m||PZ_{j,m}^q| \mone_{\{\nu\ge 2\}} + \int_{\Omega} \left[(u_0+\sigma_m)^{p-1}-\sigma_m^{p-1}\right] |\vrh_m|U_{j,m}.
\end{aligned}
\end{equation}
We now estimate each of the integrals on the right-hand side of \eqref{2.121}.

It holds that
\begin{multline*}
\left|\int_{\Omega}((-\Delta-\lambda) PZ_{j,m}^q-p (PU_{j,m})^{p-1} PZ_{j,m}^q)\vrh_m\right| \lesssim \|\vrh_m\|_{H^1_0(\Omega)} \\
\times \left[\left\|(PU_{j,m})^{p-1} PZ_{j,m}^q-U_{j,m}^{p-1}Z_{j,m}^q\right\|_{L^{\frac{p+1}{p}}(\Omega)}
+\|U_{j,m}\|_{L^{\frac{p+1}{p}}(\Omega)}\mone_{\{\text{each } PU_{j,m} \text{ satisfies } \eqref{pu1}\}}\right].
\end{multline*}
Arguing as in \eqref{pup1} and \eqref{i3e}, we deduce
\[\left\|\left[(PU_{j,m})^{p-1}-U_{j,m}^{p-1}\right] PZ_{j,m}^q\right\|_{L^{\frac{p+1}{p}}(\Omega)} + \left\|U_{j,m}^{p-1} (PZ_{j,m}^q-Z_{j,m}^q)\right\|_{L^{\frac{p+1}{p}}(\Omega)} \lesssim J_{1,m},\]
where $J_{1,m}$ is the quantity $J_1$ with $(\delta_i, \delta_j, \xi_i,\xi_j)$ replaced by $(\delta_{i,m},\delta_{j,m}, \xi_{i,m}, \xi_{j,m})$.

Also, by applying the inequality $|PZ_j^q|\lesssim PU_j$ (which directly comes from the maximum principle) for $n\ge 6$, \eqref{iqu}, \eqref{ab1}, and H\"older's inequality, we obtain
\begin{align*}
&\ \int_{\Omega} \left[\sigma^{p-1}- (PU_{j,m})^{p-1}\right]|\vrh_m||PZ_{j,m}^q|\\
&\lesssim \begin{cases}
\displaystyle \int_{\Omega} \sum_{i\ne j} \left[(PU_{j,m})^{p-2}PU_{i,m} +(PU_{i,m})^{p-1}\right]|\vrh_m||PZ_{j,m}^q| &\text{if } n=3,4,5,\\
\displaystyle \int_{\Omega} \left[\sigma^{p-1}PU_{j,m}- (PU_{j,m})^p\right]|\vrh_m|
\lesssim \int_{\Omega} \left[\sigma^p-\sum_{i=1}^{\nu}(PU_{i,m})^p\right] |\vrh_m| &\text{if } n\ge 6
\end{cases}\\
&\lesssim \begin{cases}
\displaystyle \sum_{i\ne j} \|U_{i,m}^{p-1}U_{j,m}\|_{L^{\frac{p+1}{p}}(\Omega)} \|\vrh_m\|_{H^1_0(\Omega)} &\text{if } n=3,4,5,\\
\displaystyle \sum_{i\ne j} \left\|\min\{U_{i,m}^{p-1}U_{j,m}, U_{j,m}^{p-1}U_{i,m}\}\right\|_{L^{\frac{p+1}{p}}(\Omega)} \|\vrh_m\|_{H^1_0(\Omega)} &\text{if } n\ge 6.
\end{cases}
\end{align*}

On the other hand, using \eqref{ab1}, we have
\begin{align*}
\int_{\Omega} \left[(u_0+\sigma_m)^{p-1}-\sigma_m^{p-1}\right] |\vrh_m|U_{j,m} &\lesssim \int_{\Omega} \left[(u_0\sigma_m^{p-2}\mone_{\{u_0>0, p>2\}} + u_0^{p-1}\mone_{\{u_0>0\}}\right] |\vrh_m|U_{j,m} \\
&\lesssim \|U_{j,m}\|_{L^{\frac{p+1}{p}}(\Omega)}\mone_{\{u_0>0\}} + \sum_{i=1}^{\nu}\|U_{i,m}^{p-1}\|_{L^{\frac{p+1}{p}}(\Omega)}\mone_{\{u_0>0, p>2\}}.
\end{align*}

Therefore,
\begin{align}
&\ \bigg|\int_{\Omega} \big[(-\Delta-\lambda)\vrh_m - p(u_0+\sigma_m)^{p-1}\vrh_m\big] PZ_{j,m}^q\bigg| \nonumber \\
&\lesssim \|\vrh_m\|_{H^1_0(\Omega)} \left[\|U_{j,m}\|_{L^{\frac{p+1}{p}}(\Omega)}\mone_{\{u_0>0\}\cup\{\text{each } PU_{j,m} \text{ satisfies }\eqref{pu1}\}} + \sum_{i=1}^{\nu}\|U_{i,m}^{p-1}\|_{L^{\frac{p+1}{p}}(\Omega)} \mone_{\{u_0>0, p>2\}} \right. \nonumber \\
&\hspace{65pt} + \left. \left. \begin{cases}
\displaystyle \sum_{i\ne j} \|U_{i,m}^{p-1}U_{j,m}\|_{L^{\frac{p+1}{p}}(\Omega)} &\text{if } n=3,4,5 \\
\displaystyle \sum_{i\ne j} \left\|\min\{U_{i,m}^{p-1}U_{j,m}, U_{j,m}^{p-1}U_{i,m}\}\right\|_{L^{\frac{p+1}{p}}(\Omega)} &\text{if } n\ge 6
\end{cases}\right\} \mone_{\{\nu\ge 2\}} + J_{1,m}\right] \nonumber \\
&= o_m(1), \label{liar}
\end{align}
where the last equality follows from Lemmas \ref{a4} and \ref{a22}, \eqref{i1po}, \eqref{i12e}, and $\|\vrh_m\|_{H^1_0(\Omega)}=1$.

\medskip
Third, we assert that
\[\begin{cases}
\vrh_m \rightharpoonup 0 &\text{weakly in } H^1_0(\Omega),\\
\vrh_m \to 0 &\text{strongly in } L^s(\Omega) \text{ for } s\in (1, 2^*)
\end{cases}
\quad \text{as } m \to \infty.\]
Since $\|\vrh_m\|_{H^1_0(\Omega)}=1$, there exists $\vrh_{\infty} \in H^1_0(\Omega)$ such that
\[\begin{cases}
\vrh_m \rightharpoonup \vrh_{\infty} &\text{weakly in } H^1_0(\Omega),\\
\vrh_m \to \vrh_{\infty} &\text{strongly in } L^s(\Omega) \text{ for } s \in (1, 2^*)
\end{cases}
\quad \text{as } m \to \infty,\]
along a subsequence. Given any $\chi\in C_c^{\infty}(\Omega)$, we test \eqref{2.12} with $\chi$ and passing to the limit $m \to \infty$. We can derive from \eqref{ab1} and Lemma \ref{ape1} that
\begin{align*}
\bigg|\int_{\Omega} \left[(u_0+\sigma_m)^{p-1} - u_0^{p-1}\right]\vrh_m\chi\bigg| &\lesssim \int_{\Omega} \left[\sigma_m^{p-1}+u_0^{p-2}\sigma_m\mone_{\{p>2\}}\right] |\vrh_m\chi| \\
&\lesssim \|\sigma_m^{p-1}\|_{L^{\frac{p+1}{p}}(\Omega)} + \|\sigma_m\|_{L^{\frac{p+1}{p}}(\Omega)}\mone_{\{p>2\}} = o_m(1) .
\end{align*}
This fact and \eqref{vrh}--\eqref{2.15} imply that
\[\begin{cases}
(-\Delta-\lambda) \vrh_{\infty} = pu_0^{p-1}\vrh_{\infty} &\text{in } \Omega,\\
\vrh_{\infty}=0 &\text{on } \pa\Omega,
\end{cases}\]
which together with the non-degeneracy of $u_0$ yields $\vrh_{\infty} = 0$ in $\Omega$.

\medskip
Let us now fix an index $j \in \{1,\ldots,\nu\}$, and define the rescaled function
\[\tvrh_{j,m}(y) := \delta_{j,m}^{\frac{n-2}{2}} \vrh_m\big(\delta_{j,m}y+\xi_{j,m}\big) \quad \text{for any } y \in \frac{\Omega-\xi_{j,m}}{\delta_{j,m}}\]
for all sufficiently large $m \in \N$. We extend $\tvrh_{j,m}(y)$ to $\R^n$ by setting it to zero outside its original domain. We will show that
\begin{equation}\label{tvrhjm}
\begin{cases}
\tvrh_{j,m} \rightharpoonup 0 &\text{weakly in } D^{1,2}(\R^n),\\
\tvrh_{j,m} \to 0 &\text{strongly in } L^s_{\loc}(\R^n) \text{ for } s \in (1,2^*)
\end{cases} \quad \text{as } m \to \infty.
\end{equation}
Because $\|\vrh_m\|_{H^1_0(\Omega)}=1$, the sequence $\{\tvrh_{j,m}\}_{n \in \N}$ is uniformly bounded in $D^{1,2}(\R^n)$, and so there exists $\tvrh_{j,\infty} \in D^{1,2}(\R^N)$ such that
\[\begin{cases}
\tvrh_{j,m} \rightharpoonup \tvrh_{j,\infty} &\text{weakly in } D^{1,2}(\R^n),\\
\tvrh_{j,m} \to \tvrh_{j,\infty} &\text{strongly in } L^s_{\loc}(\R^n) \text{ for } s \in (1,2^*)
\end{cases} \quad \text{as } m \to \infty,\]
up to a subsequence. Given a function $\chi \in C^{\infty}_c(\R^n)$, we set
\[\tchi_{j,m}(x) =\delta_{j,m}^{\frac{2-n}{2}} \chi\(\delta_{j,m}^{-1}(x-\xi_{j,m})\) \quad \text{for } x \in \Omega.\]
After testing \eqref{2.12} with $\tchi_{j,m}$, the only technical point we encounter is to derive
\begin{equation}\label{u0sm}
\int_{\Omega} (u_0+\sigma_m)^{p-1}\vrh_m\tchi_{j,m} = \int_{\R^n} U^{p-1}\tvrh_{j,\infty}\chi + o_m(1)
\end{equation}
as $m \to \infty$.

Indeed, direct calculations give us that
\begin{align*}
\int_{\Omega} (PU_{j,m})^{p-1} \vrh_m \tchi_{j,m} &= \int_{\frac{\Omega-\xi_{j,m}}{\delta_{j,m}}} U^{p-1}\tvrh_{j,m}\chi
+ O\bigg(\(\frac{\delta_{j,m}}{d(\xi_{j,m},\pa\Omega)}\)^{\frac{n-2}{n}}\bigg) \\
&= \int_{\R^n} U^{p-1}\tvrh_{j,\infty}\chi + o_m(1),
\end{align*}
because
\begin{equation}\label{pup}
\begin{aligned}
\int_{\Omega} \left|(PU_{j,m})^{p-1}-U_{j,m}^{p-1}\right|^{\frac{p+1}{p-1}} &\lesssim \left\||PU_{j,m}-U_{j,m}| U_{j,m}^{p-2} \mone_{\{p>2\}}\right\|^{\frac{p+1}{p-1}}_{L^{\frac{p+1}{p-1}}(B(\xi_{j,m},d(\xi_{j,m},\pa\Omega))} \\
&\ + \| |PU_{j,m}-U_{j,m}|^{p+1}\|_{L^1(B(\xi_{j,m},d(\xi_{j,m},\pa\Omega))} \\
&\ + \int_{B(\xi_{j,m},d(\xi_{j,m},\pa\Omega))^c} U_{j,m}^{p+1}
\lesssim \(\frac{\delta_{j,m}}{d(\xi_{j,m},\pa\Omega)}\)^{\frac{n-2}{2}},
\end{aligned}
\end{equation}
while we know
\[\int_{\Omega}u_0^{p-1}\vrh_m\tchi_{j,m} \simeq \delta_{j,m}^2 \int_{\supp(\chi)} u_0^{p-1}(\xi_{j,m}+\delta_{j,m}y) (\tvrh_{j,m}{\chi})(y) dy = o_m(1)\]
thanks to the boundedness of $u_0$. Furthermore, for $1 \le i \ne j \le \nu$,
\[\left|\int_{\Omega} (PU_{i,m})^{p-1}\vrh_m\tchi_{j,m} \right| \lesssim \left\|\left[\delta_{j,m}^{\frac{n-2}{2}} U_{i,m}\(\xi_{j,m}+\delta_{j,m}\cdot\)\right]^{p-1}\right\|_{L^{\frac{p+1}{p}}(\supp(\chi))} = o_m(1),\]
since
\begin{align*}
&\ \(\frac{\delta_{j,m}}{\delta_{i,m}}\)^{\frac{4n}{n+2}} \int_{\supp(\chi)} \frac{dy}{\big(1+\big(\frac{\delta_{j,m}}{\delta_{i,m}}|y - z_{ij.m}|\big)^2\big)^{\frac{4n}{n+2}}} \\
&\lesssim \begin{cases}
\displaystyle \(\frac{\delta_{j,m}}{\delta_{i,m}}\)^{-\frac{4n}{n+2}} |z_{ij,m}|^{-\frac{8n}{n+2}} &\text{if } |z_{ij,m}|\to \infty, \\
\displaystyle \(\frac{\delta_{j,m}}{\delta_{i,m}}\)^{\frac{4n}{n+2}-n} \int_{|z|\le \frac{\delta_{j,m}}{\delta_{i,m}}} \frac{1}{(1+|z|)^{\frac{8n}{n+2}}} dz &\text{if } |z_{ij,m}| \text{ is bounded}, \delta_{i,m}\ll \delta_{j,m}, \\
\displaystyle \(\frac{\delta_{j,m}}{\delta_{i,m}}\)^{\frac{4n}{n+2}} &\text{if } |z_{ij,m}|\text{ is bounded}, \delta_{i,m}\gg \delta_{j,m}
\end{cases}\\
&\lesssim \begin{medsize}
\begin{cases}
\displaystyle \msr_{ij,m}^{-\frac{8n}{n+2}} &\text{if } |z_{ij,m}|\to \infty, \\
\displaystyle \(\frac{\delta_{j,m}}{\delta_{i,m}}\)^{\frac{4n}{n+2}-n} \left[\mone_{\{p>2\}} + \left|\log \frac{\delta_{j,m}}{\delta_{i,m}}\right| \mone_{\{p=2\}} + \(\frac{\delta_{j,m}}{\delta_{i,m}}\)^{n-\frac{8n}{n+2}} \mone_{\{p<2\}} \right] &\text{if } |z_{ij,m}| \text{ is bounded}, \delta_{i,m}\ll \delta_{j,m}, \\
\displaystyle \(\frac{\delta_{j,m}}{\delta_{i,m}}\)^{\frac{4n}{n+2}} &\text{if } |z_{ij,m}| \text{ is bounded}, \delta_{i,m}\gg \delta_{j,m},
\end{cases}
\end{medsize}
\end{align*}
where $\msr_{ij,m}$ is the quantity introduced in \eqref{rq} with $(\xi_i,\xi_j,\delta_i,\delta_j)$ replaced by $(\xi_{i,m},\xi_{j,m},\delta_{i,m},\delta_{j,m})$.
By \eqref{iqu}, and Lemmas \ref{a4} and \ref{a22}, we also have that
\begin{align*}
&\ \int_{\Omega}(PU_{j,m})^{p-2} \bigg(u_0+\sum_{i\ne j}PU_{i,m}\bigg)|\vrh_m\tchi_{j,m} | \mone_{\{p>2\}} \\
&\lesssim \bigg[\sum_{i\ne j}\|U_{i,m} U_{j,m}^{p-2}\|_{L^{\frac{p+1}{p-1}}(\Omega)} + \max_i\|U_{i,m}^{p-2}\|_{L^{\frac{p+1}{p-1}}(\Omega)}\bigg] \mone_{\{p>2\}} = o_m(1).
\end{align*}
Combining the above calculations, we derive \eqref{u0sm}.

Taking $m \to \infty$, we observe from \eqref{2.12} that
\[\begin{cases}
\displaystyle -\Delta \tvrh_{j,\infty} = pU^{p-1}\tvrh_{j,\infty} \quad \text{in } \R^n, \quad \tvrh_{j,\infty} \in D^{1,2}(\R^n), \\
\displaystyle \int_{\R^n}\nabla \tvrh_{j,\infty} \cdot\nabla Z^k = 0 \quad \text{for all } k=0,\ldots,n.
\end{cases}\]
The nondegeneracy of $U$ implies that $\tvrh_{j,\infty}=0$, yielding \eqref{tvrhjm}.

\medskip
Finally, we will prove
\begin{equation}\label{vrhm}
\lim_{m \to \infty} \|\vrh_m\|_{H^1_0(\Omega)}=0.
\end{equation}
Since \eqref{vrhm} contradicts \eqref{vrh}, we will be able to conclude that \eqref{vrh1} must hold.

To deduce \eqref{vrhm}, we test \eqref{2.12} with $\vrh_m$. Then, we only have to consider
\begin{align*}
\int_{\Omega} (u_0+\sigma_m)^{p-1}\vrh_m^2 &\lesssim \int_{\Omega} u_0^{p-1}\vrh_m^2 + \sum_{i=1}^{\nu} \int_{\Omega} (PU_{i,m})^{p-1}\vrh_m^2 \\
&\lesssim o_m(1)+ \int_{\R^n} U^{p-1}\tvrh_{i,m}^2 + O\(\max_i\(\frac{\delta_{i,m}}{d(\xi_{i,m},\pa\Omega)}\)^{\frac{n-2}{n}}\) \|\vrh_m\|_{H^1_0(\Omega)}^2 \\
&= o_m(1).
\end{align*}
Here, we employed \eqref{pup} and the facts that $\vrh_m\to 0$ strongly in $L^2(\Omega)$ and $\tvrh_{i,m}^2 \rightharpoonup 0$ weakly in $L^{\frac{n}{n-2}}(\R^n)$. We are done.
\end{proof}

\begin{proof}[Proof of Proposition \ref{prop:34}]
We set
\begin{align*}
h_1 &:= \left[(u_0+\sigma+\rho_0+\rho_1)^{p} - |u_0+\sigma+\rho_0|^{p-1}(u_0+\sigma+\rho_0)-p|u_0+\sigma+\rho_0|^{p-1}\rho_1\right] \\
&\ + p\left[|u_0+\sigma+\rho_0|^{p-1}-(u_0+\sigma)^{p-1}\right]\rho_1\\
&\ + \begin{cases}
\displaystyle f+\I_1+\I_2+\I_3 &\text{if } n \ne 6,\\
\displaystyle f+(\I_3-\I_{31})-\sum_{i=1}^{\nu} \sum_{k=0}^n c_i^k (-\Delta-\lambda) PZ_i^k &\text{if } n = 6.
\end{cases}
\end{align*}

From \eqref{eq:34}, we have
\[\begin{cases}
\begin{aligned}
&\rho_1 -\Pi^{\perp}[(-\Delta-\lambda)^{-1}(p(u_0+\sigma)^{p-1}\rho_1)]=\Pi^{\perp}[(-\Delta-\lambda)^{-1}h_1] \quad \text{in } \Omega,
\end{aligned}\\
\displaystyle \rho_1=0 \quad \text{on } \pa\Omega, \\
\displaystyle \big\langle \rho_1, PZ_i^k \big\rangle_{H^1_0(\Omega)} = 0 \quad \text{for } i = 1,\ldots,\nu \text{ and } k = 0,\ldots,n.
\end{cases}
\]
By making use of \eqref{vrh1}, \eqref{ab1}, \eqref{ab6} and H\"older's inequality
\begin{align*}
&\ \|\rho_1\|_{H^1_0(\Omega)} \\
&\lesssim \|f\|_{(H^1_0(\Omega))^*} + \|\rho_1\|_{H^1_0(\Omega)}^2\mone_{\{p>2\}}+\|\rho_1\|_{H^1_0(\Omega)}^p + \(\|\rho_0\|_{H^1_0(\Omega)}\mone_{\{p>2\}}+ \|\rho_0\|_{H^1_0(\Omega)}^{p-1}\) \|\rho_1\|_{H^1_0(\Omega)} \\
&\ + \begin{cases}
\displaystyle \|\I_1\|_{L^{\frac{p+1}{p}}(\Omega)}+\|\I_2\|_{L^{\frac{p+1}{p}}(\Omega)}+\|\I_3\|_{L^{\frac{p+1}{p}}(\Omega)} &\text{if } n \ne 6, \\
\displaystyle \|\I_3-\I_{31}\|_{L^{\frac{p+1}{p}}(\Omega)} + \sum_{i=1}^{\nu} \sum_{k=0}^n |c_i^k| &\text{if } n = 6.
\end{cases}
\end{align*}
Since $p>1$ and $\|\rho_0\|_{H^1_0(\Omega)}=o_{\ep_1}(1)$, we immediately deduce \eqref{eq:341}.
\end{proof}

\begin{cor}
For each $i = 1,\ldots,\nu$, we assume that $PU_i$ satisfies \eqref{pu1} if $n \ge 5$ or \textup{[}$n = 3, 4$ and $u_0>0$\textup{]}, and satisfies \eqref{pu2} if $n=3,4$ and $u_0=0$. We define
\[\mcj_{11}(\delta_1,\ldots,\delta_{\nu}) := \begin{cases}
\displaystyle \max_i\delta_i &\text{if } n=3 \text{ and } u_0=0, \\
\displaystyle \max_i\delta_i^2|\log\delta_i| &\text{if } n=4 \text{ and } u_0=0, \\
\displaystyle \max_i\delta_i^{\frac{n-2}{2}} &\text{if } [n=3,4 \text{ and } u_0>0] \text{ or } n=5, \\
\displaystyle \max_i\delta_i^2|\log\delta_i|^{\frac12} &\text{if } n=6, \\
\displaystyle \max_i\delta_i^2 &\text{if } n\ge 7,
\end{cases}\]
\[\mcj_{12}(\ka_1,\ldots,\ka_{\nu}) := \begin{cases}
\displaystyle \max_i \ka_i^{n-2} &\text{if } n=3,4,5, \\
\displaystyle \ka_1^4 \left|\log\ka_1\right|^{\frac12} &\text{if } n=6 \text{ and } \nu=1, \\
\displaystyle \max_i \ka_i^4 \left|\log\ka_i\right|^{\frac23} &\text{if } n=6 \text{ and } \nu\ge 2, \ \footnotemark \\
\displaystyle \max_i \ka_i^{\frac{n+2}{2}} &\text{if } n\ge 7,
\end{cases}\]
\footnotetext{The bound for $n=6$ and $\nu\ge 2$ may not be optimal. We present it here for the sake of completeness.} where $\ka_i = \frac{\delta_i}{d(\xi_i,\pa\Omega)}$, and
\[\mcj_{13}(Q) := \left.\begin{cases}
Q &\text{if } n=3,4,5\\Q|\log Q|^{\frac12} &\text{if } n=6\\
Q^{\frac{n+2}{2(n-2)}} &\text{if } n\ge 7
\end{cases}\right\}\mone_{\{\nu\ge 2\}}.\]
Then
\begin{equation}\label{sirp}
\|\rho\|_{H^1_0(\Omega)} \lesssim \|f\|_{(H^1_0(\Omega))^*} + \mcj_{11}(\delta_1,\ldots,\delta_{\nu}) + \mcj_{12}(\ka_1,\ldots,\ka_{\nu}) + \mcj_{13}(Q).
\end{equation}
\end{cor}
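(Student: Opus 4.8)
The plan is to combine the two inputs that are already in hand: the abstract $H^1_0(\Omega)$-norm estimate for $\rho_1$ from Proposition \ref{prop:34}, and the concrete $L^{\frac{p+1}{p}}(\Omega)$-bounds for $\I_1,\I_2,\I_3$ from Lemma \ref{lem2.2}, together with the bound for $\rho_0$ from Proposition \ref{prop:33} when $n=6$. Recall $\rho = \rho_0 + \rho_1$ with $\rho_0 = 0$ unless $n=6$. So the first step is the triangle inequality $\|\rho\|_{H^1_0(\Omega)} \le \|\rho_0\|_{H^1_0(\Omega)} + \|\rho_1\|_{H^1_0(\Omega)}$, and then to treat the two cases $n \neq 6$ and $n = 6$ separately.

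For $n \neq 6$ we have $\rho_0 = 0$, so $\|\rho\|_{H^1_0(\Omega)} = \|\rho_1\|_{H^1_0(\Omega)}$, and Proposition \ref{prop:34} gives $\|\rho_1\|_{H^1_0(\Omega)} \lesssim \|f\|_{(H^1_0(\Omega))^*} + \|\I_1\|_{L^{\frac{p+1}{p}}(\Omega)} + \|\I_2\|_{L^{\frac{p+1}{p}}(\Omega)} + \|\I_3\|_{L^{\frac{p+1}{p}}(\Omega)}$. Now I just substitute the bound of Lemma \ref{lem2.2}. The point is purely bookkeeping: the first block of Lemma \ref{lem2.2} is exactly $\mcj_{11}(\delta_1,\ldots,\delta_\nu)$ (for $n\neq 6$), the second block is exactly $\mcj_{12}(\ka_1,\ldots,\ka_\nu)$ (for $n\neq 6$), and the third block, carrying the factor $\mone_{\{\nu\ge 2\}}$, is exactly $\mcj_{13}(Q)$. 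One small check: for $n = 3$ with $u_0 = 0$ Lemma \ref{lem2.2} states the $\delta$-contribution is $0$, whereas $\mcj_{11} = \max_i \delta_i$ for that case; since $0 \le \max_i\delta_i$ this is harmless, and similarly the $n=4,u_0=0$ rows and the $n\ge 7$ rows match up term by term, so \eqref{sirp} holds.

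For $n = 6$, Proposition \ref{prop:34} instead yields $\|\rho_1\|_{H^1_0(\Omega)} \lesssim \|f\|_{(H^1_0(\Omega))^*} + \|\I_3 - \I_{31}\|_{L^{\frac{p+1}{p}}(\Omega)} + \sum_{i,k}|c_i^k|$, and I must also add $\|\rho_0\|_{H^1_0(\Omega)}$. The estimate \eqref{eq:331} of Proposition \ref{prop:33} gives $\|\rho_0\|_{H^1_0(\Omega)} \lesssim \max_i\delta_i^2|\log\delta_i|^{1/2} + \ka_1^4|\log\ka_1|^{1/2}\mone_{\{\nu=1\}} + Q|\log Q|^{1/2}$, which is precisely $\mcj_{11} + \mcj_{12} + \mcj_{13}$ in dimension $6$ (the $\nu=1$ versus $\nu\ge 2$ split in $\mcj_{12}$ is built to match this). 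For the remaining terms I need: $\|\I_3 - \I_{31}\|_{L^{\frac{p+1}{p}}(\Omega)} \lesssim \mcj_{11} + \mcj_{12} + \mcj_{13}$ — the subtraction of $\I_{31}$ removes exactly the $\lambda PU_i$ piece and the single-bubble leading boundary piece, so what remains is controlled by the same $\I_3$-type estimates in the proof of Lemma \ref{lem2.2} (in particular the contribution $\max_i\delta_i^2|\log\delta_i|^{2/3}$ coming from $\lambda\|U_i\|$ is now absent, consistent with $\mcj_{11}$ for $n=6$ being $\max_i\delta_i^2|\log\delta_i|^{1/2}$ coming from $\rho_0$); and $\sum_{i,k}|c_i^k| \lesssim \mcj_{11} + \mcj_{12} + \mcj_{13}$, which follows from \eqref{eq:cjb} of the $c_j^k$-lemma together with the already established bound $\|\rho_0\|_* \le C$ of \eqref{eq:33}, since \eqref{eq:cjb} gives $|c_j^k| \lesssim (o(\|\rho_0\|_*) + \|h\|_{**})(Q\mone_{\{\nu\ge2\}} + \delta_j^2 + \ka_j^4\mone_{\{\nu=1\}})$ and $\|h\|_{**} \le C$. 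Collecting everything completes the case $n=6$ and hence \eqref{sirp}.

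I do not anticipate a genuine obstacle here — the statement is a consolidation of previously proved estimates. The only mildly delicate point is the $n=6$ case, where one must verify that peeling off $\I_{31}$ does not leave behind a term worse than $\mcj_{11}$; this is why $\I_{31}$ was defined to absorb precisely $\lambda PU_i$ and (for $\nu=1$) the $a_6\delta_i^2\vph(\xi_i)U_i$ and $(PU_i^p - U_i^p)$ pieces near and beyond the scale $d(\xi_i,\pa\Omega)/\delta_i$. Once that is checked against the computations already performed inside the proof of Lemma \ref{lem2.2}, the estimate drops out. I would therefore organize the write-up as: (1) split $n\neq 6$ / $n = 6$; (2) for $n\neq 6$, quote Proposition \ref{prop:34} and Lemma \ref{lem2.2} and match the three blocks to $\mcj_{11},\mcj_{12},\mcj_{13}$; (3) for $n=6$, bound $\|\rho_0\|_{H^1_0}$ via Proposition \ref{prop:33}, bound $\|\I_3-\I_{31}\|_{L^{\frac{p+1}{p}}}$ by rereading the $\I_3$-part of Lemma \ref{lem2.2}'s proof, and bound $\sum|c_i^k|$ via \eqref{eq:cjb} and \eqref{eq:33}; (4) add up.
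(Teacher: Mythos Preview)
Your proposal is correct and follows essentially the same approach as the paper, which simply states that the result is a consequence of \eqref{eq:331} and \eqref{eq:341}. You have supplied the bookkeeping details the paper omits---in particular the $n=6$ case where one must separately handle $\|\I_3-\I_{31}\|_{L^{\frac{p+1}{p}}(\Omega)}$ and $\sum_{i,k}|c_i^k|$ via \eqref{eq:cjb} and \eqref{eq:33}---and these are exactly the ingredients needed to turn the paper's one-line proof into a complete argument.
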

\begin{proof}
The result is a consequence of \eqref{eq:331} and \eqref{eq:341}.
\end{proof}
\begin{prop}
For each $i = 1,\ldots,\nu$, we assume that $PU_i$ satisfies \eqref{pu1} if $n \ge 5$ or \textup{[}$n = 3, 4$ and $u_0>0$\textup{]}, and satisfies \eqref{pu2} if $n=3,4$ and $u_0=0$. We set
\[\mcj_{21}(\delta_1,\ldots,\delta_{\nu}) := \begin{cases}
\displaystyle \max_i\delta_i &\text{if } n=3 \text{ and } u_0=0, \\
\displaystyle \max_i\delta_i^2|\log \delta_i| &\text{if } n=4 \text{ and } u_0=0, \\
\displaystyle \max_i\delta_i^{\frac{n-2}{2}} &\text{if } n=3,4,5 \text{ and } u_0>0,  \\
\displaystyle \max_i\delta_i^2 &\text{if } [n=5 \text{ and } u_0=0] \text{ or } n\ge 6,
\end{cases}\]
and $\mcj_{23}(Q) := Q\mone_{\{\nu\ge 2\}}$. If each $\xi_1,\ldots,\xi_{\nu}$ lies on a compact set of $\Omega$, then it holds that
\begin{equation}\label{qj2}
\mcj_{21}(\delta_1,\ldots,\delta_{\nu}) + \mcj_{23}(Q) \lesssim \|f\|_{(H^1_0(\Omega))^*}.
\end{equation}
\end{prop}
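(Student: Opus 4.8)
The plan is to prove $\mcj_{21}(\delta_1,\ldots,\delta_\nu)+\mcj_{23}(Q)\lesssim\|f\|_{(H^1_0(\Omega))^*}$ by testing equation \eqref{eqrho2} for $\rho$ against the dilation directions $PZ_j^0$ and reading off the leading order from Lemmas \ref{lem2.31}, \ref{lem2.32}, and \ref{lem2.33}. Since $\langle\rho,PZ_j^0\rangle_{H^1_0(\Omega)}=0$, the $(-\Delta-\lambda)\rho$ term disappears, leaving
\[-p\int_\Omega(u_0+\sigma)^{p-1}\rho\,PZ_j^0=\int_\Omega f\,PZ_j^0+\int_\Omega\I_0[\rho]\,PZ_j^0+\sum_{\ell=1}^3\int_\Omega\I_\ell\,PZ_j^0,\qquad j=1,\ldots,\nu.\]
I would first show that every term here, apart from the leading parts of $\int_\Omega\I_\ell\,PZ_j^0$ supplied by the three lemmas, is $O(\|f\|_{(H^1_0(\Omega))^*})+o(\mcj_{21}+\mcj_{23})$. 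Indeed, $|\int_\Omega f\,PZ_j^0|\lesssim\|f\|_{(H^1_0(\Omega))^*}$ because $\|PZ_j^0\|_{H^1_0(\Omega)}\lesssim1$; the pointwise bound $|\I_0[\rho]|\lesssim(u_0+\sigma)^{p-2}\rho^2\mone_{\{p>2\}}+|\rho|^p$, H\"older's inequality and $|PZ_j^0|\lesssim U_j$ give $|\int_\Omega\I_0[\rho]\,PZ_j^0|\lesssim\|\rho\|_{H^1_0(\Omega)}^{\min\{2,p\}}$; and on the left-hand side I would write $(u_0+\sigma)^{p-1}PZ_j^0=U_j^{p-1}Z_j^0+E_j$, using that $\int_\Omega U_j^{p-1}Z_j^0\rho$ vanishes when $PU_j$ solves \eqref{pu2} and equals $\frac{\lambda}{p}\int_\Omega\rho\,PZ_j^0=O(\|\rho\|_{H^1_0(\Omega)}\|U_j\|_{L^2(\Omega)})$ when $PU_j$ solves \eqref{pu1} (again by the orthogonality relation, since then $-\Delta PZ_j^0-\lambda PZ_j^0=pU_j^{p-1}Z_j^0$, respectively $-\Delta PZ_j^0=pU_j^{p-1}Z_j^0$), while $\|E_j\|_{L^{\frac{p+1}{p}}(\Omega)}$ is controlled by the same quantities bounding $\|\I_1\|+\|\I_2\|+\|\I_3\|$ in Lemma \ref{lem2.2}, being built from $PU_j-U_j$, $PZ_j^0-Z_j^0$, the bubble--bubble interactions, and the coupling of $u_0$ with $U_j$. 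Plugging in the a priori bound \eqref{sirp} for $\|\rho\|_{H^1_0(\Omega)}$ and noting that each of these contributions carries an extra power of $\max_i\delta_i$ or of $Q$ relative to the model scales (or is quadratic in the already small $\|\rho\|_{H^1_0(\Omega)}$), while $\mcj_{11},\mcj_{12},\mcj_{13}$ are at worst a logarithm larger than $\mcj_{21}$ and $\mcj_{23}$, one obtains the claimed reduction.

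After this reduction, the identity reads, for each $j$,
\[c_j\,\theta_j+\sum_{i\ne j}\mfd_n\left(q_{ij}^{-\frac{2}{n-2}}-2\frac{\delta_j}{\delta_i}\right)q_{ij}^{\frac{n}{n-2}}+(\text{lower-order interaction terms})=O(\|f\|_{(H^1_0(\Omega))^*})+o(\mcj_{21}+\mcj_{23}),\]
where $\theta_j\in\{\delta_j^{\frac{n-2}{2}},\delta_j^2,\delta_j^2|\log\delta_j|,\delta_j\}$ is the model scale appearing in $\mcj_{21}$, and $c_j\ge c>0$ thanks to the positivity of $\mfa_n,\mfb_n,\mfc_n,\mfd_n$ and the sign and non-degeneracy hypotheses of Theorem \ref{thm1} — in particular, for $n=3$, $u_0=0$, $\nu\ge2$, the assumption $\vph^3_\lambda(\txi_i)<0$, which also prevents the extra interaction sums of Lemma \ref{lem2.33} from cancelling the $\vph^3_\lambda$-term of Lemma \ref{lem2.32}. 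Ordering the bubbles so that $\delta_1\ge\delta_2\ge\cdots\ge\delta_\nu$, I would then run a downward induction on $j=\nu,\nu-1,\ldots,1$: when testing against $PZ_j^0$, every interaction with $i<j$ satisfies $q_{ij}^{-\frac{2}{n-2}}-2\frac{\delta_j}{\delta_i}\ge\frac14 q_{ij}^{-\frac{2}{n-2}}$ and so contributes a nonnegative amount $\gtrsim q_{ij}$, whereas every interaction with $i>j$ equals $q_{ji}$, which was already estimated when testing against $PZ_i^0$ at an earlier step and hence is $\lesssim\|f\|_{(H^1_0(\Omega))^*}+o(\mcj_{21}+\mcj_{23})$ in absolute value. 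The $j$-th step therefore yields both $\theta_j$ and $\max_{i<j}q_{ij}$ bounded by $\|f\|_{(H^1_0(\Omega))^*}+o(\mcj_{21}+\mcj_{23})$; taking the maximum over $j$ gives $\mcj_{21}+\mcj_{23}\lesssim\|f\|_{(H^1_0(\Omega))^*}+o(\mcj_{21}+\mcj_{23})$, and absorbing the $o(\cdot)$ term (legitimate once $\ep_1$ is small) finishes the proof.

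The crux is this second step: isolating $Q$ from the multi-bubble interactions. Since the quantities $\mfd_n(q_{ij}^{-\frac{2}{n-2}}-2\frac{\delta_j}{\delta_i})q_{ij}^{\frac{n}{n-2}}$, and for $u_0=0$ with $n=3,4$ the extra sums involving $H^n_\lambda(\xi_i,\xi_j)$ and powers of $|\xi_i-\xi_j|$, can each have either sign, a single test against one $PZ_j^0$ cannot detect $Q$; it is the bubble ordering together with the downward induction that forces the positive part of the dominant interaction to prevail. The remaining delicate point, confined to $n=3$ with $u_0=0$, is the potential cancellation among the genuinely competing leading terms — the $\vph^3_\lambda$-contribution, the bubble--bubble interaction, and $\mfd_n(\cdots)q_{ij}^{\frac{n}{n-2}}$ — which is exactly why Theorem \ref{thm1} imposes $\vph^3_\lambda(\txi_i)<0$ and keeps the $\xi_i$ on a compact subset of $\Omega$. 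Everything else amounts to bookkeeping with the expansions of Lemma \ref{lem2.1} and the projection identities recorded in Lemmas \ref{lem2.31}--\ref{lem2.33}.
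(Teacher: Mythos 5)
Your argument is correct and follows essentially the same route as the paper: test equation \eqref{eqrho2} against $PZ_j^0$, reduce to the leading terms supplied by Lemmas \ref{lem2.31}--\ref{lem2.33} using \eqref{sirp}, and run the ordered-bubble induction (your ``downward induction'' is precisely the argument the paper cites from \cite[Lemma 2.3]{DSW}) to extract both $\mcj_{23}(Q)$ and $\mcj_{21}$. One small inaccuracy worth noting: the positivity of the combined $\int_\Omega\I_2\,PZ_j^0$ contribution in the cases $n=3,4$, $u_0=0$ is not a consequence of $\vph^3_\lambda(\txi_i)<0$ (that condition only fixes the sign of $\int_\Omega\I_3\,PZ_j^0$); rather, it rests on the paper's assertion that the $\mfd_n q_{ij}$ term and the $H^n_\lambda$-correction sums combine to something $\simeq q_{ij}$, which uses the algebraic coincidence $\mfd_3=\mfc_3$ (resp.\ $\mfd_4=\mfc_4$) together with positivity of the Green's function $G_\lambda$ on compact subsets of $\Omega$, and non-degeneracy of $u_0$ plays no role in making $c_j>0$.
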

\begin{proof}
Let $j \in \{1,\ldots,\nu\}$ be fixed. By testing \eqref{eqrho2} with $ PZ_j^0$, we obtain
\begin{align*}
\int_{\Omega} \I_1 PZ_j^0 + \int_{\Omega} \I_2 PZ_j^0 + \int_{\Omega} \I_3 PZ_j^0 &= -\int_{\Omega} f PZ_j^0 -\int_{\Omega} \I_0[\rho] PZ_j^0 \\
&\ + \int_{\Omega} \big[(-\Delta-\lambda)\rho - p(u_0+\sigma)^{p-1}\rho\big] PZ_j^0.
\end{align*}

As in \eqref{liar}, we apply Lemmas \ref{a4} and \ref{a22}, and the assumption that $\xi_i$ lies on a compact set of $\Omega$ for $i=1,\ldots,\nu$ to deduce
\begin{align*}
&\ \bigg|\int_{\Omega} \big[(-\Delta-\lambda)\rho - p(u_0+\sigma)^{p-1}\rho\big] PZ_j^0 \bigg| \\
&\lesssim \|\rho\|_{H^1_0(\Omega)} \left[\|U_j\|_{L^{\frac{p+1}{p}}(\Omega)}\mone_{\{u_0>0\}\cup\{PU_{j} \text{ satisfies } \eqref{pu1}\}} + \sum_{i=1}^{\nu}\|U_i^{p-1}\|_{L^{\frac{p+1}{p}}(\Omega)}\mone_{\{u_0>0, p>2\}} \right. \\
&\hspace{60pt} \left. +\left.\begin{cases}
\displaystyle \sum_{i\ne j} \|U_{i}^{p-1}U_{j}\|_{L^{\frac{p+1}{p}}(\Omega)} &\text{if } n=3,4,5\\
\displaystyle \sum_{i\ne j} \left\|\min\{U_{i}^{p-1}U_{j}, U_{j}^{p-1}U_{i}\}\right\|_{L^{\frac{p+1}{p}}(\Omega)} &\text{if } n\ge 6
\end{cases}\right\} \mone_{\{\nu\ge 2\}} \right] \\
&= o(\mcj_{21}(\delta_1,\ldots,\delta_{\nu}) + \mcj_{23}(Q)).
\end{align*}
Using \eqref{ape1} and the fact that $|PZ_j^0|\le \sum_{i=1}^{\nu}U_i$, we also know that
\begin{equation}\label{i4rp}
\begin{aligned}
\bigg|\int_{\Omega} \I_0[\rho] PZ_j^0\bigg| &\lesssim \begin{cases}
\displaystyle \int_{\Omega} \min\{\sigma^{p-2}\rho^2, |\rho|^{p}\} |PZ_j^0| &\text{if } 1<p<2, \\
\displaystyle \int_{\Omega} \(\sigma^{p-2}\rho^2+|\rho|^p\) |PZ_j^0| &\text{if }p\ge 2
\end{cases} \\
&\lesssim \int_{\Omega} \sum_{i=1}^{\nu}U_i^{p-1}|\rho|^2 +\|\rho\|_{H^1_0(\Omega)}^{p}\mone_{\{p>2\}} \lesssim \|\rho\|^2_{H^1_0(\Omega)}.
\end{aligned}
\end{equation}

Without loss of generality, one may assume that $\delta_1 \ge \delta_2 \ge \cdots \ge \delta_{\nu}$.
By employing Lemmas \ref{lem2.31}--\ref{lem2.33}  together with $\mfa_n, \mfb_n > 0$, $d(\xi_i, \pa\Omega)\gtrsim 1$, and $-\vph_{\lambda}^3(\xi_i)>0$ provided $n=3$, $u_0=0$, and $\nu\ge 2$, noticing that 
\[\mfd_nq_{ij}+\left.\begin{cases}
\displaystyle  \sum_{i\ne j} [-\mfb_3\lambda|\xi_j-\xi_i|-\mfc_3H_{\lambda}^3(\xi_i, \xi_j)]\delta_i^{\frac12}\delta_j^{\frac12} &\text{if } n=3, u_0=0\\
\displaystyle \sum_{i\ne j} [-\mfb_4\lambda\log |\xi_j-\xi_i| - \mfc_4H_{\lambda}^4(\xi_i, \xi_j)]\delta_i\delta_j  &\text{if } n=4,u_0=0
\end{cases}\right\}\simeq q_{ij},\]
we adopt the same reasoning as in \cite[Lemma 2.3]{DSW} (which is based on mathematical induction) to achieve
\begin{equation}\label{fi1}
\mcj_{23}(Q) \lesssim \|f\|_{(H^1_0(\Omega))^*} + o(\mcj_{21}(\delta_1,\ldots,\delta_{\nu})).
\end{equation}
Then, one may take the test function $PZ_1^0$, where $\delta_1=\max_i\delta_i$, to prove
\begin{equation}\label{madl}
\begin{aligned}
\mcj_{21}(\delta_1,\ldots,\delta_{\nu})
&\lesssim \|f\|_{(H^1_0(\Omega))^*}+\bigg|\int_{\Omega} \I_2 PZ_1^0\bigg| + o(\mcj_{21}(\delta_1,\ldots,\delta_{\nu}) + \mcj_{23}(Q))\\
&\lesssim \|f\|_{(H^1_0(\Omega))^*} + o(\mcj_{21}(\delta_1,\ldots,\delta_{\nu}) + \mcj_{23}(Q)).
\end{aligned}
\end{equation}
Here, we used $\big|\int_{\Omega} \I_2 PZ_1^0\big|\lesssim Q$, which comes from \eqref{i3pz0} and Lemma \ref{a22}.

Putting \eqref{fi1} and \eqref{madl}, we establish \eqref{qj2}, concluding the proof.
\end{proof}

We are now in a position to establish estimate \eqref{eq:sqe32}.
\begin{proof}[Proof of Estimate \eqref{eq:sqe32}]
Since $d(\xi_i, \pa\Omega)\gtrsim 1$, we have
\[\mcj_{12}(\ka_1,\ldots,\ka_{\nu}) \lesssim \mcj_{11}(\delta_1,\ldots,\delta_{\nu}),\]
where $\ka_i = \frac{\delta_i}{d(\xi_i,\pa\Omega)}$. From \eqref{sirp} and \eqref{qj2}, one can identify two optimal functions $\tilde{\zeta}_1(t)$ and $\tilde{\zeta}_3(t)$ of the form $t^a|\log t|^b$, with $a>0$ and $b \ge 0$ ($b = 0$ unless $n=6$), such that
\[\mcj_{11}(\delta_1,\ldots,\delta_{\nu}) \lesssim \tilde{\zeta}_1(\mcj_{21}(\delta_1,\ldots,\delta_{\nu})) \quad \text{and} \quad \mcj_{13}(Q) \lesssim \tilde{\zeta}_3(\mcj_{23}(Q)).\]
Recognizing that $\tilde{\zeta}_1(t)$ and $\tilde{\zeta}_3(t)$ are non-decreasing for $t>0$, we obtain
\[\|\rho\|_{H^1_0(\Omega)} \lesssim \max\left\{\|f\|_{(H^1_0(\Omega))^*},\, \tilde{\zeta}_1(\|f\|_{(H^1_0(\Omega))^*}),\, \tilde{\zeta}_3(\|f\|_{(H^1_0(\Omega))^*})\right\} = \zeta(\|f\|_{(H^1_0(\Omega))^*}),\]
where $\zeta(t)$ is the function introduced in \eqref{eq:zeta}.
\end{proof}

\subsection{Sharpness of estimate \eqref{eq:sqe32}}\label{subs3.2}
Let us divide it into two cases.

\medskip \noindent \fbox{Case 1:} We prove the optimality of \eqref{eq:sqe32} when [$n=3,4,\; \nu\ge 1$], or [$n=5,\; \nu\ge 1,\; u_0>0$] or [$n\ge 7,\; \nu=1$]. In this case, we have that $\zeta(t) = t$.

\medskip
We select numbers $\delta = \delta_i \in (0,1)$ for each $i \in \{1, \ldots, \nu\}$ and points $\xi_i \in \Omega$ such that $d(\xi_i,\pa\Omega) \gtrsim 1$ and $|\xi_i - \xi_j| \gtrsim 1$ for all distinct indices $1 \le i \ne j \le \nu$.
Under these conditions, it holds that $Q \simeq \delta^{n-2} \cdot \mone_{{\nu \ge 2}}$.

Taking
\[\ep \simeq \begin{cases}
\delta &\text{if } n=3 \text{ and } u_0=0,\\
\delta^2|\log\delta| &\text{if } n=4 \text{ and } u_0=0,\\
\delta^{\frac{n-2}{2}} &\text{if } n=3,4,5 \text{ and } u_0>0,\\
\delta^2 &\text{if } n\ge 7 \text{ and } \nu=1,
\end{cases}\]
and using $|PZ_i^k|\le CPU_i$ in $\Omega$, we construct a nonnegative function of the form
\[\phi_\delta=\sum_{i=1}^{\nu}PU_i + \sum_{i=1}^{\nu}\sum_{k=0}^n \beta_i^kPZ_i^k,\]
where $\beta_i^k=o_{\delta}(1)$, $\la\phi_\delta,PZ_i^k\ra=0$, and $\|\phi_\delta\|_{H^1_0(\Omega)}\simeq 1$.

Letting $\rho:=\ep\phi_\delta$, we define $u_*:=u_0+\sum_{i=1}^{\nu}PU_i+\rho$ so that $u_*=0$ on $\pa\Omega$. Then we set
\[f:=-\Delta u_*-\lambda u_*-u_*^{p-1}=-\Delta\rho-\lambda \rho-p\(u_0+\sum_{i=1}^{\nu}PU_i\)^{p-1}\rho+\I_1+\I_2+\I_3+\I_0[\rho]\]
where $\I_1$, $\I_2$, $\I_3$, and $\I_0[\rho]$ are defined as in \eqref{eq:I2} with parameters $(\delta_i,\xi_i)$ satisfying the above conditions.
By Lemmas \ref{lem2.1} and \ref{lem2.2}, we have that $\|\rho\|_{H^1_0(\Omega)}\simeq \ep$ and
\begin{align*}
\|f\|_{(H^1_0(\Omega))^*} &\lesssim \|\rho\|_{H^1_0(\Omega)} + \|\rho\|_{H^1_0(\Omega)}^{\min\{2,p\}} + \|\I_1\|_{L^{\frac{p+1}{p}}(\Omega)}\mone_{\{u_0\ne 0\}} + \|\I_2\|_{L^{\frac{p+1}{p}}(\Omega)}\mone_{\{\nu\ge 2\}} + \|\I_3\|_{L^{\frac{p+1}{p}}(\Omega)} \\
&\simeq \ep\simeq \|\rho\|_{H^1_0(\Omega)}.
\end{align*}

Proceeding as in Step 2 of \cite[Subsection 5.1]{CK}, we deduce that
\[\inf_{\substack{(\tilde{\delta}_i,\tilde{\xi}_i)\in (0,1)\times\Omega,\\ i=1,\ldots,\nu}} \Big\|u_*-\Big(u_0+\sum_{i=1}^{\nu}PU_{\tilde{\delta}_i,\tilde{\xi}_i}\Big)\Big\|_{H^1_0(\Omega)} \gtrsim \|\rho\|_{H^1_0(\Omega)},\]
thereby establishing the optimality of \eqref{eq:sqe32}.

\medskip \noindent \fbox{Case 2:} We prove the optimality of \eqref{eq:sqe32} when $[n=5,\; \nu\ge 1,\; u_0=0]$ or $[n=6,\; \nu\ge 1]$ or $[n\ge 7,\; \nu\ge 2]$. In this case, we have that $\zeta(t) \gg t$.
The proof is split into three steps.

\medskip \noindent \doublebox{\textsc{Step 1.}}
We select $\delta = \delta_i \in (0,1)$ and $\xi_i \in \Omega$ such that $d(\xi_i,\pa\Omega) \gtrsim 1$ and $|\xi_i - \xi_j| \simeq \delta^b$ for each $i\ne j$, where $i,j \in \{1, \ldots, \nu\}$ and $b \in [0,1)$.
This choice ensures that $Q \simeq \delta^{(1-b)(n-2)}$. We impose a further restriction $b \in (\frac{n-4}{n-2}, 1)$ for $n \ge 7$, and set $b=0$ in dimensions $n=5,6$.

We now consider the function $\rho$ solving the boundary value problem
\begin{equation}\label{opp}
\begin{cases}
\displaystyle -\Delta \rho-\lambda \rho - p(u_0+\sigma)^{p-1}\rho = \I_1+\I_2+\I_3+\I_0[\rho] + \sum_{i=1}^{\nu}\sum_{k=0}^n \tc_i^k(-\Delta-\lambda)PZ_i^k \quad \text{in } \Omega,\\
\displaystyle \rho=0 \quad \text{on } \pa\Omega,\quad \tc_i^k\in \R \quad \text{for } i=1,\ldots,\nu \text{ and } k=0,\ldots,n,\\
\displaystyle \big\langle \rho, PZ_i^k \big\rangle_{H^1_0(\Omega)} = 0 \quad \text{for } i=1,\ldots,\nu \text{ and } k=0,\ldots,n,
\end{cases}
\end{equation}
where $PZ_i^k$, $\I_1$, $\I_2$, $\I_3$, and $\I_0[\rho]$ are defined as in \eqref{eq:I2} with parameters $(\delta_i,\xi_i)$ satisfying the above conditions. We set $f:=\sum_{k=0}^n\sum_{i=1}^{\nu} \tc_i^k(-\Delta-\lambda)PZ_i^k$. Then
\begin{equation}\label{tcik}
\begin{aligned}
\|f\|_{(H^1_0(\Omega))^*} &\lesssim \sum_{k=0}^n\sum_{i=1}^{\nu}|\tc_i^k| \\
&\lesssim \vsi_1(\delta) :=\begin{cases}
\delta^2 &\text{if } [n=5,\; u_0=0,\; \nu\ge 1] \text{ or } [n=6,\; \nu\ge 1],\\
\delta^{(1-b)(n-2)} &\text{if } n\ge 7 \text{ and } \nu\ge 2.
\end{cases}
\end{aligned}
\end{equation}
By applying Lemma \ref{lem2.2} and \eqref{tcik}, we see that
\begin{equation}\label{apr1}
\begin{aligned}
\|\rho\|_{H^1_0(\Omega)} &\lesssim
\|f\|_{(H^1_0(\Omega))^*} + \|\I_1\|_{L^{\frac{p+1}{p}}(\Omega)} + \|\I_2\|_{L^{\frac{p+1}{p}}(\Omega)} + \|\I_3\|_{L^{\frac{p+1}{p}}(\Omega)} \\
&\lesssim \begin{cases}
\delta^{\frac32} &\text{if } n=5, u_0=0, \text{ and } \nu\ge 1,\\
\delta^2|\log\delta|^{\frac23} &\text{if } n=6 \text{ and } \nu\ge 1,\\
\delta^{\frac{(1-b)(n+2)}{2}} &\text{if } n\ge 7 \text{ and } \nu\ge 2.
\end{cases}
\end{aligned}
\end{equation}

We now decompose $\rho = \trh_0 + \trh_1$, where the functions $\trh_0$ and $\trh_1$ satisfy
\[\begin{cases}
\begin{aligned}
&\displaystyle -\Delta \trh_0-\lambda \trh_0 - p(u_0+\sigma)^{p-1}\trh_0 = \I_1\mone_{\{n\ge 6,\; u_0>0\}}+\I_2\mone_{\{n\ge 6,\; \nu\ge 2\}}\\
&\displaystyle \hspace{100pt} -\sum_{i=1}^{\nu}\lambda PU_i\mone_{\{n=5,6\}} + \I_0[\trh_0] + \sum_{i=1}^{\nu}\sum_{k=0}^n \bar{c}_i^k(-\Delta-\lambda)PZ_i^k
\end{aligned} \quad \text{in } \Omega,\\
\displaystyle \trh_0=0 \quad \text{on } \pa\Omega,\quad \bar{c}_i^k\in \R\quad \text{for } i=1,\ldots,\nu \text{ and } k=0,\ldots,n,\\
\displaystyle \big\langle \trh_0, PZ_i^k \big\rangle_{H^1_0(\Omega)} = 0 \quad \text{for } i=1,\ldots,\nu \text{ and } k=0,\ldots,n,
\end{cases}\]
and
\[\begin{cases}
\begin{aligned}
&\displaystyle -\Delta \trh_1-\lambda \trh_1 - p(u_0+\sigma)^{p-1}\trh_1 = \I_2\mone_{\{n=5,\; u_0=0,\; \nu\ge 2\}}+\sum_{i=1}^{\nu}(\Delta PU_i+PU_i^p)\\
&\displaystyle \hspace{75pt} +\sum_{i=1}^{\nu}\lambda PU_i\mone_{\{n\ge 7\}}
+\I_0[\rho]-\I_0[\trh_0] + \sum_{i=1}^{\nu}\sum_{k=0}^n (\tc_i^k+\bar{c}_i^k) (-\Delta-\lambda)PZ_i^k
\end{aligned} \quad \text{in } \Omega,\\
\displaystyle \trh_1=0 \quad \text{on } \pa\Omega,\\
\displaystyle \big\langle \trh_1, PZ_i^k \big\rangle_{H^1_0(\Omega)} = 0 \quad \text{for } i=1,\ldots,\nu \text{ and } k=0,\ldots,n,
\end{cases}\]
respectively. By realizing $|\I_1+\I_2-\sum_{i=1}^{\nu}\lambda PU_i|\lesssim \sum_{i=1}^{\nu}U_i$ for $n=6$ (since $|\xi_i-\xi_j|\gtrsim 1$) and recalling \eqref{i1po}, one can deduce a coefficient bound
\begin{equation} \label{bcik}
\sum_{k=0}^n \sum_{i=1}^{\nu} |\bar{c}_i^k| \lesssim \vsi_1(\delta)
\end{equation}
and a pointwise estimate for $\trh_0$:
\begin{equation}\label{aptrp}
|\trh_0|(x) \lesssim \wtw(x).
\end{equation}
Here,
\begin{align*}
\tw_{1i}^{\tin}(x) &:= \frac{1}{\la x_i\ra^2} \bs{1}_{\{|x_i|\le {\delta}^{-\frac12}\}}, & \tw_{1i}^{\tout}(x) &:= \frac{\delta^{-\frac{n-6}{2}}}{\la x_i \ra^{n-4}} \bs{1}_{\{|x_i|\ge {\delta}^{-\frac12}\}}, \\
\tw_{2i}^{\tin}(x) &:= \frac{\delta^{(\frac12-b)(n-2)}}{\la x_i \ra^2} \mone_{\big\{|x_i| < \min\limits_{i\ne j}\frac{|\xi_i-\xi_j|}{2\delta}\big\}}, &
\tw_{2i}^{\tout}(x) &:= \frac{\delta^{4(1-b)-\frac{n-2}{2}}}{|x_i|^{n-4}} \mone_{\big\{|x_i| \ge \min\limits_{i\ne j}\frac{|\xi_i-\xi_j|}{2\delta}\big\}}, \\
\tw_{3i}(x) &:= \frac{\delta^{-\frac{n-6}{2}}}{\la x_i \ra^{n-4}},
\end{align*}
and
\[\wtw(x) := \sum_{i=1}^{\nu} \left[\tw_{1i}^{\tin} + \tw_{1i}^{\tout}\right](x) \mone_{\{n\ge 7,\; u_0>0\}}  + \sum_{i=1}^{\nu} \left[\tw_{2i}^{\tin} + \tw_{2i}^{\tout}\right](x) \mone_{\{n\ge 7,\; \nu\ge2\}} + \sum_{i=1}^{\nu} \tw_{3i}(x) \mone_{\{n=5,6\}}.\]
Moreover, we observe
\begin{equation}\label{aptrp1}
\begin{aligned}
\|\trh_1\|_{H^1_0(\Omega)} &\lesssim \left\| \I_2 \bs{1}_{\{n=5,\; u_0=0,\; \nu\ge 2\}} + \sum_{i=1}^{\nu} \(\Delta PU_i + PU_i^p -\lambda PU_i \bs{1}_{\{n\ge 7\}} \) \right\|_{L^{\frac{p+1}{p}}(\Omega)} \\
&\lesssim \begin{cases}
\delta^3 & \text{if } n=5,\; u_0=0,\; \nu\ge 2,\\
\delta^4 |\log\delta|^{\frac{2}{3}} & \text{if } n=6,\; \nu\ge 1,\\
\delta^2 & \text{if } n \ge 7,\; \nu \ge 2.
\end{cases}
\end{aligned}
\end{equation}
Combining these computations and adapting the approach of Proposition \ref{prop:34}, we reach the improved estimate
\begin{align}\label{exrh}
\|\rho\|_{H^1_0(\Omega)} \lesssim \vsi_2(\delta) := \begin{cases} \delta^{\frac{3}{2}} & \text{if } n=5,\; u_0=0, \nu\ge 1,\\
\delta^2 |\log\delta|^{\frac{1}{2}} & \text{if } n=6,\; \nu\ge 1,\nonumber\\
\delta^{\frac{(1-b)(n+2)}{2}} & \text{if } n \ge 7,\; \nu \ge 2.
\end{cases}
\end{align}

\medskip \noindent \doublebox{\textsc{Step 2.}} We now establish the lower bound
\begin{equation}\label{inrp}
\|\rho\|_{H^1_0(\Omega)}\gtrsim \vsi_2(\delta),
\end{equation}
which in turn implies
\[\|\rho\|_{H^1_0(\Omega)}\gtrsim \zeta(\|f\|_{H^{-1}(\Omega)}).\]

\medskip
Testing equation \eqref{opp} against $\rho$ and applying Holder's inequality yield
\begin{align*}
\|\rho\|^2_{H^1_0(\Omega)} &=\int_{\Omega}p(u_0+\sigma)^{p-1}\rho^2 +\int_{\Omega}(\I_1+\I_2+\I_3+\I_0[\rho]){\rho} \\
&\ge \int_{\Omega} \(\I_1\mone_{\{n\ge 6,\; u_0>0\}} + \I_2\mone_{\{n\ge 6,\; \nu\ge 2\}} + \sum_{i=1}^{\nu}\lambda PU_i\mone_{\{n=5,6\}}\) \trh_0 + o\(\vsi_2(\delta)^2\) \\
&=: J_2 + o\(\vsi_2(\delta)^2\),
\end{align*}
where we have invoked \eqref{aptrp}, \eqref{aptrp1}, \eqref{apr1}, \eqref{i12e}, \eqref{i3e}, and the bound $|\I_0[\rho]\rho|\lesssim|\rho|^{\min\{p+1, 3\}}$.

Let $G_\lambda$ be defined as \eqref{gla} for $n\ge 3$. We recall the integral representation of $\trh_0$ given by
\begin{align*}
\trh_0(x)=\int_{\Omega}G_\lambda(x,y) &\bigg[p(u_0+\sigma)^{p-1}\trh_0+ \I_1\mone_{\{n\ge 6,\; u_0>0\}}+\I_2\mone_{\{n\ge 6,\; \nu\ge 2\}} \\
&+\sum_{i=1}^{\nu}\lambda PU_i\mone_{\{n=5,6\}}+\I_0[\trh_0] +\sum_{i=1}^{\nu}\sum_{k=0}^n \bar{c}_i^k(-\Delta-\lambda)PZ_i^k\bigg]
\end{align*}
and the lower bound estimate of $G_{\lambda}$:
\[G_\lambda(x,y)\gtrsim \frac{1}{|x-y|^{n-2}}.\]
We also introduce the quantities
\[J_{21} := \begin{cases}
\displaystyle \int_{\Omega} \sum_{i=1}^{\nu}\lambda PU_i(x)\int_{\Omega}G_\lambda(x,\om) \sum_{j=1}^{\nu}\lambda PU_j(\om) dx d\om &\text{if } n=5,6,\\
\displaystyle \int_{\Omega} \I_2(x)\int_{\Omega}G_\lambda(x,\om)\I_2(\om) dx d\om &\text{if } n\ge 7,
\end{cases}\]
and
\[\begin{medsize}
\displaystyle J_{22} := \int_{\Omega} \(\I_1\mone_{\{n\ge 6,\; u_0>0\}} + \I_2\mone_{\{n\ge 6,\; \nu\ge 2\}} + \sum_{i=1}^{\nu} \lambda PU_i\mone_{\{n=5,6\}}\)(\om) \int_{\Omega}G_\lambda(x,\om) [p(u_0+\sigma)^{p-1}\trh_0](x)dx d\om.
\end{medsize}\]
Then, by appealing to the inequality $\|\trh_0\|_{H^1_0(\Omega)} \lesssim \vsi_2(\delta)$, \eqref{bcik}, \eqref{i12e}, \eqref{i3e}, the non-negativity of the functions $\I_1$, $\I_2$ and $\lambda PU_i$, and Lemma \ref{lem2.1}, we obtain
\begin{align*}
J_2 &\gtrsim J_{21} + J_{22} \\
&\begin{medsize}
\displaystyle \ + O\(\bigg\|\I_1\mone_{\{n\ge 6,\; u_0>0\}} + \I_2\mone_{\{n\ge 6,\; \nu\ge 2\}} + \sum_{i=1}^{\nu}\lambda PU_i\mone_{\{n=5,6\}} \bigg\|_{L^{\frac{p+1}{p}}(\Omega)} \(\|\trh_0\|_{H^1_0(\Omega)}^2 + \sum_{i=1}^{\nu}\sum_{k=0}^n|\tc_i^k|\)\)
\end{medsize} \\
&= J_{21}+J_{22}+o\(\vsi_2(\delta)^2\).
\end{align*}

Assume that $n = 5, 6$. A direct computation shows
\begin{equation}\label{in56}
\begin{aligned}
&\ \int_{\Omega} \sum_{i=1}^{\nu} PU_i \int_{\Omega}G_\lambda(x,\om) \sum_{j=1}^{\nu} PU_j dxd\om \\
&\gtrsim \int_{\Omega} \sum_{i,j=1}^{\nu} \(\frac{\delta}{\delta^2+|x-\xi_i|^2}\)^{\frac{n-2}{2}} \frac{\delta^{\frac{n-2}{2}}}{(\delta^2+|x-\xi_j|^2)^{\frac{n-4}{2}}} 
\simeq \begin{cases}
\delta^3 &\text{if } n=5,\\
\delta^4|\log\delta| &\text{if } n=6.
\end{cases}
\end{aligned}
\end{equation}
Assume that $n\ge 7$ and $\nu\ge 2$. If $|x_1|\lesssim \frac12 \delta^{b-1}$, then $|x_2|\le |x_1|+\frac{|\xi_1-\xi_2|}{\delta} \lesssim \delta^{b-1}$. From this, we derive
\[\I_2=\sigma^{p} -\sum_{i=1}^{\nu}(PU_i)^{p} \gtrsim (PU_1)^{p-1}PU_2\]
and
\[U_1^{p-1}U_2\gtrsim \frac{\delta^{-2}}{\la x_1\ra^4} \frac{\delta^{-\frac{n-2}{2}}}{\la x_2\ra^{n-2}}\gtrsim \frac{\delta^{-2}}{\la x_1\ra^4} \delta^{-\frac{n-2}{2}}\delta^{(1-b)(n-2)}\gtrsim \frac{\delta^{(\frac12-b)(n-2)-2}}{\la x_1\ra^4}.\]
As a consequence, we have
\begin{equation}\label{in67}
\begin{aligned}
J_{21} &= \int_{\Omega} \I_2(x)\int_{\Omega}G_\lambda(x,\om)\I_2(\om) dx d\om \\
&\gtrsim
\delta^{(2-2b)(n-2)} \int_{\{|x_1| \lesssim \frac12\delta^{b-1}\}} \int_{\{|\om_1| <\frac12 |x_1|\}} \frac{1}{\la x_1 \ra^4} \frac{1}{|x_1-\om_1|^{n-2}} \frac{1}{\la \om_1 \ra^4} dx_1d\om_1 + o(\delta^{(1-b)(n+2)})\\
&\gtrsim
\delta^{(2-2b)(n-2)} \int_{\{|x_1| \lesssim \frac12\delta^{b-1}\}} \frac{1}{\la x_1 \ra^6} dx_1 + o(\delta^{(1-b)(n+2)}) \gtrsim \delta^{(1-b)(n+2)},
\end{aligned}
\end{equation}
where $\om_1:= \delta_1^{-1}(\om-\xi_1)$.

We next estimate $J_{22}$. We denote
\[\tv_{ji}^{\tin}:=\frac{\delta_i^{-2}}{\la x_i\ra^2}\tw_{ji}^{\tin},\quad \tv_{ji}^{\tout}:=\frac{\delta_i^{-2}}{\la x_i\ra^2}\tw_{ji}^{\tout} \quad \text{for } j=1,2,\quad \tv_{3i}:=\frac{\delta_i^{-2}}{\la x_i\ra^2}\tw_{3i},\]
and
\[\wtv:=\sum_{i=1}^{\nu}\left[(\tv_{1i}^{\tin}+\tv_{1i}^{\tout})\mone_{\{n\ge 7,\; u_0>0\}} + (\tv_{2i}^{\tin}+\tv_{2i}^{\tout})\mone_{\{n\ge 7,\; \nu\ge 2\}} + \tv_{3i}\mone_{\{n=5,6\}}\right].\]
Recalling \eqref{i1po}, Lemma \ref{lem2.1} and \eqref{iqu}, we easily observe that
\[\bigg|\I_1\mone_{\{n\ge 6,\; u_0>0\}} + \I_2\mone_{\{n\ge 6,\; \nu\ge 2\}} + \sum_{i=1}^{\nu}\lambda PU_i\mone_{\{n=5,6\}}\bigg| \lesssim \wtv,\]
which implies
\begin{equation}\label{J22}
J_{22} \lesssim \int_{\Omega} \wtv(x) \int_{\Omega} \frac{1}{|x-\om|^{n-2}} [(\mone_{\{u_0>0\}}+\sigma^{p-1})|\trh_0|](\om) d\om dx.
\end{equation}
Hence, it suffices to estimate the right-hand side of \eqref{J22}.

It holds that
\[\|\wtv\|_{L^{\frac{p+1}{p}}(\Omega)} \lesssim \begin{cases}
\delta^2|\log\delta|^{\frac23} &\text{if } n=6,\\
\delta^{\frac{(1-b)(n+2)}{2}} &\text{if } n\ge 7,
\end{cases}\]
and
\[\left\|\tw_{1i}^{\tin}\right\|_{L^{\frac{p+1}{p}}(\Omega)} + \left\|\tw_{1i}^{\tout}\right\|_{L^{\frac{p+1}{p}}(\Omega)} \lesssim \delta^{\frac{n+2}{4}} \quad \text{if } n\ge 7,\quad \|\tw_{3i}\|_{L^{\frac{p+1}{p}}(\Omega)} \lesssim \delta^2 \quad \text{if } n=6,\]
\[\left\|\tw_{2i}^{\tin}\right\|_{L^{\frac{p+1}{p}}(\Omega)} +
\left\|\tw_{2i}^{\tout}\right\|_{L^{\frac{p+1}{p}}(\Omega)} \lesssim
\delta^{\frac{(1-b)(n+2)}{2}+2} \quad \text{if } n\ge 7.\]
By these bounds and the Hardy-Littlewood-Sobolev inequality, we have
\begin{equation}\label{uo67}
\begin{aligned}
\int_{\Omega} \wtv(x) \int_{\Omega} \frac{1}{|x-\om|^{n-2}}|\trh_0|(\om) dxd\om \mone_{\{u_0>0\}} &\lesssim \|\wtv\|_{L^{\frac{p+1}{p}}(\Omega)} \cdot \|\wtw\|_{L^{\frac{p+1}{p}}(\Omega)}\\
&=o\(\left.\begin{cases}
\delta^4|\log\delta| &\text{if } n=6,\\
\delta^{(1-b)(n+2)} &\text{if } n\ge 7
\end{cases}\right\}\).
\end{aligned}
\end{equation}

In the following, we estimate the integral $\int_{\Omega} \frac{1}{|x-\om|^{n-2}} (\sigma^{p-1}|\trh_0|)(\om) d\om$ by dividing cases according to the dimension.

We redefine $\olw$ as
\[\olw(x) := \begin{cases}
\displaystyle \sum_{i=1}^{\nu}\tw_{3j}\frac{\log(2+|x_i|)}{\la x_i\ra^{2}} &\text{if } n=5,6,\\
\begin{aligned}
\displaystyle &\sum_{i=1}^{\nu}\bigg[\tw_{1i}^{\tin}\frac{1}{\la x_i\ra^2} + \tw_{1i}^{\tout} \frac{\log(2+|x_i|)}{\la x_i\ra^2}\\
\displaystyle &\ + \tw_{2i}^{\tin}\frac{1}{\la x_i\ra^2}\mone_{\{|x_i|\le \min_{i\ne j}\frac{|\xi_i-\xi_j|}{2\delta}\}} + \tw_{2i}^{\tout}\frac{\log(2+|x_i|)}{\la x_i\ra^2}\mone_{\{|x_i|\ge \min_{i\ne j}\frac{|\xi_i-\xi_j|}{2\delta}\}}\bigg]
\end{aligned} &\text{if } n\ge 7.
\end{cases}\]

\medskip \noindent (1) When $n=5, 6$, we notice from Young's inequality that
\[U_i^{p-1}\tw_{3j}\lesssim \left[\frac{\delta^{2+\frac{4}{n}}}{(\delta^2+|x-\xi_i|^2)^2}\right]^{\frac{n}{4}} + \left[\frac{\delta^{\frac{n-2}{2}-\frac{4}{n}}}{(\delta^2+|x-\xi_j|^2)^{\frac{n-4}{2}}}\right]^{\frac{n}{n-4}}
\lesssim \frac{\delta^{-2}}{\la x_i\ra^4}\tw_{3i}+\frac{\delta^{-2}}{\la x_j\ra^4}\tw_{3j}.
\]
for any $i, j \in \{1,\ldots,\nu\}$. Therefore,
\[\int_{\Omega} \frac{1}{|x-\om|^{n-2}} (\sigma^{p-1}|\trh_0|)(\om) d\om \lesssim  \olw(x).\]

\medskip \noindent (2) Assume that $n \ge 7$.

Suppose that $|x-\xi_j|\leq \sqrt{\delta}$. If we also have $|x-\xi_i|\leq \sqrt{\delta}$, then Young's inequality yields
\[U_i^{p-1}\tw_{1j}^{\tin}
\lesssim \left[\frac{\delta^{\frac83}}{(\delta^2+|x-\xi_i|^2)^2}\right]^{\frac32} + \left[\frac{\delta^{\frac43}}{\delta^2+|x-\xi_j|^2}\right]^3 \lesssim \frac{1}{\la x_i\ra^2}\tv_{1i}^{\tin}+\frac{1}{\la x_j\ra^2}\tv_{1j}^{\tin}\]
for any $i, j \in \{1,\ldots,\nu\}$. If $|x-\xi_i|\ge \sqrt{\delta}$ is valid, then using the inequalities $\frac{\delta}{\delta^2+|x-\xi_i|^2} \lesssim 1\lesssim \frac{\delta}{\delta^2+|x-\xi_j|^2}$, we find
\[U_i^{p-1}\tw_{1j}^{\tin} \lesssim \frac{\delta^3}{(\delta^2+|x-\xi_j|^2)^2} \lesssim \delta \tv_{1j}^{\tin}.\]

Suppose next that $|x-\xi_j|\geq \sqrt{\delta}$. If we also have $|x-\xi_i|\geq \sqrt{\delta}$, then Young's inequality again gives
\[U_i^{p-1}\tw_{1j}^{\tout} \lesssim\frac{1}{\la x_i\ra^2}\tv_{1i}^{\tout}+\frac{1}{\la x_j\ra^2}\tv_{1j}^{\tout}
\lesssim \delta\(\tv_{1i}^{\tout}+ \tv_{1j}^{\tout}\).\]
If $|x-\xi_i|\leq \sqrt{\delta}$ is valid, noting that $\tw_{1j}^{\tout} \lesssim \delta$, we also have
\[U_i^{p-1}\tw_{1j}^{\tout} \lesssim \delta \tv_{1i}^{\tin}.\]

Moreover, if $| x_j| \le \frac{|\xi_i-\xi_j|}{2\delta}$ and $|x_i|\ge \frac{|\xi_i-\xi_j|}{2\delta} \ge |x_j|$, then
\[U_i^{p-1}\tw_{2j}^{\tin} \lesssim \delta^{2(1-b)} \tv_{2j}^{\tin}.\]
Suppose that $|x_j| \ge \frac{|\xi_i-\xi_j|}{2\delta}$ so that $1+|x_i|\le |x_j|+\frac{|\xi_i-\xi_j|}{\delta}+1\lesssim |x_j|$. If we also have $|x_i|\le\frac{|\xi_i-\xi_j|}{2\delta}$, then
\[U_i^{p-1}\tw_{2j}^{\tout} \lesssim \frac{\delta^{-2}}{\la x_i\ra^6} \delta^{4(1-b)-\frac{n-2}{2}+(1-b)(n-6)}=\frac{1}{\la x_i\ra^2}v_{2i}^{\tin}.\]
If $|x_i|\ge\frac{|\xi_i-\xi_j|}{2\delta}$ holds, then by Young's inequality,
\[U_i^{p-1}\tw_{2j}^{\tout} \lesssim \(\frac{\tv_{2i}^{\tout}}{\la x_i\ra^2}\)^{\frac{n-4}{n}}\(\frac{\tv_{2j}^{\tout}}{\la x_j\ra^2}\)^{\frac{4}{n}} \lesssim \frac{1}{\la x_i\ra^{2}}\tv_{2i}^{\tout}+\frac{1}{\la x_j\ra^{2}}\tv_{2j}^{\tout}.\]

Putting the estimates above together, we conclude
\begin{align*}
&\ \int_{\Omega} \frac{1}{|x-\om|^{n-2}} (\sigma^{p-1}|\trh_0|)(\om)d\om\\
&\lesssim \int_{\Omega} \frac{1}{|x-\om|^{n-2}} \sum_{i=1}^{\nu}\left[\(\frac{1}{\la x_i\ra^2}+\delta\)\(\tv_{1i}^{\tin}+\tv_{1i}^{\tout}\) + \(\frac{1}{\la x_i\ra^2}+\delta^{2(1-b)}\) \(\tv_{2i}^{\tin}+\tv_{2i}^{\tout}\) \right]dx\\
&\lesssim \olw(x) + \left[\delta^{2(1-b)}+\delta\right] \wtw(x).
\end{align*}

\medskip
Now, if we select $L>0$ satisfying $\{|x_i|\le L\}\cap \{|x_j|\le L\}=\emptyset$ for $1 \le i\neq j \le \nu$ and $L^{-1}\gtrsim \delta^{2(1-b)}+\delta$, then
\begin{equation}\label{uo68}
\begin{aligned}
&\ \int_{\Omega} \wtv(x) \int_{\Omega} \frac{1}{|x-\om|^{n-2}} (\sigma^{p-1}|\trh_0|)(\om) d\om dx \\
&\lesssim \int_{\Omega} \wtv(x) \left[\olw(x)\mone_{\cup_{i=1}^{\nu}\{|x_i|\le L\}}+L^{-1}\wtw(x)\mone_{\cap_{i=1}^{\nu}\{|x_i|\ge L\}}\right] dx \\
&\lesssim \sum_{i=1}^{\nu} \int_{\{|x_i|\le L\}} \left[\frac{\delta^{(1-2b)(n-2)-2} }{\la x_i\ra^8}\mone_{\{n\ge 7,\; \nu\ge 2\}}+ \frac{\delta^{-2}\log(2+|x_i|)}{\la x_i\ra^8}\mone_{\{n\ge 7,\; u_0>0\}} \right. \\
&\hspace{50pt} \left. + \frac{\delta^{(\frac12-b)(n-2)-2}\log(2+|x_i|)}{\la x_i\ra^8}\mone_{\{n\ge 7,\; u_0>0,\; \nu\ge 2\}}
+\frac{\delta^{4-n}\log(2+|x_i|)}{\la x_i\ra^{2n-4}}\mone_{\{n=5,6\}} \right] dx \\
&\ + L^{-1}\|\wtv\|_{L^{\frac{p+1}{p}}(\Omega)}\|\wtw\|_{L^{p+1}(\Omega)} \\
&\lesssim \left.\begin{cases}
\delta^4&\text{if } n=5,6\\
\delta^{(2-2b)(n-2)} &\text{if } n\ge 7\text{~and~}\nu\ge 2
\end{cases}\right\}
+L^{-1} \vsi_2(\delta)^2.
\end{aligned}
\end{equation}
Estimates \eqref{in56}--\eqref{uo67}, along with \eqref{uo68} for $L>0$ large enough, imply the validity of \eqref{inrp}.

\medskip \noindent \doublebox{\textsc{Step 3.}} Let $u_{\sharp}:=u_0+\sum_{i=1}^{\nu}PU_i+\rho$, $(u_{\sharp})_{\pm} := \max\{\pm u_{\sharp},0\}$, and $u_*:=\(u_{\sharp}\)_+$.

\medskip
Observe that
\begin{equation}\label{usharp}
\begin{cases}
(-\Delta-\lambda) u_{\sharp}=|u_{\sharp}|^{p-1}u_{\sharp} + \sum_{k=0}^n\sum_{i=1}^{\nu} \tc_i^k(-\Delta-\lambda)PZ_i^k &\text{in } \Omega, \\
u_{\sharp} = 0 &\text{on } \pa\Omega.
\end{cases}
\end{equation}

Assuming that $\tilde{\xi}_i$ satisfies the assumption of Theorem \ref{thm1}, we introduce
\[d_*(u) := \inf\left\{\Big\|u-\Big(u_0+\sum_{i=1}^{\nu} PU_{\tde_i,\txi_i}\Big)\Big\|_{H^1_0(\Omega)}: \(\tde_i,\txi_i\) \in (0,\infty) \times \Omega,\, i = 1,\ldots,\nu\right\}.\]
Arguing as in Case 1, we can verify
\begin{equation}\label{dstr}
d_*(u_{\sharp})\gtrsim \|\rho\|_{H^1_0(\Omega)}\simeq \vsi_2(\delta).
\end{equation}
Testing \eqref{usharp} with $(u_{\sharp})_{-}$ gives
\[\|(u_{\sharp})_{-}\|^2 = \|(u_{\sharp})_{-}\|_{L^{p+1}(\Omega)}^{p+1} + \int_{\Omega} \sum_{k=0}^n\sum_{i=1}^{\nu} \tc_i^k(-\Delta-\lambda)PZ_i^k(u_{\sharp})_{-}.\]
 Using the estimate $ |(u_{\sharp})_{-}|\lesssim |\rho|$, we get
\[\|(u_{\sharp})_{-} \|^2\lesssim \|\rho\|_{H^1_0(\Omega)}^{p+1} + \int_{\Omega} \sum_{k=0}^n\sum_{i=1}^{\nu} |\tc_i^k||(-\Delta-\lambda)PZ_i^k||\rho| = o(1),\]
and since
\[\|(u_{\sharp})_{-}\|_{L^{p+1}(\Omega)}^{p+1} \lesssim \|(u_{\sharp})_{-}\|^{p+1}_{H^1_0(\Omega)} = o(\|(u_{\sharp})_{-}\|^2_{H^1_0(\Omega)}),\]
we obtain
\begin{equation}\label{shpm}
\begin{aligned}
\|(u_{\sharp})_{-}\|^2 &\lesssim \sum_{k=0}^n\sum_{i=1}^{\nu} |\tc_i^k| \left[\int_{\Omega} |(-\Delta-\lambda)PZ_i^k||\trh_0| +\|\trh_1\|_{H^1_0(\Omega)}\right] \\
&\lesssim \sum_{k=0}^n\sum_{i=1}^{\nu}|\tc_i^k| \bigg[\int_{\Omega} \sum_{j=1}^{\nu}(U_j^{p}+U_j) \sum_{i=1}^{\nu} \left[\(\tw_{1i}^{\tin}+\tw_{1i}^{\tout}\)\mone_{\{n\ge 7,\; u_0>0\}}\right. \nonumber\\
&\hspace{150pt} \left.+ \(\tw_{2i}^{\tin}+\tw_{2i}^{\tout}\)\mone_{\{n\ge 7,\; \nu\ge 2\}} + \tw_{3i}\mone_{\{n=5,6\}}\right]+ \|\trh_1\|_{H^1_0(\Omega)}\bigg] \\
&\lesssim \vsi_1(\delta)^2.
\end{aligned}
\end{equation}
Here, the last inequality holds thanks to the following estimates:
\[\int_{\Omega} \tw_{3i} (U_i^{p}+U_i) \mone_{\{n=5,6\}} \lesssim \begin{cases}
\delta^2 &\text{if } n=5,\\
\delta^4|\log\delta| &\text{if } n=6,
\end{cases}\]
and for $1 \le i\ne j \le \nu$,
\[\int_{\Omega} \tw_{3i} (U_j^{p}+U_j) \mone_{\{n=5,6\}} \lesssim \delta^{\frac{n-2}{2}}\int_{B(\xi_i,\frac{|\xi_i-\xi_j|}{2})} \tw_{3i} + \delta^{\frac{n-2}{2}}\int_{B(\xi_j,\frac{|\xi_i-\xi_j|}{2})} (U_j^{p}+U_j)
+\delta^{n-2}\simeq \delta^{n-2}.\]
If $n\ge 7$, then
\[\int_{\Omega} \tw_{1i}^{\tin} (U_i^{p}+U_i) \lesssim \delta^{\frac{n-2}{2}},\quad \int_{\Omega} \tw_{1i}^{\tout} (U_i^{p}+U_i) \lesssim \delta^{\frac{n+2}{2}},\]
\[\int_{\Omega} \tw_{2i}^{\tin} (U_i^{p}+U_i) \lesssim\delta^{(1-b)(n-2)},\quad \int_{\Omega} \tw_{2i}^{\tout} (U_i^{p}+U_i) \lesssim \delta^{(1-b)(n-2)+2},\]
and for $1 \le i\ne j \le \nu$,
\[\int_{\Omega} \tw_{1i}^{\tin} (U_j^{p}+U_j) =\(\int_{|x_i|\le \frac{|\xi_i-\xi_j|}{2\delta}}+\int_{\frac{|\xi_i-\xi_j|}{2\delta}\le |x_i|\le \delta^{-1/2}}\) \tw_{1i}^{\tin} (U_j^{p}+U_j) \lesssim \delta^{2(1-b)+\frac{n-2}{2}},\]
\[\int_{\Omega} \tw_{1i}^{\tout} (U_j^{p}+U_j) \lesssim \delta \int_{\Omega} (U_j^{p}+U_j) \lesssim \delta^{\frac{n}{2}},\]
\[\int_{\Omega} \tw_{2i}^{\tin} (U_j^{p}+U_j) \lesssim
\delta^{\frac{(1-2b)(n-2)}{2}}\int_{\Omega} \tw_{2i}^{\tin} \lesssim \delta^{(1-b)(n-2)+2},\]
\[\int_{\Omega} \tw_{2i}^{\tout} (U_j^{p}+U_j) \lesssim \delta^{(1-b)n-\frac{n-2}{2}}\int_{\Omega}(U_j^p+U_j) \lesssim \delta^{(1-b)n}.\]

\medskip
Therefore, by combining estimates \eqref{tcik}, \eqref{dstr} and \eqref{shpm}, we infer
\[d_*(u_*)\gtrsim d_*(u_{\sharp})-\|(u_{\sharp})_{-}\| \gtrsim \|\rho\|_{H^1_0(\Omega)}\simeq \vsi_2(\delta).
\]
Moreover,
\[\Gamma(u_*)\lesssim \tilde{\Gamma}(u_{\sharp})+\|(u_{\sharp})_{-}\|\lesssim \vsi_1(\delta),\]
where $\tilde{\Gamma}(u_{\sharp}):=\|\Delta u_{\sharp}+\lambda u_{\sharp}+|u_{\sharp}|^{p-1}u_{\sharp}\|_{H^{-1}(\Omega)} \lesssim \vsi_1(\delta).$
In conclusion, we obtain a function $u_*\ge 0$ satisfying
\[d_*(u_*) \gtrsim \zeta(\Gamma(u_*)),\]
thereby establishing the optimality of \eqref{eq:sqe32}.

\section{Proof of Theorem \ref{thm2}}\label{sec4}
In this section, we investigate the single-bubble case ($\nu=1$), allowing the distance between $\xi_1$ and $\pa\Omega$ to be arbitrarily small, and prove Theorem \ref{thm2}
We assume that the function $PU_1$ satisfies \eqref{pu2} when $n=3$ or $[n=4,5,\; u_0=0]$, and satisfies \eqref{pu1} when $[n=4,5,\; u_0>0]$ or $n \geq 6$; see Remark \ref{rmk:thm2}(2).
Throughout this section, we write $\ka_1 = \frac{\delta_1}{d(\xi_1,\pa\Omega)}$.

\medskip
We first examine the case when $n=5$ and $PU_1$ satisfies \eqref{pu2}. By Lemma \ref{lem2.1} and \eqref{tdn}, we have
\begin{align}
\| \I_3 \|_{L^{\frac{p+1}{p}}(\Omega)} & \lesssim\|(PU_1-U_1)U_1^{p-1}\|_{L^{\frac{p+1}{p}}(\Omega)} \nonumber \\
&\lesssim \delta_1^{\frac{3}{2}} \(\bigg\|\frac{1}{|\cdot-\xi_1|}U_1^{p-1}\bigg\|_{L^{\frac{p+1}{p}}(\Omega)} + |\vph^5_\lambda(\xi_1)|\|U_1^{p-1}\|_{L^{\frac{p+1}{p}}(\Omega)}\) + \|\wtmcd_5U_1^{p-1}\|_{L^{\frac{p+1}{p}}(\Omega)} \nonumber \\
&\lesssim \delta_1^2 + \ka_1^3 \label{i35}
\end{align}
and
\begin{align}
\int_{B(\xi_1, d(\xi_1,\pa\Omega))} \left[(PU_1)^{p}-U_1^{p}\right] PZ_1^0 &= \frac{\lambda}{2} a_5\delta_1^{\frac32}p \int_{B(\xi_1,d(\xi_1,\pa\Omega))} \frac{1}{|x-\xi_1|}(U_1^{p-1}Z_1^0)(x) dx \nonumber \\
&\ + \lambda a_5^2p\delta_1^2 \int_{B(0,\frac{d(\xi_1,\pa\Omega)}{\delta_1})} \left[\frac{1}{(1+|z|^2)^{\frac32}}-\frac{1}{|z|^3}\right] \frac{|z|^2-1}{(1+|z|^2)^{\frac52}} dz \nonumber \\
&\ -\delta_1^{\frac{3}{2}}a_5p H^5_{\lambda}(\xi_1,\xi_1) \int_{B(\xi_1, d(\xi_1,\pa\Omega))}U_1^{p-1}Z_1^0 dx+O(\delta_1^3) + O(\ka_1^5) \nonumber \\
&=\bar{\mfb}_5\lambda\delta_1^2 - \mfc_5\delta_1^3\vph^5_{\lambda}(\xi_1) + O(\delta_1^3)+O(\ka_1^5). \label{puz54}
\end{align}
Here,
\[\bar{\mfb}_5:=\frac{a_5}{2} \int_{\R^5}\frac{1}{|x|}(U^{p-1}Z^0)(x) dx + a_5^2p \int_{\R^5} \left[\frac{1}{(1+|z|^2)^{\frac32}}-\frac{1}{|z|^3}\right] \frac{|z|^2-1}{(1+|z|^2)^{\frac52}} dz > 0\]
and $\mfc_5:=a_5p\int_{\R^5}U^{p-1}Z^0 > 0$.

Combining \eqref{i35} with \eqref{sirp}, we obtain
\begin{equation}\label{rhsin}
\begin{aligned}
\|\rho\|_{H^1_0(\Omega)} &\lesssim \|f\|_{(H^1_0(\Omega))^*}+\left.\begin{cases}
\delta_1 &\text{if } n=3 \text{ and } u_0=0 \\
\delta_1^2|\log\delta_1| &\text{if } n=4 \text{ and } u_0=0 \\
\delta_1^{\frac{n-2}{2}} &\text{if } n=3,4,5 \text{ and } u_0>0 \\
\delta_1^2|\log\delta_1|^{\frac12} &\text{if } n=6\\
\delta_1^2 &\text{if } [n=5,\; u_0=0] \text{ or } n\ge 7
\end{cases}\right\} \\
&\ + \left.\begin{cases}
\displaystyle \ka_1^{n-2} &\text{if } n=3,4,5 \\
\displaystyle \ka_1^4|\log\ka_1|^{\frac12} &\text{if } n=6 \\
\displaystyle \ka_1^{\frac{n+2}{2}} &\text{if } n\ge 7
\end{cases}\right\}.
\end{aligned}
\end{equation}
Also, applying \eqref{eq:I2Z}, \eqref{i3pz1}--\eqref{i3pz2}, and \eqref{puz54}, we deduce
\begin{align}
&\ \int_{\Omega} (\I_1+\I_3) PZ_1^0 \nonumber \\
&= \left[\mfa_nu_0(\xi_1)\delta_1^{\frac{n-2}{2}}+ \left.\begin{cases}
O(\delta_1) &\text{if } n=3\\
O(\delta_1^2|\log\delta_1|) &\text{if } n=4\\
O(\delta_1^2) &\text{if } n=5
\end{cases}\right\} \mone_{\{p>2\}}
+ O\(\delta_1^{\frac{n}{2}} + \ka_1^n\)\right] \mone_{\{u_0>0\}} \label{iz0} \\
&\ + \begin{cases}
\displaystyle -\mfc_3 \vph^3_{\lambda}(\xi_1)\delta_1 + O(\delta_1^2) + O(\ka_1^3) &\text{if } n=3,\\
\displaystyle \mfb_4 \lambda\delta_1^2|\log\delta_1| - \mfc_4 \delta_1^2\vph^4_{\lambda}(\xi_1) - 96|\S^3|\lambda\delta_1^2 + O(\delta_1^3) + O(\ka_1^4) &\text{if } n=4 \text{ and } u_0=0,\\
\displaystyle -\delta_1^2\mfc_4 \vph(\xi_1) + O(\delta_1^2|\log\delta_1|) + O(\ka_1^4) &\text{if } n=4 \text{ and } u_0>0,\\
\displaystyle \bar{\mfb}_5\lambda\delta_1^2 - \mfc_5\delta_1^3\vph^5_{\lambda}(\xi_1) + O(\delta_1^3) + O(\ka_1^5) &\text{if } n=5 \text{ and } u_0=0,\\
\displaystyle \lambda\mfb_n \delta_1^2-\delta_1^{n-2}\mfc_n \vph(\xi_1) + O\(\delta_1^2\ka_1^{n-4}\) + O(\ka_1^n) &\text{if } [n=5,\; u_0>0] \text{ or } n\ge 6.
\end{cases} \nonumber
\end{align}

As mentioned earlier, certain cancellations between terms with opposite signs may occur in \eqref{iz0}.
To handle this issue, we establish an estimate for the projection of the term $\I_1 + \I_3$ onto the direction of spatial derivatives of $PU_1$, as stated in the following lemma.
\begin{lemma}
For any $k\in\{1,..,n\}$, there exists a constant $\mfe_n > 0$ such that
\begin{equation}\label{izk}
\begin{aligned}
\bigg|\int_{\Omega}(\I_1+\I_3)PZ_1^k\bigg| &= (1+o(1))\mfe_n \delta_1^{n-1} \times
\left. \begin{cases}
\displaystyle \left|\frac{\pa \vph^n_{\lambda}}{\pa \xi_1^k}(\xi_1)\right| &\text{if } n=3 \text{ or } [n=4,5,\; u_0=0]\\
\displaystyle \left|\frac{\pa\vph}{\pa\xi_1^k}(\xi_1)\right| &\text{if } [n=4,5,\; u_0>0] \text{ or } n\ge 6
\end{cases} \right\}\\
&\ + \begin{cases}
O(\delta_1^3) &\text{if } n=3,4,5 \text{ and } u_0=0,\\
O(\delta_1^{\frac32}|\log\delta_1|) &\text{if } n=3 \text{ and } u_0>0,\\
O(\delta_1^2|\log\delta_1|) &\text{if } n=4 \text{ and } u_0>0,\\
O(\delta_1^2 d(\xi_1,\pa\Omega)+\delta_1^{\frac{n}{2}}) &\text{if } n=5 \text{ and } u_0>0,\\
O(\delta_1^{\frac{n}{2}}) &\text{if } n\ge 6.
\end{cases}
\end{aligned}
\end{equation}
\end{lemma}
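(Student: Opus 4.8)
The plan is the following. Since $\nu=1$ here, $\sigma=PU_1$, and from the very definitions \eqref{pu1} and \eqref{pu2} of the projected bubble one has $\Delta PU_1+\lambda PU_1+(PU_1)^p=\big[(PU_1)^p-U_1^p\big]+\lambda PU_1\,\mone_{\{PU_1\text{ satisfies }\eqref{pu1}\}}$, so that, recalling \eqref{eq:I2},
\[\I_1+\I_3=\I_1+\big[(PU_1)^p-U_1^p\big]+\lambda PU_1\,\mone_{\{PU_1\text{ satisfies }\eqref{pu1}\}}.\]
Accordingly I would estimate the three projections $\int_\Omega\I_1\,PZ_1^k$ (present only when $u_0>0$, since $\I_1\equiv 0$ when $u_0=0$), $\int_\Omega\big[(PU_1)^p-U_1^p\big]PZ_1^k$ (this one carries the leading term), and $\int_\Omega\lambda PU_1\,PZ_1^k$ (present only in the \eqref{pu1} cases $[n=4,5,\ u_0>0]$ or $n\ge 6$) separately. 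When $u_0=0$ one uses \eqref{pu2}, so $\I_1\equiv 0$ and the $\lambda PU_1$ term is absent, which simplifies matters considerably.

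For the main term I would split $\Omega=B(\xi_1,d(\xi_1,\pa\Omega))\cup B(\xi_1,d(\xi_1,\pa\Omega))^c$. On the exterior, the crude bounds $|(PU_1)^p-U_1^p|\le U_1^p$ and the pointwise control of $PZ_1^k$ from Corollary \ref{cor:PZ^kexp} give a contribution that is $o$ of the leading term (using $|\pa_{\xi_1^k}\vph(\xi_1)|\simeq d(\xi_1,\pa\Omega)^{-(n-1)}$ from \eqref{vaor}) together with the recorded error. On the ball, I would linearize $(PU_1)^p-U_1^p=p(PU_1-U_1)U_1^{p-1}+O(\cdots)$ — when $p<2$ (that is $n>6$) the remainder is handled by the Hölder-type inequalities of Appendix \ref{a} exactly as in \eqref{pup1} — substitute the expansions of $PU_1-U_1$ from Lemma \ref{lem2.1} and of $PZ_1^k$ from Corollary \ref{cor:PZ^kexp}, pass to the rescaled variable $y=\delta_1^{-1}(x-\xi_1)$, and Taylor expand the $C^{1,\alpha}$ regular part $H(\xi_1+\delta_1 y,\xi_1)$ (resp. $H^n_\lambda(\xi_1+\delta_1 y,\xi_1)$) to first order. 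The zeroth-order term vanishes since $Z^k$ is odd in $y^k$ and $U^{p-1}$ is radial, so $\int_{\R^n}U^{p-1}Z^k\,dy=0$; the first-order term produces $\delta_1\,\pa_{x_k}H(\xi_1,\xi_1)\int_{\R^n}y^k U^{p-1}Z^k\,dy$ with $\int_{\R^n}y^kU^{p-1}Z^k\,dy>0$ (it is a positive multiple of $\int_{\R^n}(y^k)^2(1+|y|^2)^{-\frac{n+4}{2}}dy$), and the diagonal symmetry $\pa_{\xi^k}\vph(\xi)=2\,\pa_{x_k}H(\xi,\xi)$ (resp. for $\vph^n_\lambda$) turns this into the claimed leading term $(1+o(1))\mfe_n\delta_1^{n-1}\pa_{\xi_1^k}\vph(\xi_1)$. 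The explicit singular pieces occurring in Lemma \ref{lem2.1} for $n=3,4,5$ (the $|x-\xi_1|$, $\log|x-\xi_1|$, $|x-\xi_1|^{-1}$, and $\mcd_n$ terms) and the corresponding pieces of $PZ_1^k$ depend only on $x-\xi_1$, hence after rescaling integrate against $U^{p-1}Z^k$ either to zero by parity or to quantities of the tabulated error size — precisely as in the computations \eqref{tdn}, \eqref{puz4}, \eqref{puz54} — which also produces the logarithmic factors in dimensions $3$ and $4$.

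For $\int_\Omega\I_1\,PZ_1^k$ (when $u_0>0$) I would use the pointwise decomposition \eqref{i2ex} (via \eqref{ab6}): on $B(\xi_1,\eta\sqrt{\delta_1})$ the main piece is $p\,u_0(PU_1)^{p-1}PZ_1^k$, whose leading contribution $u_0(\xi_1)\int_{\R^n}U^{p-1}Z^k\,dy$ again vanishes by oddness, the next one being $O(\delta_1^{n/2})$ from the Taylor expansion of $u_0$; the remaining pieces of \eqref{i2ex} (the $u_0^2(PU_1)^{p-2}$-controlled term for $p>2$, the $O(u_0^p)$ term, and the exterior piece $p\,u_0^{p-1}PU_1 PZ_1^k$) are treated by the same rescaling–and–parity bookkeeping and yield the dimension-dependent errors, with the $|\log\delta_1|$ for $n=3,4$ coming from the exterior integral over $\{|x-\xi_1|>\eta\sqrt{\delta_1}\}$. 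For $\int_\Omega\lambda PU_1\,PZ_1^k$ in the \eqref{pu1} cases I would write $\lambda PU_1 PZ_1^k=\tfrac{\lambda\delta_1}{2}\pa_{\xi_1^k}\big[(PU_1)^2\big]$ and use $\int_\Omega\pa_{\xi_1^k}[(PU_1)^2]=\pa_{\xi_1^k}\int_\Omega(PU_1)^2$ (legitimate because $PU_1=0$ on the fixed boundary $\pa\Omega$), or, more directly, bound it with $0<PU_1\le U_1$, Corollary \ref{cor:PZ^kexp}, and the fact that $\int_{\R^n}U_1Z_1^k=0$ by oddness, so only the $\xi_1$-asymmetry created by $\Omega^c$ and the regular-part corrections survive; this gives a contribution $\lesssim\delta_1^{n-1}d(\xi_1,\pa\Omega)^{-(n-3)}+\delta_1^{n}d(\xi_1,\pa\Omega)^{-n}$, which by \eqref{vaor} and $\delta_1/d(\xi_1,\pa\Omega)\to 0$ is $o$ of the leading term, hence absorbed into the $(1+o(1))$ factor.

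The conceptual content of the argument — the algebraic reduction, the parity cancellations, and the identification of $\pa_{\xi_1^k}\vph$ with $2\,\pa_{x_k}H$ on the diagonal — is short; the main obstacle is the bookkeeping across all the dimension/$u_0$ cases. The delicate point is that, because $d(\xi_1,\pa\Omega)$ is allowed to degenerate, the leading term $\delta_1^{n-1}|\pa_{\xi_1^k}\vph(\xi_1)|\simeq(\delta_1/d(\xi_1,\pa\Omega))^{n-1}$ itself tends to zero, so "smallness" of a correction is not automatic: one must carefully separate corrections that are $o$ of the leading term (hence inside $1+o(1)$) from those that genuinely have the tabulated size $O(\delta_1^{n/2})$, $O(\delta_1^3)$, $O(\delta_1^2|\log\delta_1|)$, or $O(\delta_1^2 d(\xi_1,\pa\Omega)+\delta_1^{n/2})$. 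Verifying the exact parity cancellations (vanishing of the zeroth- and second-order Taylor moments, nonvanishing of the first-order moment) and handling the low-dimensional singular corrections together with the transition region $\{|x-\xi_1|\sim\sqrt{\delta_1}\}$ responsible for the $n=3,4$ logarithms are the only computation-heavy steps.
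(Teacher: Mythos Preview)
Your proposal follows essentially the same route as the paper: the same decomposition of $\I_1+\I_3$, the same Taylor expansion of $H$ (resp.\ $H^n_\lambda$) together with the parity cancellation $\int_{\R^n}U^{p-1}Z^k=0$ to extract the leading term, and the same handling of $\I_1$ via \eqref{i2ex}. Two small corrections are worth flagging. First, the integral $\int_\Omega \lambda PU_1\,PZ_1^k$ is \emph{not} in general $o$ of the leading term $\mfe_n\delta_1^{n-1}|\partial_{\xi_1^k}\vph(\xi_1)|$: when $d(\xi_1,\partial\Omega)$ stays bounded away from zero your own bound $\delta_1^{n-1}d^{-(n-3)}$ is of the same order as the leading term, so this piece must be recorded as part of the tabulated absolute error (the paper puts it in $O(\delta_1^{n/2}+\ka_1^n)$ for $n\ge 5$, $O(\delta_1^2|\log\delta_1|)$ for $n=4$) rather than absorbed into $(1+o(1))$; you correctly identify this distinction at the end but misplace this particular term. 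Second, for $n=3$, $u_0>0$ the expansion \eqref{ab6}/\eqref{i2ex} is not sharp enough: the paper uses the next order \eqref{ab7}, and after exploiting oddness in the $u_0^2(PU_1)^{p-2}$ term the dominant remainder is the interior integral $\int_{B(\xi_1,\eta\sqrt{\delta_1})}u_0^3\,U_1^{p-3}|PZ_1^k|\lesssim\delta_1^{3/2}|\log\delta_1|$ (so the logarithm comes from the core, not the exterior).
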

\begin{proof}
By using Corollary \ref{cor:PZ^kexp}, we obtain
\begin{align*}
\int_{B(\xi_1, d(\xi_1,\pa\Omega))} u_0 (PU_1)^{p-1} PZ_1^k &= \frac{\pa u_0}{\pa \xi_1^k}(\xi_1) \int_{B(\xi_1, d(\xi_1,\pa\Omega))} (x-\xi_1)^k(U_1^{p-1}Z_1^k)(x) dx + O\(\delta_1^{\frac{n}{2}}+\ka_1^n\) \\
&= O\(\delta_1^{\frac{n}{2}}+\ka_1^n\)
\end{align*}
for $n \ge 3$ (cf. \eqref{21}).

Let us refine estimate \eqref{24} for the cases $n=3,5$ and $u_0>0$. If $n=5$ and $u_0>0$, then we have
\begin{align*}
\int_{B(\xi_1,\eta\sqrt{\delta_1})} u_0^2 (PU_1)^{p-2} |PZ_1^k| &\lesssim \int_{B(\xi_1, d(\xi_1,\pa\Omega))} U_1^{p-1} + \int_{B(\xi_1,d(\xi_1,\pa\Omega))^c} U_1^{p+1} \\
&\lesssim \delta_1^2d(\xi_1,\pa\Omega)+\ka_1^n.
\end{align*}
Suppose that $n=3$ and $u_0>0$. Applying \eqref{ab7}, we expand $\I_1$ by
\begin{align*}
\I_1 &= \left[pu_0(PU_1)^{p-1}+\frac{p(p-1)}{2}u_0^2(PU_1)^{p-2}+O(u_0^3(PU_1)^{p-3})+ O\(u_0^{p}\)\right] \mone_{B(\xi_1,\eta\sqrt{\delta_1})} \\
&\ +\left[pu_0^{p-1}PU_1 + O(u_0^{p-2}(PU_1)^2)\mone_{\{p>2\}}+ O((PU_1)^{p})\right] \mone_{B(\xi_1,\eta\sqrt{\delta_1})^c}.
\end{align*}
Also, we have
\begin{align*}
\int_{B(\xi_1,\eta\sqrt{\delta_1})} u_0^2 (PU_1)^{p-2} PZ_1^k &= 2(1+o(1)) u_0(\xi_1)\frac{\pa u_0}{\pa \xi_1^k}(\xi_1) \int_{B(\xi_1, d(\xi_1,\pa\Omega))}(x-\xi_1)^k(U_1^{p-2}Z_1^k)(x) dx \\
&\ + O\(\frac{\delta_1^{\frac12}}{d(\xi_1,\pa\Omega)}\int_{B(\xi_1, d(\xi_1,\pa\Omega))}U_1^{p-2}\) + \int_{B(\xi_1,d(\xi_1,\pa\Omega))^c}U_1^{p+1} \\
&\lesssim \delta_1^2|\log\delta_1| + \delta_1\ka_1\left|\log\ka_1^{-1}\right| + \ka_1^n
\end{align*}
and
\[\int_{B(\xi_1,\eta\sqrt{\delta_1})} u_0^3 (PU_1)^{p-3} |PZ_1^k| \lesssim \int_{B(\xi_1,\eta\sqrt{\delta_1})} U_1^{p-2} \lesssim \delta_1^{\frac32}|\log\delta_1|.\]

By combining the above estimates with \eqref{i2ex} and \eqref{25}--\eqref{29}, we conclude
\begin{equation}\label{eq:I1PZk}
\begin{aligned}
\int_{\Omega} \I_1PZ_1^k = \left[\left.\begin{cases}
O(\delta_1^{\frac32}|\log\delta_1|) &\text{if } n=3\\
O(\delta_1^2|\log\delta_1|) &\text{if } n=4\\
O(\delta_1^2d(\xi_1,\pa\Omega)) &\text{if } n=5
\end{cases}\right\} \mone_{\{p>2\}}
+ O\(\delta_1^{\frac{n}{2}} + \ka_1^n\)\right] \mone_{\{u_0>0\}}.
\end{aligned}
\end{equation}

\medskip
On the other hand, arguing as in \eqref{puz5}--\eqref{puz4} and \eqref{puz54}, we can find a constant $\mfe_n>0$ such that
\begin{equation}\label{eq:I3PZk1}
\begin{aligned}
&\ \int_{\Omega} [(PU_1)^{p}-U_1^{p}] PZ_1^k \\
&=\begin{cases}
\displaystyle -\delta_1^{\frac{n-2}{2}}a_np\frac{\pa \vph^n_{\lambda}}{\pa \xi_1^k}(\xi_1) \int_{B(\xi_1, d(\xi_1,\pa\Omega))}(x-\xi_1)^k(U_1^{p-1}Z_1^k)(x) dx + O(\delta_1^3) + O(\ka_1^n) \\
\hspace{250pt}\text{if } n=3 \text{ or } [n=4,5,\; u_0=0] \\
\displaystyle -\delta_1^{\frac{n-2}{2}}a_np \frac{\pa\vph}{\pa\xi_1^k}(\xi_1) \int_{B(\xi_1, d(\xi_1,\pa\Omega))} (x-\xi_1)^k(U_1^{p-1}Z_1^k)(x) dx + O(\ka_1^n) \\
\hspace{250pt}\text{if } [n=4,5,\; u_0>0] \text{ or } n\ge 6
\end{cases}\\
&= \begin{cases}
\displaystyle - (1+o(1))\mfe_n \delta_1^{n-1} \frac{\pa \vph^n_{\lambda}}{\pa \xi_1^k}(\xi_1) + O(\delta_1^3) + O(\ka_1^n) &\text{if } n=3 \text{ or } [n=4,5,\; u_0=0],\\
\displaystyle - (1+o(1))\mfe_n \delta_1^{n-1} \frac{\pa\vph}{\pa\xi_1^k}(\xi_1) + O(\ka_1^n) &\text{if } [n=4,5,\; u_0>0] \text{ or }n\ge 6.
\end{cases}
\end{aligned}
\end{equation}
Here, we also used Corollary \ref{cor:PZ^kexp}.

Moreover, we see from \eqref{lmpz} and \eqref{puzl} that
\begin{equation}\label{eq:I3PZk2}
\int_{\Omega} \lambda PU_1 PZ_1^k \mone_{\{PU_1 \text{ satisfies } \eqref{pu2}\}}
=\begin{cases}
O(\delta_1^2|\log\delta_1|) &\text{if } n=4 \text{ and } u_0>0,\\
O\(\delta_1^{\frac{n}{2}}+\ka_1^n\) &\text{if } [n=5,\; u_0>0] \text{ or } n\ge 6.
\end{cases}
\end{equation}

Consequently, \eqref{izk} follows immediately from \eqref{eq:I1PZk}--\eqref{eq:I3PZk2}, and \eqref{vaor}.
\end{proof}

Now we are in a position to prove Theorem \ref{thm2}.
\begin{proof}[Proof of Theorem \ref{thm2}]
Throughout the proof, we keep in mind \eqref{vaor}.

\medskip \noindent \doublebox{\textsc{Step 1.}} Let us prove estimate \eqref{eq:sqe322}.

By testing \eqref{eqrho2} with $PZ_1^k$ for $k\in\{0,1,\ldots,n\}$, arguing as in \eqref{liar}, and using \eqref{i4rp}, we obtain
\begin{align}
&\ \bigg|\int_{\Omega} (\I_1+\I_3) PZ_1^k\bigg| \nonumber\\
&=\bigg| -\int_{\Omega} f PZ_1^k -\int_{\Omega} \I_0[\rho] PZ_1^k + \int_{\Omega} \big[(-\Delta-\lambda)\rho - p(u_0+PU_1)^{p-1}\rho\big] PZ_1^k\bigg| \nonumber\\
&\lesssim \|f\|_{(H^1_0(\Omega))^*} + \|\rho\|^2_{H^1_0(\Omega)} + \|\rho\|_{H^1_0(\Omega)}\bigg[\|((PU_1)^{p-1}-U_1^{p-1})PZ_1^k\|_{L^{\frac{p+1}{p}}(\Omega)}\label{13zk}\\
&\hspace{50pt} +\|U_1^{p-1}(PZ_1^k-Z_1^k)\|_{L^{\frac{p+1}{p}}(\Omega)} + \|U_1\|_{L^{\frac{p+1}{p}}(\Omega)} + \|U_1^{p-1}\|_{L^{\frac{p+1}{p}}(\Omega)}\mone_{\{u_0>0,\ p>2\}}\bigg].\nonumber
\end{align}
Having \eqref{rhsin}-\eqref{izk} in mind, we proceed by distinguishing several cases according to the dimension $n$ and the function $u_0$.

\medskip \noindent \fbox{Case 1:} Assume that $n\ge 7$.

We consider the following subcases:
\begin{itemize}
\item If $\mfb_n\lambda\delta_1^2>\mfc_n\vph(\xi_1)\delta_1^{n-2}$, we have that $\delta_1^2\lesssim \|f\|_{(H^1_0(\Omega))^*}$.

\begin{itemize}
\item[-] When $\delta_1^2\gtrsim \ka_1^{\frac{n+2}{2}}$, it follows that $\|\rho\|_{H^1_0(\Omega)} \lesssim \|f\|_{(H^1_0(\Omega))^*}+\delta_1^2$. Hence, $\|\rho\|_{H^1_0(\Omega)} \lesssim \|f\|_{(H^1_0(\Omega))^*}$.

\item[-] When $\delta_1^2\lesssim \ka_1^{\frac{n+2}{2}}$, it follows that $\|\rho\|_{H^1_0(\Omega)} \lesssim \|f\|_{(H^1_0(\Omega))^*}+\ka_1^{\frac{n+2}{2}}$. Hence, $\|\rho\|_{H^1_0(\Omega)} \lesssim \|f\|_{(H^1_0(\Omega))^*}^{\frac{n+2}{2(n-2)}}$.
\end{itemize}

\item If $\mfb_n\lambda\delta_1^2<\mfc_n\vph(\xi_1)\delta_1^{n-2}$, we have that $\ka_1^{n-2} \lesssim \|f\|_{(H^1_0(\Omega))^*}$ and $\|\rho\|_{H^1_0(\Omega)} \lesssim \|f\|_{(H^1_0(\Omega))^*}+\ka_1^{\frac{n+2}{2}}$. Thus, $\|\rho\|_{H^1_0(\Omega)} \lesssim \|f\|_{(H^1_0(\Omega))^*}^{\frac{n+2}{2(n-2)}}$.

\item If $\mfb_n\lambda\delta_1^2=\mfc_n\vph(\xi_1)\delta_1^{n-2}$, we have that $\|\rho\|_{H^1_0(\Omega)} \lesssim \|f\|_{(H^1_0(\Omega))^*}+\ka_1^{\frac{n+2}{2}}$. It follows from \eqref{izk} and \eqref{13zk} that $\ka_1^{n-1} \lesssim \|f\|_{(H^1_0(\Omega))^*}+\delta_1^{\frac{n+2}{n-2}+2}$.
    Consequently, $\|\rho\|_{H^1_0(\Omega)} \lesssim \|f\|_{(H^1_0(\Omega))^*}^{\frac{n+2}{2(n-1)}}$.
\end{itemize}

\medskip \noindent \fbox{Case 2:} Assume that $n=6$.
\begin{itemize}
\item If $\mfa_nu_0(\xi_1)\delta_1^2 + \mfb_6\lambda\delta_1^2 \ne \mfc_6\vph(\xi_1)\delta_1^4$, we have that
\[\|\rho\|_{H^1_0(\Omega)} \lesssim \|f\|_{(H^1_0(\Omega))^*}|\log \|f\|_{(H^1_0(\Omega))^*}|^{\frac12}.\]

\item If $\mfa_6u_0(\xi_1)\delta_1^2 + \mfb_6\lambda\delta_1^2 = \mfc_6\vph(\xi_1)\delta_1^4$, a cancellation happens in \eqref{iz0}, which leads to
\[\I_1+\I_3 = 2(u_0(x)-u_0(\xi_1))PU_1 + 2(PU_1-U_1)U_1 + 2a_6\vph(\xi_1)\delta_1^2PU_1.\]
Therefore,
\[\|\rho\|_{H^1_0(\Omega)} \lesssim \|f\|_{(H^1_0(\Omega))^*}+\|\I_1+\I_3\|_{L^{\frac{p+1}{p}}(\Omega)} \lesssim \|f\|_{(H^1_0(\Omega))^*}+\delta_1^3+\ka_1^5.\]
Applying \eqref{izk} and \eqref{13zk}, we find that $\ka_1^5 \simeq \delta_1^{\frac52} \lesssim \|f\|_{(H^1_0(\Omega))^*}+\delta_1^4|\log\delta_1|$, and so $\|\rho\|_{H^1_0(\Omega)} \lesssim \|f\|_{(H^1_0(\Omega))^*}$.
\end{itemize}

\medskip \noindent \fbox{Case 3:} Assume that $n=3, 4, 5$ and $u_0=0$.
\begin{itemize}
\item If $n=3$ and $u_0=0$, we have that $\|\rho\|_{H^1_0(\Omega)} \lesssim \|f\|_{(H^1_0(\Omega))^*}$.

\item Assume that $n=4$ and $u_0=0$.
\begin{itemize}
\item If $\mfb_4\lambda\delta_1^2|\log\delta_1| \ne \mfc_4\vph_\lambda^4(\xi_1)\delta_1^2$, we have that $\|\rho\|_{H^1_0(\Omega)} \lesssim \|f\|_{(H^1_0(\Omega))^*}$.

\item If $\mfb_4\lambda\delta_1^2|\log\delta_1| = \mfc_4\vph_\lambda^4(\xi_1)\delta_1^2$, we have that
\[\I_3=PU_1^p-U_1^p-p\delta_1\lambda|\log\delta_1|U_1^{p-1} + pa_4\delta_1\vph_\lambda^4(\xi_1) U_1^{p-1}.\]
Therefore,
\[\|\rho\|_{H^1_0(\Omega)} \lesssim \|f\|_{(H^1_0(\Omega))^*}+\|\I_1+\I_3\|_{L^{\frac{p+1}{p}}(\Omega)} \lesssim \|f\|_{(H^1_0(\Omega))^*}+\delta_1^2.\]
Applying \eqref{iz0} and \eqref{13zk}, we find that $|\int_{\Omega} \I_3PZ_1^0|\simeq \delta_1^2\lesssim \|f\|_{(H^1_0(\Omega))^*}+\delta_1^3$, and so $\|\rho\|_{H^1_0(\Omega)} \lesssim \|f\|_{(H^1_0(\Omega))^*}$.
\end{itemize}

\item Assume that $n=5$ and $u_0=0$.
\begin{itemize}
\item If $\bar{\mfb}_5\lambda\delta_1^2 \ne \mfc_5\vph_\lambda^5(\xi_1)\delta_1^3$, we have the same estimate as above.

\item If $\bar{\mfb}_5\lambda\delta_1^2 = \mfc_5\vph_\lambda^5(\xi_1)\delta_1^3$, we have that    $\|\rho\|_{H^1_0(\Omega)} \lesssim \|f\|_{(H^1_0(\Omega))^*}+\delta_1^2$. By Corollary \ref{cor:PZ^kexp}, the inequality \[\|U_1^{p-1}\delta_1\pa_{\xi_1^k}\mcs_{\delta_1,\xi_1}\|_{L^{\frac{p+1}{p}}(\Omega)} \lesssim \|U_1^{p-1}\|_{L^5(\Omega)} \|\delta_1\pa_{\xi_1^k}\mcs_{\delta_1,\xi_1}\|_{L^2(\Omega)} \lesssim \delta_1^2,\]
    \eqref{izk}, and \eqref{13zk}, one derives that $\ka_1^4 \simeq \delta_1^{\frac83} \lesssim \|f\|_{(H^1_0(\Omega))^*}+\delta_1^{\frac72}$, which gives $\|\rho\|_{H^1_0(\Omega)} \lesssim \|f\|_{(H^1_0(\Omega))^*}^{\frac{3}{4}}$.
\end{itemize}
\end{itemize}
\medskip

\noindent\fbox{Case 4:} Assume that $n=3, 4, 5$ and $u_0>0$.
\begin{itemize}
\item If $\mfa_nu_0(\xi_1)\delta_1^{\frac{n-2}{2}} \ne \mfc_n\delta_1^{n-2}
    \left.\begin{cases}
    \vph^3_\lambda(\xi_1) &\text{if } n=3\\
    \vph(\xi_1) &\text{if }n=4,5
    \end{cases}\right\}$,
we obtain that $\|\rho\|_{H^1_0(\Omega)} \lesssim \|f\|_{(H^1_0(\Omega))^*}$.

\item If $\mfa_nu_0(\xi_1)\delta_1^{\frac{n-2}{2}} = \mfc_n\delta_1^{n-2}
    \left.\begin{cases}
    \vph^3_\lambda(\xi_1) &\text{if } n=3\\
    \vph(\xi_1)&\text{if } n=4,5
    \end{cases}\right\}$,
the expansion
\begin{align*}
\I_1+\I_3=&(u_0+PU_1)^p-u_0^p-PU_1^p-pu_0(\xi_1)PU_1^{p-1}+PU_1^p-U_1^p\\
&\ +pa_n\delta_1^{\frac{n-2}{2}}
\left.\begin{cases}
\vph^3_\lambda(\xi_1)&\text{if } n=3\\
\vph(\xi_1)&\text{if } n=4,5
\end{cases}\right\} PU_1^{p-1}-\lambda PU_1\mone_{\{n=4, 5\}}
\end{align*}
gives
\[\|\rho\|_{H^1_0(\Omega)} \lesssim \|f\|_{(H^1_0(\Omega))^*}+\|\I_1+\I_3\|_{L^{\frac{p+1}{p}}(\Omega)} \lesssim\|f\|_{(H^1_0(\Omega))^*}+
\begin{cases}
\delta_1 &\text{if } n=3,\\
\delta_1^{\frac{n-2}{2}} &\text{if } n=4,5.
\end{cases}\]
Besides, making use of \eqref{izk} and \eqref{13zk}, we know that
\[\ka_1^{n-1}\simeq\delta_1^{\frac{n-1}{2}} \lesssim \|f\|_{(H^1_0(\Omega))^*}
+\begin{cases}
\delta_1^{\frac32} &\text{if } n=3,\\
\delta_1^{n-2} &\text{if } n=4,5.
\end{cases}\]
We conclude
\[\|\rho\|_{H^1_0(\Omega)} \lesssim \begin{cases}
\|f\|_{(H^1_0(\Omega))^*} &\text{if } n=3 \text{ and } u_0>0,\\
\|f\|_{(H^1_0(\Omega))^*}^{\frac{n-2}{n-1}} &\text{if } n=4,5 \text{ and } u_0>0.
\end{cases}\]
This completes the derivation of \eqref{eq:sqe322}.
\end{itemize}

\medskip \noindent \doublebox{\textsc{Step 2.}} We prove the optimality of  \eqref{eq:sqe322}.

\medskip \noindent\fbox{Case 1:} Assume that $\zeta(t)=t$.

One can treat this case as in Case 1 of Subsection \ref{subs3.2}, by choosing a point $\xi_1 \in \Omega$ and setting \[\ep\simeq \left.\begin{cases}
\delta_1 &\text{if } n=3\\
\delta_1^2|\log\delta_1| &\text{if } n=4, u_0=0
\end{cases}\right\} + \ka_1^{n-2}.\]

\noindent\fbox{Case 2:} Assume that $\zeta(t) \gg t$.

Let us choose $\delta_1>0$ and $\xi_1 \in \Omega$ satisfying the following conditions
\begin{equation}\label{eq:dxcond}
\begin{cases}
\displaystyle \mfa_nu_0(\xi_1)\delta_1^{\frac{n-2}{2}}=\mfc_n\vph(\xi_1)\delta_1^{n-2} &\text{if } n=4,5 \text{ and } u_0>0,\\
\displaystyle \bar{\mfb}_5\lambda\delta_1^2 = \mfc_5\vph_\lambda^5(\xi_1)\delta_1^3 &\text{if } n=5 \text{ and } u_0=0,\\
\displaystyle \mfa_6u_0(\xi_1)\delta_1^2+\mfb_6\lambda\delta_1^2 > \mfc_n\vph(\xi_1)\delta_1^4 &\text{if } n=6, \\
\displaystyle \mfb_n\lambda\delta_1^2 = \mfc_n\vph(\xi_1)\delta_1^{n-2} &\text{if } n\ge 7.
\end{cases}
\end{equation}
We now consider a function $\rho$ solving the following linearized problem
\[\begin{cases}
\displaystyle -\Delta \rho-\lambda \rho - p(u_0+PU_1)^{p-1}\rho = \I_1+\I_3+\I_0[\rho] +\sum_{k=0}^n \tc_1^k(-\Delta-\lambda)PZ_1^k \quad \text{in } \Omega,\\
\displaystyle \rho=0 \quad \text{on } \pa\Omega,\quad \tc_1^k \in \R \quad \text{for } k=0,\ldots,n,\\
\displaystyle \big\langle \rho, PZ_1^k \big\rangle_{H^1_0(\Omega)} = 0 \quad \text{for } k=0,\ldots,n.
\end{cases}\]
where $\I_1$, $\I_3$, and $\I_0[\rho]$ are defined as in \eqref{eq:I2} with $(\delta_1,\xi_1)$ satisfying \eqref{eq:dxcond}.

Denote $f:=\sum_{k=0}^n\tc_1^kU_1^{p-1}Z_1^k$. Using \eqref{iz0} and \eqref{izk}, we obtain
\begin{align*}
\|f\|_{(H^1_0(\Omega))^*} &\lesssim |\tc_1^0|+\max_{k\in\{1,\ldots,n\}}|\tc_1^k|\\
&\lesssim \bigg|\int_{\Omega}(\I_1+\I_3)PZ_1^0\bigg| + \max_{k\in\{1,\ldots,n\}}\bigg|\int_{\Omega}(\I_1+\I_3)PZ_1^k\bigg|\\
&\lesssim \begin{cases}
\ka_1^{n-1} &\text{if } [n=4,\; u_0>0], \text{ or } n=5, \text{ or } n\ge 7, \\
\delta_1^2 &\text{if } n=6.
\end{cases}
\end{align*}
It follows that
\[\|\rho\|_{H^1_0(\Omega)} \lesssim \vsi_3(\delta) := \begin{cases}
\delta_1^{\frac{n-2}{2}} &\text{if } n=4,5 \text{ and } u_0>0, \\
\delta_1^2 &\text{if } n=5 \text{ and } u_0=0, \\
\delta_1^2|\log\delta_1|^{\frac12} &\text{if } n=6, \\
\ka_1^{\frac{n+2}{2}} &\text{if } n\ge 7.
\end{cases}\]
On the other hand, we can deduce a lower bound estimate
\begin{align*}
&\ \|\rho\|^2_{H^1_0(\Omega)} \\
&\gtrsim \begin{cases}
\displaystyle \int_{\Omega} \int_{\Omega}(\lambda PU_1)(x)\frac{1}{|x-\om|^{n-2}} (\lambda PU_1)(\om) dxd\om &\text{if } [n=4,5,\; u_0>0] \text{ or } n=6, \\
\displaystyle \int_{\Omega} \int_{\Omega}(PU_1^p-U_1^p)(x)\frac{1}{|x-\om|^{n-2}} (PU_1^p-U_1^p)(\om) dxd\om &\text{if } [n=5,\; u_0=0] \text{ or } n\ge 7
\end{cases}\\
&\gtrsim (\vsi_3(\delta))^2.
\end{align*}
We set $u_*:=(u_0+PU_1+\rho)_+$. Then, by proceeding as in Case 2 of Subsection \ref{subs3.2}, we finish the proof.
\end{proof}
\begin{rmk}
Assume that $\nu \ge 2$. Arguing as above, one can find a nonnegative function $u_* \in H^1_0(\Omega)$ with $\delta_i=\delta_j$ and $|\xi_i-\xi_j|\gtrsim 1$ for $1 \le i\ne j \le \nu$ such that
\[\inf\left\{\Big\|u_*-\Big(u_0+\sum_{i=1}^{\nu}PU_{\tilde{\delta}_i,\tilde{\xi}_i}\Big)\Big\|_{H^1_0(\Omega)}: \(\tde_i,\txi_i\) \in (0,\infty) \times \Omega,\, i = 1,\ldots,\nu\right\}\gtrsim \zeta(u_*),\]
where $\zeta$ is given by \eqref{eq:zeta122}, except for the cases $[n=3,\; u_0>0]$ and $[n=4,\; u_0=0]$.
In these exceptional cases, additional technical difficulties arise.
\end{rmk}

\appendix
\section{Some useful estimates}\label{a}
\begin{lemma}
Let $a, b>0$. Then the following estimates hold:
\begin{equation}\label{iqu}
\quad |(a+b)^s-a^s-b^s| \lesssim \begin{cases}
\min\{a^{s-1}b, ab^{s-1}\} &\text{if } 1\le s \le 2,\\
a^{s-1}b+ab^{s-1} &\text{if }s>2.
\end{cases}
\end{equation}
Moreover, we have the following asymptotic expansions:
\begin{align}
&(a+b)^s-a^s=O(a^{s-1}b)\mone_{s>1}+O(b^s) \quad \text{for } s>0,\label{ab1}\\
&(a+b)^s = a^s + sa^{s-1}b + O(a^{s-2}b^2)\mone_{s>2}+ O(b^s) \quad \text{for } s > 1, \label{ab6}\\
&(a+b)^s = a^s + sa^{s-1}b + \frac{p(p-1)}{2}a^{s-2}b^2+O(a^{s-3}b^3)\mone_{s>3}+ O(b^s) \quad \text{for } s > 2. \label{ab7}
\end{align}
For any $a>0$, $b \in \R$ such that $a+b \ge 0$ and $1<s<2$, it holds that
\begin{equation}\label{ape1}
\left|(a+b)^s-a^s-sa^{s-1}b\right| \lesssim \min\left\{a^{s-2}|b|^2, |b|^s\right\}.
\end{equation}
\end{lemma}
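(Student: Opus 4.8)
The plan is to prove the four inequalities \eqref{iqu}, \eqref{ab1}, \eqref{ab6}, \eqref{ab7}, and \eqref{ape1} by reducing each one to a single-variable statement via homogeneity. Writing $t := b/a > 0$ (or $t := b/a \in (-1,\infty)$ for \eqref{ape1}), each estimate becomes a claim about the function $g_s(t) := (1+t)^s - 1 - st$ (or simpler variants) on a half-line, and we only need the elementary behaviors $g_s(t) = O(t^2)$ as $t \to 0$ when $s$ is large enough, $g_s(t) = O(t^s)$ as $t \to \infty$, and smoothness on compact subsets. Concretely, for \eqref{iqu} I would set $\phi_s(t) := (1+t)^s - 1 - t^s$; Taylor expansion at $t=0$ gives $\phi_s(t) = st + O(t^2)$ for $s > 1$, hence $\phi_s(t) \lesssim t$ for $t$ bounded, while for $t$ large $\phi_s(t) = t^s[(1+t^{-1})^s - t^{-s} - 1] \lesssim t^{s-1}$; combining, $\phi_s(t) \lesssim \min\{t, t^{s-1}\}$ when $1 \le s \le 2$ and $\phi_s(t) \lesssim t + t^{s-1}$ when $s > 2$. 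Multiplying back by $a^s$ and recalling $b = at$ yields \eqref{iqu}. The absolute value on the left of \eqref{iqu} is needed only in the borderline reading $s=1$ where $\phi_1 \equiv 0$; for $s>1$ the quantity is already nonnegative.

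For \eqref{ab1}, \eqref{ab6}, and \eqref{ab7}, I would apply Taylor's theorem with integral (or Lagrange) remainder to $u \mapsto (a+u)^s$ on $[0,b]$, splitting according to whether $b \le a$ or $b \ge a$. When $b \le a$ the remainder term of the appropriate order is controlled by $a^{s-k}b^k$ for the relevant $k$ (using that the $(k{+}1)$-st derivative $c\,(a+u)^{s-k-1}$ is comparable to $a^{s-k-1}$ on the interval), which gives the $O(a^{s-k}b^k)\mone_{s>k}$ terms; the indicator $\mone_{s>k}$ appears because for $s \le k$ that Taylor term is either absent or absorbed. When $b \ge a$ one instead bounds $(a+b)^s \le (2b)^s \lesssim b^s$ and each lower-order term $a^{s-j}b^j \lesssim b^s$, producing the $O(b^s)$ contribution. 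Assembling the two regimes gives the stated asymptotic expansions; note \eqref{ab6} is just \eqref{ab7} truncated one order earlier, and \eqref{ab1} is the zeroth-order version, so in practice I would prove the general $k$-th order statement once and specialize.

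Finally, \eqref{ape1} is the only estimate where $b$ may be negative, so the reduction is to $g_s(t) = (1+t)^s - 1 - st$ on $t \in (-1,\infty)$ with $1 < s < 2$. The two desired bounds are $g_s(t) = O(t^2)$ near $t=0$ and $g_s(t) = O(|t|^s)$ globally. Near $t = 0$ the second derivative $g_s''(t) = s(s-1)(1+t)^{s-2}$ is bounded on, say, $|t| \le 1/2$, giving $|g_s(t)| \lesssim t^2$; for $|t|$ large (including $t \to -1^+$, where $(1+t)^s \to 0$) one checks $|g_s(t)| \lesssim |t|^s$ directly, using $s < 2$ so that the linear term $st$ is dominated by $|t|^s$ when $|t| \ge 1$ — wait, that last point needs $s \ge 1$, which holds — and on the compact annulus $1/2 \le |t| \le 2$ (minus a neighborhood handled by continuity at $t=-1$) one uses boundedness of $g_s$. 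Taking the minimum of the two bounds and scaling back by $a^s$ yields $\left|(a+b)^s - a^s - sa^{s-1}b\right| \lesssim \min\{a^{s-2}|b|^2, |b|^s\}$.

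I do not anticipate a genuine obstacle here; these are all standard consequences of Taylor's theorem and homogeneity. The only mild care is bookkeeping the indicator functions $\mone_{s>k}$ correctly and, in \eqref{ape1}, handling the endpoint $t \to -1^+$ where $(a+b) \to 0$, which is where the $|b|^s$ term (rather than $a^{s-2}|b|^2$) does the work. Everything else reduces to one-variable calculus on a half-line.
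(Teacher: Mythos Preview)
The paper states this lemma without proof, treating the inequalities as elementary. Your proposal via homogeneity reduction to a single variable $t = b/a$ and Taylor's theorem is the standard and correct route, and would serve as a complete proof.
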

\begin{lemma}\label{a4}
Let $s>0$ and $U_{\delta,\xi}$ be the bubble defined in \eqref{AT bubbles}. Then
\[\int_{\Omega} U_{\delta,\xi}^s \lesssim
\begin{cases}
\delta^{\frac{n-2}{2}s} & \text{if } 0 < s < \frac{n}{n-2}, \\
\delta^{\frac{n}{2}} |\log \delta| & \text{if } s = \frac{n}{n-2}, \\
\delta^{n - \frac{n-2}{2}s} & \text{if } s > \frac{n}{n-2}.
\end{cases}\]
\end{lemma}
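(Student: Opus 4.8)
The statement to prove is Lemma \ref{a4}, the $L^s(\Omega)$-norm estimate for the Aubin-Talenti bubble $U_{\delta,\xi}$. The plan is to reduce everything to a scaled integral over $\R^n$ and estimate the tail by a power-counting argument.

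First I would write $U_{\delta,\xi}(x) = a_n \delta^{-\frac{n-2}{2}} \langle \delta^{-1}(x-\xi)\rangle^{-(n-2)}$ using the Japanese bracket notation, so that $U_{\delta,\xi}^s(x) = a_n^s \delta^{-\frac{n-2}{2}s} \langle x_\delta\rangle^{-(n-2)s}$ where $x_\delta := \delta^{-1}(x-\xi)$. Changing variables $x = \xi + \delta y$ gives $dx = \delta^n\, dy$, hence
\[
\int_\Omega U_{\delta,\xi}^s\, dx = a_n^s\, \delta^{n-\frac{n-2}{2}s} \int_{\delta^{-1}(\Omega-\xi)} \frac{dy}{\langle y\rangle^{(n-2)s}} \le a_n^s\, \delta^{n-\frac{n-2}{2}s} \int_{\R^n} \frac{dy}{\langle y\rangle^{(n-2)s}},
\]
valid whenever the last integral converges, i.e. when $(n-2)s > n$, equivalently $s > \frac{n}{n-2}$. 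This immediately yields the third case. For the regime $s \le \frac{n}{n-2}$ the full-space integral diverges, so instead I would keep the integral over $\delta^{-1}(\Omega-\xi) \subset B(0, C\delta^{-1})$ for some $C$ depending on $\diam(\Omega)$, and split $\int_{B(0,C\delta^{-1})} \langle y\rangle^{-(n-2)s}\,dy$ into $\int_{|y|\le 1}$ (which is $O(1)$) plus $\int_{1\le |y|\le C\delta^{-1}} |y|^{-(n-2)s}\, dy \simeq \int_1^{C\delta^{-1}} t^{n-1-(n-2)s}\, dt$. When $s < \frac{n}{n-2}$ the exponent $n-1-(n-2)s > -1$, so this integral is $\simeq \delta^{-(n-(n-2)s)}$, and multiplying by $\delta^{n-\frac{n-2}{2}s}$ produces $\delta^{\frac{n-2}{2}s}$, the first case. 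When $s = \frac{n}{n-2}$ the exponent equals $-1$, so $\int_1^{C\delta^{-1}} t^{-1}\, dt \simeq |\log\delta|$, and multiplying by $\delta^{n-\frac{n-2}{2}\cdot\frac{n}{n-2}} = \delta^{n/2}$ gives $\delta^{n/2}|\log\delta|$, the middle case.

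There is essentially no genuine obstacle here; the only points requiring a word of care are (i) justifying that $\delta^{-1}(\Omega-\xi)$ is contained in a ball of radius $\lesssim \delta^{-1}$ with a constant independent of $\xi \in \Omega$ (which follows from boundedness of $\Omega$), and (ii) in the case $s < \frac{n}{n-2}$, noting that the upper bound $\delta^{\frac{n-2}{2}s}$ dominates the $O(1)$ contribution from $|y|\le 1$ precisely because $\delta$ is small and $\frac{n-2}{2}s > 0$, so the contribution near the origin is absorbed. One should also record that the exponent identity $n - \frac{n-2}{2}s - (n - (n-2)s) = \frac{n-2}{2}s$ is what makes the first case come out cleanly. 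I would present the argument as the three cases above, in the order $s > \frac{n}{n-2}$, $s = \frac{n}{n-2}$, $s < \frac{n}{n-2}$, or equivalently combine them by writing the radial integral once and reading off the three behaviors of $\int_1^{C\delta^{-1}} t^{n-1-(n-2)s}\, dt$ according to the sign of $n-(n-2)s$.
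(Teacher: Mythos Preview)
Your argument is correct and is the standard one-line scaling computation; there is nothing to compare, since the paper simply states Lemma \ref{a4} in Appendix \ref{a} without proof (treating it as an elementary fact on the same footing as Lemma \ref{a1}).
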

\begin{lemma}\label{a22}
Let $U_{\delta_i,\xi_i}$ and $U_{\delta_j,\xi_j}$ be the bubbles for $1 \leq i \neq j \leq \nu$.
If $s,t \geq 0$ satisfy $s+t=2^*$, then for any fixed $\tau>0$, we have
\[\int_{\R^n} U_{\delta_i,\xi_i}^s U_{\delta_j,\xi_j}^t \lesssim
\begin{cases}
q_{ij}^{\min\{s,t\}} & \text{if } |s - t| \geq \tau, \\
q_{ij}^{\frac{n}{n-2}} |\log q_{ij}| & \text{if } s = t,
\end{cases}\]
provided $q_{ij}$ in \eqref{rq} is sufficiently small.
\end{lemma}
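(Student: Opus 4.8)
The plan is to remove the parameters by a scaling (and, in the cleanest version, a conformal) reduction, and then estimate an explicit one‑parameter integral by splitting the domain around the two concentration points. First, since $s+t=2^*=\frac{2n}{n-2}$, a change of variables by any M\"obius transformation $\varphi$ of $\R^n$ carries $U_{\delta_i,\xi_i}^sU_{\delta_j,\xi_j}^t\,dx$ to $U_{\delta_i',\xi_i'}^sU_{\delta_j',\xi_j'}^t\,dx'$ for suitable new parameters — the Jacobian factors cancel exactly because $\frac{n-2}{2}(s+t)=n$ — so $\int_{\R^n}U_{\delta_i,\xi_i}^sU_{\delta_j,\xi_j}^t$ is conformally invariant and hence a function of the conformal invariant $\lambda_{ij}:=\frac{\delta_i}{\delta_j}+\frac{\delta_j}{\delta_i}+\frac{|\xi_i-\xi_j|^2}{\delta_i\delta_j}=q_{ij}^{-2/(n-2)}$ alone (indeed $\lambda_{ij}=2\cosh\mathrm{dist}_{\mathbb{H}^{n+1}}$ for the two half‑space points $(\xi_i,\delta_i),(\xi_j,\delta_j)$). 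After relabeling $i\leftrightarrow j$ (which also swaps $s,t$ but fixes $\min\{s,t\}$ and $q_{ij}$) we may assume $s\le t$, so $a:=\tfrac{n-2}{2}s\le\tfrac n2\le\tfrac{n-2}{2}t=:b$ with $a+b=n$; and then a further M\"obius map normalizes the pair to the \emph{standard configuration} $\delta_i=\delta_j=1$, $\xi_i=0$, $\xi_j=Re_1$ with $R:=(\lambda_{ij}-2)^{1/2}\to\infty$ as $q_{ij}\to0$. Since $q_{ij}\simeq R^{-(n-2)}$ and $|\log q_{ij}|\simeq\log R$, the claim reduces to
\[
\int_{\R^n}U^s(y)\,U^t(y-Re_1)\,dy\ \lesssim\
\begin{cases}R^{-(n-2)s}&\text{if }t-s\ge\tau,\\[2pt] R^{-n}\log R&\text{if }s=t=\tfrac{n}{n-2}.\end{cases}
\]

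To prove this I would use $U(y)\simeq\la y\ra^{-(n-2)}$ and split $\R^n$ into the ball $B(Re_1,R/2)$ around the more peaked bubble, the ball $B(0,R/2)$ around the flatter one, and the set $E:=\{|y|\ge R/2\}\cap\{|y-Re_1|\ge R/2\}$. On $B(Re_1,R/2)$ one has $U^s(y)\simeq R^{-(n-2)s}$, while $\int_{B(Re_1,R/2)}U^t\le\int_{\R^n}U^t\le C(\tau)<\infty$ when $t\ge\tfrac n{n-2}+\tfrac\tau2$ (and $\simeq\log R$ when $t=\tfrac n{n-2}$); this produces precisely the stated main term. On $B(0,R/2)$ one has $U^t(y-Re_1)\simeq R^{-(n-2)t}$ and $\int_{B(0,R/2)}U^s\lesssim R^{\,n-(n-2)s}+\log R$, so the contribution is $\lesssim R^{\,n-(n-2)2^*}+R^{-n}\log R=R^{-n}(1+\log R)$, which is subordinate to the main term. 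On $E$ both factors obey $U^s(y)\simeq|y|^{-(n-2)s}$, $U^t(y-Re_1)\simeq|y-Re_1|^{-(n-2)t}$; splitting once more at $|y|=4R$ (so that $|y-Re_1|\simeq|y|$ for $|y|\ge4R$, giving integrand $\simeq|y|^{-(n-2)2^*}=|y|^{-2n}$) shows each piece is again $\lesssim R^{-n}\log R$. Summing the three estimates and translating back through $q_{ij}\simeq R^{-(n-2)}$ yields the lemma.

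One may instead avoid conformal invariance entirely: rescaling $x=\delta_iy+\xi_i$ replaces the integral by $\int_{\R^n}U^s(y)\,U_{\mu,z}^t(y)\,dy$ with $\mu=\delta_j/\delta_i$, $z=(\xi_j-\xi_i)/\delta_i$ and $\lambda_{ij}=\mu+\mu^{-1}+|z|^2/\mu$, and one splits the domain into the cores $B(0,c)$ and $B(z,c\mu)$ of the two bubbles and the exterior, distinguishing the overlapping regime $|z|\lesssim\mu$ (where $\lambda_{ij}\simeq\mu$) from the separated regime $|z|\gtrsim\mu$ (where $\lambda_{ij}\simeq|z|^2/\mu$). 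Every region is controlled by the same elementary bounds for $\int\la\cdot\ra^{-\gamma}$ over balls, annuli, and exteriors, and the identity $a+b=n$ is exactly what collapses every sub‑leading piece into a common lower power of $\lambda_{ij}$.

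The routine part is this domain splitting together with the elementary integral bounds. The genuinely delicate points are: (i) making the conformal/scaling reduction precise — equivalently, if one takes the elementary route, organizing the two‑parameter geometry into the overlapping vs.\ separated regimes without losing powers; and (ii) the threshold behaviour as $s\to t$. The constant in $\int_{\R^n}U^{\max\{s,t\}}<\infty$ degenerates like $1/|s-t|$ (since $\int(1+|y|^2)^{-b}\,dy\simeq(2b-n)^{-1}$ as $b\downarrow n/2$), which is precisely why the hypothesis $|s-t|\ge\tau$ is required and why the implied constant depends on $\tau$; and in the borderline case $s=t=\frac n{n-2}$ this divergence is resolved into the extra factor $|\log q_{ij}|$ appearing on the right‑hand side.
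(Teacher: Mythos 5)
Your proposal is correct. The paper itself gives no argument for this lemma---it simply cites \cite[Lemma A.3]{CK}---so I cannot literally compare routes, but your argument is sound and self-contained, and the conformal reduction is a particularly clean way to organize it. The key mechanism, namely that $s+t=2^*$ makes $\tfrac{n-2}{2}(s+t)=n$ and hence renders $\int U_i^sU_j^t\,dx$ a M\"obius invariant depending only on $\lambda_{ij}=q_{ij}^{-2/(n-2)}=2\cosh d_{\mathbb H^{n+1}}$, is exactly what lets you normalize to $\delta_i=\delta_j=1$, $\xi_j-\xi_i=Re_1$ and reduce a two-parameter problem to a one-parameter one; the subsequent three-region split (core of each bubble plus exterior, with the exterior subdivided at $|y|=4R$) then yields the main term $R^{-(n-2)\min\{s,t\}}$ from the core of the sharper bubble, with all other pieces $\lesssim R^{-n}(1+\log R)$ and hence subordinate when $|s-t|\ge\tau$. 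You also correctly identify the two genuinely delicate points: (i) the normalization step (or, in the purely scaling-based alternative you sketch, the case distinction between the overlapping regime $|z|\lesssim\mu$ and the separated regime $|z|\gtrsim\mu$, where the identity $\frac{n-2}{2}(s+t)=n$ collapses all sub-leading powers); and (ii) the threshold degeneration, where the constant in $\int U^{\max\{s,t\}}<\infty$ blows up like $(2b-n)^{-1}\simeq|s-t|^{-1}$ as $\max\{s,t\}\downarrow\frac{n}{n-2}$, which is precisely why the hypothesis $|s-t|\ge\tau$ is needed (with $\tau$-dependent implied constant) and why the diagonal case $s=t$ resolves into the extra factor $|\log q_{ij}|$. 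For the record, the more common route in the literature, and most likely what the cited reference does, is the elementary rescaling $x=\delta_iy+\xi_i$ followed directly by the overlapping/separated dichotomy; the conformal normalization you propose as the primary route buys you cleaner bookkeeping at the cost of invoking M\"obius invariance, which the elementary route avoids, but both are rigorous and of comparable length.
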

\begin{proof}
See \cite[Lemma A.3]{CK}.
\end{proof}
\begin{lemma}\label{a1}
Suppose $\alpha>0$. Then
\[\int_{\Omega} \frac{1}{|x - z|^{n-2}} \(\frac{\delta}{\delta^2 + |z - \xi|^2}\)^{\frac{\alpha}{2}} dz
\lesssim
\begin{medsize}
\begin{cases}
\displaystyle \delta^{\frac{\alpha}{2}} &\text{if } 0 < \alpha < 2, \\
\displaystyle \delta(1+|\log |x-\xi||) &\text{if } \alpha = 2, \\
\displaystyle \delta^{\frac{\alpha}{2}} (\delta^2+|x-\xi|^2)^{-\frac{\alpha - 2}{2}} &\text{if } 2 < \alpha < n, \\
\displaystyle \delta^{\frac{n}{2}} (\delta^2+|x-\xi|^2)^{-\frac{n-2}{2}} \log(2+|x-\xi|\delta^{-1}) &\text{if } \alpha = n, \\
\displaystyle \delta^{n-\frac{\alpha}{2}} (\delta^2+|x-\xi|^2)^{-\frac{n-2}{2}} &\text{if } \alpha > n.
\end{cases}
\end{medsize}\]
\end{lemma}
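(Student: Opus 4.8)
\textbf{Proof proposal for Lemma~\ref{a1}.}
The plan is to reduce the estimate to a convolution computation of the Riesz potential $|x|^{2-n}*U_{\delta,\xi}^{\alpha}$ and to exploit scaling. First I would reduce to the model case $\xi=0$ by translation, and then rescale: setting $z=\delta w$ and $x=\delta y$, the integral becomes
\[
\int_{\delta^{-1}\Omega}\frac{1}{(\delta|y-w|)^{n-2}}\(\frac{1}{\delta(1+|w|^2)}\)^{\frac{\alpha}{2}}\delta^n\,dw
=\delta^{\frac{\alpha}{2}-(n-2)+n-\frac{\alpha}{2}}\cdot\delta^{-\frac{\alpha}{2}+\frac{\alpha}{2}}\!\!\int\frac{1}{|y-w|^{n-2}}\frac{dw}{(1+|w|^2)^{\frac{\alpha}{2}}},
\]
so that after bookkeeping the powers of $\delta$ the whole problem collapses to estimating the single dimensionless integral
\[
\Phi_\alpha(y):=\int_{\R^n}\frac{1}{|y-w|^{n-2}}\frac{dw}{(1+|w|^2)^{\frac{\alpha}{2}}},
\]
(enlarging the domain to all of $\R^n$ only helps since the integrand is nonnegative), and one then substitutes back $y=\delta^{-1}x$ so that the factor $(\delta^2+|x|^2)$ reappears through $1+|y|^2=\delta^{-2}(\delta^2+|x|^2)$.

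Second I would estimate $\Phi_\alpha(y)$ by the standard three-region decomposition of the $w$-integral: the near-singularity region $|w-y|\le \tfrac12\la y\ra$, the bulk region $\tfrac12\la y\ra\le|w-y|\le 2|y|$ (relevant only when $|y|$ is large), and the far region $|w-y|\ge 2\la y\ra$ where $|w-y|\simeq|w|$. In the near region the weight is essentially $\la y\ra^{-\alpha}$ and the remaining integral of $|y-w|^{2-n}$ over a ball of radius $\la y\ra$ contributes $\la y\ra^{2}$, giving $\la y\ra^{2-\alpha}$; this term is dominant when $\alpha>2$ and accounts for the factor $(\delta^2+|x|^2)^{-(\alpha-2)/2}$. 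In the far region the weight integral $\int_{|w|\gtrsim\la y\ra}|w|^{2-n}\la w\ra^{-\alpha}dw$ converges iff $\alpha>2$ and then is $O(\la y\ra^{2-\alpha})$; when $\alpha<2$ it is instead governed by the whole of $\R^n$ and contributes an $O(1)$ constant (this is the $0<\alpha<2$ line, yielding simply $\delta^{\alpha/2}$), while when $\alpha=2$ and $\alpha=n$ the integral is logarithmically divergent at infinity, respectively at the origin, producing the $1+|\log|x-\xi||$ and $\log(2+|x-\xi|\delta^{-1})$ factors. The borderline case $\alpha>n$ is the one where the weight $(1+|w|^2)^{-\alpha/2}$ decays so fast that the integral is concentrated near $w=0$, so $\Phi_\alpha(y)\simeq |y|^{2-n}$ for large $|y|$ and $\simeq 1$ for bounded $|y|$, i.e. $\Phi_\alpha(y)\lesssim \la y\ra^{-(n-2)}$; tracking the $\delta$-powers then gives $\delta^{n-\alpha/2}(\delta^2+|x-\xi|^2)^{-(n-2)/2}$.

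The only mildly delicate point — and the one I expect to require the most care — is keeping the exact exponents of $\delta$ straight through the rescaling, since each of the five regimes produces a slightly different homogeneity, and in particular the logarithmic cases $\alpha=2$ and $\alpha=n$ require that one not simply extend integration to $\R^n$ blindly but rather cut the $w$-integral at the scale dictated by $\delta^{-1}\Omega$ (which is legitimate because $\Omega$ is bounded, so $\delta^{-1}\Omega\subset B(0,C\delta^{-1})$). Everything else is a routine application of the elementary Riesz-potential bounds, and no result beyond the boundedness of $\Omega$ is needed; in fact this lemma is of exactly the same type as Lemma~\ref{a4} and can be proved by the same rescaling-and-splitting method. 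For that reason I would state it without a detailed proof, or with only the short indication above, since it is a classical computation used repeatedly (e.g. in the proofs of Lemmas~\ref{lem2.2}, \ref{lem2.31}, and \ref{lem2.32}).
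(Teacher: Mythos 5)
The paper offers no proof of Lemma~\ref{a1} beyond ``It follows from direct computations,'' so the only available comparison is against the standard rescaling/splitting argument, which is indeed what you propose and which does give the result. Your plan — translate $\xi$ to the origin, rescale $z-\xi=\delta w$, $x-\xi=\delta y$, estimate the dimensionless Riesz potential $\Phi_\alpha(y)=\int|y-w|^{2-n}\la w\ra^{-\alpha}\,dw$ over the three regions, and substitute back — is the right one, and the four cases $\alpha=2$, $2<\alpha<n$, $\alpha=n$, $\alpha>n$ come out correctly once the scaling prefactor is tracked properly.

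But the bookkeeping in your displayed line is wrong, and in a way that matters. The kernel contributes $\delta^{-(n-2)}$, the weight $\delta^{-\alpha/2}$, and $dz$ contributes $\delta^{n}$, so the prefactor is $\delta^{2-\alpha/2}$, not $\delta^{\frac{\alpha}{2}-(n-2)+n-\frac{\alpha}{2}}\cdot\delta^{-\frac{\alpha}{2}+\frac{\alpha}{2}}=\delta^2$ as you wrote. Compounding this, your claim that for $0<\alpha<2$ the far-region integral ``contributes an $O(1)$ constant'' is false: $\int_{|w|\gtrsim 1}|w|^{2-n}\la w\ra^{-\alpha}\,dw$ diverges at infinity for $\alpha\le 2$, and after truncating at $|w|\le C\delta^{-1}$ (as you rightly observe one must, using $\delta^{-1}(\Omega-\xi)\subset B(0,C\delta^{-1})$) one gets $\int_1^{C\delta^{-1}}r^{1-\alpha}\,dr\simeq\delta^{\alpha-2}$, not $O(1)$. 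With the corrected prefactor the pieces fit: $\delta^{2-\alpha/2}\cdot\delta^{\alpha-2}=\delta^{\alpha/2}$. With the two numbers you wrote, you would conclude the bound $\delta^2$, which is actually false — taking $x=\xi$ shows the integral is $\simeq\delta^{\alpha/2}\gg\delta^2$. You flagged the $\delta$-bookkeeping as the delicate point, and indeed that is precisely where the proposal, as written, breaks; the rest of the decomposition (near/bulk/far, with the logarithmic thresholds at $\alpha=2$ and $\alpha=n$) is correctly diagnosed.
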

\begin{proof}
It follows from direct computations.
\end{proof}

\section{Proof of \eqref{vaor}}\label{b}
\begin{lemma}\label{lema6}
Let $\vph_\lambda^n(x) := H^n_\lambda(x, x)$ for $n=3,4,5$, where $H^n_\lambda(x, y)$ satisfies equations \eqref{h1xy}--\eqref{h3xy}. If $d(x, \pa \Omega)$ is small, then we have
\[\begin{cases}
\vph_\lambda^n(x) = \dfrac{1}{(2d(x, \pa \Omega))^{n-2}}\(1 + O(d(x, \pa \Omega))\),\\[1em]
|\nabla\vph_\lambda^n (x)| = \dfrac{2(n-2)}{(2d(x, \pa \Omega))^{n-1}}\(1 + O(d(x, \pa \Omega))\).
\end{cases}\]
\end{lemma}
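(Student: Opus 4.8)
The plan is to reduce the estimate for $\varphi_\lambda^n$ to the classical estimate for the Robin function $\varphi(x) = H(x,x)$ of the ordinary Laplacian, together with a perturbation argument that controls the lower-order correction produced by the $\lambda$-term. Recall from the decomposition of $H_\lambda(x,y)$ in the proof of Lemma \ref{lem2.1} that $H^n_\lambda(x,y)$ is obtained from the full regular part $H_\lambda(x,y)$ of the Green's function $G_\lambda$ by subtracting an explicit, smooth (in $\Omega\times\Omega$), $\lambda$-dependent elementary function $E^n_\lambda(x,y)$ (namely $\frac{\lambda}{2}|x-y|$ for $n=3$, $\frac{\lambda}{2}\log|x-y|$ for $n=4$, and $-\frac{\lambda}{2}|x-y|^{-1}+2\lambda^2|x-y|$ for $n=5$). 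Since $E^n_\lambda$ and its gradient are bounded uniformly on a neighborhood of $\overline{\Omega}\times\overline{\Omega}$ minus the diagonal — and in fact near the diagonal $E^n_\lambda(x,x)$ and $\nabla_x E^n_\lambda(x,y)|_{y=x}$ are $O(1)$ for $n=3,4$ and $O(d(x,\partial\Omega)^{-1})$-type terms are absent because we evaluate on the diagonal where $|x-y|\to 0$ is not relevant — it suffices to prove the stated asymptotics for the regular part $\mathcal{H}_\lambda(x):=H_\lambda(x,x)$ of $G_\lambda$ and then absorb $E^n_\lambda(x,x)$ into the $O(d(x,\partial\Omega))$ error, observing that $d(x,\partial\Omega)^{-(n-2)}\gg 1$ as $d(x,\partial\Omega)\to 0$.

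Next I would compare $G_\lambda$ with $G_0$, the Green's function of $-\Delta$ with Dirichlet boundary condition. Writing $G_\lambda - G_0 =: R_\lambda$, the function $R_\lambda(\cdot,y)$ solves $-\Delta_x R_\lambda(x,y) = \lambda G_\lambda(x,y)$ in $\Omega$ with $R_\lambda = 0$ on $\partial\Omega$, so $R_\lambda(x,y) = \lambda\int_\Omega G_0(x,z) G_\lambda(z,y)\,dz$. Standard estimates on the Green's functions (both are comparable to $|x-z|^{-(n-2)}$ away from the boundary, with the usual boundary decay) show that $R_\lambda(x,y)$ and $\nabla_x R_\lambda(x,y)$ are bounded uniformly for $x,y$ in a fixed neighborhood of the boundary — the convolution removes the singularity since $n-2 < n$. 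Hence the regular parts satisfy $\mathcal{H}_\lambda(x) = \mathcal{H}_0(x) + O(1)$ and $\nabla\mathcal{H}_\lambda(x) = \nabla\mathcal{H}_0(x) + O(1)$, where $\mathcal{H}_0(x) = \varphi(x) = H(x,x)$ is the classical Robin function.

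Then I would invoke the well-known boundary asymptotics of the Robin function of the Dirichlet Laplacian: as $d(x,\partial\Omega)\to 0$,
\[
\varphi(x) = \frac{1}{(2d(x,\partial\Omega))^{n-2}}\bigl(1 + O(d(x,\partial\Omega))\bigr), \qquad |\nabla\varphi(x)| = \frac{2(n-2)}{(2d(x,\partial\Omega))^{n-1}}\bigl(1 + O(d(x,\partial\Omega))\bigr),
\]
which follows by comparison with the half-space model, where the regular part of the Green's function is exactly $|x-y^*|^{-(n-2)}$ with $y^*$ the reflection of $y$, so that on the diagonal $\varphi_{\text{half-space}}(x) = (2d(x))^{-(n-2)}$; the curvature of $\partial\Omega$ contributes only the $O(d(x,\partial\Omega))$ relative error after flattening the boundary and using Schauder estimates on the difference. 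Combining this with the two comparison steps above, and noting $1 = O(d(x,\partial\Omega)) \cdot d(x,\partial\Omega)^{-(n-2)} \cdot d(x,\partial\Omega)^{n-3}$ is subsumed in the error for $n\ge 3$ while for the gradient $O(1)$ is dominated by $d(x,\partial\Omega)^{-(n-1)}$, yields the claimed expansions for $\varphi_\lambda^n$. The main obstacle is the uniform control of $R_\lambda$ and its gradient up to (and near) the boundary — one must verify that the convolution $\lambda\int_\Omega G_0(x,z)G_\lambda(z,y)\,dz$ genuinely produces a bounded $C^1$ function near the diagonal and near $\partial\Omega$, which requires a careful splitting of the integration domain into a region near $z=x$, a region near $z=y$, a region near $\partial\Omega$, and a bulk region, using the sharp pointwise bounds $G_0(x,z)\lesssim \min\{|x-z|^{-(n-2)}, d(x)d(z)|x-z|^{-n}\}$ (and similarly for $G_\lambda$, valid since $\lambda<\lambda_1$).
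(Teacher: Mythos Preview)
Your approach has a genuine gap for $n=4$ and $n=5$. The elementary function $E^n_\lambda(x,y)$ is \emph{not} smooth on $\Omega\times\Omega$: for $n=4$ it is $\tfrac{\lambda}{2}\log|x-y|$ and for $n=5$ it contains $-\tfrac{\lambda}{2}|x-y|^{-1}$, both singular on the diagonal. Correspondingly, the full regular part $H_\lambda(x,y)$ of $G_\lambda$ is not continuous at $x=y$ in these dimensions, and your comparison $R_\lambda(x,y)=\lambda\int_\Omega G_0(x,z)G_\lambda(z,y)\,dz$ is not bounded near the diagonal either: the convolution of two singularities of order $|z|^{-(n-2)}$ produces a term of order $|x-y|^{\,n-2(n-2)}$, i.e.\ a logarithm when $n=4$ and $|x-y|^{-1}$ when $n=5$. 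Hence neither $\mathcal{H}_\lambda(x):=H_\lambda(x,x)$ nor $R_\lambda(x,x)$ is well-defined in those cases, and the two $O(1)$ claims you make both fail. (For $n=3$ your reduction is correct: there the convolution is genuinely bounded and $E^3_\lambda(x,x)=0$.)

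The very purpose of the decomposition $H_\lambda = E^n_\lambda + H^n_\lambda$ is that $E^n_\lambda$ absorbs this diagonal singularity, leaving $H^n_\lambda\in C^{1,\alpha}(\Omega\times\Omega)$. The paper therefore works directly with $H^n_\lambda$: for $x$ near $\partial\Omega$ it introduces the reflected point $x''$, compares $H^n_\lambda(\cdot,x)$ to an explicit expression built from $|\cdot-x''|$ (for instance $|y-x''|^{-2}-\tfrac{\lambda}{2}\log|y-x''|$ when $n=4$), and shows via elliptic estimates that the difference is $O(d(x,\partial\Omega)^{-(n-3)})$, one order below the leading term $(2d(x,\partial\Omega))^{-(n-2)}$ coming from $|x-x''|^{-(n-2)}$. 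Your scheme could in principle be salvaged by computing the exact diagonal singularity of $R_\lambda$ and checking that it cancels $E^n_\lambda$ up to a bounded remainder, but carrying this out cleanly requires essentially the same local analysis as the direct reflection argument.
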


\begin{proof}
Since $\Omega$ is a smooth domain, there exists $d_0>0$ such that for every $x \in \Omega$ with $d(x, \pa \Omega) < d_0$, there exists a unique $x' \in \pa \Omega$ such that $d(x, \pa \Omega) = |x - x'|$.
By an appropriate translation and rotation, we may assume without loss of generality that $x = (0, d)$, $x' = 0$, and the boundary near the origin is locally given by a $C^2$ function $\phi$ with $\phi(0) = 0$, $\nabla \phi(0) = 0$. Specifically,
\begin{align*}
\pa \Omega \cap B(0,\tau) &= \{ y = (y', y^n) \in \R^n : y^n=\phi(y') \} \cap B(0,\tau), \\
\Omega \cap B(0,\tau) &= \{ y \in \R^n : y^n > \phi(y') \} \cap B(0,\tau)
\end{align*}
for some small $\tau>0$. Let $x'' = (0, -d)$ be the reflection of $x$ across the boundary. For sufficiently small $d$, $x''
\not\in\Omega$, and the function $\frac{1}{|y-x''|^{n-2}}$ is harmonic in $\Omega$. Define
\[F_\lambda^n(y) := H_\lambda^n(y, x) -
\begin{cases}
\dfrac{1}{|y-x''|^{n-2}}-\frac{\lambda}{2}|y-x''| &\text{if } n=3, \\
\dfrac{1}{|y-x''|^{n-2}} - \dfrac{\lambda}{2} |\log|x''-y|| &\text{if } n=4, \\
\dfrac{1}{|y-x''|^{n-2}} + \dfrac{\lambda}{2} \dfrac{1}{|x''-y|}-2\lambda^2|y-x''| &\text{if } n=5.
\end{cases}\]
Then $F^n_\lambda$ satisfies
\[\begin{cases}
\Delta_y F^n_\lambda +\lambda F^n_\lambda = f^n_\lambda &\text{in } \Omega, \\
F^n_\lambda = g^n_\lambda &\text{on } \pa \Omega,
\end{cases}\]
where
\[f^n_\lambda(y) :=
\begin{cases}
-\frac{\lambda^2}{2} \(|y-x| - |y-x''| \) &\text{if } n=3, \\
-\lambda |\log |x-y|| - \dfrac{\lambda^2}{2} |\log |x''-y|| &\text{if } n=4, \\
-2\lambda^2|y-x| + 2\lambda^3|x''-y| &\text{if } n=5,
\end{cases}\]
and
\begin{align*}
&\ g^n_\lambda(y) \\
&:= \begin{cases}
\dfrac{1}{|y-x|^{n-2}}-\dfrac{1}{|y-x''|^{n-2}} - \dfrac{\lambda}{2}(|y-x|-|y-x''|) &\text{if } n=3, \\
\dfrac{1}{|y-x|^{n-2}}-\dfrac{1}{|y-x''|^{n-2}} - \dfrac{\lambda}{2}(|\log |x-y||-|\log |x''-y||) &\text{if } n=4, \\
\dfrac{1}{|y-x|^{n-2}}-\dfrac{1}{|y-x''|^{n-2}} + \dfrac{\lambda}{2}\(\dfrac{1}{|x-y|}-\dfrac{1}{|x''-y|}\) - 2\lambda^2(|y-x|-|y-x''|) &\text{if } n=5.
\end{cases}\end{align*}

For $y \in \pa \Omega \cap B(0,\tau)$, we have the Taylor expansions
\begin{align*}
&|y-x|=\sqrt{|y|^2+d^2-2 d y^n}=\sqrt{|y|^2+d^2}\(1+O\(\frac{d y^n}{|y|^2+d^2}\)\),\\
&\left|y-x^{\prime \prime}\right|=\sqrt{|y|^2+d^2+2 d y^n}=\sqrt{|y|^2+d^2}\(1+O\(\frac{d y^n}{|y|^2+d^2}\)\),
\end{align*}
where we used the smoothness of $\phi$. Since $|y^n|=|\phi(y^{\prime})|=O(|y^{\prime}|^2)$, we observe
\begin{align*}
\frac{1}{|y-x|^{n-2}}-\frac{1}{|y-x^{\prime \prime}|^{n-2}} &= (|y|^2+d^2)^{-\frac{n-2}{2}} O\(\frac{d y^n}{|y|^2+d^2}\) \\
&= (|y|^2+d^2)^{-\frac{n-2}{2}} O(d) = O(d^{-n+3}).
\end{align*}
Similarly,
\[\begin{cases}
|y-x|-|y-x''|=O(1) &\text{for } n=3,5,\\
|\log|y-x||-|\log|y-x''||=O(1) &\text{for } n=4,\\
\frac{1}{|y-x|}-\frac{1}{|y-x''|}=O(1) &\text{for } n=5.
\end{cases}\]

For $y \in \pa \Omega \cap\(\R^n \backslash B(0,\tau)\)$, the above differences are also uniformly bounded. In other words,
\[\|g^n_\lambda\|_{L^{\infty}(\pa\Omega)}=O(d^{-n+3}).\]
In particular, $\|f^n_\lambda\|_{L^t(\Omega)} \lesssim 1$ for any $t>n$. By standard elliptic estimates, we obtain
\[\|F_\lambda^n\|_{L^{\infty}(\Omega)}=O(d^{-n+3}).\]
Hence, evaluating at $x$, we get
\begin{align*}
\vph^n_\lambda(x)=H^n_\lambda(x,x)&=\left.\begin{cases}
\frac{1}{|y-x^{\prime \prime}|^{n-2}}-\frac{\lambda}{2}|y-x''| &\text{if } n=3\\
\frac{1}{|y-x^{\prime \prime}|^{n-2}}-\frac{\lambda}{2}|\log|x''-y|| &\text{if } n=4\\
\frac{1}{|y-x^{\prime \prime}|^{n-2}}+\frac{\lambda}{2}\frac{1}{|x''-y|}-2\lambda^2|y-x''| &\text{if } n=5
\end{cases}\right\} + O(d^{-n+3})\\
&=\frac{1}{(2d(x,\pa\Omega))^{n-2}}(1+O(d(x,\pa\Omega))).
\end{align*}
The estimate for $|\nabla\vph^n_\lambda(x)|$ follows analogously by applying interior gradient estimates under the same reflections.
\end{proof}
\begin{rmk}
The estimate for $\vph$ in \eqref{vaor} follows with slight modifications to the above proof.
\end{rmk}

\bigskip \noindent \textbf{Acknowledgement.}
H. Chen and S. Kim were supported by Basic Science Research Program through the National Research Foundation of Korea (NRF) funded by the Ministry of Science and ICT (2020R1C1C1A01010133, RS-2025-00558417).
The research of J. Wei is partially supported by GRF grant entitled ``New frontiers in singular limits of elliptic and parabolic equations".

\end{document}